\theoremstyle{plain} 
\newtheorem{thm}{Theorem}[section]
\newtheorem{cor}[thm]{Corollary}
\newtheorem{prop}[thm]{Proposition}
\newtheorem{lem}[thm]{Lemma}
\newtheorem{conject}{Conjecture}
\theoremstyle{definition}
\newtheorem{defi}[thm]{Definition}
\newtheorem{remark}[thm]{Remark}
\newtheorem{ex}[thm]{Example}
\definecolor{Ccolor}{rgb}{0,0.5,0}
\definecolor{Mcolor}{rgb}{1,0,0}
\definecolor{lightgray}{rgb}{0.6,0.6,0.6}
\newcommand{\h}{\widehat}
\newcommand{\cone}{\operatorname{cone}}
\newcommand{\conv}{\operatorname{conv}}
\newcommand{\Gar}{\operatorname{Gar}}
\newcommand{\dep}{\textnormal{dp}}
\newcommand{\dom}{\operatorname{Dom}}
\newcommand{\mpair}[1]{\langle\, #1\,\rangle}
\newcommand{\supp}{\operatorname{supp}}
\newcommand{\mor}{\operatorname{mor}}
\newcommand{\ob}{\operatorname{ob}}
\author[C. Hohlweg]{Christophe~Hohlweg$^{\diamond}$}
\address[Christophe Hohlweg]{Universit\'e du Qu\'ebec \`a Montr\'eal\\
LaCIM et D\'epartement de Math\'ematiques\\ CP 8888 Succ. Centre-Ville\\
Montr\'eal, Qu\'ebec, H3C 3P8\\ Canada}
\email{hohlweg.christophe@uqam.ca}
\urladdr{http://hohlweg.math.uqam.ca}
\thanks{$^\diamond$supported by NSERC Discovery grant {\em Coxeter groups and related structures}. A significant part of this work was done while CH was on sabbatical leave at Institut de Recherche Math\'ematique Avanc\'ee (IRMA), Universit\'e de Strasbourg, from January 2013 to June 2014.}
\author[M.~Dyer]{Matthew Dyer}
\address{Department of Mathematics 
\\ 255 Hurley Building\\ University of Notre Dame \\
Notre Dame, Indiana 46556, U.S.A.}
\email{dyer.1@nd.edu}
\keywords{Root systems, Coxeter groups,  Garside families, inversion sets, low elements, weak order, Bruhat order, root poset, dominance order, small roots}
\subjclass[2010]{Primary 20F55; secondary 17B22; 05E15; 06F99}
\title[Small roots and low elements in Coxeter groups]{Small roots, low elements, and the weak order in Coxeter groups}
\begin{document}

\begin{abstract}   In this article we provide a  new finite class of elements in any Coxeter system $(W,S)$ called {\em low elements}. They are  defined from Brink and Howlett's small roots, which are strongly linked to the automatic structure of $(W,S)$. Our first main result is to show that they form a Garside shadow in $(W,S)$, i.e., they contain $S$ and are  closed under join (for the right weak order) and by taking suffixes. These low elements are the key to prove that all   finitely generated Artin-Tits groups have a {\em finite} Garside family. This result was announced in a note with P.~Dehornoy in {\em Comptes rendus math\'ematiques}~\cite{DDH14} in which the present article was referred to under the following working title: {\em Monotonicity of dominance-depth on root systems and applications}.

The proof is based on a fundamental property enjoyed by small roots and which is our second main result; the set of small root is {\em bipodal}.

For a natural number $n$, we define similarly $n$-low elements from $n$-small roots and conjecture that the set of $n$-small roots is bipodal, implying the set of $n$-low elements is a Garside shadow; we prove this conjecture for  affine Coxeter groups and Coxeter groups whose graph is labelled by $3$ and $\infty$. To prove the latter, we extend the root poset on positive roots to a {\em weak order on the root system} and  define a {\em Bruhat order on the root system}, and  study the paths in those orders in order to establish a criterion to prove bipodality involving only finite dihedral reflection subgroups.
\end{abstract}

\date{\today}


 \maketitle

\section{Introduction}

In this article, we introduce and investigate the notion of a Garside shadow in a Coxeter system $(W,S)$:  a {\em Garside shadow in $(W,S)$} is a  subset of $W$ that contains~$S$  and closed under join (taken in the right weak order) and suffix. For instance $W$ itself is a Garside shadow;  see~\S\ref{sse:Garside}. The notion of Garside shadow  is analogous to the notion of a Garside family in a monoid~\cite{DDM13,Gars15}. 

We prove the existence of a {\em finite} Garside shadow in every Coxeter group by introducing the notion of a low element in $(W,S)$ and proving that the finite family of low elements is a Garside shadow.

An element in $w\in W$ is {\em low} if its (left) inversion set $N(w)$ --- the set of positive roots that are sent to negative roots under $w^{-1}$ --- is the conic hull of some {\em small roots}; see \S\ref{se:Low} for a precise definition of these notions. Small roots were introduced by B.~Brink and B.~Howlett~\cite{BrHo93} in their work on the regularity of the language of reduced words in $W$, in which they show that the set of small roots is finite. We state now our first main result. 

\begin{thm}\label{thm:Main1} For any Coxeter system $(W,S)$ with $S$ finite, the set of low elements of $W$ is a Garside shadow in $(W,S)$. 
\end{thm}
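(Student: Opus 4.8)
The plan is to verify directly the three defining properties of a Garside shadow for the set $L$ of low elements: that $S\subseteq L$, that $L$ is closed under join for the right weak order, and that $L$ is closed under taking suffixes. Throughout I would work with the reformulation of lowness in terms of inversion cones: writing $\Sigma$ for the (finite) set of small roots and $\Sigma(w)=N(w)\cap\Sigma$ for the small roots that are inversions of $w$, the element $w$ is low exactly when $N(w)=\cone(\Sigma(w))\cap\Phi$. One inclusion is automatic, since $N(w)$ is a closed (indeed biclosed) set of positive roots containing $\Sigma(w)$, so $\cone(\Sigma(w))\cap\Phi\subseteq N(w)$ always; lowness is the reverse inclusion, namely that every inversion of $w$ is a nonnegative combination of small inversions. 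The containment $S\subseteq L$ is then immediate: for $s\in S$ one has $N(s)=\{\alpha_s\}$, and $\alpha_s$ is a small root, so $N(s)=\cone(\{\alpha_s\})\cap\Phi$.

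For closure under join, let $u,v\in L$ admit a common upper bound, so that $z=u\vee v$ exists, and recall that $N(z)$ is the smallest biclosed set of positive roots containing $N(u)\cup N(v)$. The subtle point is that this biclosure is genuinely larger than the mere conic closure: enforcing that the complement be closed can force new positive roots into $N(z)$, and a priori these need not lie in $\cone(\Sigma(u)\cup\Sigma(v))$. So the heart of the argument is to show that every root produced by the biclosure operation is still a nonnegative combination of small roots lying in $N(z)$, which yields $N(z)\subseteq\cone(\Sigma(z))\cap\Phi$ and hence $z\in L$. This is exactly where I would invoke the second main result, that $\Sigma$ is \emph{bipodal}: whenever the biclosure must adjoin a root $\gamma$, that $\gamma$ sits inside a finite dihedral reflection subsystem spanned by two roots already present, and bipodality should guarantee that the two extreme roots of the relevant cone are themselves small and belong to $N(z)$, thereby expressing $\gamma$ conically through small inversions of $z$.

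For closure under suffix, suppose $w\in L$ and let $v$ be a suffix, so that $w=uv$ is reduced and $N(v)=u^{-1}\bigl(N(w)\setminus N(u)\bigr)$, each $\gamma\in N(w)\setminus N(u)$ satisfying $u^{-1}\gamma\in\Phi^{+}$. Applying $u^{-1}$ to the expression of an inversion $\gamma$ of $w$ as a nonnegative combination of small inversions produces a nonnegative combination of the roots $u^{-1}\delta$, $\delta\in\Sigma(w)$; the difficulty is that $u^{-1}$ need not preserve smallness. I would again use bipodality, together with the monotonicity of depth along the dominance order, to control which small inversions survive: the goal is to show that the small roots generating $N(v)$ are exactly the images of an appropriate subfamily of $\Sigma(w)$ and remain small, so that $N(v)=\cone(\Sigma(v))\cap\Phi$ and $v\in L$.

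I expect the main obstacle to be the join step, and more precisely the passage from the local, rank-two statement to the global control of the biclosure. The reduction of bipodality to finite dihedral reflection subgroups is the decisive simplification, but transferring it into a statement about arbitrary biclosed inversion sets, and verifying that no root introduced to restore coclosedness escapes the conic hull of the small inversions of $z$, is where the real work lies. The suffix step is of the same nature but lighter, since there the inversion set only shrinks and is transported by a single group element.
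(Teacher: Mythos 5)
Your proposal misallocates the difficulty, and this produces a genuine gap in the join step. You worry that the biclosure of $N(u)\cup N(v)$ "is genuinely larger than the mere conic closure" and plan to tame it with bipodality; but by Dyer's theorem, cited in the paper as Proposition~\ref{prop:Weak}(4), for bounded $X\subseteq W$ one has $N\left(\bigvee X\right)=\cone_\Phi\left(\bigcup_{x\in X}N(x)\right)$: the inversion set of the join is \emph{exactly} the set of roots in the conic hull of the union, with no extra roots adjoined. Closure of the low elements under join is then formal (this is Proposition~\ref{prop:Low} in the paper): if $N(u)=\cone_\Phi(\Sigma_0(u))$ and $N(v)=\cone_\Phi(\Sigma_0(v))$ with $\Sigma_0(w):=N(w)\cap\Sigma$, then $N(u\vee v)=\cone_\Phi(\Sigma_0(u)\cup\Sigma_0(v))$ and $\Sigma_0(u)\cup\Sigma_0(v)\subseteq\Sigma_0(u\vee v)$, so $u\vee v$ is low. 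Your alternative plan is not a proof: bipodality is a statement about maximal dihedral root subsystems, and you give no argument that whatever closure mechanism you imagine is exhausted by rank-two configurations; without Proposition~\ref{prop:Weak}(4) the step simply remains open.

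Conversely, the suffix step, which you call "lighter", is where the paper's real work lies, and your sketch would fail as written. Your stated goal --- that the small roots generating $N(v)$ are "exactly the images of an appropriate subfamily of $\Sigma(w)$" --- is false in general. Reducing to a single letter, $N(sx)=s(N(x)\setminus\{\alpha_s\})$, but the extreme rays of $\cone(N(sx))$ are \emph{not} all $s$-images of extreme rays of $\cone(N(x))$: by Theorem~\ref{thm:BaseSuff}, proved via a Bruhat-order analysis in maximal dihedral reflection subgroups, one has $N^1(sx)\subseteq s\left(N^1(x)\setminus\{\alpha_s\}\right)\cup f_s\left(N^1(x)\setminus\{\alpha_s\}\right)$, where $f_s(\beta)$ is the second canonical simple root of the maximal dihedral reflection subgroup containing $s$ and $s_\beta$. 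These roots $f_s(\beta)$ are genuinely new generators, and it is precisely (and only) for them that bipodality of $\Sigma$ (Theorem~\ref{thm:Main3}) is invoked: $\beta\in\Sigma$ is a positive non-simple root of that dihedral subsystem, so its canonical simple roots, including $f_s(\beta)$, are small. For the other generators $s(\beta)$ with $\beta\in N^1(x)$ small, smallness is preserved not by bipodality but because $B(\alpha_s,\beta)>-1$ whenever $\alpha_s,\beta$ lie in a common inversion set (Propositions~\ref{prop:NSmall} and~\ref{prop:FiniteRefl}). Your sketch contains neither the reduction to maximal proper suffixes, nor the extreme-ray description, nor the map $f_s$; appealing to "monotonicity of depth along the dominance order" does not substitute for any of these, so the suffix closure is not established.
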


Since the intersection of a family of Garside shadows is a Garside shadow (c.f. Proposition~\ref{prop:Gars}), there is a smallest Garside shadow in $(W,S)$  and the next  result follows by Theorem~\ref{thm:Main1}.

\begin{cor}\label{cor:Main1} For any Coxeter system $(W,S)$ with $S$ finite, the smallest Garside shadow in $(W,S)$ is finite.
\end{cor}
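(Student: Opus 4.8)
The plan is to deduce the corollary directly from Theorem~\ref{thm:Main1} together with the finiteness of the family of low elements, so the only real work is to supply the two ingredients that the preceding remark takes for granted. First I would make precise the existence of a smallest Garside shadow. By Proposition~\ref{prop:Gars} the intersection of any family of Garside shadows is again a Garside shadow; applying this to the family of \emph{all} Garside shadows in $(W,S)$ yields a Garside shadow $G_0$ that is contained in every Garside shadow, hence is the unique smallest one. Note this step uses $S$ finite only insofar as it guarantees $W$ itself is a Garside shadow, so the family is nonempty and the intersection is taken over a genuine collection.

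The substantive point is then to verify that the set $L$ of low elements is finite. By definition, a low element $w$ has inversion set $N(w)$ equal to the conic hull of some set of small roots. Since Brink and Howlett proved that the set of small roots is finite, there are only finitely many subsets of small roots, and therefore only finitely many sets arising as such conic hulls. Because the map $w \mapsto N(w)$ is injective on $W$ --- an element of a Coxeter group is determined by its inversion set --- distinct low elements have distinct inversion sets drawn from this finite list, so $L$ is finite.

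Finally I would combine the two steps: by Theorem~\ref{thm:Main1} the set $L$ is a Garside shadow, so minimality of $G_0$ gives $G_0 \subseteq L$, and a subset of a finite set is finite. This proves that the smallest Garside shadow in $(W,S)$ is finite. I do not anticipate a serious obstacle, as all the depth is carried by Theorem~\ref{thm:Main1} and Proposition~\ref{prop:Gars}; the only place demanding care is the finiteness of $L$, which rests squarely on the finiteness of the small roots and on the injectivity of the inversion-set map.
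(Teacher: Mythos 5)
Your proposal is correct and follows the paper's own route exactly: existence of the smallest Garside shadow via the intersection property of Proposition~\ref{prop:Gars}, containment in the set of low elements via Theorem~\ref{thm:Main1}, and finiteness of the low elements exactly as in Proposition~\ref{prop:Low}, namely by the injectivity of $w\mapsto N(w)$ together with the Brink--Howlett finiteness of the set of small roots (so that $N(w)=\cone_\Phi(\Sigma_0(w))$ ranges over finitely many possibilities). Nothing is missing; your spelled-out finiteness argument is just the content of Proposition~\ref{prop:Low}(2)--(3), which the paper cites rather than repeats.
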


\smallskip

The initial motivation for the paper was a question by P.~Dehornoy in the context of the associated finitely generated Artin-Tits  group $G$: {\em is there a finite Garside family in $G$?} Finite Garside families are important  as they provide normal forms with nice properties and are potentially linked to the problem of decidability of the Word problem and the Conjugacy Problem in finitely generated  Artin-Tits braid group;  see~\cite[Questions~26~and~27]{Gars15} or \cite{DDM13} for more details.    In particular, they provide an affirmative answer to the problem of decidability of the conjugacy problem in the Artin-Tits monoid.  P.\,Dehornoy not only provided the initial translation of the problem in Artin-Tits braid groups into a problem about Coxeter groups, but he also provided many partial results, including examples in affine and right angled cases, which were essential for the work described here. Our Theorem~\ref{thm:Main1} (and Corollary~\ref{cor:Main1}) answers Dehornoy's question in the positive. Indeed, $\sigma$ is the canonical lifting of a Coxeter group $W$ into the associated Artin-Tits monoid $M$, then $A$ is a Garside shadow in $(W,S)$ if, and only if, $\sigma(A)$ is a Garside family in $M$; see \cite[\S3]{DDH14}. Therefore, the smallest Garside family is the copy in $M$ of the smallest Garside shadow in $(W,S)$. Theorem~\ref{thm:Main1}, for which only a sketch of the proof was given in~\cite{DDH14}, appears as~\cite[Theorem~1.2]{DDH14}. 

\smallskip

The proof that the set of low elements is finite and closed under join is given in Proposition~\ref{prop:Low}. The difficult part of the proof lies in the stability by taking suffixes of the set of low elements. In order to lift up this difficulty we proceed as follows in \S\ref{se:Suffix}.  On the one hand,  we provide for a prefix $w'$ of $w\in W$ a complete description involving the {\em Bruhat order}  of the rays of the cone over the inversion set of $w'$ in function of the rays of the inversion set of $w$; see \S\ref{sse:InvSuff}. On the other hand, we describe a fundamental property enjoyed by small roots and which is our second main result; the set of small root is {\em bipodal}, see Theorem~\ref{thm:Main3}. We conclude then the proof of Theorem~\ref{thm:Main1} in \S\ref{sse:ProofMain1}.
 
\smallskip
 
Along the way, in \S\ref{sse:Low}, we discuss the question of the existence of an infinite filtration of $W$ by finite Garside subsets of $W$ called the $n$-low elements, which would provide, using the dictionary in~\cite{DDH14}, an infinite filtration of {\em bounded Garside families in Artin-Tits groups}.  More precisely, the set of positive roots may be ranked by the {\em dominance order}, which was introduced by B.~Brink and B.~Howlett together with small roots in~\cite{BrHo93}:  for any positive root $\beta$ there is a nonnegative integer $n\in\mathbb N$ such that  $\beta$  strictly dominates exactly $n$ positive roots; we call $n$ the {\em dominance depth} of $\beta$. Define the {\em $n$-small roots} to be those of dominance depth at most $n$; see \S\ref{sse:Small}. With this definition, the small roots are precisely the $0$-small roots.  X.~Fu~\cite{Fu12} and the first author (unpublished, see~\cite{Ed09}) show that the set of $n$-small roots is always finite for any $n\in\mathbb N$. Extending the definition of low elements, we call {\em $n$-low elements} the elements in $W$ whose left inversion sets are the conic hull of some $n$-small roots. We prove in Proposition~\ref{prop:Low} that the set of $n$-low elements is finite and closed under join (taken in the right weak order). We conjecture that they are also stable  under suffix.

\begin{conject}\label{conj:1}   If  $n\in\mathbb N$, the set of $n$-low elements is a finite Garside shadow in~$(W,S)$. 
 \end{conject}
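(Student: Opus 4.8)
The plan is to imitate the proof of Theorem~\ref{thm:Main1} (the case $n=0$) and to isolate precisely the one ingredient that does not yet carry over. By Proposition~\ref{prop:Low} the set of $n$-low elements is already known to be finite and closed under join in the right weak order, so the entire content of Conjecture~\ref{conj:1} lies in stability under suffix. I would therefore concentrate all effort there, and attempt to show that suffix-closure of the $n$-low elements follows formally from a single geometric property of the $n$-small roots, exactly as suffix-closure of the low elements followed from bipodality of the small roots.

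The key reduction is to bipodality. The mechanism relating inversion cones of an element to those of its prefixes (\S\ref{sse:InvSuff}) is uniform in $n$: given $w$ and a prefix $w'$, the rays of $\cone(N(w'))$ are described, via the Bruhat order, in terms of the rays of $\cone(N(w))$, and this description makes no reference to the dominance depth. Hence if $w$ is $n$-low, so that $\cone(N(w))$ is generated by $n$-small roots, the same combinatorial description produces generators for $\cone(N(w'))$, and what must be established is precisely that these generators are again $n$-small. Bipodality of the set of $n$-small roots is exactly the property designed to guarantee this closure. I therefore expect the implication ``$\{n\text{-small roots}\}$ bipodal $\Longrightarrow$ $\{n\text{-low elements}\}$ is a Garside shadow'' to go through essentially verbatim from the argument of \S\ref{se:Suffix} that concludes Theorem~\ref{thm:Main1}, substituting the bipodality of $n$-small roots for Theorem~\ref{thm:Main3}. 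Thus the whole conjecture is reduced to the statement that the set of $n$-small roots is bipodal.

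The genuine difficulty, and the main obstacle, is therefore proving bipodality of the $n$-small roots, which remains open in full generality. The strategy I would pursue is to establish a criterion that reduces bipodality to the behaviour of \emph{finite dihedral reflection subgroups} only, obtained by extending the root poset to a weak order on the root system, introducing a Bruhat order on the root system, and analysing paths in these two orders. Once such a dihedral criterion is available, two families become tractable. For affine Coxeter groups one can exploit the explicit geometry of the affine root system, where the dominance order and dominance depth are governed by the linear parts of the roots, to verify the criterion directly. For Coxeter groups whose diagram is labelled only by $3$ and $\infty$, every rank-two standard parabolic is dihedral of type handled by the criterion, so bipodality can be checked case by case on these finite dihedral subgroups. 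In both cases the reduction of the previous paragraph then yields that the $n$-low elements form a finite Garside shadow, proving Conjecture~\ref{conj:1} in these settings; the obstruction to the general case is precisely the verification of bipodality for arbitrary infinite, non-affine dihedral and higher-rank configurations, for which the controlled geometry underpinning the affine and $\{3,\infty\}$ arguments is no longer available.
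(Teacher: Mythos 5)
Your proposal follows essentially the same route as the paper: Proposition~\ref{prop:Low} supplies finiteness and join-closure, Proposition~\ref{prop:BipNSmall} (uniform in $n$, via Theorem~\ref{thm:BaseSuff} and Corollary~\ref{cor:SuffMain}) reduces suffix-closure to bipodality of $\Sigma_n$ (Conjecture~\ref{conj:3}), and the paper likewise settles only the affine and $\{3,\infty\}$-labelled cases, the latter exactly through the weak and Bruhat orders on $\Phi$ and the dihedral criterion of Proposition~\ref{prop:Increase} and Corollary~\ref{cor:Increase}, with the general conjecture left open as you correctly acknowledge. The one minor inaccuracy is that the affine case (Theorem~\ref{thm:AffineBip}) is proved by a direct computation of $\dep_\infty$ in the crystallographic affine root system rather than by invoking the dihedral criterion, but this does not affect the substance of your reduction.
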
 
  
 This conjecture would be true if, for instance, the set of $n$-low elements is bipodal (Conjecture~\ref{conj:3}): this is discussed in~\S\ref{se:BruhatRoot}. We show that Conjecture~\ref{conj:1} is true if $(W,S)$ is finite, is a dihedral Coxeter system, is an affine Coxeter system (Theorem~\ref{thm:AffineBip}), if $n=0$  (Theorem~\ref{thm:Main1}) or by the following last  main result.

\begin{thm}\label{thm:Main2} Let $n\in\mathbb N$ and assume $S$ finite. Suppose that  every entry of the Coxeter matrix of $(W,S)$ is either $1$, $2$, $3$ or $\infty$. Then   the set of $n$-low elements of $W$ is a finite  Garside shadow in $(W,S)$.
\end{thm}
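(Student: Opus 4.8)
The plan is to reduce Theorem~\ref{thm:Main2} to the bipodality criterion. By Conjecture~\ref{conj:1}'s reduction (discussed in \S\ref{se:BruhatRoot}), it suffices to prove that the set of $n$-small roots is bipodal under the hypothesis that every Coxeter matrix entry lies in $\{1,2,3,\infty\}$. Bipodality, roughly, is the property that controls how inversion sets behave under passage to suffixes, so establishing it immediately yields stability under suffix, and hence the Garside shadow property, since finiteness and closure under join are already given by Proposition~\ref{prop:Low}. Thus the entire theorem hinges on verifying bipodality for this restricted class of Coxeter systems.

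\smallskip

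To attack bipodality directly I would use the machinery announced in the abstract, namely the \emph{weak order on the root system} extending the root poset and the \emph{Bruhat order on the root system}. The key idea is that bipodality is a statement about certain pairs of small roots, and one can test it by examining paths in these orders. I expect the paper establishes a criterion (to be proved earlier in the suffix section) reducing bipodality to a local condition involving only finite dihedral reflection subgroups; this is precisely the phrase in the abstract, ``a criterion to prove bipodality involving only finite dihedral reflection subgroups.'' So the first real step is to invoke that criterion, reducing the global bipodality statement to checking it rank by rank along saturated chains in the Bruhat order on roots, where consecutive roots differ by a reflection and the relevant computation takes place inside a rank-two parabolic (dihedral) subsystem.

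\smallskip

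The second step is the dihedral case analysis, and here the hypothesis $m_{st}\in\{1,2,3,\infty\}$ is decisive. In any rank-two reflection subgroup generated by reflections in two roots, the edge label is the order $m$ of the product of the two reflections. Under our hypothesis each such $m$ is either $2$ (orthogonal, commuting), $3$ (the $A_2$ dihedral group), or $\infty$ (the free-product/infinite dihedral case); the bounded finite dihedral groups of order $8,12,\dots$ never appear. I would verify bipodality explicitly in each of these three dihedral situations. The finite cases $m=2,3$ reduce to small finite computations that the criterion makes possible, and the $m=\infty$ case is governed by the universal (affine or free) dihedral geometry, where small roots and dominance depth are completely understood. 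Having checked these three cases, the local criterion propagates the bipodality back to the full root system, completing the proof that the $n$-small roots are bipodal and therefore that the $n$-low elements form a finite Garside shadow.

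\smallskip

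The main obstacle I anticipate is not the dihedral computations themselves but establishing the reduction criterion that lets one pass from the global bipodality condition to the purely dihedral one. Bipodality concerns the geometry of conic hulls of (possibly many) small roots, and a priori involves interactions among arbitrarily many roots spread across the inversion set; the content of the argument is showing that these interactions are \emph{detected} by rank-two subsystems alone. This is where the weak and Bruhat orders on the root system do the work: one must show that any potential failure of bipodality can be traced along a path in one of these orders to a failure already visible inside a finite dihedral subgroup, and that the hypothesis on Coxeter matrix entries guarantees every such dihedral subgroup is one of the three harmless types. Controlling these paths—ensuring that the relevant chains exist and that the label restriction $\{1,2,3,\infty\}$ is exactly what rules out the problematic bounded dihedral subsystems—is the crux, and I expect it to consume the bulk of the argument.
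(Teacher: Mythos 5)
Your overall architecture does match the paper's: reduce Theorem~\ref{thm:Main2} to bipodality of $\Sigma_n$ via Proposition~\ref{prop:BipNSmall}, establish a monotonicity criterion through the weak and Bruhat orders on $\Phi$ (this is Proposition~\ref{prop:Increase} and Corollary~\ref{cor:Increase}, which indeed reduces everything to \emph{finite} maximal dihedral reflection subgroups, the infinite ones being handled unconditionally by strict monotonicity of $\dep_\infty$ along $\ell_{W'}$), and then exploit the label restriction in the finite dihedral case. However, there is a genuine gap at the decisive local step. You assert that in any rank-two reflection subgroup ``the edge label is the order $m$ of the product of the two reflections'' and that the hypothesis forces $m\in\{2,3,\infty\}$. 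The canonical simple roots $\alpha,\beta$ of a maximal dihedral reflection subgroup are in general \emph{not} simple roots of $\Phi$, and reflection subgroups are not parabolic, so the order of $s_\alpha s_\beta$ is not an entry of the Coxeter matrix; your trichotomy needs proof. The paper supplies it via Tits' theorem that a finite subgroup of $W$ is conjugate to a reflection subgroup of a finite standard parabolic subgroup, together with a rank-preservation refinement (parabolic closure), concluding that any finite dihedral reflection subgroup is conjugate into a finite standard parabolic of rank $2$, hence of order $4$ or $6$ under the hypothesis. Without that conjugation argument, dihedral reflection subgroups of order $8,10,12,\dots$ are not ruled out by inspection of the Coxeter matrix.

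The second problem is your claim that the finite cases $m=2,3$ ``reduce to small finite computations.'' They cannot, as stated: $\dep_\infty$ is a global invariant of the ambient root system, not an intrinsic invariant of $\Phi_{W'}$, so no computation internal to the dihedral subsystem can verify that $\alpha,\beta\in\Sigma_n$ (Remark~\ref{rem:balance}(b) illustrates this: in type $\tilde G_2$ the $\infty$-depth pattern along a finite dihedral subsystem is not even monotone, and $\Sigma$ fails to be balanced there). Indeed the paper explicitly notes that the inequality $\dep_\infty(s_\beta(\alpha))\geq\dep_\infty(s_\beta)$, which would settle the general finite dihedral case, is \emph{not known to hold}. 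The hypothesis rescues the argument in a different way than you propose: the order-$4$ case is vacuous (if $B(\alpha,\beta)=0$ then $\Phi^+_{W'}\setminus\Delta_{W'}=\emptyset$), and in the order-$6$ case one has $B(\alpha,\beta)=-\tfrac12$ and the \emph{only} non-simple positive root is $\gamma=\alpha+\beta=s_\alpha(\beta)=s_\beta(\alpha)$, whereupon the global monotonicity result (Lemma~\ref{1.9}(2) with $X=[1,\infty)$, packaged as Proposition~\ref{prop:Increase}(2)) gives $\dep_\infty(\alpha)\leq\dep_\infty(s_\beta(\alpha))=\dep_\infty(\gamma)\leq n$ and likewise for $\beta$. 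So the labels $\{1,2,3,\infty\}$ do not merely make the dihedral check finite; they collapse the finite dihedral configuration to the single root $\alpha+\beta$, which is exactly what allows the monotonicity machinery to apply. Your plan, as written, would stall at both of these points.
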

 
This theorem is restated  as Theorem~\ref{thm:W3Infty} and \S\ref{se:BruhatRoot} is dedicated 
to its proof.  There,  we  extend the {\em root poset on positive roots} (see~\cite[Chapter 4]
{BjBr05}) to a {\em weak order on the root system} and  define a {\em Bruhat order on the root 
system}, and  study the paths in those orders in order to establish a criterion to prove 
bipodality involving only finite dihedral reflection subgroups of $(W,S)$  
(Proposition~\ref{prop:Increase} and Corollary~\ref{cor:Increase}). The proof 
resolves a conjecture raised in \cite{Ed09} that the set of  $n$-small roots is {\em balanced} 
for all $n$, which may be phrased as a monotonicity property of the dominance depth for 
positive roots in any maximal  dihedral reflection subgroup. We  prove that this monotonicity 
property fails in general for   finite dihedral reflection subgroups but is true in a stronger form 
for infinite dihedral reflection subgroups. The key part of the argument applies to  a more general family of length 
functions on positive roots, with  both  the  standard  depth on positive roots  and the dominance depth  on roots as special 
cases,  to show they are monotonic non-decreasing in the Bruhat order  on   roots. 

\section{Weak order and Garside shadows}\label{se:Garside}

Fix $(W,S)$ a Coxeter system with length function $\ell:W\to\mathbb N$.  The {\em rank} of $W$ is the cardinality of $S$. The {\em standard parabolic subgroup $W_I$} is the subgroup of $W$ generated by $I\subseteq S$. It is well-known that $(W_I,I)$ is itself a Coxeter system and that the length function $\ell_I:W_I\to\mathbb N$ is the restriction of $\ell$ to $W_I$.   Moreover, $W_I$ is finite if and only if it  contains   a {\em longest element}, which is then unique, denoted by $w_{\circ,I}$. In~\S\ref{sse:RefSub}, we discuss more general facts about reflection subgroups for which standard parabolic subgroups are an instance. We refer the reader to \cite{Hu90,BjBr05} for general definitions and properties of Coxeter groups.  

\subsection{Weak order and reduced words} We say that $s_1\dots s_k$ $(s_i\in S)$ is a {\em reduced word for $w\in W$} if $w=s_1\dots s_k$ and $k=\ell(w)$. For $u,v,w\in W$, we adopt the following terminology: 

\begin{itemize}
\item {\em $w=uv$ is reduced} if $\ell(w)=\ell(u)+\ell(v)$; 
\item {\em $u$ is a prefix of $w$} if a reduced word for $u$ is a prefix of a reduced word for~$w$;
\item {\em $v$ is a suffix of $w$} if a reduced word for $v$ is a suffix of a reduced word for~$w$.
\end{itemize}
Observe that if $w=uv$ is reduced then the concatenation of any reduced word for~$u$ with any reduced word for~$v$ is a reduced word for $w$; so in this case $u$ is a prefix of $w$ and $v$ is a suffix of $w$.

The {\em (right) weak order}  is the order on~$W$ defined by $u\leq_Rv$ if $u$ is a prefix of $v$. Since we only consider the right weak order in this article, we only use from now on the term {\em weak order}. The weak order gives a natural orientation of the Cayley graph of  $(W,S)$: we orient an edge $w\to ws$ if $w\in W$ and $s\in S$ such that $w\leq_Rws$.  Moreover, Bj\"orner~\cite[Theorem~8]{Bj84} shows that the poset $(W,\leq_R)$ is a complete meet semilattice: for any $A\subseteq W$, there exists an infimum $\bigwedge A\in W$, also called the {\em meet} of $A$, see~\cite[Chapter~3]{BjBr05} for more details.  A subset $X\subseteq W$ is  {\em bounded in $W$} if there is $g\in W$ such that $x\leq_Rg$ for any $x\in X$. Therefore any bounded subset $X\subseteq W$ admits a least upper bound $\bigvee X$ called {\em the join of $X$}: 
$$
\bigvee X=\bigwedge\{g\in W\,|\, x\leq_R  g,\, \forall x\in X\}.
$$ 
The example of the infinite dihedral group is illustrated in Figure~\ref{fig:InfiniteDi}.
When $W$ is finite, any element $w\in W$ is a prefix of the  longest element~$w_\circ$. So in this case $W$ itself is bounded and  $(W,\leq_R)$ turns out to be a complete ortholattice, see for instance~\cite[Corollary 3.2.2]{BjBr05}.

\subsection{Garside shadows}\label{sse:Garside}  

\smallskip
We are now able  to discuss  the notion of Garside shadows in a Coxeter system mentioned in the introduction. 

\begin{defi}\label{def:Garside} A {\em Garside shadow in $(W,S)$} is a subset $A$ of $W$ containing $S$ such that:
\begin{enumerate}[(i)]
\item $A$ is closed under join in the weak order: if $X\subseteq A$ is bounded, then $\bigvee X\in A$;
\item $A$ is closed under suffix: if $w\in A$, then any suffix of $w$ is also in $A$.
\end{enumerate}
\end{defi}

The definition of a Garside shadow extends naturally to the standard parabolic subgroup $W_I$ generated by $I\subseteq S$: a Garside shadow $A$ of $(W_I,I)$ is a subset of $W_I$ containing $I$ and verifying Conditions~(i)--(ii) above.   Moreover, since a bounded~$X$ is necessarily finite, Condition~(i) above is equivalent to the following condition:
\begin{enumerate}[(i')]
\item if $u,v\in A$ and $u\vee v$ exists then $u\vee v\in A$.
\end{enumerate}

\begin{prop}\label{prop:Gars}
\begin{enumerate}
\item  The intersection of a family of Garside shadows in $(W,S)$ is a Garside shadow in $(W,S)$. In particular, for any $X\subseteq W$ there is a smallest Garside shadow $\Gar_S(X)$ of $(W,S)$ that contains $X$.
\item If $I\subseteq S$ and $X\subseteq W_I$, then $\Gar_I(X)\subseteq \Gar_S(X)$.
\item If $I\subseteq S$ is spherical, that is $W_I$ is finite, then $\Gar_I(I)=W_I$ is the unique Garside shadow in $(W_I,I)$. 
\end{enumerate}
\end{prop}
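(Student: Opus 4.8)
The plan is to treat the three assertions in turn; the first and third are essentially formal once the right structural facts about $(W,\leq_R)$ and $W_I$ are in place, whereas the second rests on a compatibility lemma between joins computed in $W$ and in $W_I$. For (1), let $(A_j)_{j\in J}$ be a family of Garside shadows and put $A=\bigcap_{j}A_j$. I would simply verify that the conditions of Definition~\ref{def:Garside} pass to intersections: each $A_j\supseteq S$ gives $A\supseteq S$; if $X\subseteq A$ is bounded then $X\subseteq A_j$ for every $j$, so $\bigvee X\in A_j$ for every $j$ and hence $\bigvee X\in A$; and if $w\in A$ then every suffix of $w$ lies in each $A_j$, hence in $A$. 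For the existence of the smallest shadow, I would note that $W$ is itself a Garside shadow containing $X$, so the family of Garside shadows containing $X$ is nonempty and its intersection is, by what precedes, the smallest Garside shadow $\Gar_S(X)$ containing $X$.

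For (2), the plan is to show that $B:=\Gar_S(X)\cap W_I$ is a Garside shadow of $(W_I,I)$ containing $X$, after which minimality of $\Gar_I(X)$ yields $\Gar_I(X)\subseteq B\subseteq\Gar_S(X)$. That $B$ contains $I$ and $X$ is immediate, since $\Gar_S(X)\supseteq S\cup X$ and $I\cup X\subseteq W_I$; closure under suffix is clear because a suffix in $W_I$ of an element $w\in W_I$ is in particular a suffix of $w$ in $W$, hence lies in $\Gar_S(X)\cap W_I$. The only delicate point is closure under join, for which I would isolate the following \emph{compatibility lemma}: if $u,v\in W_I$ are bounded above in $W_I$, then the join $u\vee_W v$ computed in $W$ already lies in $W_I$ and equals the join $u\vee_{W_I}v$ computed in $(W_I,\leq_R)$. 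Granting this, if $u,v\in B$ and $u\vee_{W_I}v$ exists, then $u\vee_{W_I}v=u\vee_W v\in\Gar_S(X)$ because $\Gar_S(X)$ is join-closed (condition~(i')), so $u\vee_{W_I}v\in B$.

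I expect the compatibility lemma to be the main obstacle. The key input is that $W_I$ is a \emph{lower set} for $\leq_R$: any prefix of an element of $W_I$ again lies in $W_I$. This holds because the set of generators occurring in a reduced word of an element is an invariant of the element (reduced words are linked by braid moves, which preserve $\supp$), so every reduced word of an element of $W_I$ uses only letters of $I$, and hence so does every prefix. With this in hand, a bound $g\in W_I$ for $\{u,v\}$ in $W_I$ is also a bound in $W$, so $u\vee_W v\leq_R g$; since $W_I$ is a lower set this forces $u\vee_W v\in W_I$. Being then an upper bound of $u$ and $v$ inside $W_I$, it satisfies $u\vee_{W_I}v\leq_R u\vee_W v$, while conversely $u\vee_W v\leq_R u\vee_{W_I}v$ because $u\vee_{W_I}v$ is an upper bound of $\{u,v\}$ in $W$ and $u\vee_W v$ is the least such; equality follows.

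For (3), since $W_I$ is finite it is bounded by its longest element $w_{\circ,I}$, and $(W_I,\leq_R)$ is a complete lattice. I would first compute $\bigvee I=w_{\circ,I}$: any upper bound $g\in W_I$ of $I$ has every $s\in I$ as a prefix, i.e.\ $\ell(sg)<\ell(g)$ for all $s\in I$, so its left descent set is all of $I$, which forces $g=w_{\circ,I}$, the unique element of $W_I$ with full left descent set. Hence any Garside shadow $A$ of $(W_I,I)$ contains $I$ and, being join-closed, contains $\bigvee I=w_{\circ,I}$. Finally every $w\in W_I$ is a suffix of $w_{\circ,I}$, since $\ell(w_{\circ,I}w^{-1})+\ell(w)=\ell(w_{\circ,I})$ by the standard length property of the longest element, so that $w_{\circ,I}=(w_{\circ,I}w^{-1})\,w$ is reduced. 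Closure of $A$ under suffix then gives $A=W_I$; thus $W_I$ is the unique Garside shadow of $(W_I,I)$, and in particular $\Gar_I(I)=W_I$.
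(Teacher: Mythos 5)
Your proof is correct and follows essentially the same route as the paper: the paper treats (1) and (2) as immediate from Definition~\ref{def:Garside} (the key point for (2) --- that $\Gar_S(X)\cap W_I$ is a Garside shadow in $(W_I,I)$, resting on the compatibility of joins in $W_I$ and $W$ --- is exactly what the paper sketches in Remark~\ref{rem:SmallG}(c) using Proposition~\ref{prop:Weak}), and it proves (3) precisely as you do, via $\bigvee I=w_{\circ,I}$ together with suffix-closure applied to the longest element. Your compatibility lemma, argued through the lower-set property of $W_I$ and invariance of support under braid moves, is a correct fleshing-out of the detail the paper leaves to the reader.
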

\begin{proof} The two first statements follow easily from Definition~\ref{def:Garside}. For the third statement, observe first that, since the longest element  $w_{\circ,I}$ of $W_I$ is the unique maximal length element, $w_{\circ,I}$ is the unique element of $W_I$ that has all $s\in I$ as prefix. So $\bigvee I=w_{\circ, I}$ and therefore $w_{\circ,I}\in \Gar_I(I)$ by Condition~(i).  Moreover it is well-known that any element of $W_I$ is prefix of $w_{\circ,I}$. Hence $W_I\subseteq \Gar_I(I)$ by Condition~(ii), which concludes the proof.
\end{proof}

\begin{ex}\label{ex:Inf1} Let $W$ be the infinite dihedral group $\mathcal D_\infty$ generated by $S=\{s,t\}$ with Coxeter graph:
\begin{center}
\begin{tikzpicture}[sommet/.style={inner sep=2pt,circle,draw=blue!75!black,fill=blue!40,thick}]
	\node[sommet,label=above:$s$] (alpha) at (0,0) {};
	\node[sommet,label=above:$t$] (beta) at (1,0) {} edge[thick] node[auto,swap] {$\infty$} (alpha);
\end{tikzpicture}
\end{center}
Then $S$ is not bounded, see~Figure~\ref{fig:InfiniteDi}. So the join of $s$ and $t$ does not exist. Since the identity $e$ is a suffix of $s$ and $t$ we have $\tilde S=\Gar_S(S)=\{e,s,t\}$. 
\end{ex}

\begin{ex}\label{ex:Univ1} Let $W$ be the {\em universal Coxeter group} generated by $S$; its Coxeter graph is the complete graph whose edges are labelled by $\infty$. Then any subset $\{s,t\}\subseteq S$ of cardinality $2$  generates a standard dihedral parabolic subgroup as in Example~\ref{ex:Inf1} and is therefore not bounded. So $\tilde S=\Gar_S(S)=S\cup\{e\}$. 
\end{ex}

\begin{remark}\label{rem:SmallG}
\begin{enumerate}[(a)]
\item The smallest Garside shadow in $(W,S)$ is $\tilde S=\Gar_S(S)$. The existence of a finite Garside shadow in $(W,S)$ stated in Theorem~\ref{thm:Main1} implies that the smallest  Garside shadow $\tilde S$ is finite, which is Corollary~\ref{cor:Main1}. 

 \item We do not have a nice combinatorial characterization of the smallest Garside shadow in general. The main reason is the absence of a suitable combinatorial characterization of existence of the join in general. We give in Example~\ref{ex:AffineA2}  the construction of the smallest Garside shadow in the case of the affine Coxeter group of type $\tilde A_2$ by using a geometric characterization of the existence of the join. 

\item   Let $u,v\in W_I$, where $W_I$ is the standard parabolic subgroup generated by $I\subseteq S$. It is well-known that any reduced words for $u$ or $v$ have all their letters in $I$.  So any suffix of $u$ or $v$ is again an element of $W_I$.  Moreover, it is not difficult to see, with the help of Proposition~\ref{prop:Weak} below for instance, that the join $u\vee v$ is therefore an element of $W_I$.  We deduce then that if $B$ is a Garside set in $(W,S)$, then $B\cap W_I$ is a Garside set in $(W_I,I)$.

We do not know however if  the notion of Garside shadows closure is stable by restriction: let $I\subseteq S$ and $X\subseteq W_I$, is it true that $\Gar_I(X)=\Gar_S(X)\cap W_I ?$ 
 This question has an affirmative answer for standard parabolic subgroups of rank $2$. 
 
\end{enumerate}
\end{remark}

\subsection{Geometric  representation and root system}\label{sse:GeoRep} We recall here useful facts on root systems and reflection subgroups that will be needed to give an interpretation for the join in the  weak order in \S\ref{sse:GeoWeak}.

A Coxeter system can be seen as a discrete reflection subgroup in some quadratic space $(V,B)$, where $V$ is a real vector space endowed with a symmetric bilinear form $B$. The group of linear maps that preserves $B$ is denoted by $O_B(V)$. The {\em isotropic cone of $(V,B)$} is $Q=\{v\in V\,|\, B(v,v)=0\}$. To any non-isotropic vector $\alpha\in V\setminus Q$,  we associate the $B$-reflection $s_\alpha\in O_B(V)$ defined    by $s_\alpha(v)=v-2 \frac{B(\alpha,v)}{B(\alpha,\alpha)}\alpha$. 

Fix a  {\em geometric representation of $(W,S)$}, i.e., a faithful representation of $W$ as a subgroup of $O_B(V)$ such that $S$ is mapped into a set of $B$-reflections associated to a simple system  $\Delta=\{\alpha_s\,|\,s\in S\}$ ($s=s_{\alpha_s}$). 
Recall that a simple system in $(V,B)$ is a finite subset $\Delta$ in $V$ such that:
\begin{enumerate}[(i)]
 \item  $\Delta$ is positively linearly independent:  if $\sum_{\alpha\in \Delta} a_\alpha\alpha$ with $a_\alpha\geq 0$, then all $a_\alpha=0$;
\item for all $\alpha, \beta \in \Delta$ distinct, 
  $\displaystyle{B(\alpha,\beta) \in \ ]-\infty,-1] \cup
    \{-\cos\left(\frac{\pi}{k}\right), k\in \mathbb N_{\geq 2} \} }$;
\item for all $\alpha \in \Delta$, $B(\alpha,\alpha)=1$.
\end{enumerate}
 Note that, since $\Delta$ is positively linearly independant, the cone $\cone(\Delta)$ is pointed: $\cone(\Delta)\cap\cone(-\Delta)=\{0\}$ (here $\cone(A)$ is the set of non-negative linear combinations of vectors in $A$).  Note  also that if the order $m_{st}$ of $st$ is  finite, then $B(\alpha_s,\alpha_t)=-\cos\left(\frac{\pi}{m_{st}}\right)$ and that  $B(\alpha_s,\alpha_t)\leq -1$ if and only if the order of $st$ is infinite. 

Denote by  $\Phi=W(\Delta)$ the corresponding {\em root system} with base $\Delta$, which is partitioned into positive roots $\Phi^+=\cone(\Delta)\cap \Phi$ and negative roots $\Phi^-=-\Phi^+$. The pair $(\Phi,\Delta)$ is called a {\em based root system}. The {\em rank of $(\Phi,\Delta)$} is the rank of $(W,S)$, i.e., $|\Delta|=|S|$. The {\em classical geometric representation} is obtained by assuming that $\Delta$ is a basis of $V$ and that $B(\alpha_s,\alpha_t)=-1$ if the order of $st$ in $W$ is infinite. For more details on geometric representations, the reader may, for instance, consult~\cite[\S1]{HoLaRi14}.

\smallskip

There is a useful statistic on the positive root system $\Phi^+$ called {\em the depth} defined as follows:  
$$
\dep(\beta)=\min\{ \ell(w)\,|\, w\in W, w(\beta)\in \Phi^- \},\quad \beta\in \Phi^+.
$$
The next proposition is well-known, see for instance~\cite[Lemma~4.6.2]{BjBr05}.

\begin{prop}\label{prop:Dep} Let $s\in S$ and $\beta\in\Phi^+\setminus\{\alpha_s\}$.  We have $\dep(\alpha_s)=1$ and 
  $$
  \dep(s(\beta))=
   \left\{
  \begin{array}{cl}
  \dep(\beta)-1&\textrm{if }\ B(\alpha_s,\beta)> 0\\
  \dep(\beta)&\textrm{if }\ B(\alpha_s,\beta)=0\\
  \dep(\beta)+1&\textrm{if }\ B(\alpha_s,\beta)<0\\
  \end{array}
  \right.
  $$
  In particular, $\dep(\alpha)=1$ for any simple root $\alpha\in \Delta$.
\end{prop}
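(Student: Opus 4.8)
The plan is to peel the statement down to a single implication using three elementary facts together with one symmetry, and then to establish that implication by induction on the depth.

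\textbf{Elementary reductions.} First, $\dep(\alpha)=1$ for every simple root $\alpha=\alpha_r$: indeed $r(\alpha_r)=-\alpha_r\in\Phi^-$ with $\ell(r)=1$ give $\dep(\alpha_r)\le 1$, while $\dep(\gamma)\ge 1$ for every $\gamma\in\Phi^+$ because the identity fixes $\gamma\in\Phi^+$; in particular $\dep(\alpha_s)=1$. Second, for $\beta\in\Phi^+\setminus\{\alpha_s\}$ one has $s(\beta)\in\Phi^+$ (since $s$ permutes $\Phi^+\setminus\{\alpha_s\}$ and only sends $\alpha_s$ to $-\alpha_s$), so $\dep(s\beta)$ is defined and $s\beta\ne\alpha_s$. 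Third, the Lipschitz bound $|\dep(s\beta)-\dep(\beta)|\le 1$ holds: if $\ell(w)=\dep(\beta)$ with $w\beta\in\Phi^-$, then $(ws)(s\beta)=w\beta\in\Phi^-$, whence $\dep(s\beta)\le\ell(ws)\le\dep(\beta)+1$, and the reverse bound follows by exchanging the roles of $\beta$ and $s\beta$ via $s(s\beta)=\beta$.

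\textbf{The case $B=0$ and a symmetry.} If $B(\alpha_s,\beta)=0$ then $s\beta=\beta-2B(\alpha_s,\beta)\alpha_s=\beta$, so the depths coincide, which is the middle case. For the remaining two cases I would use that $s\in O_B(V)$ with $s\alpha_s=-\alpha_s$, so that
\[ B(\alpha_s,s\beta)=B(s\alpha_s,\beta)=-B(\alpha_s,\beta). \]
Since moreover $s\beta\in\Phi^+\setminus\{\alpha_s\}$, the two implications ``$B(\alpha_s,\beta)>0\Rightarrow\dep(s\beta)=\dep(\beta)-1$'' and ``$B(\alpha_s,\beta)<0\Rightarrow\dep(s\beta)=\dep(\beta)+1$'' are interchanged under $\beta\mapsto s\beta$. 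Hence it suffices to prove just one of them, say the second.

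\textbf{The crux and the obstacle.} By the Lipschitz bound, this last implication amounts to excluding $\dep(s\beta)\le\dep(\beta)$ when $B(\alpha_s,\beta)<0$, which I would prove by induction on $d=\dep(\beta)$. In the base case $d=1$ we have $\beta=\alpha_t$ with $t\ne s$ and $B(\alpha_s,\alpha_t)<0$; using that distinct simple roots have non-positive inner product one checks that $s\alpha_t=\alpha_t-2B(\alpha_s,\alpha_t)\alpha_s$ is a positive \emph{non-simple} root, hence of depth $\ge 2=\dep(\beta)+1$ (recalling that depth-one roots are exactly the simple roots). For the inductive step I would peel a simple reflection off a reduced word $w=s_{i_1}\cdots s_{i_d}$ realizing $\dep(\beta)$: minimality forces $s_{i_2}\cdots s_{i_d}(\beta)=\alpha_{i_1}$, so with $t=s_{i_d}$ one gets $t\beta\in\Phi^+$ and $\dep(t\beta)=d-1$, and the induction hypothesis applied to the lower-depth root $t\beta$ (together with the transformation rule $B(\alpha_s,t\beta)=B(\alpha_s,\beta)-2B(\alpha_t,\beta)\,B(\alpha_s,\alpha_t)$) controls how the depth moves. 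The main obstacle is precisely this inductive bookkeeping: because $s$ and $t$ need not commute, the roots $s\beta$ and $t\beta$ are related by the \emph{non-simple} reflection $sts$, so recovering the desired $\dep(s\beta)$ from the computable quantity $\dep(st\beta)$ forces a case analysis within the dihedral subgroup $\langle s,t\rangle$ according to the sign and value of $B(\alpha_s,\alpha_t)$. I expect this rank-two dihedral reduction — where inner products and depths become completely explicit — to be the cleanest way to close the argument.
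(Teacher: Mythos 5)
Your elementary reductions are all correct: depth-one roots are exactly the simple roots (via $N(r)=\{\alpha_r\}$, cf.\ Proposition~\ref{prop:Weak}), the Lipschitz bound $|\dep(s(\beta))-\dep(\beta)|\leq 1$, the trivial case $B(\alpha_s,\beta)=0$, and the symmetry $B(\alpha_s,s(\beta))=-B(\alpha_s,\beta)$ which swaps the two signed cases, so that everything reduces to the single implication $B(\alpha_s,\beta)<0\Rightarrow\dep(s(\beta))=\dep(\beta)+1$. (For calibration: the paper gives no proof of this proposition, citing \cite[Lemma~4.6.2]{BjBr05}; its later generalization, Proposition~\ref{1.6}, is proved by a different bookkeeping that itself invokes $\ell(s_\beta)=\ell(s_\gamma)+2$, i.e.\ essentially this statement, so it is no substitute.) The genuine gap is that this remaining implication --- the entire content of the proposition --- is never established. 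Your inductive setup is fine ($\dep(t(\beta))=d-1$ for the last letter $t$ of a depth-realizing reduced word), but, as you yourself concede, the induction hypothesis only controls the effect of \emph{simple} reflections on the depth-$(d-1)$ root $t(\beta)$, hence quantities like $\dep(st(\beta))$, whereas the target $s(\beta)=(sts)\bigl(st(\beta)\bigr)$ is reached through the non-simple reflection $sts$. The announced ``rank-two dihedral case analysis'' meant to bridge this is not performed, and nothing in the outline shows it closes: one must track not only $\langle s,t\rangle$ but how the minimal word for $t(\beta)$ interacts with it. A plan plus an acknowledged obstacle is not a proof. (A smaller point: in this paper's setting $\Delta$ is only positively independent, not a basis, so even your base-case claim that $s(\alpha_t)$ is non-simple needs a line: if $s(\alpha_t)=\alpha_u$ with $u\notin\{s,t\}$, then $1=B(\alpha_u,\alpha_u)=B(\alpha_u,\alpha_t)-2B(\alpha_s,\alpha_t)B(\alpha_u,\alpha_s)\leq 0$, absurd, and $u\in\{s,t\}$ is ruled out directly.)

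The gap can in fact be closed without any induction or dihedral analysis, using pointedness of $\cone(\Delta)$. Suppose $B(\alpha_s,\beta)<0$, write $\gamma:=s(\beta)=\beta+c\alpha_s$ with $c=-2B(\alpha_s,\beta)>0$, set $d:=\dep(\beta)$, and suppose for contradiction that $e:=\dep(\gamma)\leq d$. Choose $v$ with $\ell(v)=e$ and $v(\gamma)\in\Phi^-$. If $v(\alpha_s)\in\Phi^-$, then $\ell(vs)=e-1$ and $(vs)(\beta)=v(\gamma)\in\Phi^-$, giving $d\leq e-1\leq d-1$, absurd; so $v(\alpha_s)\in\Phi^+$. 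Then $v(\gamma)=v(\beta)+c\,v(\alpha_s)$, and if $v(\beta)$ were positive, $v(\gamma)$ would lie in $\cone(\Delta)\setminus\{0\}$, contradicting $v(\gamma)\in\Phi^-\subseteq-\cone(\Delta)$ and $\cone(\Delta)\cap-\cone(\Delta)=\{0\}$; so $v(\beta)\in\Phi^-$, forcing $e=d$ and making $v$ simultaneously depth-minimal for $\beta$ and for $\gamma$. Now write $v=s_{i_1}\cdots s_{i_d}$ reduced and put $v':=s_{i_2}\cdots s_{i_d}$: minimality gives $v'(\beta),v'(\gamma)\in\Phi^+$, while $s_{i_1}\bigl(v'(\beta)\bigr)=v(\beta)\in\Phi^-$ and likewise for $\gamma$, so $v'(\beta)=\alpha_{i_1}=v'(\gamma)$ since $N(s_{i_1})=\{\alpha_{i_1}\}$; hence $\beta=\gamma$, contradicting $c>0$. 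Combined with your (correct) reductions, this completes the proof of the implication your symmetry reduces everything to --- so the missing step is fillable, but by a shorter direct argument rather than the unexecuted inductive/dihedral scheme you propose.
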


\begin{remark}\label{rem:Proj1}
In order to present examples of rank $|S|=2,3,4$ easily, it is useful to consider the  {\em projective geometric representation} of $W$, as in~\cite{HoLaRi14,DyHoRi13}. Since $\Phi=\Phi^+\sqcup \Phi^-$ is encoded by the set of positive roots $\Phi^+$, we represent $\Phi$ by an `affine cut' $\h\Phi$: there is an affine hyperplane $V_1$ in $V$ {\em transverse to $\Phi^+$}, i.e., for any $\beta\in \Phi^+$, the ray $\mathbb R^+\beta$ intersects $V_1$ in a unique nonzero point $\h\beta$. So $\mathbb R\beta\cap V_1=\{\h\beta\}$ for any $\beta\in\Phi$. The {\em set of normalized roots}  
$\h\Phi=\{\h\beta\,|\, \beta\in \Phi\}$ is contained in the compact  set $\conv(\h\Delta)$ and therefore admits a set $E$ of accumulation points called {\em the set of limit roots}. We have   $E=\varnothing$ if and only if $W$ is finite; $E$ is a singleton if  $(W,S)$ is affine and irreducible. Moreover, limit roots are in the isotropic cone $Q$ of $B$:
$$
E\subseteq \h Q=\{x\in V_1\,|\, B(x,x)=0\}.
$$
The group $W$ acts on $\h\Phi\sqcup E \cup \conv(E)$ componentwise: $w\cdot x=\widehat{w(x)}$.    We refer the reader to~\cite{HoLaRi14,DyHoRi13} for more details.
\end{remark}

\subsection{Weak order and inversion sets}\label{sse:GeoWeak} In this text, we make use of the interplay between reduced words  and their geometric counterparts:  inversion sets.   

\smallskip

The {\em (left) inversion set~$N(w)$ of $w\in W$} is defined by:
$$
N(w)=\Phi^+\cap w(\Phi^-)=\{\beta\in \Phi^{+}\, | \, \ell(s_{\beta}w)<\ell(w)\}. 
$$
Its cardinality is well-known to be $\ell(w)$.  Inversion sets allow a useful geometric interpretation of the weak order. For~$A\subseteq V$ we denote $\cone_\Phi (A)=\cone(A)\cap \Phi$ the set of roots in  $\cone(A)$.

\begin{prop}\label{prop:Weak}
\begin{enumerate} 
\item If $w=s_1\dots s_k$ is a reduced word for $w\in W$, then
$$
N(w)=\{\alpha_{s_1},s_1(\alpha_{s_2}),\cdots , s_1\dots s_{k-1}(\alpha_{s_k})\}.
$$

\item For $u,v\in W$ we have $u\leq_Rv$ if and only if $N(u)\subseteq N(v)$.
\item The map $N:(W,\leq_R)\to (\mathcal P(\Phi^+),\subseteq)$ is a poset monomorphism.
\item If $X\subseteq W$ is bounded, then 
$$
N\left(\bigvee X\right)=\cone_\Phi \left(\bigcup_{x\in X} N(x)\right).
$$ 
\end{enumerate}
\end{prop}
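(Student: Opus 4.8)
Parts (1)--(3) are the standard combinatorial dictionary between reduced words and inversion sets, and I would dispatch them quickly. For (1) the engine is the one-step recursion: if $s\in S$ and $\ell(sw)=\ell(w)+1$, then $N(sw)=\{\alpha_s\}\sqcup s\bigl(N(w)\bigr)$, a disjoint union inside $\Phi^+$. This holds because $s$ permutes $\Phi^+\setminus\{\alpha_s\}$ while sending $\alpha_s$ to $-\alpha_s$, combined with the standard fact $\ell(sw)>\ell(w)\iff w^{-1}(\alpha_s)\in\Phi^+$ (which gives $\alpha_s\in N(sw)$ and $\alpha_s\notin s\,N(w)$). Iterating the recursion along a reduced word $w=s_1\cdots s_k$ yields exactly the listed set and shows $|N(w)|=k=\ell(w)$. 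For (2), the forward implication is immediate from (1): a reduced word for a prefix $u$ of $v$ is an initial segment of one for $v$, so its roots form an initial segment of $N(v)$. For the converse I would induct on $\ell(u)$: when $\ell(u)\ge 1$ pick a left descent $s$ of $u$, so $\alpha_s\in N(u)\subseteq N(v)$ makes $s$ a left descent of $v$ as well; the recursion gives $N(su)=s\bigl(N(u)\setminus\{\alpha_s\}\bigr)\subseteq s\bigl(N(v)\setminus\{\alpha_s\}\bigr)=N(sv)$, induction yields $su\le_R sv$, and prepending $s$ to the reduced-word prefix lifts this to $u\le_R v$. Part (3) is then formal: (2) says $N$ is order-preserving and order-reflecting, and antisymmetry of $\le_R$ upgrades order-reflection to injectivity, so $N$ is a poset embedding.

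The substance is (4). Write $g=\bigvee X$ (which exists because $X$, being bounded, is finite in the complete meet-semilattice $(W,\le_R)$) and set $C=\cone_\Phi\bigl(\bigcup_{x\in X}N(x)\bigr)$. The first, easy, observation is that every inversion set is \emph{closed}, i.e.\ $\cone_\Phi(N(w))=N(w)$: if $\rho\in\cone(N(w))\cap\Phi$ then $\rho\in\cone(\Delta)$ forces $\rho\in\Phi^+$, while $w^{-1}\rho$ is a positive combination of elements of $\Phi^-\subseteq\cone(-\Delta)$ and hence lies in $\cone(-\Delta)\cap\Phi=\Phi^-$, so $\rho\in N(w)$. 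Since $N(x)\subseteq N(g)$ for each $x$ by (2), applying $\cone_\Phi$ and using closedness of $N(g)$ gives the inclusion $C\subseteq N(g)$; in particular $C$ is finite. For the reverse inclusion I would argue that $C$ is itself an inversion set, say $C=N(h)$: then $N(x)\subseteq C=N(h)$ for all $x$ makes $h$ an upper bound of $X$, whence $g\le_R h$ and $N(g)\subseteq N(h)=C$, giving equality.

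Everything thus reduces to showing that $C$ is an inversion set. By the well-known characterization, a finite subset of $\Phi^+$ is an inversion set precisely when it is \emph{biclosed}, that is, both it and its complement in $\Phi^+$ are closed. Now $C$ is closed by construction (since $\cone_\Phi$ is idempotent) and finite, so the whole difficulty is concentrated in the \textbf{coclosedness} of $C$, namely that $\Phi^+\setminus C$ is closed. This is the genuine obstacle, and it is where the hypotheses that $X$ is bounded and that $\bigcup_x N(x)$ is a \emph{union of inversion sets} must enter: a closed subset of an inversion set need not be coclosed (for instance in rank-two type $A$ the singleton $\{\alpha_s+\alpha_t\}=\cone_\Phi(\{\alpha_s+\alpha_t\})$ is closed and lies inside $N(w_\circ)=\Phi^+$, yet is not biclosed). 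Applying $g^{-1}$ to a hypothetical relation $\delta=a\beta+b\gamma$ with $\delta\in C$ and $\beta,\gamma\notin C$ shows, by pointedness of $\cone(\Delta)$, that at least one of $\beta,\gamma$ lies in $N(g)$; promoting ``lies in $N(g)$'' to ``lies in $C$'' is exactly the hard step. I would settle it through the theory of biclosed sets, localizing the verification of biclosedness to rank-two (dihedral) reflection subsystems, where the biclosed sets are simply the initial segments of the linearly ordered positive roots and the closure of a bounded union of initial segments is visibly again an initial segment --- precisely the dihedral-reduction mechanism that the paper develops later for bipodality.
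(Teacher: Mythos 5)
Parts (1)--(3) of your proposal are correct and follow the standard route (the paper itself does not prove them, citing \cite[Chapter 3]{BjBr05}); your one-step recursion for $N(sw)$, the descent-peeling induction for the converse of (2), and the formal deduction of (3) are all fine. For (4) --- which the paper also outsources, to \cite{Dy11} and \cite[\S2.2--\S2.3]{HoLa15} --- your two reductions are sound and cleanly executed: the self-contained proof that inversion sets are cone-closed (via pointedness of $\cone(\Delta)$) combined with (2) gives $C:=\cone_\Phi\bigl(\bigcup_{x\in X}N(x)\bigr)\subseteq N\bigl(\bigvee X\bigr)$, and the converse does reduce, via the fact that finite biclosed subsets of $\Phi^{+}$ are inversion sets, to showing $\Phi^{+}\setminus C$ is closed (your $C$ is cone-closed, hence in particular $2$-closed, so only coclosedness is at stake). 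You also correctly isolate this as the hard step. The problem is that you do not actually prove it.

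The dihedral reduction you invoke does not close the gap as stated. It is true that $2$-coclosedness localizes tautologically to maximal dihedral subsystems, since a relation $\delta=a\beta+b\gamma$ lives in the plane spanned by $\beta$ and $\gamma$. What is unjustified is your description of the trace $C\cap \Phi^{+}_{W'}$ as ``the closure of a bounded union of initial segments'': $C$ is a \emph{global} conic closure, and $\cone(R)\cap\Span(\Phi_{W'})$, where $R=\bigcup_x N(x)$, can strictly contain $\cone(R\cap\Phi_{W'})$ --- the extreme rays of the planar slice of the polyhedral cone $\cone(R)$ are cut out by faces of $\cone(R)$ and need not pass through any root of $R\cap\Phi_{W'}$, so a root $\delta\in\Phi^{+}_{W'}$ may be a positive combination of elements of $R$ none of which lies in $\Phi_{W'}$. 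Thus Proposition~\ref{prop:CosetRep}, which tells you each trace $N(x)\cap\Phi^{+}_{W'}=N_{W'}(u_x)$ is a dihedral inversion set, only yields that the dihedral closure of these segments is \emph{contained in} $C\cap\Phi^{+}_{W'}$; coclosedness of the trace remains exactly as open as the global statement. Moreover, even granting the trace description, your ``visibly an initial segment'' claim fails without invoking boundedness: in an infinite maximal dihedral subsystem, segments growing from both ends have infinite closure, and excluding this requires routing through $C\subseteq N\bigl(\bigvee X\bigr)$ --- a point you gesture at but never use. So the inclusion $N\bigl(\bigvee X\bigr)\subseteq C$ is not established; bridging it is precisely the nontrivial content of \cite{Dy11} and \cite[\S2.2--\S2.3]{HoLa15}, and it needs a genuinely new input about how the conic closure operator interacts with unions of biclosed sets, not rank-two reduction alone.
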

An illustration of this proposition  is given in Figure~\ref{fig:InfiniteDi}. The three first statements are classical and can be found in \cite[Chapter 3]{BjBr05} stated within the language of reflections instead of roots. The last statement follows from Dyer~\cite{Dy11}; a complete proof may also be found in~\cite[\S2.2-\S2.3]{HoLa15}.
\begin{figure}[h!]
\resizebox{.75\hsize}{!}{
\begin{tikzpicture}
	[scale=1,
	 pointille/.style={dashed},
	 axe/.style={color=black, very thick},
	 sommet/.style={inner sep=2pt,circle,draw=blue!75!black,fill=blue!40,thick,anchor=west}]
	 
\node[sommet]  (id)    [label=below:{\small{$e;\  \emptyset$}}]          at (0,0)    {};
\node[sommet]  (1)    [label=left:{\small{$\{\alpha_s\};\ s$}}]         at (-1,1)   {} edge[thick,<-] (id);
\node[sommet]  (2)    [label=right:{\small{$t;\ \{\alpha_t\}$}}]        at (1,1)    {} edge[thick,<-] (id);
\node[sommet]  (12)   [label=left:{\small{$\{\alpha_s, s(\alpha_t)\};\ st$}}]      at (-1,2)   {} edge[thick,<-] (1);
\node[sommet]  (21)   [label=right:{\small{$ts;\ \{\alpha_t, t(\alpha_s)\}$}}]     at (1,2)    {} edge[thick,<-] (2);
\node[sommet]  (121)  [label=left:{\small{$\{\alpha_s\}\sqcup s(N(ts));\ sts$}}]    at (-1,3)    {} edge[thick,<-] (12);
\node[sommet]  (212)  [label=right:{\small{$tst;\ \{\alpha_t\}\sqcup t(N(st))$}}]    at (1,3)    {} edge[thick,<-] (21);

\draw[pointille] (121) -- +(0,1);
\draw[pointille] (212) -- +(0,1);
\end{tikzpicture}}
\caption{The weak order on  the infinite dihedral group $\mathcal D_\infty$: the vertices are labelled by $w\in \mathcal D_\infty$ together with its inversion set $N(w)$.}
\label{fig:InfiniteDi}
\end{figure}
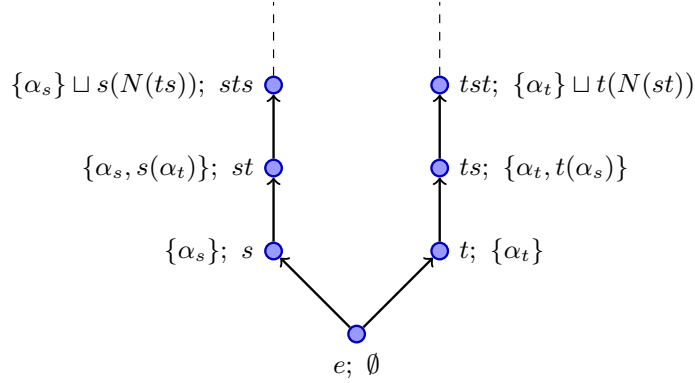
\begin{remark}\label{rem:Proj2}
Within the projective representation, see Remark~\ref{rem:Proj1}, conic closure is replaced by convex hull and $A\subseteq \Phi^+$ is replaced by $\h A \subseteq \h\Phi$. So Proposition~\ref{prop:Weak}(d) translates as follows (see Figure~\ref{fig:AffineA2} for an illustration):  let $X$ be a bounded subset of $W$ and write $\h N(x)$ instead of $\h{N(x)}$. Then the join $\bigvee X$ exists and 
$\h N(\bigvee X)= \conv_\Phi(\h N(X)).$ Moreover, in \cite[Theorem~3.2]{HoLa15}, the authors show that $X\subseteq W$ is bounded if and only if 
$$
\conv \left(\bigcup_{x\in X} \h N(x)\right)\cap \conv(E)=\emptyset.
$$
This is well illustrated in the case of the infinite dihedral group $\mathcal D_\infty$ as in Example~\ref{ex:Inf1}: in this case $E$ is always in the convex hull of $\h\Delta=\{\alpha_s,\alpha_t\}$ and therefore $S=\{s,t\}$ is not bounded. A more general example  of application is given  in Example~\ref{ex:AffineA2}  to justify the construction of the smallest Garside shadow in the case $\tilde A_2$. This point of view provided the intuition behind the definition of low elements. 
\end{remark}

As a direct consequence, we obtain  the following geometric interpretation of our terminology on reduced words.

\begin{cor}\label{cor:RedInv} For $u,v,w\in W$, we have:
\begin{itemize}
\item $w=uv$ is reduced if and only if  $N(w)=N(u)\sqcup u(N(v))$; 
\item $u$ is a prefix of $w$ if and only if $N(u)\subseteq N(w)$;
\item $v$ is a suffix of $w$ if and only if $N(v)=vw^{-1}(N(w)\setminus N(wv^{-1}))$.
\end{itemize}
In particular, $v$ is a maximal proper suffix of $w$ (i.e., $w=sv$ is reduced with $s\in S$) if and only if $N(v)=s(N(w)\setminus \{\alpha_s\})$. 
\end{cor}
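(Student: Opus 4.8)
The plan is to derive all four assertions from Proposition~\ref{prop:Weak} and the definition of the weak order, treating the three displayed equivalences in the order second, first, third so that each may feed the next.

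The second bullet is essentially a restatement: by definition $u\leq_R w$ means that $u$ is a prefix of $w$, so the equivalence with $N(u)\subseteq N(w)$ is exactly Proposition~\ref{prop:Weak}(b). For the first bullet, assume $w=uv$ is reduced, choose reduced words $s_1\cdots s_p$ for $u$ and $s_{p+1}\cdots s_{p+q}$ for $v$, and recall that their concatenation is then a reduced word for $w$. Proposition~\ref{prop:Weak}(a) writes $N(w)$ as the list $\alpha_{s_1},\dots,s_1\cdots s_{p+q-1}(\alpha_{s_{p+q}})$, whose first $p$ entries are $N(u)$ and whose last $q$ entries are $u$ applied to the corresponding list for $v$, namely $u(N(v))$; the union is disjoint because $|N(w)|=\ell(w)=\ell(u)+\ell(v)=|N(u)|+|u(N(v))|$. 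Conversely, if $N(w)=N(u)\sqcup u(N(v))$ then $N(u)\subseteq N(w)$, so by the second bullet $u$ is a prefix of $w$ and $w=u\,(u^{-1}w)$ is reduced; the forward direction just proved gives $N(w)=N(u)\sqcup u(N(u^{-1}w))$, and cancelling the disjoint summand $N(u)$ and the injective map $u$ yields $N(u^{-1}w)=N(v)$, whence $u^{-1}w=v$ by injectivity of $N$ (Proposition~\ref{prop:Weak}(c)). Thus $w=uv$ is reduced.

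For the third bullet set $u=wv^{-1}$, so that $w=uv$ holds in $W$ and ``$v$ is a suffix of $w$'' means precisely that this factorisation is reduced. The forward implication is then immediate from the first bullet: reducedness gives $u(N(v))=N(w)\setminus N(u)$, and applying $u^{-1}=vw^{-1}$ produces the stated formula. The converse is the one point requiring an idea, since $N(wv^{-1})\subseteq N(w)$ is not automatic and so the disjoint decomposition of the first bullet cannot simply be read off. I would pass to inverses via the elementary identity $N(x^{-1})=-x^{-1}N(x)$, which follows at once from $N(x)=\Phi^+\cap x(\Phi^-)$: rewriting the hypothesis as $-v\,N(v^{-1})=vw^{-1}\bigl(N(w)\setminus N(wv^{-1})\bigr)$ and applying $v^{-1}$ gives $N(v^{-1})=-w^{-1}\bigl(N(w)\setminus N(wv^{-1})\bigr)$, which lies inside $-w^{-1}N(w)=N(w^{-1})$ simply because $N(w)\setminus N(wv^{-1})\subseteq N(w)$. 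Hence $N(v^{-1})\subseteq N(w^{-1})$, so by the second bullet $v^{-1}$ is a prefix of $w^{-1}$, that is, $v$ is a suffix of $w$.

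Finally, the ``in particular'' clause is the specialisation of the third bullet to $v=sw$ with $s\in S$: then $u=wv^{-1}=s$, and since $N(s)=\{\alpha_s\}$ and $s^{-1}=s$ the formula becomes $N(v)=s(N(w)\setminus\{\alpha_s\})$, while reducedness of $w=sv$ is the condition $\alpha_s\in N(w)$, i.e. that $s$ is a left descent of $w$. I expect the converse half of the third bullet to be the main obstacle; everything else is bookkeeping with cardinalities together with the monotonicity and injectivity of $N$ from Proposition~\ref{prop:Weak}, and the passage to inverses is exactly what keeps that converse short.
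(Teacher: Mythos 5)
Your proof is correct and follows the paper's intended route: the paper states this corollary without proof, as a direct consequence of Proposition~\ref{prop:Weak}, and your argument is precisely the careful unpacking of that proposition (prefix criterion for the second bullet, the reduced-word description of $N(w)$ plus a cardinality count for the first, and specialization $u=wv^{-1}$ for the rest). The one ingredient you add beyond what the paper makes explicit --- the standard identity $N(x^{-1})=-x^{-1}(N(x))$, used to convert the suffix condition into the prefix condition $N(v^{-1})\subseteq N(w^{-1})$ for the converse of the third bullet --- is correctly stated and correctly applied, so there is no gap.
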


\begin{ex}[The smallest Garside shadow in the affine group of type $\tilde A_2$]\label{ex:AffineA2} 

We show now how to obtain the smallest Garside shadow in the case~$\tilde A_2$. In this example, the difficulty is to show the existence or not of the join of a subset $X$ of $W$, i.e., to decide if $X$ is bounded or not in $W$, before taking suffixes. However, in the proof of Theorem~\ref{thm:Main1} the difficulty lies in the stability by taking suffixes.  
\smallskip

Let $(W,S)$ be the affine Coxeter system of type $\tilde A_2$ with Coxeter graph
\begin{center}
 \begin{tikzpicture}[sommet/.style={inner sep=2pt,circle,draw=blue!75!black,fill=blue!40,thick},]
\coordinate (ancre) at (0,0);
\node[sommet,label=left:$1$] (alpha) at (ancre) {};
\node[sommet,label=right:$2$] (beta) at ($(ancre)+(1,0)$) {} edge[thick] (alpha);
\node[sommet,label=above:$3$] (gamma) at ($(ancre)+(0.5,0.86)$) {} edge[thick] (alpha) edge[thick] (beta);
\end{tikzpicture}
\end{center}
Our aim is to show that 
$$
 \tilde S= \Gar_S(S)=\{e,1,2,3,12,21,13,31,23,32,121,131,232,1232,2313,3121\}.
 $$
 We build $\tilde S$ recursively:
 \begin{description}
 \item[Initial step] $\tilde S_0=S\cup\{e\}$;
 \item[Inductive step]  If $m$ is in $\mathbb{N}$, then  $\tilde S_{m+1}$  is constituted of all the joins of bounded pairs of  $\tilde S_m$ together with their suffixes;
 \item[Final step] $\tilde S=\bigcup_{m\in\mathbb N} \tilde S_m$.  Actually, Corollary~\ref{cor:Main1} shows that this union is finite.
 \end{description}
Start with $\tilde S_0=\{e,1,2,3\}$. The proper maximal standard parabolic subgroups of $W$ are all finite of rank~$2$ with Coxeter graph the edges of the above graph. So the join of $I=\{1,2\}$ exists and $1\vee 2 = 121=212$, by Proposition~\ref{prop:Gars}. By taking all the suffixes we obtain therefore $\Gar_{\{1,2\}}(\{1,2\})=W_I=\{e,1,2,12,21,121\}$. Repeating the same argument for all other maximal standard parabolic subgroups we obtain:
$$
 \tilde S_1=\{e,1,2,3,12,21,13,31,23,32,121,131,232\}.
 $$
 Now to construct $\tilde S_2$ we consider join of pairs of elements in $W$ that are not contained in a proper standard parabolic subgroup. For instance $$21\vee 23=21.31=23.1 3$$ is the smallest word having both $21$ and $23$ as prefixes. We obtain similarly $$12\vee 13=12.32=13.23\ \textrm{ and }\ 31\vee 32=31.21=32.12.$$ We claim that 
$$
 \tilde S= \tilde S_2=\{e,1,2,3,12,21,13,31,23,32,121,131,232,1232,2313,3121\}.
 $$
 It is easy to check that $\tilde S_{2}$ is closed under suffixes.  
In order to prove that there is no other element of $W$ obtained by joining elements of $\tilde S_1$, we consider the geometric interpretation of the weak order as described  in~Remark~\ref{rem:Proj2}. Let $V$ be a real vector space of dimension $3$ with bilinear form $B$ and basis $\Delta=\{\alpha_1,\alpha_2,\alpha_3\}$ such that for $1\leq i,j\leq 3$ we have $B(\alpha_i,\alpha_j)=-\cos\left(\frac{\pi}{m_{i,j}}\right)$, where $m_{ij}$ is the order of the element $ij$ in $W$. So $\Phi=W(\Delta)$ is a root system with simple system $\Delta$ for $(W,S)$ of type $\tilde A_2$. In Figure~\ref{fig:AffineA2} one finds the first few normalized roots in blue and the unique limit root $\h\delta=\widehat{\alpha_1+\alpha_2+\alpha_3}$ is the red dot and then $E=\h Q=\{\h\delta\}$. From Remark~\ref{rem:Proj2}, we know that $u\vee v$ exists if and only if $\conv(\h N(u)\cup \h N(v))\cap E=\emptyset$, and that in this case $\h N(u\vee v)=\conv_\Phi(\h N(u)\cup \h N(v))$. For instance consider $N(31)=\{\alpha_3,\alpha_1+\alpha_3\}$ and $N(32)=\{\alpha_3,\alpha_2+\alpha_3\}$. Then
$$
\conv(\h N(31)\cup \h N(32))=\conv (\h\alpha_3,\h{\alpha_1+\alpha_3},\h{\alpha_2+\alpha_3})
$$
does not intersect $E$ and we have:
\begin{eqnarray*}
\h N(31\vee 32)&=&\conv_\Phi(\h N(31)\cup \h N(32))\\
&=&\{\h\alpha_3,\h{\alpha_1+\alpha_3},\h{\alpha_2+\alpha_3},\h{\beta}\}\\
&=&\h N(3121),\qquad \textrm{where }\beta=s_{\alpha_3}(\alpha_1+\alpha_2)=\alpha_1+\alpha_2+2\alpha_3.
\end{eqnarray*}
But $\conv(\h N(1)\cup \h N(32))=\conv (\h\alpha_1,\h\alpha_3,\h{\alpha_2+\alpha_3})$ 
does intersect $E$, since the segment $[\h\alpha_1,\h{\alpha_2+\alpha_3}]$ contains $E$. Therefore  $1\vee 32$ does not exist. This is this argument that shows our claim: $\tilde S=\tilde S_2$. Indeed, let $u,v\in \tilde S_1$   be such that  $\{u,v\}$  is not contained in a proper standard parabolic subgroup and is  different from  $\{31,32\}$, $\{12,13\}$ and $\{21,23\}$.    Without loss of generality, we consider $u=1u'$ reduced and $v=32v'$ reduced.  So $E\subseteq [\h\alpha_1,\h{\alpha_2+\alpha_3}]\subseteq \conv(\h N(u)\cup \h N(v))$ and therefore $u\vee v$ does not exists. So no more elements are added to $\tilde S_2$ than the ones we have already found.

\begin{figure}[h!]
\begin{tikzpicture}
	[scale=2,
	 q/.style={red,line join=round,thick},
	 racine/.style={blue},
	 racinesimple/.style={inner sep=2pt,circle,draw=blue!75!black,fill=blue!40,thick},
	 racinedih/.style={blue},
	 rotate=0]
	 
\def\grosseur{0.025}
\def\grosseursimple{0.05}

\def\grosseurdih{0.0075}

\fill[red] (2.00000000000000,1.15470053837925) circle (\grosseursimple);

\draw[blue,line width = 2pt] (2.00000000000000,3.46410161513775) -- (3.00000000000000,1.73205080756888) ;
\draw[red,line width = 2pt] (2.00000000000000,3.46410161513775) -- (1.00000000000000,1.73205080756888);

\fill[red, fill opacity=0.5] (2.00000000000000,3.46410161513775) -- (1.00000000000000,1.73205080756888) -- (3.00000000000000,1.73205080756888) -- cycle;

\node[racinesimple, label=left :{$\alpha_1$}] (a) at (0,0) {};
\node[racinesimple, label=right :{$\alpha_2$}] (b) at (4,0) {};
\node[racinesimple, label=above:{$\alpha_3$}] (g) at (2.00000000000000,3.46410161513775) {};
\draw[green!75!black] (a) -- (b) -- (g) -- (a);
\fill[racine] (3.00000000000000,1.73205080756888) circle (\grosseur);
\node[label=right :{$\alpha_2+\alpha_3$}] at (3.00000000000000,1.73205080756888) {};
\fill[racine] (2.00000000000000,0.01) circle (\grosseur);
\node[label=above :{$\alpha_1+\alpha_2$}] at (2.00000000000000,0.01) {};
\fill[racine] (1.00000000000000,1.73205080756888) circle (\grosseur);
\node[label=left :{$\alpha_1+\alpha_3$}] at (1.00000000000000,1.73205080756888) {};

\fill[racine] (2.50000000000000,0.866025403784439) circle (\grosseur);
\fill[racine] (1.50000000000000,0.866025403784439) circle (\grosseur);
\fill[racine] (2.00000000000000,1.73205080756888) circle (\grosseur);
\node[label=above :{$\beta$}] at (2.00000000000000,1.73205080756888) {};

\fill[racine] (2.00000000000000,0.692820323027551) circle (\grosseur);
\fill[racine] (2.40000000000000,1.38564064605510) circle (\grosseur);
\fill[racine] (1.60000000000000,1.38564064605510) circle (\grosseur);

\fill[racine] (1.71428571428571,0.989743318610787) circle (\grosseur);
\fill[racine] (2.28571428571429,0.989743318610787) circle (\grosseur);
\fill[racine] (2.00000000000000,1.48461497791618) circle (\grosseur);

\fill[racine] (1.75000000000000,1.29903810567666) circle (\grosseur);
\fill[racine] (2.25000000000000,1.29903810567666) circle (\grosseur);
\fill[racine] (2.00000000000000,0.866025403784439) circle (\grosseur);

\fill[racine] (2.20000000000000,1.03923048454133) circle (\grosseur);
\fill[racine] (1.80000000000000,1.03923048454133) circle (\grosseur);
\fill[racine] (2.00000000000000,1.38564064605510) circle (\grosseur);

\fill[racine] (1.81818181818182,1.25967331459555) circle (\grosseur);
\fill[racine] (2.18181818181818,1.25967331459555) circle (\grosseur);
\fill[racine] (2.00000000000000,0.944754985946660) circle (\grosseur);

\fill[racine] (2.00000000000000,1.33234677505298) circle (\grosseur);
\fill[racine] (1.84615384615385,1.06587742004239) circle (\grosseur);
\fill[racine] (2.15384615384615,1.06587742004239) circle (\grosseur);

\fill[racine] (1.85714285714286,1.23717914826348) circle (\grosseur);
\fill[racine] (2.00000000000000,0.989743318610787) circle (\grosseur);
\fill[racine] (2.14285714285714,1.23717914826348) circle (\grosseur);

\node at ($(2.00000000000000,2.43205080756888)$) {$N(3121)$};

\node at (1,2.5) {$N(31)$};
\node at (3,2.5) {$N(32)$};

\coordinate (ancre) at (-0.5,2.6);
\node[racinesimple,label=below left:{$1$}] (alpha) at (ancre) {};
\node[racinesimple,label=below right :{$2$}] (beta) at ($(ancre)+(0.5,0)$) {} edge[thick] (alpha);
\node[racinesimple,label=above:{$3$}] (gamma) at ($(ancre)+(0.25,0.43)$) {} edge[thick]  (alpha) edge[thick] (beta);
\coordinate (mat) at (4, 2.8);

\end{tikzpicture}
\caption{The inversion sets of $31$, $32$ and  $3121=31\vee 32$  pictured in the projective representation (see Remark~\ref{rem:Proj1} and Remark~\ref{rem:Proj2}) of the affine Coxeter group of type $\tilde A_2$. Each blue point represents a normalized root $\h\gamma$ which we simply label by the corresponding root $\gamma$. The red dot represents the unique limit root $\delta$.}
\label{fig:AffineA2}
\end{figure}
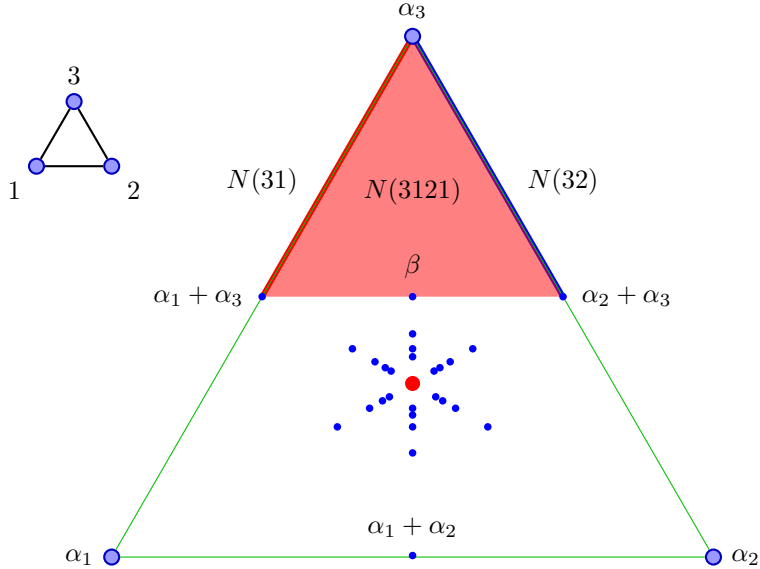
\end{ex}

\subsection{Reflection subgroups}\label{sse:RefSub}  We end this section by recalling some useful facts about reflection subgroups that will be needed to give an interpretation  of suffixes of a word in \S\ref{sse:InvSuff}.

A \emph{reflection subgroup} $W'$ of $W$ is a subgroup $W'=\langle s_\beta\,|\, \beta\in A\rangle$ generated by the reflections associated to the roots in some  $A\subseteq \Phi^+$. Write 
$$
\Phi_{W'}:=\{\beta\in \Phi\,|\, s_{\beta}\in W'\} \ \textrm{ and }\ \Delta_{W'}:=\{\alpha\in \Phi^{+}  \,|\, N(s_\alpha)\cap \Phi_{W'}=\{\alpha\}\}.
$$
Then the first author shows in~\cite{Dy90} that  $\Phi_{W'}$ is a  root system in $(V,B)$ with simple system $\Delta_{W'}$, called  the  {\em canonical simple system of $\Phi_{W'}$}.  Therefore the pair  $(W',S')$ is a Coxeter system, with {\em  canonical simple  reflections}  
$$
S'=\chi(W'):=\{s_{\alpha}\,|\, \alpha\in \Delta_{W'}\},
$$
and with corresponding positive roots  $\Phi_{W'}^{+}=\Phi_{W'}\cap \Phi^{+}  $;  see also~\cite{BoDy10} (both notions depend on $(W,S)$ and not just $W$).  

\begin{remark}  Other characterizations of the canonical simple system $\Delta_{W'}$ are that it is the unique inclusion-minimal subset $\Gamma$ of $\Phi^{+}_{W'}$  such that $\Phi^{+}_{W'}\subseteq \cone(\Gamma)$, and it is the  set consisting of all representatives in $\Phi$ of extreme rays of $\cone(\Phi^{+}_{W'})$ (see \cite{De89}).
 \end{remark}

Various notions attached above and below to the root system $(\Phi,\Delta)$ may be applied to the based root system $(\Phi_{W'},\Pi_{W'})$, and will be denoted by attaching $W'$ as decoration (usually, as subscript) on the corresponding notation used for $(\Phi,\Pi)$. For example, $\ell_{W'}: W'\to \mathbb N$ is the length function of $(W',S')$,
and $N_{W'}(u):=\Phi_{W'}^{+}\cap u(\Phi_{W'}^{-})=N(u)\cap \Phi_{W'}$ for $u\in W'$ where $\ell_{W'}(u)=| N_{W'}{u}|$.

\begin{ex}\label{ex:InfDiRef} Let $\alpha,\beta\in \Phi^+$ such that $B(\alpha,\beta)\leq -1$. Then $\alpha\not = \beta$ and $\Delta_{W'}=\{\alpha,\beta\}$ verifies the condition to be a simple system as in \S\ref{sse:GeoRep}. Let $W':=\langle s_{\alpha},s_{\beta}\rangle$ the  dihedral reflection subgroup associated to $\Delta'$ and note that $W'$ is infinite. 

A useful observation in the context of normalized roots, as in Remark~\ref{rem:Proj1}, is that for any roots $\alpha,\beta\in\Phi^+$, the dihedral reflection subgroup generated by $s_\alpha,s_\beta$ is finite if and only if the line $(\h\alpha,\h\beta)\cap Q=\varnothing$. Otherwise the line $(\h\alpha,\h\beta)$ intersects $Q$ in one or two points and contains an infinite number of normalized roots, see Figure~\ref{fig:Dihedral} for an illustration. We refer the reader to~\cite{HoLaRi14,DyHoRi13} for more details.
  
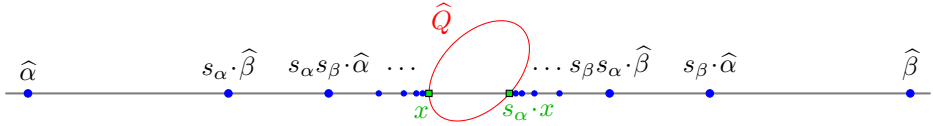
\begin{figure}[h!]
\newcommand{\ccdot}{\!\cdot \!}
\begin{tikzpicture}
	[scale=1,
	 pointille/.style={densely dashed},
	 root/.style={inner sep=1pt,circle,draw=blue,fill=blue,text=blue},
         sroot/.style={inner sep=0.7pt,circle,draw=blue,fill=blue},
         limroot/.style={inner sep=1.2pt,rectangle,draw=black,fill=green!90!black},
	 ]

\begin{scope} [rotate=45]
  
  \draw[red] (0,0) ellipse (0.8cm and 0.5cm);
\end{scope}

\draw[color=red] (-0.5,0.7) node{$\h Q$} ;
\draw[color=gray, thick] (-6.3,-0.3) -- (6,-0.3);

\node[root, label=above :{$\h\alpha$}] at ( -6.00000000000000 ,-0.3) {};
\node[root, label=above :{$s_\alpha \ccdot \h\beta$}] at (
-3.33500000000000 ,-0.3) {};
\node[root, label=above :{$s_\alpha s_\beta \ccdot \h\alpha$}] at (
-2.00250000000000 ,-0.3) {};
\node[sroot] at ( -1.33625000000000 ,-0.3) {};
\node[sroot, label={[shift={(0,0.15)}]$\dots$}] at ( -1.00312500000000 ,-0.3) {};
\node[sroot] at ( -0.836562500000000 ,-0.3) {};
\node[sroot] at ( -0.753281250000000 ,-0.3) {};

\node[root, label=above :{$\h\beta$}] at ( 5.73000000000000 ,-0.3) {};
\node[root, label=above :{$s_\beta \ccdot \h\alpha$}] at (
3.06500000000000 ,-0.3) {};
\node[root, label=above :{$s_\beta s_\alpha \ccdot \h\beta$}] at (
1.73250000000000 ,-0.3) {};
\node[sroot] at ( 1.06625000000000 ,-0.3) {};
\node[sroot, label={[shift={(0.2,0.15)}]$\dots$}] at ( 0.733125000000000 ,-0.3) {};
\node[sroot] at ( 0.566562500000000 ,-0.3) {};
\node[sroot] at ( 0.483281250000000 ,-0.3) {};

\node [limroot] at (-0.67,-0.3) {};
\node[green!70!black] at (-0.77,-0.55) {$x$};
\node[limroot] at (0.4,-0.3) {} ;
\node[green!70!black] at (0.65,-0.55) {$s_\alpha \ccdot x$};
\end{tikzpicture}

\caption{(\cite[Figure~7]{DyHoRi13}) Projective representation of a infinite dihedral reflection subgroup $W'$. In green are the two limit points of the root subsystem associated to $W'$ together with the action of $W'$.}
\label{fig:Dihedral}
\end{figure}
\end{ex}

\begin{ex}[Maximal dihedral reflection subgroups]\label{ex:MaxDi} A {\em maximal rank~$2$ root subsystem of $\Phi$} is a set~$\Phi'$ of the form $\Phi'=P\cap \Phi$ where $P$ is a plane in~$V$ intersecting $\Phi^+$ in at least two roots. The cone  spanned by~$\Phi' \cap\Phi^+$ has then a basis $\Delta'=\{\alpha,\beta\}$ of cardinality $2$  included in~$\Phi'\cap \Phi^+$, and then one has 
$$
\Phi'=P\cap \Phi^{+}=\cone(\Delta')\cap \Phi.  
$$
One can show that $(\Phi',\Delta')$ is a based root subsystem of $(\Phi,\Delta)$ of rank $2$. Write $S'=\{s_\alpha,s_\beta\}$, then the dihedral reflection subgroup $W'=\langle S'\rangle$ is a {\em maximal dihedral reflection subgroup}. 

Any dihedral reflection subgroup $W''$ of $W$ is contained in such a maximal dihedral reflection subgroup since it has a rank $2$ based root subsystem that is contained in such a maximal rank 2 root subsystem.

In the context of normalized roots, as in Remark~\ref{rem:Proj1}, planes correspond to lines so maximal dihedral reflection subgroups corresponds to lines passing through two normalized roots. If $P$ is such a line, then $\h \Phi' =P\cap \h\Phi$ is contained in a segment $[\h\alpha,\h\beta]$ and $\h\Delta'=\{\h\alpha,\h\beta\}$. For instance, in Figure~\ref{fig:AffineA2}, we see that $W'=\langle s_{\alpha_1+\alpha_3},s_{\alpha_2}\rangle$ is a (infinite) maximal dihedral reflection subgroup containing any dihedral reflection subgroup generated by two roots in the segment $[\h{\alpha_1+\alpha_2}, \h\alpha_3]$.
\end{ex}

We state now the following sufficient condition for a reflection subgroup to be finite; this result is used in the proof of Lemma~\ref{lem:SmallInv}.

\begin{prop}\label{prop:FiniteRefl} Let $w\in W$.
\begin{enumerate}
\item If $W'$ is a reflection subgroup of $W$ with $\Delta_{W'}\subseteq N(w)$, then $W'$ is finite.
\item If $\alpha,\beta\in N(w)$, then $B(\alpha,\beta)>-1$.
\end{enumerate}
\end{prop}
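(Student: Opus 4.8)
The plan is to establish part~(1) first and then obtain part~(2) as a short consequence. The crucial underlying fact I would isolate is that the inversion set $N(w)$ is \emph{closed}, in the sense that $\cone(N(w))\cap\Phi^+=N(w)$. To prove this, take a positive root $\gamma\in\cone(N(w))\cap\Phi^+$ and write $\gamma=\sum_i a_i\beta_i$ with $a_i\geq 0$ and $\beta_i\in N(w)$. Applying $w^{-1}$ gives $w^{-1}(\gamma)=\sum_i a_i\,w^{-1}(\beta_i)$. Since each $\beta_i\in N(w)=\Phi^+\cap w(\Phi^-)$ means $w^{-1}(\beta_i)\in\Phi^-\subseteq-\cone(\Delta)$, the vector $w^{-1}(\gamma)$ lies in $-\cone(\Delta)$. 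As $\cone(\Delta)$ is pointed and $w^{-1}(\gamma)$ is a (nonzero) root, it must be negative, whence $\gamma\in w(\Phi^-)\cap\Phi^+=N(w)$.

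For part~(1), I would combine this closure property with the characterization of the canonical simple system recalled in the Remark following the definition of reflection subgroups, namely $\Phi_{W'}^+\subseteq\cone(\Delta_{W'})$. From the hypothesis $\Delta_{W'}\subseteq N(w)$ we get $\cone(\Delta_{W'})\subseteq\cone(N(w))$, and therefore
$$
\Phi_{W'}^+\subseteq\cone(N(w))\cap\Phi^+=N(w).
$$
Since $|N(w)|=\ell(w)$ is finite, the positive system $\Phi_{W'}^+$ is finite, hence $\Phi_{W'}$ is a finite root system and $W'=\langle s_\alpha\mid\alpha\in\Delta_{W'}\rangle$ is a finite Coxeter group.

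For part~(2), I would argue by contradiction. Suppose $\alpha,\beta\in N(w)$ with $B(\alpha,\beta)\leq -1$. By Example~\ref{ex:InfDiRef}, $\{\alpha,\beta\}$ is then the canonical simple system of the dihedral reflection subgroup $W'=\langle s_\alpha,s_\beta\rangle$, and this $W'$ is infinite. But $\Delta_{W'}=\{\alpha,\beta\}\subseteq N(w)$, so part~(1) forces $W'$ to be finite, a contradiction. Hence $B(\alpha,\beta)>-1$.

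I expect the main obstacle to be establishing the closure property $\cone(N(w))\cap\Phi^+=N(w)$ cleanly, since everything else is a direct assembly: once closure is available, part~(1) is the one-line containment $\Phi_{W'}^+\subseteq N(w)$ together with the finiteness criterion $|\Phi_{W'}^+|\leq\ell(w)$, and part~(2) reduces immediately to part~(1) and Example~\ref{ex:InfDiRef} with essentially no further computation.
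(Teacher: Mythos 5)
Your proof is correct and takes essentially the same route as the paper's: the paper deduces the closure fact $\cone_\Phi(N(w))=N(w)$ from Proposition~\ref{prop:Weak}(4) and then runs the identical chain $\Phi^{+}_{W'}\subseteq\cone(\Delta_{W'})\subseteq\cone(N(w))\cap\Phi^{+}=N(w)$, obtaining part~(2) by the same infinite-dihedral contradiction as in Example~\ref{ex:InfDiRef}. The only differences are cosmetic: you prove the closure property directly (a correct elementary argument via $w^{-1}$ and pointedness of $\cone(\Delta)$) where the paper cites it, and you should add the paper's one-line justification that finiteness of $\Phi_{W'}$ implies finiteness of $W'$ because $W'$ acts faithfully on $\Phi_{W'}$.
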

\begin{proof} To prove {\em (1)}, note that Proposition~\ref{prop:Weak}{\em (4)} implies in particular that $$\Phi^{+}_{W'}\subseteq \cone_\Phi(\Delta_{W'})\subseteq \cone_\Phi (N(w))=N(w).$$ Since $N(w)$ is finite, so is $\Phi^{+}_{W'}$. Therefore $\Phi_{W'}$ is finite. Since $W'$ acts faithfully as permutation group on $\Phi_{W'}$, $W'$ is finite.

 To prove {\em (2)}, assume for a contradiction that $\alpha,\beta\in \Phi$ with $B(\alpha,\beta)\leq -1$. Then $\alpha\not = \beta$ and $\Delta_{W'}=\{\alpha,\beta\}$ verifies the condition to be a simple system as in \S\ref{sse:GeoRep}. Let $W':=\langle s_{\alpha},s_{\beta}\rangle$ the  dihedral reflection subgroup associated to $\Delta'$ and note that $W'$ is infinite. But   $\Delta_{W'}\subseteq N(w)$ by assumption, contradicting {\em (1)}. 
\end{proof}
%

 
 We end this section by recalling the notion of {\em shortest coset representatives} of reflection subgroups.  Let~$W'$ be a reflection subgroup of $W$ with canonical generators $S'$ and based root subsystem $(\Phi_{W'},\Delta_{W'})$ and  set
 
\begin{equation*}
X_{W'}:=\{v\in W\,|\, \ell(s_\beta v)>\ell(v),\forall \beta\in \Phi_{W'}  \}=\{v\in W\,|\, N(v)\cap \Phi_{W'}=\emptyset \}.
\end{equation*}  
Any coset $W'w$ in $W$ contains an element of minimal length, which must be in~$X_{W'}$.
It is known (see \cite[3.3(ii),3.4]{Dy91}) that

\begin{prop} \label{prop:CosetRep} Let $W'$ be a reflection subgroup of $W$ and $w\in W$. Write $w=uv$ with $(u,v)\in  W'\times X_{W'}$. We have:
\begin{enumerate}
\item  $N(w)\cap \Phi_{W'}=N_{W'}(u)$;
\item $\ell(s_\beta w)<\ell(w)$ if and only if $\ell_{W'}(s_\beta u)<\ell_{W'}(u)$ for all $\beta\in \Phi_{W'}$.
\end{enumerate}
\end{prop}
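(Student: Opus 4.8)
The plan is to reduce both statements to the basic dictionary between inversion sets and the length function, exploiting the inversion-set characterization of $X_{W'}$. First I would record the key geometric property of minimal coset representatives: if $v\in X_{W'}$, then since $N(v)\cap\Phi_{W'}=\emptyset$, no positive root of $\Phi_{W'}$ can lie in $v(\Phi^-)$ (otherwise it would sit in $N(v)\cap\Phi_{W'}$), which rephrases as $v^{-1}(\Phi_{W'}^+)\subseteq\Phi^+$. Negating and using $\Phi_{W'}^-=-\Phi_{W'}^+$ and $\Phi^-=-\Phi^+$ also gives $v^{-1}(\Phi_{W'}^-)\subseteq\Phi^-$; so $v^{-1}$ \emph{preserves signs} of the roots of $\Phi_{W'}$. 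I would note in passing that the decomposition $w=uv$ is genuinely available: the coset $W'w$ has a unique minimal-length element $v\in X_{W'}$, and then $u:=wv^{-1}\in W'$.

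For statement $(1)$, I would compute $N(w)\cap\Phi_{W'}=\Phi_{W'}^+\cap w(\Phi^-)$ directly. Fix $\beta\in\Phi_{W'}^+$ and test whether $\beta\in N(w)$, i.e. whether $w^{-1}(\beta)=v^{-1}u^{-1}(\beta)\in\Phi^-$. Since $u\in W'$, the root $\gamma:=u^{-1}(\beta)$ lies in $\Phi_{W'}$, and by the sign-preservation property the sign of $v^{-1}(\gamma)$ in $\Phi$ equals the sign of $\gamma$ in $\Phi_{W'}$. Hence $\beta\in N(w)$ exactly when $\gamma\in\Phi_{W'}^-$, i.e. exactly when $\beta\in\Phi_{W'}^+\cap u(\Phi_{W'}^-)=N_{W'}(u)$, which is precisely $(1)$.

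Statement $(2)$ I would then obtain as a formal consequence of $(1)$ together with the standard length criterion. Taking $\beta\in\Phi_{W'}^+$ without loss of generality (as $s_\beta=s_{-\beta}$), the criterion in $(W,S)$ gives $\ell(s_\beta w)<\ell(w)\iff\beta\in N(w)$, and since $\beta\in\Phi_{W'}$ this is $\beta\in N(w)\cap\Phi_{W'}=N_{W'}(u)$ by $(1)$. Applying the same equivalence inside the Coxeter system $(W',S')$ gives $\beta\in N_{W'}(u)\iff\ell_{W'}(s_\beta u)<\ell_{W'}(u)$, and chaining these yields $(2)$.

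The one point that genuinely needs care — and which I would regard as the main (if mild) obstacle — is the sign-preservation step: one must verify that $v^{-1}$ respects the positive/negative partition of $\Phi_{W'}$ on \emph{both} sides, not merely that $v$ has no $\Phi_{W'}$-inversions. Everything else is bookkeeping with the identity $N_{W'}(u)=N(u)\cap\Phi_{W'}$ and the standard equivalence $\ell(s_\beta w)<\ell(w)\iff\beta\in N(w)$, invoked once in $W$ and once in $W'$.
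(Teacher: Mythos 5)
Your proof is correct, and it is worth noting that the paper gives no proof of Proposition~\ref{prop:CosetRep} at all: it simply quotes the statement from \cite[3.3(ii), 3.4]{Dy91}, so your argument supplies what the paper outsources. The route you take is the natural self-contained root-system one, and the step you flag as the crux is indeed the only nontrivial point: from $N(v)\cap\Phi_{W'}=\emptyset$ one gets $v^{-1}(\Phi_{W'}^{+})\subseteq\Phi^{+}$, hence by negating also $v^{-1}(\Phi_{W'}^{-})\subseteq\Phi^{-}$, and your verification of this from the inversion-set description of $X_{W'}$ is correct. Part (1) then follows as you say, though the claim that $u^{-1}(\beta)\in\Phi_{W'}$ for $u\in W'$ deserves its one-line justification ($s_{x(\gamma)}=xs_{\gamma}x^{-1}$, so $W'$ permutes $\Phi_{W'}$). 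Part (2) is the inversion/descent dictionary applied once in $(W,S)$ and once in $(W',S')$; the second application is legitimate precisely because, as the paper recalls from \cite{Dy90} just before the statement, $(\Phi_{W'},\Delta_{W'})$ is a based root system for the Coxeter system $(W',\chi(W'))$ with positive system $\Phi_{W'}\cap\Phi^{+}$, so the standard equivalence $\ell_{W'}(s_{\beta}u)<\ell_{W'}(u)\iff\beta\in N_{W'}(u)$ holds there, and your reduction to $\beta\in\Phi_{W'}^{+}$ via $s_{\beta}=s_{-\beta}$ is handled correctly. One cosmetic overstatement: when setting up $w=uv$ you assert \emph{uniqueness} of the minimal-length coset representative, which you have not proved at that point and do not need --- existence suffices for the decomposition, and the paper in fact derives uniqueness as a consequence of this very proposition rather than using it as an input.
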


 This second statement of this last proposition, which will  be used frequently in  \S\ref{se:Suffix} and \S\ref{se:BruhatRoot}, will be referred to  as   {\em ``functoriality of the Bruhat graph''}  (for inclusions of reflection subgroups; see \cite{Dy91,Dy92}). 
It implies that the elements of $X_{W'}$ are precisely  those elements $v$ of $W$ which are of minimum length in their coset $W'v$, and that any element $w\in W$ can be uniquely expressed in the form $w=uv$ where $u\in W'$ and $v\in X_{W'}$.   Accordingly,   $X_{W'}$ is   called the set of \emph{minimal length  left coset representatives} for $W'$ in $W$. Functoriality of the Bruhat graph also implies the following useful alternative characterization of~$X_{W'}$:   
$$X_{W'}=\{v\in W\,|\, \ell(s_\beta v)>\ell(v),\forall \beta\in \Delta_{W'} \}.$$

\section{Small roots and Low elements}\label{se:Low}

Let  $(W,S)$ be a finitely generated Coxeter system together with a root system~$\Phi$ and simple system $\Delta$ as in \S\ref{sse:GeoRep}.  The aim of the next two sections is to prove  Theorem~\ref{thm:Main1}:  the set of low elements  is a finite Garside shadow in $(W,S)$. In order to do this we first review one important partial order  on the set of positive roots $\Phi^+$ that lead to the definitions of small roots and low elements.

\subsection{Dominance order on roots}\label{sse:Domin}   

 Introduced by Brink and Howlett in~\cite{BrHo93}, the \emph{dominance order} is the partial order~$\preceq$ on $\Phi^+$ defined by:
$$
\alpha\preceq \beta \iff (\forall w\in W,\, \beta\in N(w)\implies \alpha\in N(w)).
$$
(we say in this case that {\em $\beta$ dominates~$\alpha$}). Related to the dominance order, there is a notion of dominance depth, called $\infty$-depth, of a positive root.

\begin{defi} Let $\beta\in \Phi^+$ and $n\in\mathbb N$.
\begin{enumerate}[(i)]  

\item The {\em dominance set of $\beta$ is} $\dom(\beta)=\{\alpha\in\Phi^+\,|\,\alpha\prec \beta\}$, the set of positive roots strictly dominated by $\beta$. 
 
 \item The {\em $\infty$-depth on $\Phi^+$} is  defined by 
$
\dep_\infty(\beta)=|\dom(\beta)|.
$

\end{enumerate}
\end{defi}

\begin{remark}
\begin{enumerate}[(a)]

\item In the definition of $\dom(\beta)$, the inequality  is strict: $\beta\notin \dom(\beta)$. 

\item The dominance order together with the $\infty$-depth $\dep_\infty(\beta)$ should not be confused with the {\em root poset~\cite[\S4.6]{BjBr05}} defined by using the usual depth of a root $\dep(\beta)$ from \S\ref{sse:GeoRep},  see~\S\ref{se:BruhatRoot} for further discussions on this subject.

\end{enumerate}
 \end{remark}

 The following result, which is analogous  to Proposition~\ref{prop:Dep}, gives a recurrence formula for $\infty$-depth of roots; see Proposition~\ref{1.6} for a common generalization.

\begin{prop}\label{prop:NSmall} Let $s\in S$ and $\beta\in\Phi^+$  with $\beta\neq \alpha_{s}$.  We have $\dep_\infty(\alpha_s)=0$ and 
 $$
  \dep_\infty(s(\beta))=
   \left\{
  \begin{array}{cl}
  \dep_\infty(\beta)-1&\textrm{if }\ B(\alpha_s,\beta)\geq 1\\
  \dep_\infty(\beta)&\textrm{if }\ B(\alpha_s,\beta)\in\,  ]-1,1[ \\
  \dep_\infty(\beta)+1&\textrm{if }\ B(\alpha_s,\beta)\leq  -1.\\
  \end{array}
  \right.
  $$
  In particular, $\dep_\infty(\alpha) =0$ for any simple root $\alpha\in \Delta$.
\end{prop}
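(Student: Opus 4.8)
The plan is to read off the recurrence from a single ``book-keeping'' identity comparing $\dom(\beta)$ and $\dom(s\beta)$, and then reduce the one remaining quantity to a dihedral computation. First I would dispose of the base case: since $N(s)=\{\alpha_s\}$ by Proposition~\ref{prop:Weak}(a), any $\gamma\in\Phi^+$ with $\gamma\prec\alpha_s$ must lie in $N(s)$ (take $w=s$ in the definition of dominance), forcing $\gamma=\alpha_s$ and contradicting strictness; hence $\dom(\alpha_s)=\varnothing$ and $\dep_\infty(\alpha_s)=0$. This also records the fact, used below, that \emph{nothing} is strictly dominated by a simple root. Next I would exploit the $W$-equivariance of dominance. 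Writing $u=w^{-1}$, the defining condition reads $\alpha\preceq\beta\iff(\forall u\in W,\ u(\beta)\in\Phi^-\Rightarrow u(\alpha)\in\Phi^-)$, which is visibly invariant under the substitution $(\alpha,\beta,u)\mapsto(s\alpha,s\beta,us)$. Thus for $\gamma,\beta\in\Phi^+\setminus\{\alpha_s\}$ (so that $s\gamma,s\beta\in\Phi^+$) one has $\gamma\prec\beta\iff s\gamma\prec s\beta$. Consequently $s$ restricts to a bijection $\dom(\beta)\setminus\{\alpha_s\}\to\dom(s\beta)\setminus\{\alpha_s\}$, and counting gives the key identity
$$
\dep_\infty(s\beta)-\dep_\infty(\beta)=\mathbf 1[\alpha_s\prec s\beta]-\mathbf 1[\alpha_s\prec\beta].
$$

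Everything now reduces to the single criterion that, for $\gamma\in\Phi^+\setminus\{\alpha_s\}$,
$$
\alpha_s\prec\gamma\iff B(\alpha_s,\gamma)\ge 1.
$$
Granting this, I would combine it with the elementary computation $B(\alpha_s,s\beta)=B(\alpha_s,\beta)-2B(\alpha_s,\beta)B(\alpha_s,\alpha_s)=-B(\alpha_s,\beta)$ and run the three cases. If $B(\alpha_s,\beta)\ge 1$ then $\alpha_s\prec\beta$ while $B(\alpha_s,s\beta)\le -1$ gives $\alpha_s\not\prec s\beta$, so the identity returns $-1$; if $B(\alpha_s,\beta)\in\,]{-1},1[$ then $B(\alpha_s,s\beta)$ lies in the same interval and neither dominance holds, returning $0$; if $B(\alpha_s,\beta)\le -1$ then $\alpha_s\not\prec\beta$ but $B(\alpha_s,s\beta)\ge 1$ gives $\alpha_s\prec s\beta$, returning $+1$. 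These match the three lines of the statement exactly.

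The hard part will be the criterion itself, and I would prove it by localizing dominance to the dihedral reflection subgroup $W'=\langle s_{\alpha_s},s_\gamma\rangle$, which contains both $\alpha_s$ and $\gamma$ in $\Phi_{W'}^+$. Using functoriality of the Bruhat graph (Proposition~\ref{prop:CosetRep}), membership $\gamma\in N(w)$ depends only on the $W'$-component of $w$, so for roots in $\Phi_{W'}^+$ dominance computed in $W$ coincides with dominance computed in $W'$. When $B(\alpha_s,\gamma)\in\,]{-1},1[$ the subgroup $W'$ is finite, where dominance is trivial, so $\alpha_s\not\prec\gamma$; when $B(\alpha_s,\gamma)\le -1$ the subgroup is infinite dihedral with $\{\alpha_s,\gamma\}$ its canonical simple system (Example~\ref{ex:InfDiRef}), and the two canonical simple roots are incomparable, so again $\alpha_s\not\prec\gamma$. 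Finally, when $B(\alpha_s,\gamma)\ge 1$ the subgroup is infinite dihedral and $\alpha_s,\gamma$ lie on the same side of its positive root poset; since the inversion sets of $W'$ are initial segments along each side, the two roots are comparable, and the wrong direction $\gamma\prec\alpha_s$ is excluded by the base-case fact $\dom(\alpha_s)=\varnothing$, leaving $\alpha_s\prec\gamma$. The delicate points I expect to spend the most care on are verifying the functoriality transfer of dominance from $W'$ to $W$ and the explicit description of dominance inside an infinite dihedral group (that distinct same-side roots are comparable and that opposite-side roots are not), together with the elementary inequalities on $B$ that separate the finite and infinite dihedral regimes.
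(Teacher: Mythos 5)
Your argument is correct, but it takes a genuinely different route from the paper: the paper proves only the base case $\dep_\infty(\alpha_s)=0$ (exactly as you do, from $N(s)=\{\alpha_s\}$) and then disposes of the three-case recurrence by citing Fu~\cite[Proposition~3.14]{Fu12}, whereas you give a self-contained proof. Both of your main ingredients are in fact available inside the paper: the $W$-equivariance of dominance (this is \cite[Lemma~2.2]{BrHo93}, quoted later in the proof of Proposition~\ref{2.60}) yields your counting identity $\dep_\infty(s(\beta))-\dep_\infty(\beta)=\mathbf{1}[\alpha_s\prec s(\beta)]-\mathbf{1}[\alpha_s\prec\beta]$, and the localization of dominance to reflection subgroups is precisely Lemma~\ref{lem:Domin}(1), which you correctly re-derive from Proposition~\ref{prop:CosetRep}(1). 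Your key criterion, $\alpha_s\prec\gamma\iff B(\alpha_s,\gamma)\geq 1$ for $\gamma\in\Phi^+\setminus\{\alpha_s\}$, is a special case of Brink--Howlett's characterization of dominance and is in substance what the outsourced citation rests on, so your route buys transparency at the cost of the dihedral verifications you flag; these do go through with facts the paper already records: $-1<B(\alpha_s,\gamma)<1$ is equivalent to finiteness of $\langle s_{\alpha_s},s_\gamma\rangle$ (remark after Lemma~\ref{lem:Bip1}), dominance is trivial in finite groups (Example~\ref{ex:Finite}; concretely $N(s_\alpha w_{\circ,W'})=\Phi^+_{W'}\setminus\{\alpha\}$ separates any pair of distinct positive roots), and in the infinite dihedral group every inversion set is an initial segment of a \emph{single} side, $N([s,t]_k)=\{\alpha_{s,j}\mid 0\leq j<k\}$ as in Example~\ref{ex:Inf2} and Figure~\ref{fig:InfiniteDi}, which gives simultaneously your same-side comparability and opposite-side incomparability. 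Two details you should write out when completing the sketch: first, that $\alpha_s\in\Delta_{W'}$ for any reflection subgroup $W'$ with $\alpha_s\in\Phi_{W'}$ (immediate from $N(s_{\alpha_s})\cap\Phi_{W'}=\{\alpha_s\}$ and the definition of $\Delta_{W'}$), so the side dichotomy applies with $\alpha_s$ as a canonical simple root of $W'$; second, the sign computation that in an infinite dihedral subsystem with canonical simple system $\{\mu,\nu\}$, $c=-B(\mu,\nu)\geq 1$, the roots on the $\mu$-side satisfy $B(\mu,\cdot)\geq 1$ and those on the $\nu$-side satisfy $B(\mu,\cdot)\leq -1$ (the values are $\pm T_k(c)$ for Chebyshev polynomials $T_k$), which is what converts the trichotomy on $B(\alpha_s,\gamma)$ into your side analysis.
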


\begin{proof} 

 The first statement follows from definition: if $\alpha_s$, which is element of $N(s)$,  dominates a positive root $\beta$, then $\beta\in N(s)=\{\alpha_s\}$. The second statement is a restatement of~\cite[Proposition~3.14]{Fu12}.
\end{proof}

The following lemma recollects useful properties of dominance order regarding restriction to root subsystem.

\begin{lem}\label{lem:Domin} 
\begin{enumerate} 
\item For any reflection subroup $W'$ of $W$, dominance order $\prec_{W'}$ on $\Phi^+_{W'}$ is the restriction to $\Phi_{W'}$ of the dominance order $\prec$ on $\Phi$.
\item If $I\subseteq S$, then the $\infty$ depth on $\Phi_I^+$ is the restriction of the $\infty$-depth on $\Phi^+$ to $\Phi_I^+$.
\end{enumerate}
\end{lem}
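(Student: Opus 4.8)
The plan is to treat the two parts separately and to deduce (2) from (1) together with one extra containment. Throughout I use that the intrinsic dominance order $\preceq_{W'}$ on $\Phi^+_{W'}$ is defined by quantifying only over $u\in W'$, applied to the inversion sets $N_{W'}(u)=N(u)\cap\Phi_{W'}$, whereas $\preceq$ quantifies over all of $W$; so a priori $\alpha\preceq\beta$ is the \emph{stronger} condition, and the content of (1) is that the two agree on $\Phi^+_{W'}$.

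For (1), fix $\alpha,\beta\in\Phi^+_{W'}$ and prove the two implications. The implication $\alpha\preceq\beta\Rightarrow\alpha\preceq_{W'}\beta$ is immediate: if $u\in W'$ has $\beta\in N_{W'}(u)\subseteq N(u)$ then $\alpha\in N(u)$ by hypothesis, and since $\alpha\in\Phi_{W'}$ we get $\alpha\in N(u)\cap\Phi_{W'}=N_{W'}(u)$. The reverse implication is where the real input enters, since $\preceq$ tests more elements. Here I would take an arbitrary $w\in W$ with $\beta\in N(w)$ and write $w=uv$ with $u\in W'$ and $v\in X_{W'}$; Proposition~\ref{prop:CosetRep}(1) gives $N(w)\cap\Phi_{W'}=N_{W'}(u)$. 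As $\beta\in\Phi_{W'}$, this puts $\beta\in N_{W'}(u)$, so $\alpha\preceq_{W'}\beta$ forces $\alpha\in N_{W'}(u)\subseteq N(w)$; hence $\alpha\preceq\beta$. Strictness ($\alpha\neq\beta$) is untouched, so $\prec_{W'}$ and $\prec$ coincide on $\Phi^+_{W'}$.

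For (2), observe that $W_I$ is a reflection subgroup with $\Phi_{W_I}=\Phi_I$, so (1) already identifies $\prec_{W_I}$ with the restriction of $\prec$; thus $\dep_{\infty,I}(\beta)=|\{\alpha\in\Phi^+_I\,:\,\alpha\prec\beta\}|$ and the claim reduces to $\dom(\beta)\subseteq\Phi^+_I$ whenever $\beta\in\Phi^+_I$. The key observation is the general containment $\dom(\beta)\subseteq N(s_\beta)$: applying the definition of dominance to $w=s_\beta$, which satisfies $\beta\in N(s_\beta)$ since $s_\beta(-\beta)=\beta$, shows every $\alpha\preceq\beta$ lies in $N(s_\beta)$. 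When $\beta\in\Phi^+_I$ we have $s_\beta\in W_I$, and inversion sets of elements of $W_I$ stay in $\Phi^+_I$ (by Proposition~\ref{prop:Weak}(a), a reduced word in $I$ builds $N$ from the simple roots $\alpha_s$, $s\in I$, which all lie in $\Phi_I$). Hence $\dom(\beta)\subseteq N(s_\beta)\subseteq\Phi^+_I$, giving $\dep_\infty(\beta)=|\dom(\beta)|=\dep_{\infty,I}(\beta)$.

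The calculations are light; the only genuinely delicate point is the reverse implication in (1), which the coset decomposition of Proposition~\ref{prop:CosetRep}(1) handles cleanly. I would emphasize that the extra step for (2) relies crucially on $I\subseteq S$ being \emph{standard} parabolic: the containment $N(s_\beta)\subseteq\Phi^+_I$ uses that $W_I$ is generated by simple reflections. For a general reflection subgroup $W'$, the reflection $s_\beta$ with $\beta\in\Phi_{W'}$ may have inversions outside $\Phi_{W'}$, so a root dominated by $\beta$ can leave $\Phi_{W'}$ and the $\infty$-depth need not restrict — which is exactly why (2) is stated only for standard parabolic subgroups, while (1) holds for all reflection subgroups. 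Finally I would check the trivial boundary case $\dep_\infty(\alpha_s)=0$ for simple roots, but expect no surprises there.
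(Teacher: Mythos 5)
Your proof is correct. For part (2) your argument is essentially identical to the paper's: the paper also notes that $N_I(w)=N(w)$ for $w\in W_I$ (because reduced words for $w$ use only letters of $I$) and concludes $\alpha\preceq\beta\implies\alpha\in N(s_\beta)\subseteq\Phi_I^+$, hence $\dom(\beta)\subseteq\Phi_I^+$ for $\beta\in\Phi_I^+$. The difference is in part (1): the paper does not prove it at all, but cites the literature (it ``appears in [HRT97]; see [Fu13, Corollary~3.3] for a more detailed proof''), whereas you supply a short self-contained argument, deducing the nontrivial implication $\alpha\preceq_{W'}\beta\implies\alpha\preceq\beta$ from the coset decomposition $w=uv$, $u\in W'$, $v\in X_{W'}$, and the identity $N(w)\cap\Phi_{W'}=N_{W'}(u)$ of Proposition~\ref{prop:CosetRep}(1). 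This is exactly the right tool and your two implications check out (the easy direction uses only $N_{W'}(u)=N(u)\cap\Phi_{W'}$, and strictness is preserved since the underlying roots are unchanged); it is in substance the standard proof from the cited references, so your version buys self-containedness at the cost of a paragraph the authors chose to outsource. Your closing remark correctly isolates where standardness of $W_I$ enters (the containment $N(s_\beta)\subseteq\Phi_I^+$ fails for general reflection subgroups, and indeed $\infty$-depth does not restrict to them --- e.g.\ roots of ambient $\infty$-depth $1$ can lie in a finite dihedral reflection subgroup, all of whose roots have intrinsic $\infty$-depth $0$), which is a point the paper makes only implicitly.
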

\begin{proof} Part (1) appears in \cite{HRT97}; see  \cite[Corollary 3.3]{Fu13}{\em (2)} for a more detailed proof. Let us now prove {\em (2)}: since any reduced word for $w\in W_I$ has its letters in $I$, we have $N_I(w)=N(w)$ for any $w\in W_I$. So  by definition of dominance  for $\beta\in\Phi_I^+$ and $\alpha\in \Phi^+$: $\alpha\preceq \beta\implies \alpha\in N(s_\beta)\subseteq \Phi^+_I$. So $\dom(\beta)\subseteq \Phi_I^+$  if $\beta\in\Phi_I^+$ and therefore the $\infty$-depth on $\Phi^+_I$ is the restriction of the $\infty$-depth on $\Phi^+$ to $\Phi_I^+$. 
\end{proof}

\begin{remark}\label{rem:Proj3} The $\infty$-depth has a nice geometric interpretation in the context of normalized roots (see~Remarks~\ref{rem:Proj1}~and~\ref{rem:Proj2}). Following~\cite{DyHoRi13} we say that $\h\beta\in \h \Phi$ is visible from $\h\alpha\in\h\Phi$  looking at $\h Q$, where $\h\alpha\neq \h\beta$,  if the segment  $[\h\alpha,\h\beta]$  has empty intersection with $Q$  and if the half-line $[\h\alpha,\beta)$ starting at $\h\alpha$ and passing through $\h \beta$ intersects $\h Q$. Then   $\alpha\prec \beta$ if and only if~$\h\beta$ is visible from $\h\alpha$ looking at $\h Q$; see \cite[Proposition~5.7]{DyHoRi13}. For instance, in Figure~\ref{fig:AffineA2}, $\beta$ is visible from $\alpha_3$ looking at the point $\h Q$ (in red). For a normalized root $\h\beta\in\h\Phi$, define the {\em blind cone} of $\h\beta$ to be the cone $Bl(\beta)$ pointed in $\h\beta$ and constituted of the points $a$ such that the line $(a,\h\beta)$ cuts $\h Q$ and such that the half-line $[\h\beta,a)$ starting at $\h\beta$ and passing through $a$ does not intersect $\h Q$. So $$\dom(\beta)=\{\alpha\in\Phi^+\,|\, \h\alpha\in Bl(\beta)\setminus \{\h\beta\}\}.$$ In particular $\dep_\infty(\beta)$ is the number of normalized roots in the blind cone of $\beta$ without counting $\beta$:
$$
\dep_\infty(\beta)=|Bl(\beta)\cap \h\Phi\setminus\{\h\beta\}|=|Bl(\h\beta)\cap\h\Phi|-1
$$
An example of the blind cone of a positive root $\beta$ such that $\dep_\infty(\beta)=2$ is given in Figure~\ref{fig:Ex237}; note that in Figure~\ref{fig:AffineG2}  the blind cone starting at the root of $\infty$-depth $2$ is a half-line passing through $\gamma$. 
 \end{remark}

Using Remark~\ref{rem:Proj3}, one gives the following examples. 

\begin{ex}\label{ex:Finite} If $W$ is finite, then $\dep_\infty(\alpha)=0$ for all $\alpha\in\Phi^+$. In particular any positive root dominates only itself.
\end{ex}

\begin{ex}\label{ex:Inf2} Assume, as in Example~\ref{ex:Inf1}, that $W$ is the infinite dihedral group generated by $S=\{s,t\}$. Choose a geometric representation with $B(\alpha_s,\alpha_t)\leq -1$, the associated projective representation is illustrated in Figure~\ref{fig:Dihedral}. Denote by $[s,t]_k$ the reduced word $st\dots$ with $k$ letters; this word ends with a $s$ if $k$ is odd and with a $t$ if $k$ is even. Denote for $k\in\mathbb N$:
$$
\alpha_{s,k} =\left\{\begin{array}{ll}%
 [s,t]_k (\alpha_s)&\textrm{if $k$ is even}.\\%
 {}[s,t]_k (\alpha_t)&\textrm{if $k$ is odd}.%
\end{array}
\right.
$$ 
In particular $\alpha_{s,0}=\alpha_s$ and the $\h\alpha_{s,k}$ are the roots on the left-hand side of $\h Q$ in Figure~\ref{fig:Dihedral}. We have therefore 
\begin{eqnarray*}
\dom(\alpha_{s,k})&=&\{\alpha\in\Phi^+\,|\, \h\alpha\in Bl(\alpha_{s,k})\setminus \{\h\beta\}\}\\
&=&\{\alpha_{s,j}\,|\, 0\leq j <  k\}
\end{eqnarray*}
Hence $\dep_\infty(\alpha_{s,k})=k$. By symmetry we can define $\alpha_{t,k}$ using the words $[t,s]_k$. In particular the unique roots of $\infty$-depth equals to $0$ are the simple roots $\alpha_s$ and $\alpha_t$.
\end{ex}

\begin{ex}\label{ex:AffineG2} In Figure~\ref{fig:AffineG2}, we give the $\infty$-depth on the first roots for the affine Coxeter system of type $\tilde G_2$. 
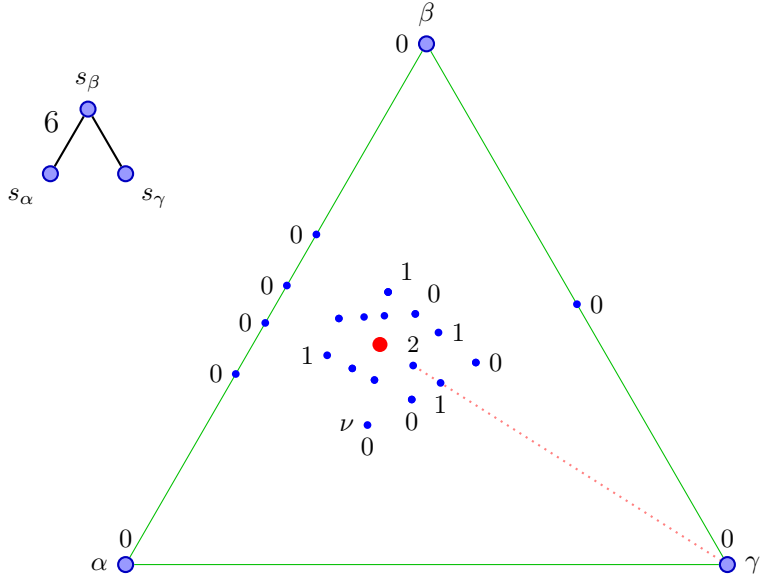
\begin{figure}[!h]
\scalebox{1}{
\begin{tikzpicture}
	[scale=2,
	 q/.style={red,line join=round,thick},
	 racine/.style={blue},
	 racinesimple/.style={inner sep=2pt,circle,draw=blue!75!black,fill=blue!40,thick},
	 racines/.style={inner sep=0.75pt,circle,draw=blue,fill=blue,thick},
	 rotate=0]
	 
\def\grosseur{0.025}
\def\grosseursimple{0.05}

\draw[dotted,thick,red!50] (1.91129642424986,1.32418500605019) -- (b);
\fill[red] (1.69059892324150,1.46410161513775) circle (\grosseursimple);

\node[racinesimple, label=left :{$\alpha$}, label=above:{$0$}] (a) at (0,0) {};
\node[racinesimple, label=right :{$\gamma$}, label=above:{$0$}] (b) at (4,0) {};
\node[racinesimple, label=above:{$\beta$}, label=left:{$0$}] (g) at (2.00000000000000,3.46410161513775) {};
\draw[green!75!black] (a) -- (b) -- (g) -- (a);

\node[racines, label=right :{$0$}] at (3.00000000000000,1.73205080756888) {};
\node[racines,label=left :{$0$}] at (1.26794919243112,2.19615242270663) {};
\node[racines,label=left :{$0$}] at (0.732050807568877,1.26794919243112)  {};

\node[racines, label=right :{$0$}] at (2.32797228678061,1.34405542643878) {};
\node[racines, label=below :{$0$}, label=left :{$\nu$}]  at (1.60769515458674,0.928203230275509) {};
\node[racines, label=left :{$0$}]  at (0.928203230275509,1.60769515458674) {};
\node[racines, label=left :{$0$}]  at (1.07179676972449,1.85640646055102) {};

\node[racines, label=below :{$0$}]  at (1.90192378864668,1.09807621135332) {};
\node[racines, label=north east :{$1$}]  at (1.74457630187009,1.81301687551177) {};
\node[racines, label=below :{$1$}]  at  (2.09349156224411,1.20867791700785) {};

\node[racines, label=north east :{$0$}]  at  (1.92552987299556,1.66755778575998) {};
\node[racines, label=right :{$1$}]  at  (2.07960403154708,1.54370564668483) {};
\node[racines, label=left :{$1$}]  at  (1.33974596215561,1.39230484541326) {};

\fill[racine] (1.50676128707236,1.30489355204360) circle (\grosseur);
\fill[racine] (1.65404441446703,1.22780955592818) circle (\grosseur);
\fill[racine] (1.41775235525746,1.63707940790424) circle (\grosseur);

\fill[racine] (1.50676128707236,1.30489355204360) circle (\grosseur);
\fill[racine] (1.92552987299556,1.66755778575998) circle (\grosseur);
\fill[racine] (1.74457630187009,1.81301687551177) circle (\grosseur);
\fill[racine] (1.58493649053890,1.64711431702997) circle (\grosseur);
\fill[racine] (1.72016678182488,1.65523125756274) circle (\grosseur);
\fill[racine] (1.90192378864668,1.09807621135332) circle (\grosseur);
\fill[racine] (2.32797228678061,1.34405542643877) circle (\grosseur);
\fill[racine] (1.65404441446703,1.22780955592818) circle (\grosseur);
\node[racines, label=above:{\small $2$}]  at  (1.91129642424986,1.32418500605019) {};
\fill[racine] (1.74457630187009,1.81301687551177) circle (\grosseur);
\fill[racine] (1.41775235525746,1.63707940790424) circle (\grosseur);

\coordinate (ancre) at (-0.5,2.6);
\node[racinesimple,label=below left:{$s_\alpha$}] (alpha) at (ancre) {};
\node[racinesimple,label=below right :{$s_\gamma$}] (beta) at ($(ancre)+(0.5,0)$) {};
\node[racinesimple,label=above:{$s_\beta$}] (gamma) at ($(ancre)+(0.25,0.43)$) {} edge[thick] node[auto,swap] {{\Large 6}} (alpha) edge[thick] (beta);
\coordinate (mat) at (4, 2.8);

\end{tikzpicture}
}
\caption{ The normalized isotropic cone $\h{Q}=\{\h\delta\}$ in red, the small roots (with $\infty$-depth~$0$)  and the $\infty$-depth on some other roots  for the normalized root system of
affine type $\widetilde{G_2}$. The  dotted red line forms the blind cone (c.f. Remark~\ref{rem:Proj3}) of the root of depth $2$ it emanates from. }
\label{fig:AffineG2}
\end{figure}
\end{ex}

\begin{ex}[Affine Weyl groups]\label{ex:Affine} We now give the example of an affine Coxeter system $(W,S)$, i.e., $W$ is an affine Weyl groups. 

When working with an affine Weyl group, one usually uses its {\em crystallographic root system}, which we recall now. Let $\Psi_{0}$ be a reduced, irreducible, crystallographic root system of a finite Weyl group $W_{0}$, in a real vector space $V_0$, with $W_{0}$-invariant positive definite scalar product $\mpair{ -|-}$ (see \cite{Bo68,Hu90}). Choose a positive system $\Psi^{+}_{0}$ with simple roots $\Delta_{0}$. Form a new vector space $V$ with $V_0$ as a codimension $1$ subspace, say $V=V_0\oplus \mathbb R\delta$ and extend $\mpair{-|-}$ to a symmetric invariant bilinear form $B$ on $V$ with radical $\mathbb R\delta$.  In particular the isotropic cone is $Q=\mathbb R\delta$.  Set 
$$
\Psi:=\{\mu+k\delta\,|\, \mu\in \Psi_{0},k\in\mathbb Z\}, \ \Psi^{+}=\{\mu+k\delta\in \Psi\,|\, \mu\in \Psi_{0}, k\geq 1 \textrm{ if $\mu\in \Psi_{0}^{-}$}\}
$$
and $\Pi=\Delta_{0}\cup\{\delta-\omega\}$ where $\omega$ is the highest root of $\Psi_{0}$. It is well known that $\Psi$ is a standard crystallographic root system of the affine Weyl group $W$ corresponding to $W_{0}$, with $\Psi^{+}$ as positive system and $\Delta$ as corresponding simple roots (see e.g. \cite{Kac90} or \cite{DyLe11}). For instance:
\begin{itemize}
\item in Example~\ref{ex:AffineA2}, $W$ is the affine Weyl group of type $\tilde A_2$: $W_0=S_3$, which is a finite Weyl group of type $A_2$, with  $\Delta_0=\{\alpha_1,\alpha_2\}$. Then the highest root is $\alpha_1+\alpha_2$ and $\alpha_3= \delta-\alpha_1-\alpha_2$.

\item in Example~\ref{ex:AffineG2}, $W$ is the affine Weyl group of type $\tilde G_2$: $W_0=\mathcal D_6$, which is a finite Weyl group of type $G_2$, with  $\Delta_0=\{\alpha,\beta\}$ and we have two choices for a finite crystallographic root system $\Psi_0$, which we give us the same based root system.  Choose $\Psi_0^+=\{\alpha,\alpha+\beta,2\alpha+3\beta,\alpha+2\beta,\alpha+3\beta,\beta\}$; the highest root is $2\alpha+3\beta$ and $\gamma= \delta-2\alpha-3\beta$.

\end{itemize}

The crystallographic root system   $\Psi$  is easily converted to the root system of a based root system $(\Phi,\Delta)$ in $(V,B)$   by normalizing roots to have square length $1$: since the elements of $\Psi$ are non-isotropic, i.e. $\Psi\cap \mathbb R\delta=\emptyset$,  we define $\alpha':=\frac{1}{B(\alpha,\alpha)^{1/2}}\alpha$ for any $\alpha\in \Psi$; then set 
$$
\Phi:=\{\alpha'\,|\, \alpha\in \Psi\},\quad \Delta:=\{\alpha'\,|\, \alpha\in\Pi\}\ \textrm{ and }\Phi^{+}=\cone_\Phi(\Delta)=\{\beta'\,|\,\beta\in \Psi^+\}.
$$
Note that the reflection $s_u=s_{ku}$ for any $k\in\mathbb R^{*}$ and $u\in V\setminus \mathbb R\delta$ so $s_{\alpha'}=s_{\alpha}\in W$ for $\alpha'\in \Phi$, i.e., $\alpha\in \Psi$. So we may  extend the notion of $\infty$-depth and dominance to the crystallographic root system $\Psi$ by  setting $\alpha \preceq  \beta$ if $\alpha' \preceq \beta'$ and 
 $$
 \dep_\infty(\alpha):=\dep_{\infty}(\alpha'),\  \textrm{ for any } \alpha,\beta\in \Psi.
 $$ 
It is well known and easily seen that the positive root systems of {\em infinite} maximal dihedral reflection subgroups of $W$ (see Example~\ref{ex:MaxDi}) are precisely the  sets 
$$
\{\mu+k\delta, -\mu+(k+1)\delta\,|\, \mu\in \Psi_0^{+}, k\in \mathbb N\}.
$$
 Those correspond to all the  sets of normalized roots on  segments in Figure~\ref{fig:AffineA2} and Figure~\ref{fig:AffineG2} passing through the roots on the face corresponding to the roots in $\Psi_0$ and the red dot $\delta$. It is clear by the definition that $\Psi^+$ is the union of all positive root systems of {\em infinite} maximal dihedral reflection subgroups of $W$.  The only dominances in $\Psi$ are thus:
 $$
 \mu+k\delta\preceq \mu+l\delta, \quad \textrm{for $k\leq l$ in $\mathbb Z$ and $\mu\in \Psi_0$}.
 $$
 It follows that for $\beta=\mu+k\delta\in\Psi^+$ we have:
$$
(\Diamond)\qquad \dep_{\infty}(\beta)=\dep_{\infty}(\mu+k\delta)=
\left\{
\begin{array}{ll}
k&\textrm{if $\mu\in \Psi_0^{+}$ and $k\in \mathbb N$}\\
k-1&\textrm{if $\mu \in \Psi_0^{-}$ and $k\in \mathbb N^*$}
\end{array}
\right. 
$$
 where $\mathbb N^{*}=\mathbb{N}\setminus\{0\}$.
In other words, on the segments in Figure~\ref{fig:AffineA2} and Figure~\ref{fig:AffineG2} representing infinite maximal dihedral reflection subgroups, $\dep_\infty$ is strictly increasing from $0$ to $\infty$ starting from a root $\mu\in \Psi_0^+$ to $\delta$ (the red dot), then decreasing from $\delta$ to  $-\mu+\delta$.

\end{ex}
\begin{ex} In Figure~\ref{fig:Ex237}, we give the $\infty$-depth on the first roots for the affine Coxeter system of rank $3$ whose graph is in the top right corner of the figure. 
\begin{figure}[h!]
\scalebox{0.8}{
\begin{tikzpicture}
	[scale=2,
	 q/.style={red,thin,line join=round},
	 racine/.style={blue},
	 racinesimple/.style={inner sep=2pt,circle,draw=blue!75!black,fill=blue!40,thick},
	 racines/.style={inner sep=0.75pt,circle,draw=blue,fill=blue,thick},
	 rotate=0]
	 
\def\grosseur{0.025}
\def\grosseursimple{0.05}

\coordinate (O) at (1.26495775914699,1.35583498282519);
\shade [shading=axis,bottom color=red!75!black!50,top color=white,shading angle=90] (O) -- (0.87,1.45) arc (4.4:0:10) -- (O);
\draw[red!50] (0.535,0) -- (1.41,1.63) {};
\draw[red!50] (0.87,1.45) -- (1.6,1.27)  {};

\draw[dotted,thick,red] (0.713791735784419,1.23632355240139) -- (1.54054077082828,1.40973726890995) ;
\draw[dotted,thick,red] (1.51794931097639,1.62700197939023) -- (0,0);


\draw[q] (1.95,1.48) -- (1.94,1.48) -- (1.95,1.49) -- (1.94,1.49) -- (1.94,1.5) -- (1.93,1.5) -- (1.94,1.51) --(1.93,1.51) -- (1.93,1.52) --(1.92,1.52) --
(1.93,1.53) --(1.92,1.53) -- (1.92,1.54) --(1.91,1.54) -- (1.91,1.55) --(1.9,1.55) -- (1.91,1.56) --(1.9,1.56) --
(1.9,1.57) --(1.89,1.57) -- (1.89,1.58) --(1.88,1.58) -- (1.88,1.59) --(1.87,1.59) -- (1.87,1.6) --(1.86,1.6) --
(1.85,1.6) -- (1.85,1.61) -- (1.84,1.61) -- (1.84,1.62) -- (1.83,1.62) -- (1.82,1.62) -- (1.82,1.63) -- (1.81,1.63) --
(1.81,1.64) -- (1.8,1.64) -- (1.79,1.64) -- (1.78,1.65) -- (1.77,1.65) -- (1.76,1.65) -- (1.76,1.66) -- (1.75,1.66) -- (1.74,1.66) -- (1.73,1.66) --
(1.73,1.67) -- (1.72,1.67) -- (1.71,1.67) -- (1.7,1.67) -- (1.7,1.68) -- (1.69,1.67) -- (1.69,1.68) -- (1.68,1.68) -- (1.67,1.68) -- (1.65,1.68) --
(1.64,1.68) -- (1.63,1.68) -- (1.62,1.69) -- (1.62,1.68) -- (1.61,1.69) -- (1.61,1.68) -- (1.6,1.69) -- (1.6,1.68) -- (1.59,1.68) -- (1.58,1.68) -- 
(1.57,1.68) -- (1.56,1.68) -- (1.55,1.68) -- (1.54,1.68) -- (1.53,1.68) -- (1.53,1.67) -- (1.52,1.67) -- (1.51,1.67) -- (1.5,1.67) -- (1.5,1.66) --
(1.49,1.67) -- (1.49,1.66) -- (1.48,1.66) -- (1.47,1.66) -- (1.47,1.65) --(1.46,1.65) -- (1.45,1.65) -- (1.45,1.64) -- (1.44,1.64) -- (1.44,1.63) -- 
(1.43,1.63) -- (1.42,1.63) -- (1.43,1.62) -- (1.42,1.62) -- (1.41,1.62) -- (1.41,1.61) -- (1.4,1.61) -- (1.41,1.6) -- (1.4,1.6) -- (1.39,1.6) -- (1.4,1.59) --
(1.39,1.59) -- (1.39,1.58) -- (1.38,1.58) -- (1.39,1.57) -- (1.38,1.57) -- (1.38,1.56) -- (1.37,1.56) -- (1.38,1.55) -- (1.37,1.55) -- (1.37,1.54) -- 
(1.37,1.53) -- (1.36,1.53) -- (1.37,1.52) -- (1.36,1.52) -- (1.37,1.51) -- (1.36,1.51) -- (1.37,1.5) -- (1.36,1.5) -- (1.37,1.49) -- (1.36,1.49) -- 
(1.37,1.48) -- (1.37,1.47) -- (1.38,1.47) -- (1.37,1.46) -- (1.38,1.46) -- (1.38,1.45) -- (1.38,1.44) -- (1.39,1.44) -- (1.38,1.43) -- (1.39,1.43) --
(1.39,1.42) -- (1.4,1.42) -- (1.4,1.41) -- (1.4,1.4) -- (1.41,1.4) -- (1.41,1.39) -- (1.42,1.39) -- (1.42,1.38) -- (1.43,1.38) -- (1.43,1.37) --
(1.44,1.37) -- (1.44,1.36) -- (1.45,1.36) -- (1.45,1.35) -- (1.46,1.35) -- (1.46,1.34) -- (1.47,1.34) -- (1.48,1.34) -- (1.48,1.33) --
(1.49,1.33) -- (1.49,1.32) -- (1.5,1.32) -- (1.51,1.32) -- (1.51,1.31) -- (1.52,1.31) -- (1.53,1.31) -- (1.53,1.3) -- (1.54,1.3) -- (1.55,1.3) -- 
(1.55,1.29) -- (1.56,1.29) -- (1.57,1.29) -- (1.58,1.29) -- (1.58,1.28) -- (1.59,1.28) -- (1.6,1.28) -- (1.61,1.28) -- (1.62,1.28) -- (1.62,1.27) -- 
(1.63,1.27) -- (1.64,1.27) -- (1.65,1.27) -- (1.67,1.27) -- (1.68,1.26) --(1.68,1.27) --(1.69,1.26) -- (1.69,1.27) -- (1.7,1.26) -- (1.7,1.27) --
(1.71,1.26) -- (1.71,1.27) -- (1.72,1.26) -- (1.72,1.27) -- (1.73,1.26) -- (1.73,1.27) -- (1.74,1.26) -- (1.74,1.27) -- (1.75,1.27) -- (1.76,1.27) --
(1.77,1.27) -- (1.78,1.27) -- (1.79,1.27) -- (1.79,1.28) -- (1.8,1.27) -- (1.8,1.28) -- (1.81,1.28) -- (1.82,1.28) -- (1.82,1.29) -- (1.83,1.28) -- 
(1.83,1.29) -- (1.84,1.29) -- (1.85,1.29) -- (1.85,1.3) -- (1.86,1.3) -- (1.87,1.3) -- (1.87,1.31) -- (1.88,1.31) -- (1.88,1.32) -- (1.89,1.32) --
(1.9,1.32) -- (1.89,1.33) -- (1.9,1.33) -- (1.91,1.33) -- (1.9,1.34) -- (1.91,1.34) -- (1.92,1.34) -- (1.91,1.35) -- (1.92,1.35) -- (1.92,1.36) --
(1.93,1.36) -- (1.93,1.37) -- (1.94,1.37) -- (1.93,1.38) -- (1.94,1.38) -- (1.94,1.39) -- (1.95,1.39) -- (1.94,1.4) -- (1.95,1.4) -- (1.94,1.41) -- 
(1.95,1.41) -- (1.94,1.42) -- (1.95,1.42) -- (1.95,1.43) -- (1.95,1.44) -- (1.95,1.45) -- (1.94,1.46) -- (1.95,1.46) -- (1.94,1.47) -- (1.95,1.47) -- cycle;

\node[racinesimple, label=left :{$\alpha$}, label=above :{$0$}] (a) at (0,0) {};
\node[racinesimple, label=right :{$\gamma$}, label=above:{$0$}] (b) at (4,0) {};
\node[racinesimple, label=above:{$\beta$}, label= left:{$0$}] (g) at (2.00000000000000,3.46410161513775) {};
\draw[green!75!black] (a) -- (b) -- (g) -- (a);
\node[racines, label=right :{$0$}] at  (3.00000000000000,1.73205080756888) {};
\node[racines, label=left :{$0$}] at  (0.713791735784419,1.23632355240139) {};
\node[racines, label=left :{$0$}] at  (1.28620826421558,2.22777806273636) {};

\node[racines, label=right :{$0$}] at  (1.57814262540253,0.911141069595774) {};
\node[racines, label=left :{$0$}] at  (0.890083735825258,1.54167025344006) {};
\node[racines, label=right :{$0$}] at  (2.34837507693250,1.35583498282519) {};
\node[racines, label=left:{$0$}] at  (1.10991626417474,1.92243136169769) {};

\node[racines, label=right :{$1$}] at  (2.14137520735326,1.23632355240139){};
\node[racines, label=south east :{$0$}] at (1.84787117021942,1.06686891755393) {};
\node[racines, label=above :{$1$}] at  (1.73484916695252,1.85948437682656) {};
\node[racines, label=left :{$0$}] at  (1.00000000000000,1.73205080756888) {};

\fill[racine] (2.12199784930487,1.54167025344006) circle (\grosseur);
\fill[racine] (2.00000000000000,1.15470053837925) circle (\grosseur);
\fill[racine] (1.89008373582526,1.73205080756888) circle (\grosseur);
\node[racines, label=below :{$2$}] at  (1.26495775914699,1.35583498282519) {};

\fill[racine] (2.14137520735326,1.23632355240139) circle (\grosseur);
\fill[racine] (1.84787117021942,1.06686891755393) circle (\grosseur);
\fill[racine] (1.62234914662510,1.17866633129016) circle (\grosseur);
\fill[racine] (1.39612452839032,1.73205080756888) circle (\grosseur);
\fill[racine] (2.00000000000000,1.64181947594621) circle (\grosseur);
\fill[racine] (1.73484916695252,1.85948437682656) circle (\grosseur);
\fill[racine] (1.40421148623362,1.28680311492469) circle (\grosseur);

\fill[racine] (2.34837507693250,1.35583498282519) circle (\grosseur);
\fill[racine] (1.50604079256507,1.23632355240139) circle (\grosseur);
\fill[racine] (1.71379173578442,1.73205080756888) circle (\grosseur);
\fill[racine] (1.52604754180084,1.73205080756888) circle (\grosseur);
\fill[racine] (1.27032300324898,1.57597974887493) circle (\grosseur);
\fill[racine] (1.89008373582526,1.73205080756888) circle (\grosseur);
\fill[racine] (1.26495775914699,1.35583498282519) circle (\grosseur);
 \fill[racine] (2.00000000000000,1.33036378002989) circle (\grosseur);

\coordinate (ancre) at (-0.5,2.6);
\node[racinesimple,label=below left:{$s_\alpha$}] (alpha) at (ancre) {};
\node[racinesimple,label=below right :{$s_\gamma$}] (beta) at ($(ancre)+(0.5,0)$) {};
\node[racinesimple,label=above:{$s_\beta$}] (gamma) at ($(ancre)+(0.25,0.43)$) {} edge[thick] node[auto,swap] {{\Large 7}} (alpha) edge[thick] (beta);
\coordinate (mat) at (4, 2.8);

\end{tikzpicture}
}
\caption{The normalized isotropic cone $\h{Q}$ in red, the small roots (with $\infty$-depth~$0$)  and the $\infty$-depth on some other roots  for the normalized root system associated to the Coxeter graph on the top right corner. The roots in the pointed shaded red cone form the blind cone (c.f. Remark~\ref{rem:Proj3}) of the root of depth $2$ it is pointed on.}
\label{fig:Ex237}
\end{figure}
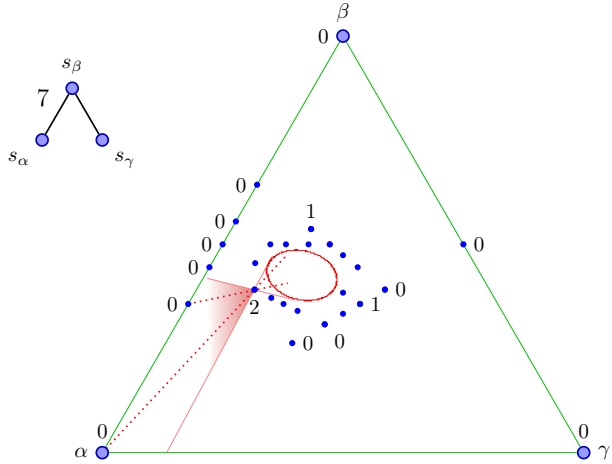

\end{ex}

\subsection{Small roots and small inversion sets}\label{sse:Small}
 
 \begin{defi} Let $\beta\in \Phi^+$ and $n\in\mathbb N$.
\begin{enumerate}[(i)]  

\item The positive root $\beta$ is  \emph{small}\footnote{These roots are also called \emph{humble} or \emph{elementary} in the literature. We adopt here the terminology of~\cite{BjBr05}. See \cite[Notes, p.130]{BjBr05} for more details.} if $\beta$ dominates no other positive root than itself, i.e., $\dep_\infty(\beta)=0$.

\item The positive root $\beta$ is {\em $n$-small} if $\dep_\infty(\beta)\leq n$. 

\item The set of $n$-small roots is denoted by 
 $
 \Sigma_n(W)$, or simply by $\Sigma_n$ if there is no possible confusion.  We denote by $\Sigma=\Sigma_0$ the set of small roots in $\Phi$.
\end{enumerate}
\end{defi}

\begin{remark} The collection $(\Sigma_n)_{n\in\mathbb N}$ is a filtration of $\Phi^+$: we have $$
\Phi^+=\bigcup_{n\in\mathbb N} \Sigma_n\quad \textrm{and}\quad\Delta\subseteq \Sigma=\Sigma_0\subseteq \Sigma_1\subseteq \dots\subseteq \Sigma_n,\  \forall n\in\mathbb N.
$$
 \end{remark}

\begin{prop}\label{prop:NSmallParab} If $I\subseteq S$ and $n\in\mathbb N$, then $\Sigma_n(W_I)\subseteq \Sigma_n(W)$.
\end{prop}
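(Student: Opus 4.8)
The plan is to reduce the statement directly to the compatibility of $\infty$-depth under passage to a standard parabolic subgroup, which is precisely Lemma~\ref{lem:Domin}{\em (2)}. There is essentially no genuine obstacle here: the real work has already been carried out in that lemma, and the proposition follows by unwinding definitions.

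First I would recall what membership in $\Sigma_n(W_I)$ means. By definition, the $n$-small roots of the Coxeter system $(W_I,I)$ are the positive roots $\beta\in\Phi_I^+$ whose $\infty$-depth, computed inside the based root system $(\Phi_I,\Delta_I)$, is at most $n$. Since $\Phi_I^+\subseteq\Phi^+$, every such $\beta$ is automatically a positive root of the ambient system $\Phi$; thus the only thing to check is that the depth bound persists when $\dep_\infty(\beta)$ is computed in $(\Phi,\Delta)$ rather than in $(\Phi_I,\Delta_I)$. Invoking Lemma~\ref{lem:Domin}{\em (2)}, which states that the $\infty$-depth on $\Phi_I^+$ is the restriction of the $\infty$-depth on $\Phi^+$, the two values of $\dep_\infty(\beta)$ agree for $\beta\in\Phi_I^+$. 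Hence $\dep_\infty(\beta)\leq n$ holds in $\Phi$ exactly when it holds in $\Phi_I$, so $\beta\in\Sigma_n(W_I)$ forces $\beta\in\Sigma_n(W)$, which is the claimed inclusion.

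The only point worth spelling out, should one prefer an argument self-contained rather than a bare citation, is why $\dom(\beta)$ is unchanged by passing to $W_I$ for $\beta\in\Phi_I^+$. Here I would reuse the observation underlying the proof of Lemma~\ref{lem:Domin}{\em (2)}: every reduced word for an element of $W_I$ has all its letters in $I$, so $N_I(w)=N(w)$ for $w\in W_I$; consequently any positive root $\alpha$ with $\alpha\preceq\beta$ lies in $N(s_\beta)\subseteq\Phi_I^+$, whence $\dom(\beta)\subseteq\Phi_I^+$ and the dominance sets computed in the two systems coincide. This immediately yields equality of the $\infty$-depths and completes the reduction.
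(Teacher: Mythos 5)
Your proof is correct and follows exactly the paper's route: the paper's entire proof is the citation of Lemma~\ref{lem:Domin}\emph{(2)}, and your optional unpacking of why $\dom(\beta)\subseteq\Phi_I^+$ for $\beta\in\Phi_I^+$ reproduces the argument given in the proof of that lemma. Nothing is missing.
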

\begin{proof}  The result follows from Lemma~\ref{lem:Domin}.
\end{proof}

The next proposition gives a useful characterization of small roots, see \cite{BrHo93,BjBr05}.

\begin{prop}\label{prop:SmallMin}  Let~$\beta$ in $\Phi^{+}\setminus \Delta$ and $\alpha\in \Delta$ such that $B(\alpha,\beta)>0$, then  $\beta\in \Sigma$ if and only if $s_{\alpha}(\beta)$ lies in~$\Sigma$ and $B(\alpha,\beta)<1$ holds; if and only if $\dep(s_\alpha(\beta))<\dep(\beta)$.
\end{prop}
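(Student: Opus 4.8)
The plan is to reduce the whole statement to the two recurrence formulas already at hand: the depth recurrence (Proposition~\ref{prop:Dep}) and the dominance-depth recurrence (Proposition~\ref{prop:NSmall}). First I would record the basic set-up: writing $s=s_\alpha$ so that $\alpha=\alpha_s$, the hypothesis $\beta\in\Phi^+\setminus\Delta$ gives $\beta\neq\alpha_s$, so $s_\alpha(\beta)$ is again a positive root (the only positive root sent to a negative one by $s$ is $\alpha_s$ itself), and both recurrences apply to the pair $(s,\beta)$. The last clause is then immediate from Proposition~\ref{prop:Dep}: since $\beta\neq\alpha_s$ one has $\dep(s_\alpha(\beta))=\dep(\beta)-1<\dep(\beta)$ precisely when $B(\alpha,\beta)>0$, which is the standing hypothesis; thus $\dep(s_\alpha(\beta))<\dep(\beta)$ merely re-expresses that hypothesis in depth language (convenient because smallness is organized by depth in the Brink--Howlett automaton). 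So it remains to prove the equivalence $\beta\in\Sigma\iff\bigl(s_\alpha(\beta)\in\Sigma\text{ and }B(\alpha,\beta)<1\bigr)$.

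For this I would use only Proposition~\ref{prop:NSmall}, together with the trivial but decisive fact that $\dep_\infty=|\dom(\cdot)|$ is nonnegative. For the forward direction, assume $\beta\in\Sigma$, i.e.\ $\dep_\infty(\beta)=0$. If $B(\alpha,\beta)\geq 1$ held, the first case of Proposition~\ref{prop:NSmall} would give $\dep_\infty(s_\alpha(\beta))=\dep_\infty(\beta)-1=-1<0$, which is impossible; hence $B(\alpha,\beta)<1$, so $B(\alpha,\beta)\in\,]0,1[\,\subseteq\,]-1,1[$. The middle case of Proposition~\ref{prop:NSmall} then yields $\dep_\infty(s_\alpha(\beta))=\dep_\infty(\beta)=0$, that is $s_\alpha(\beta)\in\Sigma$.

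For the converse I would assume $s_\alpha(\beta)\in\Sigma$ and $B(\alpha,\beta)<1$; combined with the standing hypothesis $B(\alpha,\beta)>0$ this places $B(\alpha,\beta)\in\,]0,1[\,\subseteq\,]-1,1[$, so the middle case of Proposition~\ref{prop:NSmall} again applies and gives $\dep_\infty(\beta)=\dep_\infty(s_\alpha(\beta))=0$, whence $\beta\in\Sigma$. The only step requiring genuine care — everywhere else it is a mechanical reading-off of the recurrence — is the exclusion of the borderline regime $B(\alpha,\beta)\geq 1$ in the forward direction: it is smallness of $\beta$ (forcing $\dep_\infty(\beta)=0$) together with the nonnegativity of $\dep_\infty$ that rules this case out and thereby upgrades $B(\alpha,\beta)>0$ to the strict bound $B(\alpha,\beta)<1$.
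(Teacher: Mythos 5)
Your argument is correct, and it necessarily differs from the paper's, because the paper gives no proof of Proposition~\ref{prop:SmallMin} at all: it is stated with a pointer to \cite{BrHo93,BjBr05}, where the classical proofs run through the dominance criterion for simple roots (for $\alpha\in\Delta$ one has $\alpha\preceq\beta$ if and only if $B(\alpha,\beta)\geq 1$). You instead extract the whole statement from the two recurrences already recorded in the paper: Proposition~\ref{prop:Dep} for $\dep$ and Proposition~\ref{prop:NSmall} for $\dep_\infty$, together with the obvious bound $\dep_\infty=\vert\dom(\cdot)\vert\geq 0$. This is legitimate within the paper's logical order --- Proposition~\ref{prop:NSmall} is established independently (it restates \cite[Proposition~3.14]{Fu12}) and precedes Proposition~\ref{prop:SmallMin} --- and your case analysis is clean: $\beta\in\Phi^{+}\setminus\Delta$ gives $\beta\neq\alpha_s$, hence $s_\alpha(\beta)\in\Phi^{+}$ and both recurrences apply; smallness of $\beta$ excludes $B(\alpha,\beta)\geq 1$ because that case would force $\dep_\infty(s_\alpha(\beta))=-1$; and in the surviving regime $B(\alpha,\beta)\in\,]0,1[$ the middle case of Proposition~\ref{prop:NSmall} transports $\dep_\infty=0$ in both directions. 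Note that your exclusion step is the classical dominance fact in disguise ($B(\alpha,\beta)\geq 1$ with $\alpha$ simple forces $\alpha\prec\beta$, i.e.\ $\dep_\infty(\beta)\geq 1$), so what your route buys is a short derivation staying entirely inside the paper's toolkit, at the cost of using Fu's recurrence as a black box.

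You were also right to treat the final clause as you did. Read literally as a three-term chain of equivalences it would be false: under the standing hypothesis $B(\alpha,\beta)>0$, Proposition~\ref{prop:Dep} makes $\dep(s_\alpha(\beta))<\dep(\beta)$ hold unconditionally, while $\beta$ need not be small. For instance, in the infinite dihedral group take $\alpha=\alpha_s$ and $\beta=s_\alpha(\alpha_t)$: then $B(\alpha,\beta)=-B(\alpha_s,\alpha_t)\geq 1>0$, $s_\alpha(\beta)=\alpha_t\in\Sigma$ and $\dep(s_\alpha(\beta))=1<2=\dep(\beta)$, yet $\beta\notin\Sigma$ since $\Sigma=\Delta$ there (Example~\ref{ex:Inf3}). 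Your reading --- that the depth inequality merely re-expresses the hypothesis $B(\alpha,\beta)>0$ in depth language --- is the only tenable one, and it matches the paper's own usage in the proof of Lemma~\ref{lem:Bip1}, where the depth decrease $\dep(s(\gamma))=\dep(\gamma)-1$ is exactly the form in which the hypothesis is verified before the proposition is invoked.
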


The case $n=0$ in the following theorem is due to Brink-Howlett~{\cite[Theorem~2.8]{BrHo93}} whereas the general case is due to Fu~{\cite[Corollary~3.9]{Fu12}~and~\cite[Theorem~3.22]{Fu13}} and Dyer (unpublished; see~\cite{Ed09}).

\begin{thm}\label{thm:NSmall}  For all $n\in\mathbb N$, the set $\Sigma_n$ is finite and does not depend of the choice of the root system. In particular $\Sigma_0=\Phi^+$ if and only if $W$ is finite 
\end{thm}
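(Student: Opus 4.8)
\emph{The plan is to} establish the three assertions of Theorem~\ref{thm:NSmall}---finiteness, independence of the root system, and the equivalence between $\Sigma_0=\Phi^+$ and the finiteness of $W$---in that logical order, deducing the latter two from the first together with elementary observations. First I would dispose of the independence from the chosen geometric representation. The map $\beta\mapsto s_\beta$ is a bijection from $\Phi^+$ onto the set of reflections of $(W,S)$, and under it the inversion set $N(w)=\{\beta\in\Phi^+\mid\ell(s_\beta w)<\ell(w)\}$ corresponds to $\{t\mid\ell(tw)<\ell(w)\}$, which depends only on $(W,S)$. Since dominance is defined purely in terms of the family $(N(w))_{w\in W}$, the poset $(\Phi^+,\preceq)$ is carried by this bijection onto a poset intrinsic to $(W,S)$; hence $\dep_\infty(\beta)$, and therefore each $\Sigma_n$, is independent of the root system.

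For finiteness I would reduce to a bound on the ordinary depth, using that for any $D$ the set $\{\beta\in\Phi^+\mid\dep(\beta)\le D\}=\bigcup_{\ell(w)\le D}N(w)$ is a finite union of finite sets, hence finite; so it suffices to bound $\dep(\beta)$ for $\beta\in\Sigma_n$. Given $\beta\notin\Delta$, I would pick a simple root $\alpha_s$ with $B(\alpha_s,\beta)>0$ and pass to $s(\beta)$: by Proposition~\ref{prop:Dep} this drops $\dep$ by $1$, while by Proposition~\ref{prop:NSmall} it leaves $\dep_\infty$ unchanged when $B(\alpha_s,\beta)\in\,]0,1[$ (a \emph{small step}) and drops it by $1$ when $B(\alpha_s,\beta)\ge 1$ (a \emph{big step}). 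Iterating yields a chain $\beta=\beta_0\to\cdots\to\beta_m\in\Delta$ with $m=\dep(\beta)-1$ along which $\dep_\infty$ is non-increasing; since it falls from $\dep_\infty(\beta)\le n$ to $0$, there are at most $n$ big steps, and everything comes down to a uniform bound, in $(W,S)$, on the lengths of the maximal runs of consecutive small steps.

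This last bound is the crux and the main obstacle. A run of small steps at $\infty$-depth $0$ is exactly a descending chain of small roots in the sense of Proposition~\ref{prop:SmallMin}, and bounding its length is precisely Brink--Howlett's finiteness theorem for $\Sigma_0$; I would prove it by showing that the numbers $B(\alpha_s,\beta)$ attached to small roots $\beta$ take only finitely many values, and that this forces only finitely many such $\beta$ to occur. For a run at $\infty$-depth $d>0$ I expect to localize the dominances responsible for depth $d$ inside suitable maximal dihedral (more generally, reflection) subgroups and invoke Lemma~\ref{lem:Domin}, reducing the estimate to the explicit dihedral computation of Example~\ref{ex:Inf2}. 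Assembling these bounds gives $\dep(\beta)\le g(n)$ for a function $g$ depending only on $(W,S)$, whence the finiteness of $\Sigma_n$.

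Finally I would settle the equivalence. If $W$ is finite then $B$ is positive definite, so $B(\alpha,\beta)\in\,]-1,1[$ for distinct normalized roots by Cauchy--Schwarz; only the middle case of Proposition~\ref{prop:NSmall} can then occur, and induction on ordinary depth forces $\dep_\infty\equiv 0$, i.e.\ $\Sigma_0=\Phi^+$ (cf.\ Example~\ref{ex:Finite}). Conversely, if $W$ is infinite then $\Phi^+$ is infinite and so cannot coincide with the finite set $\Sigma_0$; thus this direction is immediate once finiteness is established.
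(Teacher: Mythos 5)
Your proposal has a genuine gap at its core, and you in effect acknowledge it: the paper does not prove this theorem at all, but cites it --- the case $n=0$ is Brink--Howlett \cite[Theorem~2.8]{BrHo93}, and the general case is due to Fu \cite{Fu12,Fu13} and Dyer (see \cite{Ed09}). Your surrounding structure is fine: the independence of the root system does follow because dominance is defined purely from the sets $N(w)$, which correspond under $\beta\mapsto s_\beta$ to data intrinsic to $(W,S)$; the equivalence ``$\Sigma_0=\Phi^+$ iff $W$ finite'' is correctly handled (Cauchy--Schwarz plus the middle case of Proposition~\ref{prop:NSmall} for one direction, finiteness of $\Sigma_0$ versus $|N(w)|=\ell(w)$ unbounded for the other); and your reduction via depth-decreasing chains with ``big'' and ``small'' steps is sound as far as it goes: at most $n$ big steps occur, so finiteness of $\Sigma_n$ is equivalent to a uniform bound on runs of consecutive small steps. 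But that uniform bound \emph{is} the theorem --- for $d=0$ it is exactly Brink--Howlett's finiteness result, whose proof is a substantial automaton-style argument, and your one-sentence plan (``the numbers $B(\alpha_s,\beta)$ take only finitely many values, and this forces only finitely many such $\beta$'') skips both halves: establishing the finite value set is the hard content of \cite{BrHo93}, and even granted it, finitely many inner products with $\Delta$ do not by themselves pin down finitely many roots unless you add an argument (e.g.\ that $\Delta$ is a basis in the classical representation and $B$-data determines $\beta$, which needs care when $B$ is degenerate).

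The proposed route for runs at level $d>0$ (``localize the dominances inside maximal dihedral subgroups and invoke Lemma~\ref{lem:Domin}, reducing to Example~\ref{ex:Inf2}'') would fail as stated. It is true that every strict dominance pair lies in an \emph{infinite} maximal dihedral subgroup $W'$, and Theorem~\ref{1.10}(3) gives $\dep_{\infty,W'}\leq\dep_\infty$, so each such $W'$ contains only boundedly many roots of $\infty$-depth at most $n$; but there are infinitely many maximal dihedral reflection subgroups, so this local count cannot yield global finiteness --- the consecutive simple reflections in a run are not confined to any single dihedral subgroup. Moreover the paper's Remark~\ref{rem:balance}(b) shows $\dep_\infty$ is not even monotone along \emph{finite} dihedral subsystems (type $\tilde G_2$), so dihedral bookkeeping alone cannot control how $\dep_\infty$ distributes along a run. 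To close the gap you would have to import the actual Brink--Howlett argument for $n=0$ and Fu's dominance-hierarchy analysis for general $n$, which is precisely why the paper states Theorem~\ref{thm:NSmall} as a cited result rather than proving it.
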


 \begin{ex}\label{ex:Inf3} If $W$ is the infinite dihedral group, as in Examples~\ref{ex:Inf1}~and~\ref{ex:Inf2}, then $\Sigma=\Delta=\{\alpha_s,\alpha_t\}$. For similar reason, $\Sigma=\Delta$ if $W$ is a universal Coxeter group as in Example~\ref{ex:Univ1}.
\end{ex}

\begin{ex}\label{ex:Small} Small roots are all represented in the cases illustrated in Figures~\ref{fig:AffineG2}~and~\ref{fig:Ex237}. In the case of the affine Coxeter group of type $\tilde A_2$ as in Example~\ref{ex:AffineA2} and Figure~\ref{fig:AffineA2},  the small roots are the roots corresponding to proper standard parabolic subgroups: 
$
\Sigma=\{\alpha_1,\alpha_2,\alpha_3,\alpha_1+\alpha_2,\alpha_1+\alpha_3,\alpha_2+\alpha_3\}. 
$
\end{ex}

 The case of small roots ($n=0$) is at the heart of the work of Brink and Howlett~\cite{BrHo93} on the automatic structure of Coxeter systems: they provide, by the mean of {\em small inversion sets}, a finite state automaton that recognizes the language of reduced words. The notion of small inversion sets will be used in \S\ref{sse:Low} and beyond.
 
 \begin{defi} Let $n\in \mathbb N$, the {\em (left) $n$-small inversion set of $w\in W$} is 
  $$
    \Sigma_n(w)=N(w)\cap \Sigma_n.
  $$ 
  We denote by $\Lambda_n(W)$ (or simply $\Lambda_n$ if there is not possible confusion) the set of all (left) $n$-small inversion sets. 
 \end{defi}
 

 \begin{remark} In \cite[\S4.8]{BjBr05}, the authors use {\em right inversion sets}: $N(w^{-1})\cap \Sigma$. 
 \end{remark}
 
 Theorem~\ref{thm:NSmall} has the following interesting direct consequence:
 
 \begin{cor}\label{cor:SmallInv} The set $\Lambda_n(W)$ is finite for all $n\in\mathbb N^*$.
 \end{cor}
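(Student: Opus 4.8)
The plan is to treat this as a purely set-theoretic consequence of the finiteness of $\Sigma_n$, so the argument is very short. First I would unwind the definition: every element of $\Lambda_n(W)$ is of the form $\Sigma_n(w)=N(w)\cap\Sigma_n$ for some $w\in W$, and is therefore a subset of $\Sigma_n$. Hence the entire collection is contained in the power set:
$$
\Lambda_n(W)\subseteq \mathcal P(\Sigma_n).
$$

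The second and final step is to invoke Theorem~\ref{thm:NSmall}, which guarantees that $\Sigma_n$ is finite for every $n\in\mathbb N$ (in particular for $n\in\mathbb N^*$). Since a finite set has a finite power set, $\mathcal P(\Sigma_n)$ is finite, and consequently its subset $\Lambda_n(W)$ is finite as well. There is no real obstacle to overcome here: the only nontrivial ingredient, the finiteness of $\Sigma_n$, has already been established in Theorem~\ref{thm:NSmall}, and everything else is the trivial observation that $n$-small inversion sets live inside the fixed finite set $\Sigma_n$.

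If a quantitative refinement were wanted, I would additionally record the crude cardinality bound $|\Lambda_n(W)|\le 2^{|\Sigma_n|}$, which is immediate from the inclusion above and turns any explicit bound on $|\Sigma_n|$ into an explicit bound on the number of $n$-small inversion sets; but this is not needed for the finiteness statement itself.
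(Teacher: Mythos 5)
Your proof is correct and is exactly the paper's argument: the paper presents the corollary as a ``direct consequence'' of Theorem~\ref{thm:NSmall}, implicitly via the same observation that every $n$-small inversion set $\Sigma_n(w)=N(w)\cap\Sigma_n$ is a subset of the finite set $\Sigma_n$, so $\Lambda_n(W)\subseteq \mathcal P(\Sigma_n)$ is finite. Nothing is missing.
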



We end this discussion on small descent sets with the following lemma and proposition. Note that we do not need these results  in the rest of this article; but we state them anyway since they play an important role  in relation to finite state automata associated to Coxeter groups (see \cite{Ed09} for a multi-parameter generalization). For $n=0$, this goes back to \cite{BrHo93}, see also~\cite[\S4.8]{BjBr05}.

\begin{lem}\label{lem:SmallInv} Let $w\in W$, $s\in S$ and $n\in \mathbb N$.
\begin{enumerate}
 \item If $\ell(sw)>\ell(w)$,  then $\Sigma_n(sw) =\{\alpha_s\}\sqcup (\Sigma_n\cap s(\Sigma_n(w)))$.
\item  If $n\geq 1$ and $\ell(sw)<\ell(w)$,   then $\Sigma_{n-1}(sw) =\Sigma_{n-1}\cap s(\Sigma_n(w)\setminus\{\alpha_s\})$.
\end{enumerate} 
\end{lem}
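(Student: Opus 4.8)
The plan is to reduce both identities to the standard recursive description of inversion sets under left multiplication by a simple reflection, then to control the $\infty$-depth one root at a time. First I would record, via Proposition~\ref{prop:Weak}(a), the two shapes of $N(sw)$: when $\ell(sw)>\ell(w)$ one has $N(sw)=\{\alpha_s\}\sqcup s(N(w))$, and applying this with $sw$ in place of $w$ shows that when $\ell(sw)<\ell(w)$ one has $\alpha_s\in N(w)$ and $N(sw)=s(N(w)\setminus\{\alpha_s\})$. Intersecting with the relevant $\Sigma_m$ and using $\dep_\infty(\alpha_s)=0$, so that $\alpha_s\in\Sigma_m$ for every $m$ (Proposition~\ref{prop:NSmall}), both statements reduce to comparing a set of the form $s(A)\cap\Sigma_m$ with $\Sigma_m\cap s(A\cap\Sigma_{m'})$, where $A$ is $N(w)$ (part~(1), $m=m'=n$) or $N(w)\setminus\{\alpha_s\}$ (part~(2), $m=n-1$, $m'=n$). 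Since $s(A\cap\Sigma_{m'})=s(A)\cap s(\Sigma_{m'})$, the inclusion $\supseteq$ is purely formal; the content lies entirely in the reverse inclusion.

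The reverse inclusion is a depth comparison. Given $\gamma\in s(A)\cap\Sigma_m$, write $\gamma=s(\beta)$ with $\beta\in A$; note $\gamma\ne\alpha_s$ because $\alpha_s\notin s(A)$ in both cases. I must then show $\beta\in\Sigma_{m'}$, i.e. bound $\dep_\infty(\beta)=\dep_\infty(s(\gamma))$ in terms of $\dep_\infty(\gamma)\le m$, which Proposition~\ref{prop:NSmall} controls by the sign of $B(\alpha_s,\gamma)$. For part~(1) both $\gamma$ and $\alpha_s$ lie in $N(sw)$, so Proposition~\ref{prop:FiniteRefl}(2) forces $B(\alpha_s,\gamma)>-1$; this excludes the depth-increasing case and gives $\dep_\infty(s(\gamma))\le\dep_\infty(\gamma)\le n$, so $\beta\in\Sigma_n(w)$ and the index is preserved. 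The disjointness of $\{\alpha_s\}$ from $s(\Sigma_n(w))$ is immediate from the disjoint decomposition of $N(sw)$.

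For part~(2) the roots $\gamma\in N(sw)$ no longer share an inversion set with $\alpha_s$ (indeed $\alpha_s\notin N(sw)$), so the depth-increasing case cannot be excluded, and I would instead invoke only the crude bound $\dep_\infty(s(\gamma))\le\dep_\infty(\gamma)+1$ from Proposition~\ref{prop:NSmall}; this is exactly what forces the index to drop by one, since $\dep_\infty(\gamma)\le n-1$ then yields $\dep_\infty(\beta)\le n$. The main obstacle, and the only genuinely non-formal ingredient, is therefore this directional control of $\infty$-depth under $s$ restricted to inversion sets: ruling out the $+1$ jump in part~(1) through the positivity in Proposition~\ref{prop:FiniteRefl}(2), whereas in part~(2) the $+1$ jump is permitted and precisely accounts for the shift from $\Sigma_n(w)$ to $\Sigma_{n-1}(sw)$.
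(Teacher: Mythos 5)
Your proof is correct and follows essentially the same route as the paper: both arguments reduce the lemma to the recursion $N(sw)=\{\alpha_s\}\sqcup s(N(w))$ (resp.\ $N(sw)=s(N(w)\setminus\{\alpha_s\})$), observe the formal inclusion, and settle the non-trivial inclusion by controlling $\dep_\infty$ under $s$ via Proposition~\ref{prop:NSmall}, with Proposition~\ref{prop:FiniteRefl}(2) ruling out the case $B(\alpha_s,\gamma)\leq -1$ in part (1) exactly as the paper does (in contrapositive form), and with the crude $\pm 1$ bound accounting for the index shift in part (2).
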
 
\begin{proof}
Under the assumptions of {\em (1)}, the word $sw$ is reduced and $\alpha_s\in \Delta\subseteq \Sigma_n$. So $N(w)=\{\alpha\}\sqcup s(N(w))$ by Corollary~\ref{cor:RedInv}, which implies the right hand side of {\em (1)} is contained in the left hand side. To prove the reverse inclusion, it suffices to show that if $\beta\in \Sigma_n(sw)=N(sw)\cap\Sigma_n$ and $\beta\not = \alpha_s$, then $s(\beta)\in \Sigma_n$, since $s(\beta)\in N(w)$ from above. Assume  for a contradiction that $\dep_\infty(s(\beta))\geq n+1$.  Then Proposition~\ref{prop:NSmall} forces  $B(\alpha_s,\beta)\leq -1$ since $\dep_\infty(\beta) \leq  n$.  Since $\alpha_s,\beta\in N(sw)$, this contradicts Proposition~\ref{prop:FiniteRefl}. 

The proof of {\em (2)} is similar but simpler. Under these assumptions, we have $\alpha_s\in \Delta\subseteq\Sigma_n$ and $w=s(sw)$ is reduced. So $N(sw)=s(N(w)\setminus\{\alpha_s\})$ by Corollary~\ref{cor:RedInv}.  This implies the right hand side of {\em (2)} is contained in the left hand side.  To prove the reverse inclusion, it suffices to show that if $\beta\in \Sigma_{n-1}(sw)$, then 
$s(\beta)\in \Sigma_{n}$, since $s(\beta)\in N(w)$ from above and we cannot have $s(\beta)=\alpha$.  The result then follows from Proposition~\ref{prop:NSmall}.
\end{proof}

\begin{prop} \label{prop:SmallInv} For any $n\in \mathbb N$, the  finite set $\Lambda_n$ is the inclusion-minimum subset of $\Sigma_{n}$  such that
\begin{enumerate}
\item $\emptyset\in \Lambda_{n}$
\item If $A\in \Lambda_{n}$ and  $\alpha\in \Delta\setminus A$,
then $\{\alpha\}\cup (s_{\alpha}(A)\cap \Sigma_{n})\in \Lambda_{n}$
\end{enumerate}
\end{prop}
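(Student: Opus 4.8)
The plan is to verify directly that $\Lambda_n=\{\Sigma_n(w)\mid w\in W\}$ satisfies the two closure conditions, and then to establish minimality by an induction on length driven by the very same recursion. The one preliminary observation I would record is that for $\alpha=\alpha_s\in\Delta\subseteq\Sigma_n$ one has $\alpha_s\in\Sigma_n(w)=N(w)\cap\Sigma_n$ if and only if $\alpha_s\in N(w)$, i.e. if and only if $\ell(sw)<\ell(w)$; hence the condition $\alpha_s\in\Delta\setminus\Sigma_n(w)$ is precisely the length-increasing condition $\ell(sw)>\ell(w)$. This identification is what makes condition~(2) of the statement line up with Lemma~\ref{lem:SmallInv}(1).

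For condition~(1) I would simply take $w=e$, so that $\Sigma_n(e)=N(e)\cap\Sigma_n=\emptyset$ and $\emptyset\in\Lambda_n$. For condition~(2), let $A=\Sigma_n(w)\in\Lambda_n$ and $\alpha=\alpha_s\in\Delta\setminus A$. By the observation above $\ell(sw)>\ell(w)$, so Lemma~\ref{lem:SmallInv}(1) gives $\Sigma_n(sw)=\{\alpha_s\}\sqcup(\Sigma_n\cap s(\Sigma_n(w)))=\{\alpha\}\cup(s_\alpha(A)\cap\Sigma_n)$. Thus the set manufactured by condition~(2) is exactly $\Sigma_n(sw)$ and lies in $\Lambda_n$, which shows $\Lambda_n$ is one of the families under consideration.

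For minimality I would let $\mathcal C\subseteq\mathcal P(\Sigma_n)$ be any family satisfying (1) and (2) and prove by induction on $\ell(w)$ that $\Sigma_n(w)\in\mathcal C$ for every $w\in W$, which yields $\Lambda_n\subseteq\mathcal C$. The base case $\ell(w)=0$ is $\Sigma_n(e)=\emptyset\in\mathcal C$ by~(1). For $\ell(w)\geq 1$, I would choose $s\in S$ with $\ell(sw)<\ell(w)$ and set $w'=sw$, so $\ell(w')<\ell(w)$ and $\Sigma_n(w')\in\mathcal C$ by the inductive hypothesis. Since $sw'=w$ with $\ell(sw')>\ell(w')$, the observation gives $\alpha_s\in\Delta\setminus\Sigma_n(w')$, and Lemma~\ref{lem:SmallInv}(1) applied to $w'$ yields $\Sigma_n(w)=\Sigma_n(sw')=\{\alpha_s\}\cup(s_{\alpha_s}(\Sigma_n(w'))\cap\Sigma_n)$. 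This is exactly the set obtained from $A=\Sigma_n(w')\in\mathcal C$ and $\alpha=\alpha_s$ via condition~(2), so $\Sigma_n(w)\in\mathcal C$ and the induction closes.

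I do not expect a serious obstacle here: the whole argument is powered by the recursion of Lemma~\ref{lem:SmallInv}(1), and finiteness of $\Lambda_n$ is already guaranteed by Corollary~\ref{cor:SmallInv}. The only point demanding care is the bookkeeping noted at the outset—checking that the hypothesis ``$\alpha\in\Delta\setminus A$'' in condition~(2) matches exactly the hypothesis ``$\ell(sw)>\ell(w)$'' of the lemma, and, in the minimality step, that descending along a left multiplication $w\mapsto sw$ with $\ell(sw)<\ell(w)$ always returns one to the length-increasing situation when the lemma is read in the reverse direction.
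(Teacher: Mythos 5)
Your proof is correct and follows essentially the same route as the paper: the key identification $\alpha_s\in\Delta\setminus\Sigma_n(w)\iff\ell(sw)>\ell(w)$ combined with Lemma~\ref{lem:SmallInv}(1) is exactly the paper's argument, which then declares the result ``clear.'' You have merely made explicit the induction on $\ell(w)$ for the minimality claim that the paper leaves implicit, which is a welcome expansion rather than a deviation.
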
 
 \begin{proof} Suppose $A:=\Sigma_n(w)$ for some $w\in W$.  Then for $s\in S$, one has $\ell(sw)>\ell(w)$ if and only if $\alpha_s\not\in  N(w)$, so if and only if $\alpha_s\not\in  A$. In that case $\Sigma_n(sw)=\{\alpha\}\cup (s(A)\cap \Sigma_{n})$  which is  completely determined by $A$ and $s$.  The result is clear from this using Lemma~\ref{lem:SmallInv}. 
 \end{proof}   

\begin{remark}\label{rem:Automaton} Following~\cite[p.119--120]{BjBr05}, we define for $n\in \mathbb N$ a {\em $n$-canonical automaton} that recognizes the language of reduced words: the set of states is $\Lambda_n(W)$; for each $A\in \Lambda_n(W)$ 
and $s\in S$ such that $\alpha_s\in \Delta\setminus A$ we put a transition:
$$
A\xrightarrow{s} \{\alpha\}\cup (s_{\alpha}(A)\cap \Sigma_{n}).
$$ 
Note that the $0$-canonical automaton is the canonical automaton described in \cite[p.120]{BjBr05} and in~\cite[\S4.3]{Ed09}. 
However if $n>0$, the {\em canonical automata} defined and studied in~\cite[\S4.3]{Ed09} are different than the one we define above.
\end{remark}

 \subsection{Low elements}\label{sse:Low}  We are now ready to define  low elements.

\begin{defi} Let $n\in\mathbb N$. An element $w\in W$ is {\em $n$-low} if  $N(w)=\cone_{\Phi}(A)$ for some $A\subseteq \Sigma_n$, or equivalently if $N(w)=\cone_\Phi(\Sigma_n(w))$. We denote by $L_n(W)$ the set of $n$-low elements in $W$. 
\smallskip

\noindent A $0$-low element is simply called a {\em low element} and $L_0(W)$ is denoted  by $L(W)$.
\end{defi}

\begin{remark} The collection $(L_{n}(W))_{n\in \mathbb{N}}$ is a filtration of $W$; one has 
\begin{equation*}
W=\bigcup_{n\in \mathbb{N}}L_{n}(W), \text{ \rm and } L_{0}(W)\subseteq L_{1}(W)\subseteq \ldots \subseteq L_{n}(W)\subseteq\ldots , \quad \forall n \in \mathbb{N}.
\end{equation*}
\end{remark}

The following proposition shows part of Theorem~\ref{thm:Main1}.

\begin{prop}\label{prop:Low} Let $n\in\mathbb N$. 
\begin{enumerate}
\item We have $S\cup\{e\}\subseteq L_n(W)$.
\item  The map $\Sigma_n:L_n(W)\to \Lambda_n(W)$ is injective; 
\item The set $L_n(W)$ is finite and  closed under join, i.e., if~$X\subseteq L_n(W)$ is bounded then $\bigvee X\in L_n(W)$. 
\end{enumerate}
\end{prop}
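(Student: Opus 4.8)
The plan is to handle the three assertions in turn, using the poset monomorphism $N\colon(W,\leq_R)\to(\mathcal P(\Phi^+),\subseteq)$ and the join formula of Proposition~\ref{prop:Weak} as the only real tools; the whole argument is essentially a translation between elements, their inversion sets, and conic hulls of $n$-small roots.

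For (1) I would exhibit the generating sets directly. The identity satisfies $N(e)=\emptyset=\cone(\emptyset)\cap\Phi=\cone_\Phi(\emptyset)$, so $e$ is $n$-low with $A=\emptyset$. For $s\in S$ one has $N(s)=\{\alpha_s\}$, and $\alpha_s\in\Delta\subseteq\Sigma_0\subseteq\Sigma_n$; since every root has $B$-norm $1$, the ray $\mathbb R^{+}\alpha_s$ meets $\Phi$ only in $\alpha_s$, whence $\cone_\Phi(\{\alpha_s\})=\{\alpha_s\}=N(s)$ and $s$ is $n$-low. For (2) the point is that an $n$-low element is recovered from its $n$-small inversion set: by definition $N(w)=\cone_\Phi(\Sigma_n(w))$, so if $w,w'\in L_n(W)$ have $\Sigma_n(w)=\Sigma_n(w')$ then $N(w)=N(w')$, and injectivity of $N$ (Proposition~\ref{prop:Weak}(3)) gives $w=w'$.

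For (3), finiteness is then immediate: $\Sigma_n\colon L_n(W)\to\Lambda_n(W)$ is injective by (2) and $\Lambda_n(W)$ is finite by Corollary~\ref{cor:SmallInv}. It remains to show closure under join. Given a bounded $X\subseteq L_n(W)$, I would set $A:=\bigcup_{x\in X}\Sigma_n(x)$, which is a subset of $\Sigma_n$, and show $N\bigl(\bigvee X\bigr)=\cone_\Phi(A)$; this exhibits $\bigvee X$ as $n$-low. By Proposition~\ref{prop:Weak}(4) the left-hand side equals $\cone_\Phi\bigl(\bigcup_{x\in X}N(x)\bigr)$, so it suffices to compare the two conic hulls after intersecting with $\Phi$. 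The key step is the identity $\cone\bigl(N(x)\bigr)=\cone\bigl(\Sigma_n(x)\bigr)$ for each $x\in X$: the inclusion $\supseteq$ is trivial since $\Sigma_n(x)\subseteq N(x)$, while $n$-lowness gives $N(x)=\cone_\Phi(\Sigma_n(x))\subseteq\cone(\Sigma_n(x))$, forcing the reverse inclusion. Using that $\cone$ is a closure operator I would then combine these termwise to obtain $\cone\bigl(\bigcup_{x}N(x)\bigr)=\cone\bigl(\bigcup_{x}\Sigma_n(x)\bigr)=\cone(A)$, and intersecting with $\Phi$ concludes.

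I expect the only load-bearing step to be this last cone manipulation: it is precisely where the hypothesis that each $x$ is $n$-low is used, and where Proposition~\ref{prop:Weak}(4)---the deepest input, resting on Dyer's description of joins in the weak order---enters. Parts (1) and (2) and the finiteness half of (3) are formal consequences of the bijective dictionary between low elements and their small inversion sets.
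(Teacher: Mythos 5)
Your proof is correct and follows essentially the same route as the paper's: parts (1) and (2) are the paper's computations verbatim, and for (3) the paper likewise combines injectivity into the finite set $\Lambda_n(W)$ (Corollary~\ref{cor:SmallInv}) with Proposition~\ref{prop:Weak}(4) and idempotency of the conic hull to get $N\bigl(\bigvee X\bigr)=\cone_\Phi\bigl(\bigcup_{x\in X}\Sigma_n(x)\bigr)$, a subset of $\Sigma_n$. Your additional justifications (the norm-one argument showing $\cone_\Phi(\{\alpha_s\})=\{\alpha_s\}$, and the closure-operator manipulation $\cone\bigl(\bigcup_x \cone(\Sigma_n(x))\bigr)=\cone\bigl(\bigcup_x \Sigma_n(x)\bigr)$) merely make explicit steps the paper leaves implicit.
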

\begin{proof}  For the first statement: $e\in L_n(W)$ since $N(e)=\emptyset = \cone_\Phi (\emptyset)$; moreover $N(s)=\{\alpha_s\}=\cone_\Phi(\alpha_s)$  and  $\alpha_s\in \Delta\subseteq \Sigma_0\subseteq \Sigma_n$. For {\em (2)}, let $w,w'\in L_n(W)$ such that $\Sigma_n(w)=\Sigma_n(w')$, then 
$
 N(w)=\cone_\Phi(\Sigma_n(w))=\cone_\Phi(\Sigma_n(w))=N(w').
 $
 By Proposition~\ref{prop:Weak} $N$ is injective and so $w=w'$, which implies that $\Sigma$ is injective.  It remains to show {\em (3)}. Finiteness of $L_{n}(W)$ holds by (2) and  Corollary~\ref{cor:SmallInv}.   Now let   $X\subseteq L_n(W)$  bounded. For each $x\in X$, we have by definition  that $N(x)= \cone_\Phi(\Sigma_n(x))$ and $\bigvee X$ exists.   The fact that $L_n(W)$ is closed under join follows now from the definition and Proposition~\ref{prop:Weak}:
  $$
N\left(\bigvee X\right)=\cone_\Phi \left(\bigcup_{x\in X} \cone_\Phi(\Sigma_n(x))\right)=\cone_\Phi \left(\bigcup_{x\in X}\Sigma_n(x)\right),
$$ 
and $\bigcup_{x\in X}\Sigma_n(x)\subseteq\Lambda_n$. So $\bigvee X\in L_n(W)$.
\end{proof}

\begin{ex}\label{ex:Inf4}
If $W$ is the infinite dihedral group $\mathcal D_\infty$, as in Examples~\ref{ex:Inf1}, \ref{ex:Inf2} and~\ref{ex:Inf3}, then $\Sigma=\Delta=\{\alpha_s,\alpha_t\}$.  Therefore  the conic hull of the two small roots contains $\Phi^+$ which is infinite in this case.

Hence $\cone_\Phi(\Delta)=\Phi^+$ cannot be an inversion set. So $L(W)=S\cup\{e\}$.
\end{ex}

\begin{ex}\label{ex:Univ2} If $(W,S)$ is a universal Coxeter system, as in Examples~\ref{ex:Univ1}~and~\ref{ex:Inf3}, then $\Sigma=\Sigma_0=\Delta$ and therefore $L(W)=S\cup\{e\}$. Indeed,   the conic hull of two small roots is the positive root system for an infinite dihedral group, a standard parabolic subgroup of rank $2$ as in Examples~\ref{ex:Inf3}~and~\ref{ex:Inf4}; hence we cannot have an inversion set that arises from the conic hull of more than one small root. 
\end{ex}

\begin{ex} In the case of the affine Coxeter group of type $\tilde A_2$ as in Examples~\ref{ex:AffineA2}~and~\ref{ex:Small} and Figure~\ref{fig:AffineA2}, 
it is not difficult to see, using the same techniques as in Example~\ref{ex:AffineA2}, that
$$
 L(\tilde A_2)=\{e,1,2,3,12,21,13,31,23,32,121,131,232,1232,2313,3121\}.
 $$
\end{ex}

 In both these examples and more, the map $\Sigma$ is bijective, which leads us to state the following conjecture.

 \begin{conject}\label{conj:2} 
 The  map $\Sigma_n:L_n(W)\to \Lambda_n(W)$ is a bijection. 
 \end{conject}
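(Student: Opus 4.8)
The plan is to deduce the conjecture from its already-established half: injectivity of $\Sigma_n\colon L_n(W)\to\Lambda_n(W)$ is Proposition~\ref{prop:Low}(2), so everything reduces to surjectivity, and I would first recast surjectivity as a purely geometric statement about inversion sets. Fix $A\in\Lambda_n(W)$ and write $A=\Sigma_n(w)=N(w)\cap\Sigma_n$. I claim it suffices to show that the conically closed set $C:=\cone_\Phi(A)$ is itself an inversion set, say $C=N(w')$. Granting this, note first that $C\subseteq N(w)$: since $A\subseteq N(w)$ and $N(w)$ is conically closed (if $\alpha,\beta\in N(w)$ and $\gamma\in\cone(\alpha,\beta)\cap\Phi^{+}$ then $w^{-1}(\gamma)\in\cone(\Phi^-)\cap\Phi=\Phi^-$, so $\gamma\in N(w)$), we get $C=\cone_\Phi(A)\subseteq\cone_\Phi(N(w))=N(w)$. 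Hence $\Sigma_n(w')=N(w')\cap\Sigma_n=C\cap\Sigma_n=A$, the last equality because any $\gamma\in C\cap\Sigma_n$ lies in $N(w)\cap\Sigma_n=A$ while $A\subseteq C\cap\Sigma_n$ is clear. As $N(w')=C=\cone_\Phi(A)$ with $A\subseteq\Sigma_n$, the element $w'$ is $n$-low by definition and maps to $A$. Thus Conjecture~\ref{conj:2} is \emph{equivalent} to the assertion that $\cone_\Phi(\Sigma_n(w))$ is an inversion set for every $w\in W$.

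Next I would translate ``inversion set'' into the combinatorial condition of being \emph{biclosed}, using the standard description of the inversion sets as the finite biclosed subsets of $\Phi^{+}$ (see \cite{Dy11}): here $N\subseteq\Phi^+$ is closed if $\cone(\alpha,\beta)\cap\Phi^+\subseteq N$ whenever $\alpha,\beta\in N$, and biclosed if both $N$ and $\Phi^+\setminus N$ are closed. The set $C=\cone_\Phi(A)$ is closed by construction, so only coclosedness of $C$ needs to be checked. Since closedness and coclosedness are conditions on \emph{pairs} of roots, and each pair spans a rank~$2$ subsystem lying in a unique maximal dihedral reflection subgroup (Example~\ref{ex:MaxDi}), biclosedness of $C$ is equivalent to requiring that $C\cap\Phi^{+}_{W'}$ be an inversion set of $W'$ for every maximal dihedral reflection subgroup $W'$ of $W$. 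This dihedral reduction is the heart of the matter.

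Finally I would analyse a fixed maximal dihedral $W'$ with $\Phi'=\Phi_{W'}$. Writing $w=uv$ with $u\in W'$ and $v\in X_{W'}$, functoriality of the Bruhat graph (Proposition~\ref{prop:CosetRep}) gives $N(w)\cap\Phi'^{+}=N_{W'}(u)$, which is already an inversion set of $W'$. The difficulty is that $C\cap\Phi'^{+}=\cone(\Sigma_n(w))\cap\Phi'^{+}$ may be strictly smaller than $N_{W'}(u)$, and is not \emph{a priori} an initial segment in the dihedral ordering of $\Phi'^{+}$. Deciding which roots of $\Phi'$ fall into $\cone(\Sigma_n(w))$ is exactly a monotonicity question for the dominance depth $\dep_\infty$ along $\Phi'^{+}$, and is governed by the bipodality of $\Sigma_n$. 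For infinite $W'$ the dominance depth is strictly monotone along each half of $\Phi'^{+}$ (compare Example~\ref{ex:Affine}), which forces $C\cap\Phi'^{+}$ to be an initial segment and hence an inversion set of $W'$.

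\textbf{The main obstacle} I anticipate is precisely the finite dihedral case, where this monotonicity of $\dep_\infty$ can fail: one must then invoke the bipodality of $\Sigma_n$ to guarantee that the $n$-small roots of $\Phi'$, and with them $\cone(\Sigma_n(w))\cap\Phi'^{+}$, are distributed so as to yield a genuine inversion set of $W'$. This input is available for $n=0$ through Theorem~\ref{thm:Main3}, which should yield the conjecture in that case in parallel with Theorem~\ref{thm:Main1}, and is conjectural in general (Conjecture~\ref{conj:3}). I therefore expect the full statement to follow exactly for those $(W,S)$ and $n$ for which bipodality of $\Sigma_n$ is established, in particular the affine groups and the $\{1,2,3,\infty\}$-labelled case of Theorem~\ref{thm:Main2}.
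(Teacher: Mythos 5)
First, context that matters here: the statement you set out to prove is recorded in the paper only as Conjecture~\ref{conj:2}, and the paper contains \emph{no} proof of it --- not even for $n=0$, where bipodality of $\Sigma_0$ (Theorem~\ref{thm:Main3}) and the whole suffix-closure machinery are fully established. So there is no argument of the authors to compare yours against, and a complete proof would be new. Your reductions are correct as far as they go: injectivity is indeed Proposition~\ref{prop:Low}(2); your equivalence between surjectivity and the assertion that $C:=\cone_\Phi(\Sigma_n(w))$ is an inversion set for every $w\in W$ is valid (including the verifications $C\subseteq N(w)$ and $C\cap \Sigma_n=\Sigma_n(w)$); and localizing biclosedness to maximal dihedral reflection subgroups is sound, since closure and coclosure are conditions on pairs of roots and the rank-two subsystem they span (Example~\ref{ex:MaxDi}).

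The genuine gap is the dihedral verification itself, which you assert rather than prove, and the assertion is unsupported in both cases. In the infinite case, write $N(w)\cap\Phi_{W'}^{+}=\{\alpha_1,\dots,\alpha_k\}$ as an initial segment with $\alpha_1\in\Delta_{W'}$. If $\alpha_j\in C$ with $\alpha_j\in\Sigma_n$, your monotonicity argument does work: Proposition~\ref{prop:Increase}(1) gives $\dep_\infty(\alpha_1)<\dep_\infty(\alpha_j)\leq n$, so $\alpha_1\in\Sigma_n(w)\subseteq C$ and closedness fills in the interval. But a root $\alpha_j$ can lie in $C$ with $\dep_\infty(\alpha_j)>n$, entering the cone through generators of $\Sigma_n(w)$ \emph{outside} the plane $\mathbb{R}\Phi_{W'}$; then monotonicity only yields $\dep_\infty(\alpha_1)<\dep_\infty(\alpha_j)$, which does not place $\alpha_1$ in $\Sigma_n$, hence gives no reason for $\alpha_1\in C$ --- and if $\alpha_1\notin C$ then $C$ fails coclosure and, by your own two-sided equivalence, the conjecture itself would fail for that $w$. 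Nothing you cite excludes this scenario; excluding it is precisely the open content. In the finite case you defer entirely to bipodality, but bipodality constrains only the canonical simple roots $\Delta_{W'}$, while the paper's own $\tilde G_2$ computation (Remark~\ref{rem:balance}(b)) exhibits a finite maximal dihedral subgroup whose $\infty$-depth pattern is $(0,0,1,0,1,0)$, so the $n$-small roots of $\Phi_{W'}^{+}$ need not even form an interval; the distributional statement you need does not follow from bipodality by inspection. Finally, your closing claim that Theorem~\ref{thm:Main3} ``should yield the conjecture'' for $n=0$ in parallel with Theorem~\ref{thm:Main1} is belied by the paper itself: the authors possess bipodality for $n=0$ yet still state bijectivity of $\Sigma_0$ only as a conjecture, because Theorem~\ref{thm:Main1} concerns closure of $L_n(W)$ under join and suffix, a different property from surjectivity of $\Sigma_n$. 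Your reformulation is genuinely useful, but the proof is incomplete at its central step.
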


\begin{remark} In the examples above, the set of low elements $L(W)$ is also the smallest Garside shadow $\tilde S$ from Remark~\ref{rem:SmallG}. But it is not true in general. If $(W,S)$ is of affine type $\tilde G_2$ (see Figure~\ref{fig:AffineG2}), then $w=s_\alpha s_\gamma s_\beta$ is a low element since its inversion set is $N(w)=\cone_\Phi(\alpha,\gamma,\nu)$,  where $s_\alpha(\gamma)=\gamma$ and $\nu=s_\alpha s_\gamma (\beta)$;  but is not in $\tilde S$ since we cannot obtain $w$ by join or suffix closure starting from $S$.
\end{remark}

\section{Low elements form a finite Garside shadow}\label{se:Suffix}

The aim of this section is to finish the  proof of Theorem~\ref{thm:Main1}. In regard to Proposition~\ref{prop:Low}{\em (3)}, we just have to show that the set $L(W)$ of low elements is closed under taking suffix. In order to do this we first give a description of the rays of the cone over the inversion set of a suffix of $w\in W$ as a function of rays of $\cone(N(w))$. Then we prove that the set of small roots  is {\em bipodal}: if a small root  is  a positive and non-simple root of a maximal dihedral reflection subgroup $W'$, then the simple system $\Delta_{W'}$ is constituted of two small roots.

\subsection{Bruhat order}\label{sse:Bruhat} The \emph{Bruhat order} on $W$
is the partial order $\leq$ arising as the reflexive, transitive closure of the relation $\to$ on $W$ 
defined by $x\to y$ if there is $\beta\in \Phi^+$ such that $y=s_\beta x$ with $\ell(x)<\ell(y)$. We say that a pair $x\leq y$ is a {\em covering in the Bruhat order}, which we denote by $x\lhd y$,  if for any $z\in W$ such that $x\leq z\leq y$ then we have $z=x$ or $z=y$. The chain property of Bruhat order implies that $x\lhd y$  is a {\em covering}  if $x<y$ and  $\ell(y)=\ell(x)+1$, i.e., if there is $\beta\in \Phi^+$ such that $y=s_\beta x$ and  $\ell(y)=\ell(x)+1$. It is known that $u\leq_R v$ implies that $u\leq v$.   We refer the reader to \cite[Chapter~2]{BjBr05} for more details. The following well-known proposition can be found for instance as \cite[Proposition 2.2.7]{BjBr05}.

\begin{prop}[Lifting property]\label{prop:Lifting} Let $s\in S$ and $u<v$ in $W$ such that $sv\lhd v$ and $u\lhd su$, then $u\leq sv$ and $su\leq sv$. 
\end{prop}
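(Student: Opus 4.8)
The plan is to establish the lifting property by the classical argument, reading the two covering hypotheses as descent/ascent conditions on $s$ and then exploiting the subword characterisation of the Bruhat order together with the Exchange Condition, organised as a single induction on $\ell(v)$ that produces both inequalities. First I would record what the hypotheses say. Since $sv\lhd v$ is equivalent to $\ell(sv)=\ell(v)-1$, the element $s$ is a left descent of $v$, so $v$ has a reduced word $s\,s_2\cdots s_k$ whose tail $s_2\cdots s_k$ is a reduced word for $sv$. Since $u\lhd su$ is equivalent to $\ell(su)=\ell(u)+1$, the element $s$ is a left ascent of $u$, so no reduced word for $u$ begins with $s$.

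For the first inequality $u\le sv$ I would argue directly from the subword property. From $u\le v$ one obtains a reduced subword of $s\,s_2\cdots s_k$ spelling $u$. The decisive point is that such a subword cannot use the leading letter $s$: if it did, erasing that $s$ would yield a reduced word for $u$ beginning with $s$, hence $su<u$, contradicting $u\lhd su$. Therefore the subword sits inside the tail $s_2\cdots s_k$, which spells $sv$, and so $u\le sv$.

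For the second inequality $su\le sv$ I would proceed by the same induction on $\ell(v)$, now carrying $su\le sv$ as the second inductive conclusion alongside the first. Passing from $v$ to $sv$ removes the leading $s$ and lowers the length by one, where the inductive hypothesis governs the comparison descended from the pair $(u,sv)$; I would then reinsert $s$ on the left and apply the Exchange Condition, keeping track of the occurrence of $s$ that realises the left descent of $su$, so as to transport the comparison up to $su\le sv$. The bookkeeping from the first part — that no reduced expression for $u$ begins with $s$ — is exactly what guarantees that the reinserted $s$ survives reducedly and produces $su$ rather than collapsing its length.

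The main obstacle is that the two multiplications by $s$ pull in opposite directions, $s$ being a descent of $v$ but an ascent of $u$, so there is no one-step monotonicity of left multiplication by $s$ on a Bruhat comparison to appeal to; everything must be funneled through the subword and Exchange analysis of a single controlled occurrence of $s$. I would therefore take care, at each stage of the induction, that the chosen reduced expressions remain reduced after the insertion or deletion of $s$, since it is precisely this reducedness that converts the combinatorial subword statement into the two Bruhat comparisons $u\le sv$ and $su\le sv$.
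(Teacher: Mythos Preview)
The paper does not prove this proposition; it simply cites \cite[Proposition~2.2.7]{BjBr05}. More importantly, the second conclusion as printed in the paper contains a typo: it should read $su\leq v$, not $su\leq sv$. Indeed, in $W=S_3$ with $s=s_1$, $u=s_2$, $v=s_1s_2s_1$, all three hypotheses hold ($u<v$, $sv=s_2s_1\lhd v$, $u\lhd su=s_1s_2$), yet $su=s_1s_2$ and $sv=s_2s_1$ are distinct elements of the same length, so $su\not\leq sv$. The paper's own use of the proposition in the proof of Corollary~\ref{cor:Lifting} confirms the intended reading: only $su\leq v$ (there written as $ss_\gamma x\leq x$) is ever invoked.

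Your argument for $u\leq sv$ via the subword criterion is the standard one and is correct, though the phrase ``erasing that $s$ would yield a reduced word for $u$ beginning with $s$'' is garbled: you mean simply that if the chosen reduced subword for $u$ uses the leading letter $s$, then that subword is already a reduced expression for $u$ beginning with $s$, forcing $su<u$, a contradiction. Your inductive sketch for $su\leq sv$, on the other hand, cannot be completed, because the target inequality is false in general; the vagueness of ``reinsert $s$ on the left and apply the Exchange Condition'' is a symptom of this. The correct conclusion $su\leq v$ follows immediately from the first part: a reduced word for $u$ embeds as a subword of $s_2\cdots s_k$ (the chosen reduced word for $sv$), so prepending $s$ gives a word for $su$ of length $\ell(u)+1=\ell(su)$, hence reduced, sitting as a subword of the reduced word $s\,s_2\cdots s_k$ for $v$.
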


The next corollary is used in the proof of Theorem~\ref{thm:BaseSuff} below.

\begin{cor}\label{cor:Lifting}  Let $s\in S$, $x\in W$ and $\gamma\in \Phi^+$ such that $sx\lhd x$ and $s_\gamma x\lhd x$. Then either $\gamma=\alpha_s$ or $ss_\gamma x\lhd s_\gamma x$.
\end{cor}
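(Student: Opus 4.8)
The plan is to reduce everything to a single application of the lifting property (Proposition~\ref{prop:Lifting}), used contrapositively. First I would dispose of the degenerate possibility: if $\gamma=\alpha_s$ then the first alternative of the conclusion holds and there is nothing to prove, so from now on I assume $\gamma\neq\alpha_s$, which (since $\gamma$ and $\alpha_s$ are both positive roots) is equivalent to $s_\gamma x\neq sx$. The goal is then to establish the covering $ss_\gamma x\lhd s_\gamma x$. Because $s\in S$ and left multiplication by a simple reflection changes length by exactly one, the element $ss_\gamma x$ differs from $s_\gamma x$ by one in length; thus proving $ss_\gamma x\lhd s_\gamma x$ amounts precisely to showing $\ell(ss_\gamma x)<\ell(s_\gamma x)$.

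I would argue this by contradiction, so suppose instead that $\ell(ss_\gamma x)>\ell(s_\gamma x)$, i.e. $s_\gamma x\lhd ss_\gamma x$. Now set $u:=s_\gamma x$ and $v:=x$. The hypothesis $s_\gamma x\lhd x$ gives $u<v$, the hypothesis $sx\lhd x$ gives $sv\lhd v$, and our contradiction hypothesis is exactly $u\lhd su$. These are precisely the three hypotheses of Proposition~\ref{prop:Lifting}, so I may conclude $u\leq sv$, that is $s_\gamma x\leq sx$.

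The contradiction is then extracted purely from lengths. Since $s_\gamma x\lhd x$ and $sx\lhd x$ are both coverings, one has $\ell(s_\gamma x)=\ell(x)-1=\ell(sx)$, so $\ell(u)=\ell(sv)$. As the Bruhat order is graded by length, a comparison $u\leq sv$ between two elements of equal length forces $u=sv$, i.e. $s_\gamma x=sx$; this yields $s_\gamma=s$ and hence $\gamma=\alpha_s$, contradicting the standing assumption $\gamma\neq\alpha_s$. Therefore $\ell(ss_\gamma x)<\ell(s_\gamma x)$, and consequently $ss_\gamma x\lhd s_\gamma x$, which completes the proof.

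The only real choice in the argument is the identification $u=s_\gamma x$, $v=x$ that matches the corollary's data to the hypotheses of the lifting property; once that is made, the rest is forced. I do not expect any genuine obstacle here: no root-theoretic computation is needed beyond the elementary equivalence $s_\gamma x=sx\iff\gamma=\alpha_s$ for positive roots, and the length bookkeeping is immediate from the two covering hypotheses. The one point to state carefully is that $u\leq sv$ together with $\ell(u)=\ell(sv)$ gives $u=sv$, which is exactly the fact that the Bruhat order is ranked by the length function $\ell$.
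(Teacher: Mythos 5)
Your proof is correct and follows essentially the same route as the paper: both split on whether $\ell(ss_\gamma x)<\ell(s_\gamma x)$ and, in the remaining case, apply Proposition~\ref{prop:Lifting} with exactly the same substitution $u=s_\gamma x$, $v=x$, finishing by a length comparison that forces $\gamma=\alpha_s$. The only (immaterial) differences are that you use the conclusion $u\leq sv$ where the paper uses $su\leq sv$, and you phrase the case analysis as a contradiction against $\gamma\neq\alpha_s$ rather than deriving the dichotomy directly.
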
 
\begin{proof} Since $s\in S$ we have  $\ell(ss_\gamma x)=\ell(s_\gamma x)\pm 1$. So either $ss_\gamma x \lhd s_\gamma x$ and we are done,  or $s_\gamma x\lhd ss_\gamma x$. In this last case, we have $\ell(ss_\gamma x)=\ell(s_\gamma x)+1=\ell(x)$ since $s_\gamma x \lhd x$. Moreover Proposition~\ref{prop:Lifting} implies, with $u=s_\gamma x$ and $v= x$, that $ss_\gamma x \leq x$. Since $ss_\gamma x$ and $x$ have the same length, we have $x=ss_\gamma x$, forcing $s=s_\gamma$, i.e.,  $\gamma =\alpha_s$.
\end{proof}

\begin{ex}\label{ex:BruhatDi}  Let $W$ be a dihedral group $\mathcal D_m$ ($m\in\mathbb N_{\geq 2}\cup\{\infty\})$ generated by $S=\{s,t\}$ with Coxeter graph:
\begin{center}
\begin{tikzpicture}[sommet/.style={inner sep=2pt,circle,draw=blue!75!black,fill=blue!40,thick}]
	\node[sommet,label=above:$s$] (alpha) at (0,0) {};
	\node[sommet,label=above:$t$] (beta) at (1,0) {} edge[thick] node[auto,swap] {$m$} (alpha);
\end{tikzpicture}
\end{center}
We follow the notations introduced in Example~\ref{ex:Inf2} for $[s,t]_k$, $[t,s]_k$, $\alpha_{s,k}$ and $\alpha_{t,k}$ for $k\in\mathbb N$. We illustrate in Figure~\ref{fig:BruhatDi} the Hasse diagrams for the Bruhat order on dihedral groups with the edges representing the covering $x\lhd y$ labelled by the root $\beta$ such that $y=s_\beta x$.
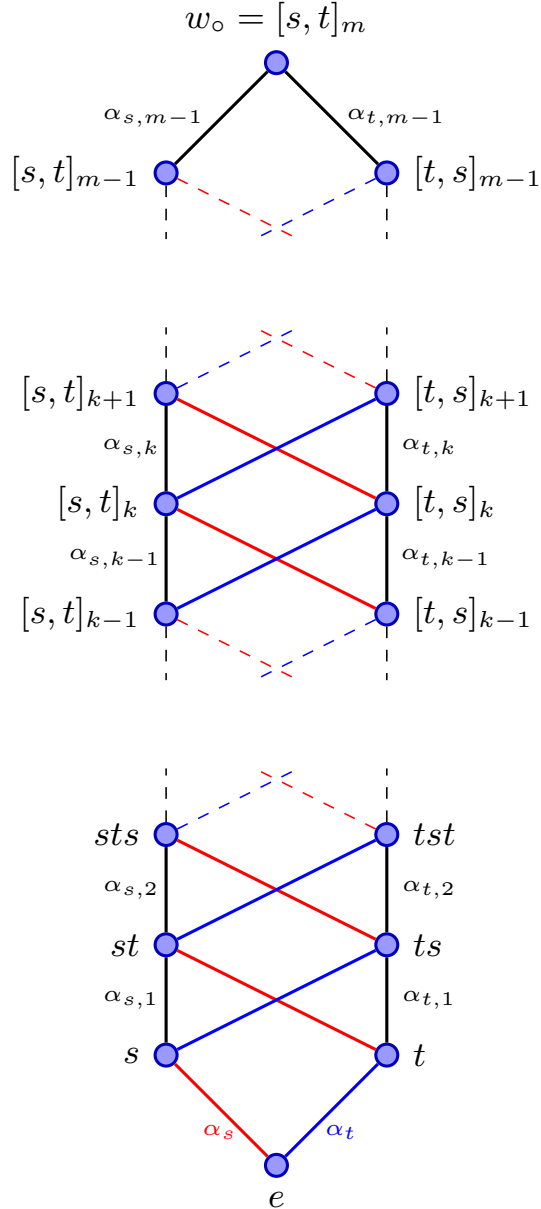
\begin{figure}[h!]
\resizebox{.6\hsize}{!}{
\begin{tikzpicture}
	[scale=1,
	 pointille/.style={dashed},
	 axe/.style={color=black, very thick},
	 sommet/.style={inner sep=2pt,circle,draw=blue!75!black,fill=blue!40,thick,anchor=west}]
	 
\node[sommet]  (id)    [label=below:{\small{$e$}}]          at (0,0)    {};
\node[sommet]  (1)    [label=left:{\small{$s$}}]         at (-1,1)   {} edge[thick, red] (id) ;
\node[sommet]  (2)    [label=right:{\small{$t$}}]        at (1,1)    {} edge[thick,blue] (id);
\node[sommet]  (12)   [label=left:{\small{$st$}}]      at (-1,2)   {} edge[thick] (1) {} edge[thick,red] (2);
\node[sommet]  (21)   [label=right:{\small{$ts$}}]     at (1,2)    {} edge[thick] (2) {} edge[thick,blue] (1);
\node[sommet]  (121)  [label=left:{\small{$sts$}}]    at (-1,3)    {} edge[thick] (12) {} edge[thick,red] (21);
\node[sommet]  (212)  [label=right:{\small{$tst$}}]    at (1,3)    {} edge[thick] (21){} edge[thick,blue] (12) ;

\draw[pointille] (121) -- +(0,0.6);
\draw[pointille] (212) -- +(0,0.6);
\draw[pointille,blue] (121) -- +(1.2,0.6);
\draw[pointille,red] (212) -- +(-1.2,0.6);

\node[sommet]  (12k)  [label=left:{\small{$[s,t]_{k-1}$}}]    at (-1,5)    {};
\node[sommet]  (21k)  [label=right:{\small{$[t,s]_{k-1}$}}]    at (1,5)    {};
\draw[pointille] (12k) -- +(0,-0.6);
\draw[pointille] (21k) -- +(0,-0.6);
\draw[pointille,red] (12k) -- +(1.2,-0.6);
\draw[pointille,blue] (21k) -- +(-1.2,-0.6);
\node[sommet]  (12kk)  [label=left:{\small{$[s,t]_{k}$}}]    at (-1,6)    {} edge[thick] (12k) {} edge[thick,red] (21k);
\node[sommet]  (21kk)  [label=right:{\small{$[t,s]_{k}$}}]    at (1,6)    {} edge[thick,blue] (12k) {} edge[thick] (21k);
\node[sommet]  (12kkk)  [label=left:{\small{$[s,t]_{k+1}$}}]    at (-1,7)    {} edge[thick] (12kk) {} edge[thick,red] (21kk);
\node[sommet]  (21kkk)  [label=right:{\small{$[t,s]_{k+1}$}}]    at (1,7)    {} edge[thick,blue] (12kk) {} edge[thick] (21kk);

\draw[pointille] (12kkk) -- +(0,0.6);
\draw[pointille] (21kkk) -- +(0,0.6);
\draw[pointille,blue] (12kkk) -- +(1.2,0.6);
\draw[pointille,red] (21kkk) -- +(-1.2,0.6);

\node[sommet]  (12f)  [label=left:{\small{$[s,t]_{m-1}$}}]    at (-1,9)    {};
\node[sommet]  (21f)  [label=right:{\small{$[t,s]_{m-1}$}}]    at (1,9)    {};
\draw[pointille] (12f) -- +(0,-0.6);
\draw[pointille] (21f) -- +(0,-0.6);
\draw[pointille,red] (12f) -- +(1.2,-0.6);
\draw[pointille,blue] (21f) -- +(-1.2,-0.6);
\node[sommet]  (wo)    [label=above:{\small{$w_\circ=[s,t]_m$}}]          at (0,10)     {} edge[thick] (12f)  {} edge[thick] (21f) ;

\draw (-0.4,0.3) node[auto,swap,red] {\tiny{$\alpha_s$}};
\draw (0.7,0.3) node[auto,swap,blue] {{\tiny $\alpha_t$}};

\draw (-1.2,1.5) node[auto,swap] {\tiny{$\alpha_{s,1}$}};
\draw (-1.2,2.5) node[auto,swap] {\tiny{$\alpha_{s,2}$}};
\draw (-1.35,5.5) node[auto,swap] {\tiny{$\alpha_{s,k-1}$}};
\draw (-1.2,6.5) node[auto,swap] {\tiny{$\alpha_{s,k}$}};
\draw (-1,9.5) node[auto,swap] {\tiny{$\alpha_{s,m-1}$}};

\draw (1.5,1.5) node[auto,swap] {\tiny{$\alpha_{t,1}$}};
\draw (1.5,2.5) node[auto,swap] {\tiny{$\alpha_{t,2}$}};
\draw (1.65,5.5) node[auto,swap] {\tiny{$\alpha_{t,k-1}$}};
\draw (1.5,6.5) node[auto,swap] {\tiny{$\alpha_{t,k}$}};
\draw (1.2,9.5) node[auto,swap] {\tiny{$\alpha_{t,m-1}$}};

\end{tikzpicture}}
\caption{ The Hasse diagram of the Bruhat order on  the finite dihedral group $\mathcal D_m$. For $\mathcal D_\infty$, the Hasse diagram is the diagram with infinite vertices obtained by not considering the top part of the above diagram. The labels for the interior edges are as follows: the `parallel' red  ones corresponds to the label $\alpha_s$ and the `parallel' blue ones to the label  $\alpha_t$. All the labels on the other edges are distinct. }
\label{fig:BruhatDi}
\end{figure}
\end{ex}

The above example of dihedral group leads to the following lemma, which will be used to prove Theorem~\ref{thm:BaseSuff} below (recall the definition of maximal dihedral reflection subgroup in Example~\ref{ex:MaxDi}).

\begin{lem}\label{lem:BruhatDi} Let $s\in S$, $\beta\in \Phi^+$ and $x\in W$ such that $sx\lhd x$.  Denote $W'=\langle s,s_\beta\rangle$, a reflection subgroup  with simple system $\Delta_{W'}$ and positive root system $\Phi^+_{W'}$. The following assertion are equivalent:
\begin{enumerate}
\item  $ss_\beta sx\lhd s_\beta sx \lhd sx\lhd x$;

\item $W'$ is a maximal dihedral reflection subgroup,~$\Delta_{W'}=\{\alpha_s,\beta\}$, $\ell(s_{s(\beta)}x)=\ell(x)-3$ and there is $\gamma \in\Phi_{W'}^+\setminus \{\alpha_s\}$ such that~$s_\gamma x\lhd x$.

\end{enumerate}
\end{lem}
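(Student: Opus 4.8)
The plan is to collapse the whole statement into a computation inside a single dihedral reflection subgroup, transporting the four–term chain in $W$ into the corresponding chain there by functoriality of the Bruhat graph (Proposition~\ref{prop:CosetRep}), where the explicit Bruhat order of Example~\ref{ex:BruhatDi} (Figure~\ref{fig:BruhatDi}) makes descents rigid and the analysis transparent. Concretely, let $W_P\supseteq W'$ be the maximal dihedral reflection subgroup with root system $\Phi\cap P$ for the plane $P=\Span(\alpha_s,\beta)$ (Example~\ref{ex:MaxDi}), and write $x=uv$ with $u\in W_P$, $v\in X_{W_P}$, so $\ell(wx)=\ell_{W_P}(wu)+\ell(v)$ for all $w\in W_P$. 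Since $s,s_\beta\in W'\subseteq W_P$, the whole chain in (1) lies in the coset $W_Px$, and the map $y\mapsto yv$ identifies it with a chain in $W_P$; thus $sx\lhd x\Leftrightarrow su\lhd u$, $s_\gamma x\lhd x\Leftrightarrow s_\gamma u\lhd u$ for $\gamma\in\Phi_{W_P}$, and $\ell(s_{s(\beta)}x)=\ell(x)-3\Leftrightarrow \ell_{W_P}(s_{s(\beta)}u)=\ell_{W_P}(u)-3$, using $s_{s(\beta)}=ss_\beta s\in W_P$. It therefore suffices to prove the equivalence inside $W_P$ under the standing hypothesis $su\lhd u$, with all coverings computed in $W_P$.

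For (1)$\Rightarrow$(2), I first observe that $\alpha_s$, being a simple root of $W$, is an extreme ray of $\cone(\Phi^+)$ and hence of the subcone $\cone(\Phi^+\cap P)$ containing it; by the description of $\Delta_{W_P}$ recalled in \S\ref{sse:RefSub} this means $\alpha_s\in\Delta_{W_P}$, so $s$ is one canonical generator of $W_P$. Writing the canonical generators as $a=s$ and $c=s_\nu$ with $\Delta_{W_P}=\{\alpha_s,\nu\}$ and using the $[a,c]_k$ notation of Example~\ref{ex:Inf2}, the hypothesis $su\lhd u$ forces $u=[a,c]_k$ to begin with $a$, whence $su=[c,a]_{k-1}$. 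Reading off Figure~\ref{fig:BruhatDi}, the element $[c,a]_{k-1}$ covers exactly $[a,c]_{k-2}$ (via the simple reflection $c$) and $[c,a]_{k-2}$ (via a non-simple reflection); as the next step requires $a$ to be a left descent of $s_\beta su$, and only $[a,c]_{k-2}$ begins with $a$, we get $s_\beta su=[a,c]_{k-2}$ and so $s_\beta=c$. Hence $\Delta_{W_P}=\{\alpha_s,\beta\}$ and $W'=\langle s_{\alpha_s},s_\nu\rangle=W_P$ is maximal dihedral; the length statement is exactly the bottom of the chain, and taking $\gamma$ to be the root labelling the \emph{other} cover of $u$ (non-simple, or the second simple reflection when $u$ is the longest element) gives $\gamma\in\Phi^+_{W'}\setminus\{\alpha_s\}$ with $s_\gamma x\lhd x$. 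The small cases $k\le 2$ are ruled out because they break one of the three coverings, so $k\ge 3$ and no degenerate chain occurs.

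For (2)$\Rightarrow$(1), the hypothesis $\Delta_{W'}=\{\alpha_s,\beta\}$ gives canonical generators $a=s$, $b=s_\beta$ of the dihedral group $W'$, and I again reduce to $W'$. The standing hypothesis makes $u=[a,b]_k$ begin with $a$, and since $s_{s(\beta)}u=aba\cdot u=[b,a]_{k-3}$, the condition $\ell_{W'}(s_{s(\beta)}u)=\ell_{W'}(u)-3$ is equivalent to $k\ge 3$. Granting this, the direct computation $su=[b,a]_{k-1}$, $s_\beta su=[a,b]_{k-2}$, $ss_\beta su=[b,a]_{k-3}$ exhibits each step as a covering, because at each stage the multiplying reflection is precisely the simple left descent of the current element. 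This yields chain (1); the hypotheses of maximality and existence of $\gamma$ are not needed here, and are recorded in (2) only because they are among the consequences of (1) established above.

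I expect the main obstacle to be the bookkeeping in the first implication: correctly identifying $W'$ with the \emph{full} maximal dihedral subgroup $W_P$ rather than with a proper dihedral subgroup of the plane $P$, and pinning down that $s_\beta$ must be the second canonical generator. The decisive leverage is that $\alpha_s$ is forced to be canonical-simple in $W_P$ by its extremality as a simple root of $W$; once $s=a$ is a canonical generator, the rigidity of descents in the dihedral Bruhat order of Figure~\ref{fig:BruhatDi} determines $s_\beta$ and hence the rest.
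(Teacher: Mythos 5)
There is a genuine gap, and it originates in one false claim on which your whole reduction rests: the identity $\ell(wx)=\ell_{W_P}(wu)+\ell(v)$. The correct statement is $\ell(wx)=\ell(wu)+\ell(v)$ with \emph{$W$-lengths} throughout, and $\ell_{W_P}$ is not the restriction of $\ell$ to $W_P$, because the canonical generators of a maximal dihedral reflection subgroup are in general non-simple reflections of $W$ (e.g.\ $\langle s_{\alpha_1+\alpha_3},s_{\alpha_2}\rangle$ in type $\tilde A_2$). What Proposition~\ref{prop:CosetRep}(2) transfers is only \emph{descents}: $\ell(s_\gamma x)<\ell(x)\iff \ell_{W_P}(s_\gamma u)<\ell_{W_P}(u)$ for $\gamma\in\Phi_{W_P}$; it transfers neither covering relations (in the direction $W_P\to W$) nor length differences. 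Concretely, in $W=\mathcal D_\infty=\langle s,t\rangle$ with $W'=\langle s,tst\rangle$ and $u=s\cdot tst=stst$, left multiplication by the reflection $ststs\in W'$ is a covering in $(W',\leq_{W'})$ (it drops $\ell_{W'}$ from $2$ to $1$) but drops $\ell$ from $4$ to $1$. So your asserted equivalences ``$s_\gamma x\lhd x\Leftrightarrow s_\gamma u\lhd u$'' and, crucially, ``$\ell(s_{s(\beta)}x)=\ell(x)-3\Leftrightarrow \ell_{W_P}(s_{s(\beta)}u)=\ell_{W_P}(u)-3$'' are false in general. (The one direction you do need in (1)$\Rightarrow$(2) — that a $W$-covering induces a $W_P$-covering — is true, but it requires an argument, e.g.\ pushing a maximal chain of $(W_P,\leq_{W_P})$ forward by the order-preserving map $z\mapsto zv$ to bound the $W$-length gap from below by the $W_P$-length gap; the false length formula cannot substitute for it.)

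The gap then bites in both implications. In (1)$\Rightarrow$(2), the $\gamma$ you exhibit as the other down-cover of $u$ in $W_P$ only yields the strict inequality $s_\gamma x<x$ after transporting back to $W$; the conclusion of (2) demands the \emph{covering} $s_\gamma x\lhd x$, which the paper obtains by the sandwich $s_\beta sx<s_\gamma x<x$ together with $\ell(s_\beta sx)=\ell(x)-2$ — a step absent from your argument. In (2)$\Rightarrow$(1) the failure is worse: you declare maximality and the existence of $\gamma$ superfluous, but the hypothesis of (2) is the \emph{$W$-length} condition $\ell(s_{s(\beta)}x)=\ell(x)-3$, and this does not by itself force $k\geq 3$. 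Indeed, with $u=[a,b]_2=ss_\beta$ one computes $s_{s(\beta)}u=s$, so if $\ell(s_\beta)=3$ and $\ell(ss_\beta)=4$ the $W$-drop is exactly $3$ while $k=2$; then $s_\beta sx=v$ and $ss_\beta sx=sv$ with $\ell(sv)=\ell(v)+1$, so the bottom covering of (1) fails. It is precisely the hypothesis that some $\gamma\in\Phi^+_{W'}\setminus\{\alpha_s\}$ satisfies $s_\gamma x\lhd x$ that excludes this degenerate case (in this scenario $s_\gamma x=sv$ is \emph{not} a covering of $x$), and this is how the paper's proof uses $\gamma$: to rule out $ss_\beta su=s_\gamma u$, after which strict inequalities in $W'$ are pushed to $W$ and upgraded to coverings by the $W$-length hypothesis. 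Your skeleton — coset reduction, functoriality, $\alpha_s\in\Delta_{W_P}$ by extremality of simple roots, rigidity of the dihedral Bruhat order of Example~\ref{ex:BruhatDi} — is exactly the paper's, but the metric transfer you rely on is invalid, so the proof as written does not close.
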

\begin{proof} Assume {\em (1)} to be true. Note that $W'$ is of rank $2$ since $\alpha_s\not = \beta$ by  {\em (1)}. Let $W''$ be the maximal dihedral reflection subgroup containing $W'$.  Write $x=uv$ with $u\in W''$ and $v\in X_{W''}$ as in Proposition~\ref{prop:CosetRep}. Proposition~\ref{prop:CosetRep}{\em (2)} shows that the map $\psi_v:(W'',\leq_{W''})\mapsto (W''v,\leq)$, $z\mapsto zv$, is an order preserving  bijection. Therefore, even though the inverse map may  not be order preserving,  {\em (1)} forces 
$$
ss_\beta su\lhd_{W''} s_\beta su \lhd_{W''} su\lhd_{W''} u.
$$
But since the graph of the Bruhat order on $W''$ is   as in Example~\ref{ex:BruhatDi} and Figure~\ref{fig:BruhatDi}, the only possible chains of covering with repetition of the same reflection is one involving the two canonical generators of $W''$.  So $S''=\{s,s_\beta\}$ must be the canonical generators of $W''$. In other words, $W'=W''$ and $\Delta_{W'}=\{\alpha_s,\beta\}$. 

Moreover, since each Bruhat interval of length $2$ (or more) in dihedral groups have two coatoms (see Figure~\ref{fig:BruhatDi}), there is  $\gamma \in\Phi_{W'}^+\setminus \{\alpha_s\}$ such that $s_\beta s u \lhd_{W'} s_\gamma u\lhd_{W'} u$. Applying the order preserving map $\psi_v$ we obtain $s_\beta s x < s_\gamma x< x$. Since by assumption $\ell(s_\beta s x)=\ell(x)-2$, we have 
$s_\beta s  x \lhd s_\gamma x \lhd x$.

\smallskip

Assume now {\em (2)} to be true. As in {\em (1)} we use the decomposition $x=uv$ and the map $\psi_v$.  Observe that,  and $\ell(ss_{\beta}sx)=\ell(x)-3=l(sx)-2$ since $sx\lhd x$. We must have $s_{\beta}sx<sx$, since otherwise $\ell(ss_{\beta}sx)\geq \ell(s_{\beta}sx)-1\geq \ell(sx)$. So 
$s_{\beta}s u\lhd_{W'} su\lhd_{W'} u$, since $S_{W'}=\{s,s_\beta\}$,  and $s_{\gamma}u\lhd_{W'} u$. This implies that  $s_{\beta}s u\lhd_{W'} s_{\gamma} u\lhd_{W'} u$ since $s_\gamma u$ and $su$ are the two coatoms of the length $2$ interval $[s_{\beta}s u,u]$ in $(W',\leq_{W'})$. 

If $ss_{\beta}su >_{W'}s_{\beta}su$, this would force $s_\beta su\lhd_{W'} ss_\beta su$ since $S_{W'}=\{s,s_\beta\}$. Therefore, $ss_\beta su\in\{su,s_\gamma u\}$ since there is only two atoms above $s_{\beta}su$ in $(W',\leq_{W'})$. Since $s_\beta\not = s$, we have $ss_\beta su=s_\gamma u$ and hence $ss_{\beta}sx=s_{\gamma}x$ contrary to $s_{\gamma}x\lhd x$ and $\ell(ss_{\beta}sx)=l(x)-3$.

Hence $ss_{\beta}su <_{W'}s_{\beta}su$ and therefore $ss_{\beta}su <_{W'}s_{\beta}su<_{W'}su <_{W'} u $. Applying $\psi_v$ shows   $ss_{\beta}sx<s_{\beta}sx<sx<x$. 
Since  $\ell(ss_{\beta}sx)=\ell(x)-3$, it follows that $ss_{\beta}sx\lhd s_{\beta}sx\lhd sx\lhd x$.
\end{proof}

\subsection{Bases of inversion sets}\label{sse:BaseInv}

 Using the Bruhat order, the first author (M.D.)~\cite{Dy93,Dy94-1,Dy11} describes the rays of the cone over an inversion set. If $C$ is a cone such that $C\cap -C=\{0\}$,  an  {\em extreme ray of $C$} is a ray $\mathbb R^+\alpha$ where $\alpha\in C\setminus\{0\}$  such that if $\beta,\gamma\in C$ and  $\mathbb R^+\alpha\subseteq  \cone(\beta,\gamma)$, then $\beta\in \mathbb R^+\alpha$ or $\gamma\in \mathbb R^+\alpha$.  In that case, any non-zero $\alpha\in F$  is called a {\em representative of $F$}. 
 
 \begin{defi} Let $w\in W$, {\em the base $N^1(w)$ of the inversion set $N(w)$} is the set of representatives  of $\cone(N(w))$ constituted of roots:
 $$
 N^1(w)=\{\beta\in \Phi^+\,|\, \mathbb R^+\beta\ \textrm{ is an extreme ray of} \cone(N(w))\}.
 $$
\end{defi}
 
 The following proposition is~\cite[Lemma~1.7(a)]{Dy11}.
 
 \begin{prop}\label{prop:BaseInv} Let $w\in W$, then the base of $N(w)$ is:
$$
 N^1(w)=\{\beta\in \Phi^+\,|\,\ell(s_\beta w)= \ell(w)-1\}=\{\beta\in \Phi^+\,|\,s_\beta w\lhd w\}.
 $$
 \end{prop}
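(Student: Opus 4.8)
The plan is first to dispose of the second equality: for a reflection $s_\beta$ one has $\ell(s_\beta w)=\ell(w)-1$ exactly when $s_\beta w\lhd w$ is a Bruhat covering, directly from the description of covers in \S\ref{sse:Bruhat}. Everything then reduces to proving $N^1(w)=D(w)$, where $D(w):=\{\beta\in\Phi^+\,|\,\ell(s_\beta w)=\ell(w)-1\}$. Both sets lie in $N(w)$, and since $\cone_\Phi(N(w))=N(w)$ (as noted in the proof of Proposition~\ref{prop:FiniteRefl}) the cone $\cone(N(w))$ is pointed, so extreme rays are well-defined.

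The main input is the standard reflection--length formula: for $\beta\in N(w)$,
$$
\ell(s_\beta w)=\ell(w)+|N(s_\beta)|-2\,|N(w)\cap N(s_\beta)|,
$$
obtained by tracking, for each $\gamma\in\Phi^+$, whether $\gamma\in N(s_\beta w)$ according to the sign of $s_\beta\gamma$ and to whether the positive representative of $s_\beta\gamma$ lies in $N(w)$. The involution $\gamma\mapsto-s_\beta\gamma$ of $\Phi^+$ fixes $\beta$ and partitions $N(s_\beta)\setminus\{\beta\}$ into $p$ pairs $\{\gamma,\gamma'\}$ with $\gamma'=-s_\beta\gamma=2B(\beta,\gamma)\beta-\gamma$ (here $B(\beta,\gamma)>0$, as $s_\beta\gamma\in\Phi^-$); thus $|N(s_\beta)|=2p+1$, and $\beta=\tfrac{1}{2B(\beta,\gamma)}(\gamma+\gamma')$ lies in the relative interior of $\cone(\gamma,\gamma')$ for each pair. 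The formula then reads
$$
\ell(s_\beta w)=\ell(w)-1\iff|N(w)\cap N(s_\beta)|=p+1,
$$
and $p+1$ is the least value the cardinality can take, since $\beta\in N(w)$ forces $\ell(s_\beta w)<\ell(w)$.

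Next I would establish two claims about the pairs. \textbf{(A)} $\beta$ fails to be an extreme ray of $\cone(N(w))$ if and only if some pair $\{\gamma,\gamma'\}$ is entirely contained in $N(w)$. One direction is immediate, a contained pair exhibiting $\beta$ in a relative interior; conversely, if $\beta$ lies in the relative interior of $\cone(\eta_1,\eta_2)$ with $\eta_1,\eta_2\in N(w)$ spanning a plane $P$, then the $s_\beta$-partner of whichever of $\eta_1,\eta_2$ is angularly closer to $\beta$ falls strictly between $\beta$ and the other root, hence inside the segment $N(w)\cap P$, producing a pair contained in $N(w)$. \textbf{(B)} every pair meets $N(w)$. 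Here I pass to the maximal dihedral reflection subgroup $W'$ attached to the plane $P=\Span(\gamma,\gamma')$ (Example~\ref{ex:MaxDi}); writing $w=uv$ with $v\in X_{W'}$, Proposition~\ref{prop:CosetRep}(1) gives $N(w)\cap\Phi_{W'}=N_{W'}(u)$, a dihedral inversion set, hence an initial or final segment of the linearly ordered positive arc $\Phi_{W'}^+$ running between the two simple roots of $W'$ (read off from Figure~\ref{fig:BruhatDi}). As $\beta\in N_{W'}(u)$ and the pair is symmetric about $\beta$ inside $\Phi_{W'}^+$, this segment contains the pair element on the side along which it extends.

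Finally I combine these. Writing $|N(w)\cap N(s_\beta)|=1+\sum_i c_i$ with $c_i=|N(w)\cap\{\gamma_i,\gamma_i'\}|\in\{0,1,2\}$, Claim~(B) gives each $c_i\ge1$, so $\sum_i c_i\ge p$; equality $|N(w)\cap N(s_\beta)|=p+1$ therefore holds if and only if every $c_i=1$, that is, if and only if no pair is contained in $N(w)$, which by Claim~(A) is exactly the condition that $\beta$ be an extreme ray. Since the displayed formula identifies this same equality with $\ell(s_\beta w)=\ell(w)-1$, we obtain $N^1(w)=D(w)$, as desired. I expect Claim~(B) to be the delicate step: it is where one must use that a dihedral inversion set is a \emph{saturated} segment of the positive arc reaching a simple root of $W'$, and not an arbitrary interval, so that symmetry of the pair about $\beta$ forces one of its members into $N(w)$.
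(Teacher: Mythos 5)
The paper itself gives no proof of this proposition --- it is quoted from \cite[Lemma~1.7(a)]{Dy11} --- so your attempt must be judged on its own terms. Most of it is sound: the reduction of the second equality to the chain property, the formula $\ell(s_\beta w)=\ell(w)+\vert N(s_\beta)\vert-2\,\vert N(w)\cap N(s_\beta)\vert$, the pairing of $N(s_\beta)\setminus\{\beta\}$ under $\gamma\mapsto -s_\beta(\gamma)$ with $\vert N(s_\beta)\vert=2p+1$, the final count, and Claim~(B) --- which correctly combines Proposition~\ref{prop:CosetRep}(1) with the fact that dihedral inversion sets are contiguous arcs anchored at a canonical simple root, so that the arc from the anchor through $\beta$ must pick up the pair member on the anchored side --- are all correct. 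Contrary to your own closing assessment, (B) is not where the argument is fragile.

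The genuine gap is the converse direction of Claim~(A). With the paper's definition, failure of extremality produces $x,y\in\cone(N(w))$ --- arbitrary cone elements, not roots --- with $\mathbb R^+\beta\subseteq\cone(x,y)$; expanding over the generators and applying Carath\'eodory yields only $\beta\in\cone(\delta_1,\ldots,\delta_k)$ with $\delta_j\in N(w)\setminus\{\beta\}$ linearly independent, where $k$ may well exceed $2$. For general pointed cones one cannot then pass to a two-generator witness: with generators $e_1,e_2,e_3$ and $\beta=e_1+e_2+e_3$ in $\mathbb R^3$, the generator $\beta$ is non-extreme yet lies in no $\cone(e_i,e_j)$. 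So your sentence ``if $\beta$ lies in the relative interior of $\cone(\eta_1,\eta_2)$ with $\eta_1,\eta_2\in N(w)$'' silently assumes a root-theoretic fact that is false for arbitrary vector configurations; and in your setup it is essentially equivalent to the proposition being proved, since granting the proposition, non-extremality forces $\ell(s_\beta w)\leq\ell(w)-3$, hence $\vert N(w)\cap N(s_\beta)\vert\geq p+2$, and your own counting together with Claim~(B) then yields the contained pair. As written, the argument is therefore circular at this one step. A natural repair is to abandon the convex-geometric reduction and prove ``no pair contained in $N(w)$ implies $\beta$ extreme'' by induction on $\ell(w)$, using $N(w)=\{\alpha_s\}\sqcup s(N(sw))$ from Corollary~\ref{cor:RedInv} and tracking extreme rays under the linear isomorphism $s$ when the generator $\alpha_s$ is adjoined; alternatively, one must prove directly, by root-system (not merely convexity) arguments, that a non-extreme element of $N(w)$ always admits a two-root witness $\eta_1,\eta_2\in N(w)$, after which your dihedral-arc step does finish the proof.
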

 
 The concept of base of inversion sets  has the following interesting consequences for  the weak order and $n$-low elements that are worth mentioning, but are not necessary to prove Theorem~\ref{thm:Main1}. 
 
 \begin{cor}\label{cor:BInv}  Let $x,y\in W$ and $\beta\in \Phi$.
\begin{enumerate}
\item If $\ell(s_{\beta}(x\wedge y))=\ell(x\wedge y)+1$, then either
$\ell(s_{\beta}x)=\ell(x)+1$ or $\ell(s_{\beta}y)=\ell(y)+1$.
\item If $x\vee y$ exists and  $\ell(s_{\beta}(x\vee y))=\ell(x\vee y)-1$, then either
$\ell(s_{\beta}x)=\ell(x)-1$ or $\ell(s_{\beta}y)=l(y)-1$.
\end{enumerate}
\end{cor}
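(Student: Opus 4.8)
The plan is to read both statements as assertions about extreme rays of cones and to derive them from the base description of Proposition~\ref{prop:BaseInv} together with the join formula of Proposition~\ref{prop:Weak}(4). The only elementary input is the following fact about cones: if $C'\subseteq C$ are pointed cones and $\gamma\in C'$ spans an extreme ray of $C$, then $\gamma$ spans an extreme ray of $C'$ as well (immediate from the definition, since any inclusion $\mathbb{R}^+\gamma\subseteq\cone(\eta,\theta)$ with $\eta,\theta\in C'\subseteq C$ already forces $\eta\in\mathbb{R}^+\gamma$ or $\theta\in\mathbb{R}^+\gamma$). Throughout I may assume $\beta\in\Phi^+$, since $s_\beta=s_{-\beta}$ and every hypothesis and conclusion involves only the reflection $s_\beta$.

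I would dispatch (2) first, as it is the clean case. By Proposition~\ref{prop:BaseInv}, the hypothesis $\ell(s_\beta(x\vee y))=\ell(x\vee y)-1$ says precisely that $\mathbb{R}^+\beta$ is an extreme ray of $\cone(N(x\vee y))$. By Proposition~\ref{prop:Weak}(4) this cone equals $\cone(N(x)\cup N(y))$, which is polyhedral and pointed (it lies inside the pointed cone $\cone(\Delta)$). Every extreme ray of a polyhedral cone is generated by one of its generators, so $\mathbb{R}^+\beta=\mathbb{R}^+\gamma$ for some $\gamma\in N(x)\cup N(y)$; since all roots have $B$-norm $1$, proportional positive roots are equal, whence $\beta\in N(x)\cup N(y)$, say $\beta\in N(x)$. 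Applying the cone fact to $\cone(N(x))\subseteq\cone(N(x)\cup N(y))$ shows $\mathbb{R}^+\beta$ is extreme in $\cone(N(x))$, i.e. $\beta\in N^1(x)$, and Proposition~\ref{prop:BaseInv} gives $\ell(s_\beta x)=\ell(x)-1$.

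For (1) I would argue dually. When $W$ is finite the statement is literally (2) transported by the anti-automorphism $w\mapsto w_\circ w$ of the weak order: it exchanges $\wedge$ and $\vee$, and since $w_\circ s_\beta w=s_{\gamma}(w_\circ w)$ with $\gamma=-w_\circ\beta\in\Phi^+$ and $\ell(w_\circ w')=\ell(w_\circ)-\ell(w')$, the hypothesis $\ell(s_\beta(x\wedge y))=\ell(x\wedge y)+1$ becomes the hypothesis of (2) for $w_\circ x,\,w_\circ y$ and the root $\gamma$, and the two conclusions correspond as well. For infinite $W$ this duality breaks down, because $\Phi^+\setminus N(w)$ is then infinite and no longer an inversion set; I regard making (1) work in this generality as the main obstacle. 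The natural route is to establish the two statements dual to those used in (2): the ``upper'' analogue of Proposition~\ref{prop:BaseInv}, that for $\beta\in\Phi^+$ one has $\ell(s_\beta w)=\ell(w)+1$ if and only if $\mathbb{R}^+\beta$ is an extreme ray of the (pointed) cone $\cone(\Phi^+\setminus N(w))$, and the meet-analogue of Proposition~\ref{prop:Weak}(4), namely $\cone(\Phi^+\setminus N(x\wedge y))=\cone\big((\Phi^+\setminus N(x))\cup(\Phi^+\setminus N(y))\big)$. Here the inclusion $\supseteq$ is immediate from $N(x\wedge y)\subseteq N(x)\cap N(y)$, while the reverse inclusion encodes the fact that $N(x\wedge y)$ is only the largest \emph{biclosed} subset of $N(x)\cap N(y)$, and controlling this co-closure is exactly the delicate point. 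Granting these two dual facts, the proof of (2) transcribes verbatim and yields $\ell(s_\beta x)=\ell(x)+1$ or $\ell(s_\beta y)=\ell(y)+1$; these dual statements are the natural counterparts of Dyer's results on bases of inversion sets~\cite{Dy11}, from which I would expect to extract them rather than reprove them in isolation.
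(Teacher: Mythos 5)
Your proposal is correct and takes essentially the same route as the paper: for part (2) the paper likewise combines Proposition~\ref{prop:Weak}(4) with Proposition~\ref{prop:BaseInv} to get $N^1(x\vee y)\subseteq N^1(x)\cup N^1(y)$, which is exactly your extreme-ray-of-a-generated-cone argument. For part (1) the paper does precisely what you propose: it declares the proof ``similar'' and cites the dual statement \cite[Lemma~1.7(b)]{Dy11} (the analogue of Proposition~\ref{prop:BaseInv} for $\cone(\Phi^{+}\setminus N(w))$, together with the corresponding meet formula), so your deferral of the infinite case to Dyer's results — and your additional self-contained finite-case reduction via $w\mapsto w_{\circ}w$ — matches, and is if anything slightly more explicit than, the paper's own proof.
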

\begin{proof} We prove only {\em (2)}, the proof of {\em (1)} being similar using~\cite[Lemma~1.7(b)]{Dy11}.
 Proposition~\ref{prop:Weak}  and   Proposition~\ref{prop:BaseInv} with $w=x$ and  $w=y$ respectively give
$$
N(x\vee y)=\cone_\Phi(N^1(x)\cup N^1(y)). 
$$
So by definition of the base of an inversion set, we have   $N^1(x \vee y)\subseteq N^1(x)\cup N^1(y)$. This is equivalent to the desired conclusion, by Proposition~\ref{prop:BaseInv}.
 \end{proof}

\begin{prop}\label{prop:BInv} Let $u,v\in W$ both be  $n$-low elements.
\begin{enumerate}
\item  For    $x\in W$, one has  $u\leq_R x$ if and only if $N^1(u)\subseteq \Sigma_n(x)$.
\item The  join of $u$ and $v$ exists (i.e.  $\{u,v\}$ is bounded in $W$) if and only if  there is some  $A\in \Lambda_{n}$ with $N^1(u)\cup N^1(v)\subseteq A$.
\end{enumerate}
\end{prop}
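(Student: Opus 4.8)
The plan is to prove part (1) first and then deduce part (2) from it purely formally, so that all the substance is concentrated in a few facts about the base of an inversion set. The two preliminary observations I would isolate are: \emph{(Closedness)} for every $x\in W$, applying Proposition~\ref{prop:Weak}(4) to the (trivially bounded) singleton $\{x\}$ gives $N(x)=\cone_\Phi(N(x))$; and \emph{(Base generates)} since $N(x)\subseteq\cone(\Delta)$ is a finite set spanning a pointed cone, $\cone(N(x))$ equals the conic hull of its extreme rays, whence $\cone(N(x))=\cone(N^1(x))$ and therefore $N(x)=\cone_\Phi(N^1(x))$ for every $x$.

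Next I would pin down the base of an $n$-low element $u$. Because $u$ is $n$-low, $N(u)=\cone_\Phi(\Sigma_n(u))$, so $\cone(N(u))=\cone(\Sigma_n(u))$; the extreme rays of this pointed, finitely generated cone are each spanned by some member of the generating set $\Sigma_n(u)$. Since every root satisfies $B(\beta,\beta)=1$, each ray of $V$ carries at most one positive root, so the representative in $N^1(u)$ of such a ray is exactly the generator that spans it. This yields the key inclusion $N^1(u)\subseteq\Sigma_n(u)\subseteq\Sigma_n$ (in fact one checks that $u$ is $n$-low if and only if $N^1(u)\subseteq\Sigma_n$).

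For part (1): if $u\leq_R x$, then $N(u)\subseteq N(x)$ by Proposition~\ref{prop:Weak}(b), so $N^1(u)\subseteq N(u)\cap\Sigma_n\subseteq N(x)\cap\Sigma_n=\Sigma_n(x)$, using $N^1(u)\subseteq\Sigma_n$. Conversely, if $N^1(u)\subseteq\Sigma_n(x)\subseteq N(x)$, then passing to conic hulls and intersecting with $\Phi$ gives $\cone_\Phi(N^1(u))\subseteq\cone_\Phi(N(x))=N(x)$ by Closedness; since $N(u)=\cone_\Phi(N^1(u))$ this reads $N(u)\subseteq N(x)$, i.e. $u\leq_R x$. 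Then part (2) is immediate from (1): if $u\vee v$ exists, put $g=u\vee v$, so $u\leq_R g$ and $v\leq_R g$ give $N^1(u)\cup N^1(v)\subseteq\Sigma_n(g)=:A\in\Lambda_n$; conversely, given $A\in\Lambda_n$ with $N^1(u)\cup N^1(v)\subseteq A$, write $A=\Sigma_n(x)$, apply (1) to both $u$ and $v$ to get $u\leq_R x$ and $v\leq_R x$, so $x$ bounds $\{u,v\}$ and $u\vee v$ exists.

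I expect the only delicate point --- the main, and rather mild, obstacle --- to be the structural claim that the base $N^1(u)$ of an $n$-low element lies in $\Sigma_n$ and recovers $N(u)$ as $\cone_\Phi(N^1(u))$. This is exactly where $n$-lowness is genuinely used: a general inversion set is \emph{not} the conic hull of a \emph{small} base, only of its own base. The claim rests on two already-available inputs, namely the conic closedness of inversion sets and the fact that a pointed finitely generated cone is the conic hull of its extreme rays, applied to the identity $\cone(N(u))=\cone(\Sigma_n(u))$.
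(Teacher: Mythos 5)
Your proof is correct and follows essentially the same route as the paper's: the forward direction of (1) via the inclusion $N^1(u)\subseteq\Sigma_n(u)$, the converse via $N(u)=\cone_\Phi(N^1(u))\subseteq\cone_\Phi(N(x))=N(x)$, and (2) deduced formally from (1) together with the definition of $\Lambda_n$. The only difference is that you spell out the structural fact $N^1(u)\subseteq\Sigma_n(u)$ (extreme rays of the pointed cone $\cone(\Sigma_n(u))$ are spanned by generators, and each ray carries at most one root since $B(\beta,\beta)=1$), which the paper asserts without comment --- a worthwhile addition, not a divergence.
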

\begin{proof}
We prove {\em (1)}.  If  $v\leq_R x$, then  $N(u)\subseteq N(x)$. Since $u$ is $n$-low we have  $N^1(u)\subseteq \Sigma_n(u)=N(u)\cap\Sigma_n\subseteq N(x)\cap \Sigma_{n}=\Sigma_n(x)$.  Conversely, if $N^1(u)\subseteq\Sigma_n(x)$, then  $N^1(u)\subseteq N(x)$ and so $N(u)=\cone_{\Phi}(N^1(u))\subseteq N(x)$ i.e. $u\leq_R x$ by Proposition~\ref{prop:Weak}.  Part {\em (2)} follows from {\em (1)}  and the definition of $\Lambda_{n}$.
\end{proof}
 
 \begin{remark}\label{rem:ComputeJoin} If $S$ is finite, the  proposition gives rise to an  algorithm to determine  whether 
two elements $v,w$ of $W$ have a join in weak order, since $\Sigma_{n}$ and $\Lambda_{n}$ can be effectively computed. In general (for possibly infinite $S$) one may find a finite subset $I\subseteq S$ such that $v,w\in W_{I}=\langle I\rangle$. Using  Proposition~\ref{prop:Weak},  for instance, one easily sees that  $v,w$ have a join in $W$ if and only if they have one in $W_{I}$, and that  if so, the joins of $v$ and $w$ in $W_{I}$ and $W$ coincide.
\end{remark}

\subsection{Inversion sets of suffixes}\label{sse:InvSuff} We give here a useful description of the base of the inversion set of a suffix of $w\in W$ obtained from the description of the base of~$N(w)$. We deduce from there a sufficient condition for a suffix of a $n$-low element to be $n$-low.

For each $s\in S$, we define a function $f_{s}: \Phi^{+}\setminus\{\alpha_s\}\to \Phi^{+}$ as follows.
Let $\beta\in \Phi^{+}\setminus \{\alpha_s\}$. Then the dihedral reflection subgroup $\langle s,s_{\beta}\rangle$ is contained in a unique maximal dihedral reflection subgroup $W'$, as defined in~Example~\ref{ex:MaxDi}. We have necessarily $\alpha_s\in\Delta_{W'}$ since $\alpha_s\in \Delta$, which is a basis of $\cone(\Phi^+)$. Define $f_{s}(\beta)$ to be the other element of $\Delta_{W'}$ i.e. by
$\Delta_{W'}=\{\alpha_{s},f_{s}(\beta)\}$. Equivalently, $f_{s}(\beta)$ is defined by the conditions $f_{s}(\beta)\in \Phi^{+}$ and 
$(\mathbb R\alpha_{s}+\mathbb R\beta)\cap \Phi^{+}=\cone(\alpha_s, f_{s}(\beta))\cap \Phi^{+}$.

\begin{thm}\label{thm:BaseSuff} Let $s\in S$ and $x\in W$.
\begin{enumerate}
\item If $sx$ is a suffix of $x$, i.e.,   $sx\lhd x$, then 
$$
N^1(sx)\subseteq s\left(N^1(x)\setminus\{\alpha_s\}\right)\cup f_{s}\left(N^1(x)\setminus\{\alpha_s\}\right).
$$
\item If $x$ is a suffix of $sx$, i.e.,   $x\lhd sx$, then 
$$
N^1(sx)=\{\alpha_s\}\sqcup s\left(\{\beta\in N^1(x)\,|\, s_{\beta}x<ss_\beta x\}\right).
$$
\end{enumerate}
\end{thm}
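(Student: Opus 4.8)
The plan is to reduce everything to length computations via Proposition~\ref{prop:BaseInv}, which identifies $N^1(w)$ with the set of positive roots $\delta$ satisfying $\ell(s_\delta w)=\ell(w)-1$. Throughout I will use the identity $s_{s(\beta)}=s\,s_\beta\,s$ and its consequence $s_{s(\beta)}\,sx=s\,s_\beta\,x$, so that writing a base element of $N(sx)$ in the form $s(\beta)$ converts coverings below $sx$ into length statements about $s_\beta x$ and $s\,s_\beta x$.

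For part~(2), since $x\lhd sx$ the word $sx=s\cdot x$ is reduced, so Corollary~\ref{cor:RedInv} gives $N(sx)=\{\alpha_s\}\sqcup s(N(x))$; in particular $\alpha_s\notin N(x)$. First, $\alpha_s\in N^1(sx)$ because $s_{\alpha_s}(sx)=x$ has length $\ell(sx)-1$. For $\gamma\in N^1(sx)$ with $\gamma\neq\alpha_s$, membership in $N(sx)$ forces $\beta:=s(\gamma)\in N(x)$ (so $\ell(s_\beta x)<\ell(x)$ and $\beta\neq\alpha_s$), while $\gamma\in N^1(sx)$ reads $\ell(s\,s_\beta x)=\ell(sx)-1=\ell(x)$. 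Since $\ell(s_\beta x)=\ell(s\,s_\beta x)\pm1$ and $\ell(s_\beta x)<\ell(x)$, the only possibility is $\ell(s_\beta x)=\ell(x)-1$ and $\ell(s\,s_\beta x)=\ell(s_\beta x)+1$, i.e.\ $\beta\in N^1(x)$ and $s_\beta x<s\,s_\beta x$. Running this equivalence backwards (every step is reversible) yields the claimed equality, and the union is disjoint because $s(\beta)=\alpha_s$ is impossible for $\beta\in\Phi^+$.

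For part~(1), since $sx\lhd x$ the word $x=s\cdot sx$ is reduced, so $N(sx)=s(N(x)\setminus\{\alpha_s\})$ by Corollary~\ref{cor:RedInv}; thus any $\gamma\in N^1(sx)$ can be written $\gamma=s(\beta)$ with $\beta\in N(x)\setminus\{\alpha_s\}$, and $\gamma\in N^1(sx)$ means $\ell(s\,s_\beta x)=\ell(sx)-1=\ell(x)-2$. From $\ell(s_\beta x)=\ell(s\,s_\beta x)\pm1$ together with $\ell(s_\beta x)<\ell(x)$ there are exactly two cases. If $\ell(s_\beta x)=\ell(x)-1$, then $\beta\in N^1(x)\setminus\{\alpha_s\}$ and $\gamma=s(\beta)\in s(N^1(x)\setminus\{\alpha_s\})$, landing in the first part of the target union.

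The remaining case $\ell(s_\beta x)=\ell(x)-3$ is the heart of the argument and the only genuine obstacle. Here $\ell(s\,s_\beta x)=\ell(s_\beta x)+1$, and substituting $s_\beta x=s\,s_\gamma s\,x$ and $s\,s_\beta x=s_\gamma s\,x$ (via $s_\gamma=s\,s_\beta s$) produces the chain of coverings $s\,s_\gamma s\,x\lhd s_\gamma s\,x\lhd s\,x\lhd x$. This is precisely condition~(1) of Lemma~\ref{lem:BruhatDi} applied with its root taken to be $\gamma$. The lemma then delivers condition~(2): the subgroup $W'=\langle s,s_\gamma\rangle$ is a maximal dihedral reflection subgroup with $\Delta_{W'}=\{\alpha_s,\gamma\}$, and there exists $\beta'\in\Phi^+_{W'}\setminus\{\alpha_s\}$ with $s_{\beta'}x\lhd x$, i.e.\ $\beta'\in N^1(x)\setminus\{\alpha_s\}$. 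Because $\beta'\in\Phi^+_{W'}$ with $\beta'\neq\alpha_s$, the rank~$2$ subgroup $\langle s,s_{\beta'}\rangle$ has $W'$ as its maximal dihedral hull, so by the definition of $f_s$ (Example~\ref{ex:MaxDi}) the value $f_s(\beta')$ is the simple root of $W'$ other than $\alpha_s$, namely $\gamma$; hence $\gamma=f_s(\beta')\in f_s(N^1(x)\setminus\{\alpha_s\})$. The bookkeeping identity $s\,s_\gamma s\,x=s_\beta x$ and the exhaustive length dichotomy are what make this case reduce to Lemma~\ref{lem:BruhatDi}; once the lemma applies, the definition of $f_s$ closes the inclusion.
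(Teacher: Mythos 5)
Your proof is correct and follows essentially the same route as the paper: you translate membership in $N^1$ into length conditions via Proposition~\ref{prop:BaseInv}, split into the two cases $\ell(s_\beta x)=\ell(x)-1$ and $\ell(s_\beta x)=\ell(x)-3$ (which is exactly the dichotomy of the paper's Lemma~\ref{lem:Suff}), and dispose of the hard case by the equivalence in Lemma~\ref{lem:BruhatDi} together with the definition of $f_s$. The only (harmless) difference is organizational: you inline Lemma~\ref{lem:Suff} and, in part~(2), replace the paper's appeal to the lifting property (Corollary~\ref{cor:Lifting}) by the observation that $\beta=s(\gamma)\in N(x)$ already forces $\ell(s_\beta x)<\ell(x)$, which pins down the lengths directly.
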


The proof of this theorem uses the following lemma.

\begin{lem}\label{lem:Suff} Let $s\in S$ and $x\in W$ such that $sx\lhd x$, then:
\begin{eqnarray}\label{eq:Suff1}
  \{\beta \in N^1(sx)\,|\, s_\beta sx < ss_\beta s x\}&=&  \{\beta \in \Phi^+\,|\, s_\beta sx \lhd ss_\beta s x \textrm{ and } s_\beta sx \lhd sx\}\\
 & =&s(N^1(x)\setminus\{\alpha_s\});\nonumber
  \end{eqnarray}
\begin{multline}\label{eq:Suff2}
 \{\beta \in N^1(sx)\,|\, s_\beta sx > ss_\beta s x\}=  \{\beta \in \Phi^+\,|\, ss_\beta sx \lhd s_\beta s x\lhd sx\lhd x\}\\
      \begin{array}{cl}
      =&\left\{\beta \in f_s\left(N^1(x)\setminus\{\alpha_s\}\right)\,|\,  ss_\beta sx \lhd s_\beta s x\lhd sx\lhd x  \right\}\\
      =&\left\{\beta \in f_s\left(N^1(x)\setminus\{\alpha_s\}\right)\,|\,  \ell(s_{s(\beta)}x) =\ell(x)-3  \right\}.
      \end{array}
\end{multline}
\end{lem}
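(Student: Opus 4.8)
The plan is to treat the two displayed identities \eqref{eq:Suff1} and \eqref{eq:Suff2} in parallel, first disposing of the purely formal equalities and then isolating the two substantive ones. Observe first that the two left-hand sets partition $N^1(sx)$: since $s\in S$, for every $\beta$ one has $\ell(ss_\beta sx)=\ell(s_\beta sx)\pm 1$, so exactly one of $s_\beta sx<ss_\beta sx$ and $s_\beta sx>ss_\beta sx$ holds. The first equality of each line is then immediate from Proposition~\ref{prop:BaseInv}: the membership $\beta\in N^1(sx)$ is the same as $s_\beta sx\lhd sx$, and a strict length inequality between $s_\beta sx$ and $ss_\beta sx$ is automatically a covering because $s$ is simple. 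In particular $\alpha_s$ lies in neither set, since $s_{\alpha_s}sx=x$ and $sx\lhd x$ rules out $x\lhd sx$.

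For the remaining equality in \eqref{eq:Suff1}, I would use the bijection $\beta\mapsto s(\beta)$ of $\Phi^+\setminus\{\alpha_s\}$ onto itself together with the identity $ss_\beta s=s_{s(\beta)}$, so that $ss_\beta sx=s_{s(\beta)}x$. Writing $\gamma=s(\beta)$: in the forward direction, $s_\beta sx\lhd sx$ gives $\ell(s_\beta sx)=\ell(x)-2$, and $s_\beta sx\lhd ss_\beta sx$ then forces $\ell(s_\gamma x)=\ell(ss_\beta sx)=\ell(x)-1$, i.e. $\gamma\in N^1(x)\setminus\{\alpha_s\}$ and $\beta\in s(N^1(x)\setminus\{\alpha_s\})$. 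Conversely, given $\gamma\in N^1(x)\setminus\{\alpha_s\}$, I would apply Corollary~\ref{cor:Lifting} to $sx\lhd x$ and $s_\gamma x\lhd x$ (with $\gamma\neq\alpha_s$) to obtain $ss_\gamma x\lhd s_\gamma x$, hence $\ell(s_\beta sx)=\ell(ss_\gamma x)=\ell(x)-2$; this yields $s_\beta sx\lhd sx$ and $s_\beta sx\lhd s_{s(\beta)}x=ss_\beta sx$, placing $\beta=s(\gamma)$ in the middle set. This use of Corollary~\ref{cor:Lifting} to exclude the competing value $\ell(s_\beta sx)=\ell(x)$ is the crux of this half.

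For \eqref{eq:Suff2} the engine is Lemma~\ref{lem:BruhatDi}. The chain $ss_\beta sx\lhd s_\beta sx\lhd sx\lhd x$ appearing there is precisely condition~(1), so I would invoke the equivalence (1)$\Leftrightarrow$(2): it tells us that $W'=\langle s,s_\beta\rangle$ is maximal dihedral with $\Delta_{W'}=\{\alpha_s,\beta\}$ and that there is $\gamma\in\Phi^+_{W'}\setminus\{\alpha_s\}$ with $s_\gamma x\lhd x$, i.e. $\gamma\in N^1(x)\setminus\{\alpha_s\}$. Since $\gamma$ and $\alpha_s$ span the plane of the rank-two system $\Phi_{W'}$, the unique maximal dihedral subgroup containing $\langle s,s_\gamma\rangle$ is $W'$ itself, so by definition of $f_s$ one has $\beta=f_s(\gamma)\in f_s(N^1(x)\setminus\{\alpha_s\})$; this gives the second equality. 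For the third, the chain forces $\ell(ss_\beta sx)=\ell(x)-3$, i.e. $\ell(s_{s(\beta)}x)=\ell(x)-3$, while conversely for $\beta=f_s(\gamma)$ with $\gamma\in N^1(x)\setminus\{\alpha_s\}$ the hypotheses of condition~(2) of Lemma~\ref{lem:BruhatDi} are all met, so (1) — the chain — holds.

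The main obstacle is not any single computation but the careful bookkeeping of lengths and Bruhat coverings under left multiplication by $s$ and under $\beta\mapsto s(\beta)$, and in channelling everything through the two nontrivial inputs: Corollary~\ref{cor:Lifting} for \eqref{eq:Suff1} and the dihedral classification of Lemma~\ref{lem:BruhatDi} for \eqref{eq:Suff2}. I would also take care that $\alpha_s$ is consistently excluded so that $s(\beta)$ and $f_s(\beta)$ are defined, and that the identity $ss_\beta s=s_{s(\beta)}$ is used to pass between $ss_\beta sx$ and $s_{s(\beta)}x$ throughout.
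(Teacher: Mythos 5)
Your proposal is correct and follows essentially the same route as the paper's proof: the first equality in each display comes from the parity observation that $\ell(ss_\beta sx)=\ell(s_\beta sx)\pm 1$ together with Proposition~\ref{prop:BaseInv}, the substantive inclusion in \eqref{eq:Suff1} is handled via the identity $s_\beta sx=ss_{s(\beta)\mapsto\gamma}$-style rewriting $s_\beta sx=ss_\gamma x$ with Corollary~\ref{cor:Lifting} supplying $ss_\gamma x\lhd s_\gamma x$ and the same length bookkeeping for the converse, and \eqref{eq:Suff2} is channelled through the equivalence (1)$\Leftrightarrow$(2) of Lemma~\ref{lem:BruhatDi} plus the definition of $f_s$, exactly as in the paper. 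Your write-up is if anything slightly more explicit than the paper's terse treatment of \eqref{eq:Suff2}, but the decomposition and key inputs coincide.
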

\begin{proof} Recall that, for all $w\in W$,  $\ell(sw)=\ell(w)\pm 1$ since $s\in S$; so either $sw\lhd w$ or $w\lhd sw$ and $sw\lhd w$ if and only if $\ell(sw)=\ell(w)-1$.  This shows the first equalities in both (1) and (2) by taking $w=s_\beta s x$.
\smallskip

For the proof of (1),  suppose first that $\gamma\in N^1(x)\setminus\{\alpha_s\}$ and write $\beta=s(\gamma)$.   Observe that $s_\beta sx= s_{s(\gamma)} sx= ss_\gamma x$ since $s_\beta=s_{s(\gamma)}=ss_\gamma s$. We show that $\beta$ is in the left hand side of (1), that is:
$$
(\star)\qquad s_\beta sx=ss_\gamma x \lhd sx\ \textrm{ and }\ s_\beta sx =ss_\gamma x \lhd ss_\beta sx=s_\gamma x,
$$
by Proposition~\ref{prop:BaseInv}. By Proposition~\ref{prop:BaseInv} again we have $s_{\gamma}x\lhd x$. Since $sx\lhd x$ and $\gamma \not = \alpha_s$ we have $ss_\gamma x \lhd s_\gamma x$ by Corollary~\ref{cor:Lifting}, proving the first statement of $(\star)$. The second statement follows from the following computation:
$$
\ell(s_\beta sx)=\ell(ss_\gamma x)=\ell(s_\gamma x) -1=\ell(x)-2=\ell(sx)-1.
$$
This shows the right hand side of (1) is contained in the left hand side. For the reverse inclusion, let $\beta$ be in the left hand side of (1) and set $\gamma:=s(\beta)$. Since $\gamma\not = \alpha$, it will suffice to show that $\gamma\in N^1(x)$. But we have by assumption on $\beta$ that
$\ell(s_{\gamma}x)=\ell(ss_{\beta}sx)=1+\ell(s_{\beta}sx)=\ell(sx)=\ell(x)-1$, and the result follows.

\smallskip
Now for the proof of (2), the first equality follows from the discussion at the beginning of this proof.  The second and third equalities follows from  Lemma~\ref{lem:BruhatDi} and the definition of $f_s$, since there is $\gamma\in N^1(x)\setminus\{\alpha_s\}$, i.e., $s_\gamma x\lhd x$, and $\gamma$ is in the positive root system spanned by $\Delta_{W'}=\{\alpha_s,\beta\}$.
\end{proof}
\begin{proof}[Proof of Theorem~\ref{thm:BaseSuff}] Part~{\em (1)} is a direct consequence of Lemma~\ref{lem:Suff} since 
$$
N^1(sx) = \{\beta \in N^1(sx)\,|\, s_\beta sx > ss_\beta s x\}\sqcup\{\beta \in N^1(sx)\,|\, s_\beta sx < ss_\beta s x\}.
$$
Item~{\em (2)} follows easily  by replacing $x$ by $sx$ in Lemma~\ref{lem:Suff}(1).
\end{proof}
\begin{cor}\label{cor:Suff} Let $n\in \mathbb N$ and $x\in L_n(W)$ be a $n$-low element.  Let $s\in S$ such that $sx$ is a suffix of $x$. If $f_s\left(N^1(x)\setminus\{\alpha_s\}\right)\subseteq \Sigma_n$,  then $sx\in L_n(W)$.
\end{cor}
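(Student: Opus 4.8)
The plan is to reformulate $n$-lowness in terms of the base of the inversion set, which reduces the claim to a statement purely about $n$-small roots. First I would record the characterization that $w\in W$ lies in $L_n(W)$ if and only if its base satisfies $N^1(w)\subseteq\Sigma_n$. For the forward direction: if $N(w)=\cone_\Phi(A)$ with $A\subseteq\Sigma_n$, then $\cone(N(w))=\cone(A)$, so each extreme ray of $\cone(N(w))$ is spanned by some element of $A$; since every root has $B$-norm $1$, the representative lying in $\Phi^+$ coincides with that element of $A$, whence $N^1(w)\subseteq A\subseteq\Sigma_n$. For the converse, Proposition~\ref{prop:Weak}(4) applied to the trivially bounded singleton $\{w\}$ gives $N(w)=\cone_\Phi(N(w))$, and since $\cone(N(w))=\cone(N^1(w))$ this yields $N(w)=\cone_\Phi(N^1(w))$; thus $N^1(w)\subseteq\Sigma_n$ forces $w\in L_n(W)$, taking $A=N^1(w)$.

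With this reformulation in hand, the corollary reduces to proving $N^1(sx)\subseteq\Sigma_n$. By Theorem~\ref{thm:BaseSuff}(1), which applies since $sx\lhd x$, we have
$$
N^1(sx)\subseteq s\bigl(N^1(x)\setminus\{\alpha_s\}\bigr)\cup f_s\bigl(N^1(x)\setminus\{\alpha_s\}\bigr).
$$
The second set is contained in $\Sigma_n$ by hypothesis, so the whole task is to prove that $s\bigl(N^1(x)\setminus\{\alpha_s\}\bigr)\subseteq\Sigma_n$. This is the crux of the argument.

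To that end I would fix $\gamma\in N^1(x)\setminus\{\alpha_s\}$. Since $x$ is $n$-low, the reformulation gives $N^1(x)\subseteq\Sigma_n$, so $\dep_\infty(\gamma)\leq n$; moreover $s(\gamma)\in\Phi^+$ because $s=s_{\alpha_s}$ permutes $\Phi^+\setminus\{\alpha_s\}$ and $\gamma\neq\alpha_s$. Now $sx\lhd x$ gives $\alpha_s\in N(x)$, and $\gamma\in N^1(x)\subseteq N(x)$, so both $\alpha_s$ and $\gamma$ lie in the single inversion set $N(x)$; Proposition~\ref{prop:FiniteRefl}(2) then forces $B(\alpha_s,\gamma)>-1$. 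Consequently the third branch of the recurrence in Proposition~\ref{prop:NSmall} is excluded, and that proposition yields $\dep_\infty(s(\gamma))\leq\dep_\infty(\gamma)\leq n$, i.e.\ $s(\gamma)\in\Sigma_n$. Hence $s\bigl(N^1(x)\setminus\{\alpha_s\}\bigr)\subseteq\Sigma_n$, so $N^1(sx)\subseteq\Sigma_n$ and $sx\in L_n(W)$.

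The main obstacle is precisely the control of $s\bigl(N^1(x)\setminus\{\alpha_s\}\bigr)$: a priori the reflection $s$ applied to an $n$-small root can raise its dominance depth, which would ruin $n$-smallness. The point that rules this out is that $\alpha_s$ and $\gamma$ share the inversion set $N(x)$, forcing the dihedral reflection subgroup $\langle s,s_\gamma\rangle$ to be finite and hence $B(\alpha_s,\gamma)>-1$. The complementary $f_s$-contribution, which would otherwise be the delicate part, is exactly what the hypothesis $f_s\bigl(N^1(x)\setminus\{\alpha_s\}\bigr)\subseteq\Sigma_n$ (the role played by bipodality) supplies for free.
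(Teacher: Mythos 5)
Your proof is correct and follows essentially the same route as the paper's: decompose $N^1(sx)$ via Theorem~\ref{thm:BaseSuff}(1), absorb the $f_s$-part by the hypothesis, and handle the $s\left(N^1(x)\setminus\{\alpha_s\}\right)$ part by combining Proposition~\ref{prop:FiniteRefl}(2) (since $\alpha_s,\gamma\in N(x)$) with the recurrence of Proposition~\ref{prop:NSmall}. The only differences are cosmetic: the paper runs the second case as a proof by contradiction and treats the characterization $w\in L_n(W)\iff N^1(w)\subseteq\Sigma_n$ as immediate from the definition, whereas you spell that equivalence out explicitly.
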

\begin{proof} We have, by definition of $n$-low elements, to show that $N^1(sx)\subseteq \Sigma_n$.  Let $\gamma\in N^1(sx)$. By Theorem~\ref{thm:BaseSuff}{\em (1)}, either $\gamma\in f_s\left(N^1(x)\setminus\{\alpha_s\}\right)$ or $\gamma=s(\beta)$ where $\beta\in N^1(x)\setminus\{\alpha_s\}$. In the first case, $\gamma\in \Sigma_n$ by assumption.  In the second case, if $\gamma\not\in \Sigma_{n}$,  we have
$d_{\infty}(\beta)\leq n$ and $d_{\infty}(s(\beta))>n$. By Proposition \ref{prop:NSmall}, this implies that $B(\alpha_s,\beta)\leq -1$. But $\alpha,\beta\in N^1(x)\subseteq N(x)$, so this contradicts Proposition~\ref{prop:FiniteRefl}{\em (2)}. 
\end{proof}

The next result is immediate from Corollary~\ref{cor:Suff} and Proposition \ref{prop:Low}{\em (3)}. 

\begin{cor} \label{cor:SuffMain} Let $n\in \mathbb N$. Suppose that  for every $s\in S$ and $\beta\in \Phi^{+}\setminus\{\alpha_s\}$,
one has $f_{s}(\beta)\in \Sigma_n$ whenever  $\beta\in \Sigma_n$. Then the set $L_n(W)$ of $n$-low elements of $W$  is a finite Garside shadow.
\end{cor}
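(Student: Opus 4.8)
The plan is to verify the three defining conditions of a Garside shadow (Definition~\ref{def:Garside}) for $L_n(W)$, observing that all but one are already in hand. Indeed, Proposition~\ref{prop:Low} gives $S\subseteq S\cup\{e\}\subseteq L_n(W)$, the finiteness of $L_n(W)$, and its closure under join. Hence the only point that remains is closure under suffix, and the entire content of the statement is to extract this from Corollary~\ref{cor:Suff} using the hypothesis on $f_s$.

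First I would reduce closure under arbitrary suffixes to closure under maximal proper suffixes. If $v$ is a suffix of $w$, write a reduced expression $w=s_1\cdots s_k v$ with $\ell(w)=k+\ell(v)$; then setting $w_0:=w$ and $w_i:=s_i w_{i-1}=s_{i+1}\cdots s_k v$ one has $w_i\lhd w_{i-1}$ for each $i$ and $w_k=v$. So it suffices to prove: if $x\in L_n(W)$ and $s\in S$ with $sx\lhd x$, then $sx\in L_n(W)$; the general case then follows by induction on $\ell(w)-\ell(v)$.

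So fix $x\in L_n(W)$ and $s\in S$ with $sx\lhd x$. By Corollary~\ref{cor:Suff} it suffices to check that $f_s\left(N^1(x)\setminus\{\alpha_s\}\right)\subseteq \Sigma_n$, and by the hypothesis of the present statement this will follow once I establish $N^1(x)\subseteq \Sigma_n$. Since $x$ is $n$-low, $N(x)=\cone_\Phi(\Sigma_n(x))$, and I would first observe that this forces $\cone(N(x))=\cone(\Sigma_n(x))$: the inclusion $\subseteq$ holds because $N(x)=\cone(\Sigma_n(x))\cap\Phi\subseteq \cone(\Sigma_n(x))$, and $\supseteq$ holds because $\Sigma_n(x)\subseteq N(x)$. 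Now $\cone(\Sigma_n(x))$ is a pointed polyhedral cone (it lies in the pointed cone $\cone(\Delta)$) finitely generated by the positive roots in $\Sigma_n(x)\subseteq\Sigma_n$, a finite set by Theorem~\ref{thm:NSmall}; hence each of its extreme rays is spanned by one of these generators. As distinct positive roots are non-proportional, the representatives $N^1(x)$ of these extreme rays lie in $\Sigma_n(x)\subseteq \Sigma_n$.

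With $N^1(x)\subseteq \Sigma_n$ in hand, the hypothesis applied to each $\beta\in N^1(x)\setminus\{\alpha_s\}\subseteq\Sigma_n$ yields $f_s(\beta)\in\Sigma_n$, that is $f_s\left(N^1(x)\setminus\{\alpha_s\}\right)\subseteq\Sigma_n$. Corollary~\ref{cor:Suff} then gives $sx\in L_n(W)$, which proves suffix-closure and completes the argument. The only step requiring a little care is the cone-theoretic identity $\cone(N(x))=\cone(\Sigma_n(x))$ together with the remark that the extreme rays of a finitely generated pointed cone lie among its generators; everything else is bookkeeping, which is why the result is essentially immediate from Corollary~\ref{cor:Suff} and Proposition~\ref{prop:Low}\emph{(3)}.
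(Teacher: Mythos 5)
Your proof is correct and follows exactly the route the paper intends: the paper declares this corollary immediate from Corollary~\ref{cor:Suff} and Proposition~\ref{prop:Low}\emph{(3)}, and you simply spell out the two implicit steps (reduction of suffix-closure to maximal proper suffixes, and the fact that $N^1(x)\subseteq\Sigma_n(x)$ for $n$-low $x$ since the extreme rays of the finitely generated pointed cone $\cone(\Sigma_n(x))$ lie among its generators). Both details are accurate, so this is the paper's argument with the bookkeeping made explicit.
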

%

\subsection{Bipodality}\label{sse:Bip} We present here a property called {\em bipodality} and show that if the set $\Sigma_n$ of $n$-small roots is bipodal, then $\Sigma_n$ meets the hypothesis of   Corollary~\ref{cor:SuffMain} and so $L_n(W)$ is a finite Garside shadow. 

\begin{defi} A subset $A\subseteq \Phi^+$ of positive roots is {\em bipodal} if for any maximal dihedral reflection subgroup $W'$  with canonical simple system $\Delta_{W'}$ and positive root system $\Phi^+_{W'}$ we have the following property:
$$
A\cap (\Phi_{W'}^+\setminus\Delta_{W'})\not = \emptyset \implies \Delta_{W'}\subseteq A.
$$
 \end{defi}
 
 \begin{remark}
 In the context of normalized roots, as in Remark~\ref{rem:Proj1}, $A$ is bipodal if for any maximal segment $[\h \alpha, \h\beta]=\h\Phi\cap P$ ($P$ is some line containing at least two normalized roots) such that $\h A\cap (\h \alpha, \h\beta)\not = \emptyset$ the endpoints $\h\alpha,\h\beta$ are elements of~$A$. 
 \end{remark}

\begin{prop}\label{prop:BipNSmall} Let $n\in \mathbb N$. If the set $\Sigma_n$ of $n$-small  roots  is bipodal, then the set $L_n(W)$ of $n$-low elements  is a finite Garside shadow. 
\end{prop}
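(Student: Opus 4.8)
The plan is to deduce the statement directly from Corollary~\ref{cor:SuffMain}, whose hypothesis I will verify using the assumed bipodality of $\Sigma_n$. Recall that Corollary~\ref{cor:SuffMain} guarantees that $L_n(W)$ is a finite Garside shadow as soon as one knows that $f_s(\beta)\in\Sigma_n$ whenever $s\in S$, $\beta\in\Phi^+\setminus\{\alpha_s\}$ and $\beta\in\Sigma_n$. Thus the entire task reduces to establishing this implication, after which finiteness and the Garside shadow property come for free from Corollary~\ref{cor:SuffMain} (which in turn packages Proposition~\ref{prop:Low}\emph{(3)}); in particular I will not need to argue finiteness separately.

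So fix $s\in S$ and $\beta\in\Sigma_n$ with $\beta\neq\alpha_s$. I would first unwind the definition of $f_s$: the dihedral subgroup $\langle s,s_\beta\rangle$ is contained in a unique maximal dihedral reflection subgroup $W'$ with canonical simple system $\Delta_{W'}=\{\alpha_s,f_s(\beta)\}$, and since $s_\beta\in W'$ we have $\beta\in\Phi_{W'}^+$. Now split according to whether $\beta$ is simple in $W'$. If $\beta\in\Delta_{W'}$, then because $\beta\neq\alpha_s$ we are forced to have $\beta=f_s(\beta)$, and hence $f_s(\beta)=\beta\in\Sigma_n$. Otherwise $\beta\in\Phi_{W'}^+\setminus\Delta_{W'}$, so $\beta$ witnesses $\Sigma_n\cap(\Phi_{W'}^+\setminus\Delta_{W'})\neq\emptyset$; bipodality of $\Sigma_n$ then yields $\Delta_{W'}\subseteq\Sigma_n$, and in particular $f_s(\beta)\in\Sigma_n$.

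In both cases $f_s(\beta)\in\Sigma_n$, which is exactly the hypothesis of Corollary~\ref{cor:SuffMain}, and the conclusion follows. This argument is essentially immediate once the definition of $f_s$ is made explicit, and I do not expect a genuine obstacle. The only point requiring care is the degenerate case $\beta\in\Delta_{W'}$: bipodality only constrains $W'$ when its \emph{non-simple} positive roots meet $\Sigma_n$, so when $\beta$ is already a simple root of $W'$ one cannot invoke bipodality and must instead observe directly that $f_s(\beta)=\beta$. Correctly identifying the maximal dihedral subgroup attached to the pair $(s,\beta)$ and keeping track of this edge case is the whole content of the proof.
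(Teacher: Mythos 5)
Your proof is correct and takes essentially the same route as the paper's: both reduce the statement via Corollary~\ref{cor:SuffMain} to showing $f_s(\beta)\in\Sigma_n$ whenever $\beta\in\Sigma_n\setminus\{\alpha_s\}$, and both settle this by the same dichotomy --- either $f_s(\beta)=\beta$, or $\beta\in\Phi^+_{W'}\setminus\Delta_{W'}$ for the maximal dihedral subgroup $W'$ with $\Delta_{W'}=\{\alpha_s,f_s(\beta)\}$, in which case bipodality gives $\Delta_{W'}\subseteq\Sigma_n$. The only cosmetic difference is that the paper checks the condition only for roots $\beta\in N^1(x)\setminus\{\alpha_s\}$ of an $n$-low element $x$ (which lie in $\Sigma_n$ automatically), whereas you verify the global hypothesis of Corollary~\ref{cor:SuffMain} directly; the mathematical content is identical.
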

\begin{proof} By Proposition~\ref{prop:Low}{\em (3)} it is enough to show that if $s\in S$ and $x\in L_n(W)$ such that $sx$ is a suffix of $x$, then $sx\in L_n(W)$. By Corollary~\ref{cor:SuffMain},  we have to show that if $\beta\in N^1(x)\setminus\{\alpha_s\}$, then $f_s(\beta)\in \Sigma_n$. Since $x\in L_n(W)$ and $\beta$ is in the base of its inversion set, we have $\beta\in \Sigma_n$. Let $W'$ be the dihedral reflection subgroup generated by the reflection in the simple system $\Delta_{W'}=\{\alpha_s,f_s(\beta)\}$; by definition of $f_s$, this reflection subgroup is maximal. So either $f_s(\beta)=\beta$ and we are done, or $\beta\in \Phi^+_{W'}\setminus \Delta_{W'}$. This forces $f_s(\beta)\in \Sigma_n$ since $\Sigma_n$ is bipodal by assumption. 
\end{proof}

 In regards of the above proposition, it is now time to state our second conjecture, that would imply, if true,  Conjecture~\ref{conj:1}.

\begin{conject}\label{conj:3} The set of $n$-small roots $\Sigma_n$ is bipodal for any $n\in\mathbb N$. 
\end{conject}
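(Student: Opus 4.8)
The plan is to recast bipodality as a comparison of dominance depths inside each maximal dihedral reflection subgroup, and then to treat infinite and finite subgroups by completely different mechanisms. Working with all $n$ at once, I first observe that $\Sigma_n$ is bipodal for every $n\in\mathbb N$ if and only if every maximal dihedral reflection subgroup $W'$ with canonical simple system $\Delta_{W'}=\{a,c\}$ satisfies
$$
\max\bigl(\dep_\infty(a),\dep_\infty(c)\bigr)\ \le\ \min\{\dep_\infty(\beta)\mid \beta\in\Phi^+_{W'}\setminus\Delta_{W'}\}.
$$
Indeed, the hypothesis of bipodality for $\Sigma_n$ is triggered exactly when some non-simple positive root of $W'$ has dominance depth at most $n$, and its conclusion is that both simple roots of $W'$ have dominance depth at most $n$; requiring this for every $n$ is precisely the displayed inequality (with the convention that it holds vacuously when $\Phi^+_{W'}=\Delta_{W'}$). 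By Lemma~\ref{lem:Domin}(1) the dominance order inside $W'$ is the restriction of the ambient one, so I may analyse $\Phi^+_{W'}$ using the explicit dihedral combinatorics of Example~\ref{ex:BruhatDi}, and Proposition~\ref{prop:BipNSmall} then converts the inequality into the Garside-shadow statement of Conjecture~\ref{conj:1}.

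The engine of the argument is a monotonicity theorem for depth-type functions along a Bruhat order on the whole root system. I would extend the root poset on $\Phi^+$ to a weak order on $\Phi$ and define a Bruhat order $\preceq_{\mathrm{Br}}$ on $\Phi$ in which, for any maximal dihedral $W'$, \emph{both} simple roots $a,c$ lie below every non-simple positive root of $W'$ (note this is genuinely coarser than dominance, under which $s_a(c)$ dominates only $a$). The main claim is that a suitable family of length functions on $\Phi^+$ — containing both the standard depth $\dep$ and the dominance depth $\dep_\infty$ — is non-decreasing along $\preceq_{\mathrm{Br}}$. I would prove this by reducing each covering relation, via functoriality of the Bruhat graph (Proposition~\ref{prop:CosetRep}), to the action of a single simple reflection $s\in S$ of $W$, and then reading off the unit change in depth from the recurrences of Proposition~\ref{prop:Dep} and Proposition~\ref{prop:NSmall}, whose cases are governed by the sign of $B(\alpha_s,\cdot)$. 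For an \emph{infinite} dihedral $W'$ (where $B(a,c)\le -1$) this yields the stronger conclusion that $\dep_\infty$ increases from each simple root inward, as in Examples~\ref{ex:Inf2} and~\ref{ex:Affine}; in particular the displayed inequality holds, settling bipodality for all infinite maximal dihedral reflection subgroups.

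The finite dihedral subgroups must be handled by a different route, because the depth profile of a finite $\mathcal D_m$ need not increase monotonically from the simple roots inward — so the monotonicity step above cannot be applied verbatim, and in fact fails for some finite $W'$. Here I would establish a criterion in the spirit of Proposition~\ref{prop:Increase} and Corollary~\ref{cor:Increase} that reduces bipodality to a purely local verification inside finite dihedral reflection subgroups. Using the explicit Bruhat order on $\mathcal D_m$ (Example~\ref{ex:BruhatDi} and Figure~\ref{fig:BruhatDi}), the lifting property (Proposition~\ref{prop:Lifting}), and the covering analysis of Lemma~\ref{lem:BruhatDi}, I would track how a single simple reflection of $W$ displaces a minimal-depth interior root of $W'$ and argue that this forces the two simple roots to remain of minimal depth even when the intermediate depths oscillate.

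The main obstacle is precisely this finite dihedral case. Since the monotonicity of $\dep_\infty$ genuinely fails there, one cannot deduce minimality of the endpoint depths from a monotone profile; one must instead show that the weaker ``minimality at the two ends'' in the displayed inequality survives the oscillation, uniformly over all $m$ and over all ambient embeddings $W'\hookrightarrow W$. I expect the criterion to resolve the cases where $m$ is controlled — the affine case (Theorem~\ref{thm:AffineBip}) and Coxeter matrices with entries in $\{1,2,3,\infty\}$ (Theorem~\ref{thm:Main2}) — while the general finite dihedral estimate is where the difficulty concentrates, which is why the statement must remain Conjecture~\ref{conj:3} rather than a theorem.
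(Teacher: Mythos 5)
Since the statement is a conjecture, there is no complete proof in the paper to match your proposal against; what you have written is, in outline, precisely the program the paper itself carries out in \S\ref{se:BruhatRoot}. Your opening reformulation --- bipodality of $\Sigma_n$ for \emph{all} $n$ is equivalent to endpoint-minimality of $\dep_\infty$ on every maximal dihedral subsystem --- is correct and is how the paper phrases matters in Remark~\ref{rem:balance}(a); your monotone family of length functions along a Bruhat order on $\Phi$ is Theorem~\ref{1.10} (with $\dep$ and $\dep_\infty$ recovered as the cases $X=\mathbb R^{+}$ and $X=[1,\infty[$, Proposition~\ref{prop:DepthLength}); the infinite dihedral case is Proposition~\ref{prop:Increase}(1); and your verdict that everything reduces to an open estimate inside finite dihedral reflection subgroups is Corollary~\ref{cor:Increase}, so you are right to leave the general statement conjectural. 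One attribution to fix: the paper proves the affine case (Theorem~\ref{thm:AffineBip}) directly from the explicit formula $(\Diamond)$ on the crystallographic root system, not via the finite-dihedral criterion; only Theorem~\ref{thm:W3Infty} goes through the criterion, the label hypothesis forcing (via Tits' argument) every finite maximal dihedral $W'$ to satisfy $\gamma=\alpha+\beta=s_\alpha(\beta)=s_\beta(\alpha)$. Note also that Lemma~\ref{lem:Domin}(1) only restricts the dominance \emph{order} to $\Phi_{W'}$; the depth $\dep_\infty$ itself counts dominated roots in all of $\Phi^{+}$ and is not intrinsic to $W'$ (part (2) holds only for standard parabolics), so the dihedral combinatorics alone cannot compute it --- the argument must stay ambient, as the paper's does.

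The one genuine flaw is in your second paragraph: you posit a Bruhat order on $\Phi$ in which, for \emph{every} maximal dihedral $W'$ with $\Delta_{W'}=\{a,c\}$, both $a$ and $c$ lie below every non-simple root of $\Phi^{+}_{W'}$. If such an order existed and carried the monotonicity you propose to prove, endpoint minimality --- hence the full conjecture, finite case included --- would follow at once, contradicting your own (correct) closing assessment. The order your proof method actually produces (coverings $\beta=s_\gamma(\alpha)$ with $B(\gamma,\alpha)<0$, analysed one simple reflection at a time via functoriality and the recurrences) is the paper's $\leq'$, and for finite $W'$ it lacks exactly the relations you assert: in a $B_2$-subsystem the only covering up from $a$ inside $\Phi_{W'}$ is $a\lessdot' s_c(a)$; the reflection that would carry $a$ to the adjacent root $s_a(c)$ is not in $W'$ (its root sits on the bisector, which is a root of $W'$ only when $|W'|/2$ is odd), and since $d_{\mathbb R^{+}}$ strictly increases along $\leq'$ no longer chain can substitute. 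Only the ``cross'' comparisons of Remark~\ref{rem:Increase} ($a_{i-1}$ against $b_i$) survive, which is precisely why Corollary~\ref{cor:Increase} isolates the same-side inequality $\dep_\infty(\alpha)\leq\dep_\infty(s_\alpha(\beta))$ as the open residue. Relatedly, your diagnosis that ``monotonicity fails for some finite $W'$'' conflates two things: monotonicity of $\dep_\infty$ along $\leq'$ is unconditional (Theorem~\ref{1.10}(1), via Lemma~\ref{1.9}); what fails for finite $W'$ is Bruhat-\emph{comparability} of $a$ with $s_a(c)$, and what fails along the segment itself is the stronger balance property --- the $\widetilde G_2$ profile $(0,0,1,0,1,0)$ of Remark~\ref{rem:balance}(b). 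Naming the gap correctly matters, because it tells you where work is needed: not a better monotonicity theorem, but new Bruhat relations (or a different mechanism) for finite maximal dihedral subgroups.
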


This conjecture is obvioulsy  true for finite Coxeter groups, since all roots are small (see Example~\ref{ex:Finite}); it is also true for infinite dihedral groups (see Example~\ref{ex:Inf2}) and  for affine Coxeter systems as shown in the following theorem. 

\begin{thm}\label{thm:AffineBip} Let $n\in\mathbb N$ and assume $(W,S)$ is an affine Weyl group. Then:
\begin{enumerate}
\item $\Sigma_n$  is bipodal;
\item $L_n(W)$ is a finite  Garside shadow in $(W,S)$.
\end{enumerate}
\end{thm}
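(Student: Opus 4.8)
The second statement follows at once from the first by Proposition~\ref{prop:BipNSmall}, so the whole problem reduces to proving that $\Sigma_n$ is bipodal. The plan is to work inside the crystallographic root system $\Psi$ of Example~\ref{ex:Affine}, where the $\infty$-depth is given by the explicit formula~$(\Diamond)$, and to verify the bipodality condition on each maximal dihedral reflection subgroup $W'$ by splitting into the two possible types of maximal rank~$2$ subsystem $\Psi'=\Phi_{W'}$. Writing $\pi\colon V\to V_0$ for the projection with kernel $\mathbb R\delta$, the plane $P'=\Span(\Psi')$ either contains $\delta$ or not; accordingly $\Psi'$ is an infinite or a finite maximal dihedral subsystem. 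First I would record, as in Example~\ref{ex:Affine}, that an infinite $\Psi'$ has canonical simple roots $\{\mu,\delta-\mu\}$ for some $\mu\in\Psi_0^+$, and that by~$(\Diamond)$ both of these have $\dep_\infty=0$. Hence $\Delta_{W'}\subseteq\Sigma_0\subseteq\Sigma_n$ and the bipodality implication holds vacuously (its conclusion is automatic) for every infinite maximal dihedral subgroup.

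The heart of the argument is the finite case. Here $\delta\notin P'$, so $\pi|_{P'}$ is injective and every root of $\Psi'$ has the form $\mu+h(\mu)\delta$ with $\mu$ in the rank~$2$ subsystem $\pi(P')\cap\Psi_0$ of $\Psi_0$ and $h$ the linear height determined by $P'$; in particular $\Psi'$ is a reduced crystallographic rank~$2$ system. Let $\alpha=\mu_a+k_a\delta$ and $\beta=\mu_b+k_b\delta$ be its canonical simple roots and let $\rho$ be a non-simple positive root of $\Psi'$. Then one may write $\rho=p\alpha+q\beta$ with integers $p,q\geq 1$; comparing $\Psi_0$-parts and $\delta$-parts (using $P'\cap\mathbb R\delta=\{0\}$) gives $\rho=\mu_c+k_c\delta$ with $\mu_c=p\mu_a+q\mu_b$ and $k_c=pk_a+qk_b$. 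Writing $\epsilon(\mu)=0$ for $\mu\in\Psi_0^+$ and $\epsilon(\mu)=1$ for $\mu\in\Psi_0^-$, formula~$(\Diamond)$ reads $\dep_\infty(\mu+k\delta)=k-\epsilon(\mu)$, and positivity forces $k_a,k_b\geq 0$. I would then compute
\[
\dep_\infty(\rho)-\dep_\infty(\alpha)=(p-1)k_a+qk_b+\bigl(\epsilon(\mu_a)-\epsilon(\mu_c)\bigr),
\]
show this is $\geq 0$, and argue symmetrically for $\beta$; this yields $\dep_\infty(\alpha),\dep_\infty(\beta)\leq\dep_\infty(\rho)$, so that whenever $\rho$ is $n$-small both simple roots are $n$-small, which is exactly bipodality for the finite $W'$.

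The one point requiring care—and the step I expect to be the main obstacle—is the sign correction $\epsilon$, i.e.\ controlling the term $\epsilon(\mu_a)-\epsilon(\mu_c)\geq -1$ in the degenerate situation $(p-1)k_a+qk_b=0$. In that situation $qk_b=0$ forces $k_b=0$, whence $\mu_b\in\Psi_0^+$; if moreover $\epsilon(\mu_a)=0$, i.e.\ $\mu_a\in\Psi_0^+$, then $\mu_c=p\mu_a+q\mu_b$ is a non-negative combination of positive roots which is itself a root, hence $\mu_c\in\Psi_0^+$ and $\epsilon(\mu_c)=0$; otherwise $\epsilon(\mu_a)=1$ and the correction is trivially $\geq 0$. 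In every remaining case $(p-1)k_a+qk_b\geq 1\geq\epsilon(\mu_c)-\epsilon(\mu_a)$, so the displayed difference is non-negative. The only genuine subtlety is thus this bookkeeping of when a depth-lowering correction can occur on a simple root but not on $\rho$; once it is dispatched, both types of maximal dihedral subgroup are handled, $\Sigma_n$ is bipodal, and~(1)—hence~(2)—follows. It is worth noting that this same argument in fact shows the stronger monotonicity statement that $\dep_\infty$ is minimized on $\Delta_{W'}$ for every finite maximal dihedral subgroup of an affine $W$, even though this can fail for general Coxeter systems.
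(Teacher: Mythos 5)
Your proposal is correct and is essentially the paper's own proof: both reduce (2) to (1) via Proposition~\ref{prop:BipNSmall}, pass to the crystallographic system of Example~\ref{ex:Affine}, write a non-simple positive root of $\Phi_{W'}^{+}$ as an integral combination $p\alpha+q\beta$ with $p,q\geq 1$, and compare $\infty$-depths through formula~$(\Diamond)$, the only delicate point being the sign correction when a $\delta$-height vanishes (your $\epsilon$-bookkeeping is exactly the paper's contradiction argument in the case $\gamma_0\in\Psi_0^-$, $\alpha_0\in\Psi_0^+$, $l=0$). The only differences are cosmetic: the paper handles infinite and finite maximal dihedral subgroups by one uniform computation rather than your preliminary case split, and it opens with the (routine, but worth stating) reduction to irreducible $(W,S)$, which your use of Example~\ref{ex:Affine} implicitly requires.
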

\begin{proof} By Proposition~\ref{prop:BipNSmall} one just has to show {\em (1)}. 
Assume without loss of generality that $(W,S)$ is irreducible. We use the notations and results of Example~\ref{ex:Affine}. Let $\gamma$ be a root in the positive crystallographic root system $\Psi$ such that $\gamma'\in\Sigma_n$. Let  $W'$ be a maximal  dihedral reflection subgroup with simple system $\Delta_{W'}=\{\alpha',\beta'\}$, with $\alpha',\beta'\in\Psi^+$, and positive root subsystem $\Phi_{W'}^{+}$ such that $\gamma'\in\Phi_{W'}^+\setminus\Delta_{W'}$.  By symmetry between $\alpha$ and $\beta$, one just has to show that $\dep_{\infty}(\alpha)\leq \dep_\infty(\gamma)$, which would imply $\alpha'\in\Sigma_n$.

Since $\Phi_{W'}^+=\cone_\Phi(\Delta_{W'})$, $\Psi$ is crystallographic and $\gamma'\notin\Delta_{W'}$, there is $a,b\in\mathbb N^*$ such that $\gamma=a\alpha+b\beta$. By the description of $\Psi^+$ given in Example~\ref{ex:Affine} we have $\alpha=\alpha_0+k\delta$ and $\beta=\beta_0+l\delta$, for some $k,l\in\mathbb N$ and $\alpha_0,\beta_0\in\Psi_0$. So
 $$
 \gamma=a\alpha_0+b\beta_0 + (ak+bl)\delta.
 $$ 
 By the description of $\Psi^+$ again, we must have $\gamma_0:=a\alpha_0+b\beta_0\in \Psi_0$ since $\delta$ is not in the linear span of $\Psi_0$ but $\gamma_0=a\alpha_0+b\beta_0$ is.  So by Equation~$(\Diamond)$ in Example~\ref{ex:Affine} we have:
$$
\dep_{\infty}(\gamma)=\dep_{\infty}(\gamma_0+(ak+bl)\delta)=
\left\{
\begin{array}{ll}
ak+bl&\textrm{if $\gamma_0\in \Psi_0^{+}$}\\
ak+bl-1&\textrm{if $\gamma_0\in \Psi_0^{-}$}
\end{array}
\right. .
$$
and
$$
\dep_{\infty}(\alpha)=\dep_{\infty}(\alpha_0+n\delta)=
\left\{
\begin{array}{ll}
k&\textrm{if $\alpha_0\in \Psi_0^{+}$}\\
k-1&\textrm{if $\alpha_0\in \Psi_0^{-}$}
\end{array}
\right. .
$$
So $\dep_\infty(\alpha)\leq \dep_\infty(\gamma)$ unless $\gamma_0\in\Psi_0^-$, $\alpha_0\in\Psi^+_0$ and $l=0$. In this case $\beta=\beta_0$ must be a positive root in $\Psi_0^+$ and therefore $\gamma_0=a\alpha_0+b\beta_0\in\Psi_0^+$ contradicting ~$\gamma_0\in\Psi_0^-$. In conclusion, $\dep_\infty(\alpha)\leq \dep_\infty(\gamma)$, $\alpha'\in \Sigma_n$ and $\Sigma_n$ is bipodal.
\end{proof}

\subsection{Bipodality of small roots and proof of Theorem~\ref{thm:Main1}}\label{sse:ProofMain1} As seen in Proposition~\ref{prop:BipNSmall}, in order to show Theorem~\ref{thm:Main1} it is enough to show that the set of small roots $\Sigma$ is bipodal.

 \begin{thm}\label{thm:Main3} The set of small roots $\Sigma$ is bipodal. 
\end{thm}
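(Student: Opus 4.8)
The plan is to peel the statement down to a property of a single maximal dihedral reflection subgroup and then split according to whether that subgroup is finite or infinite. By the definition of bipodality it suffices to fix a maximal dihedral reflection subgroup $W'$ with canonical simple system $\Delta_{W'}=\{\alpha,\beta\}$ and positive system $\Phi^+_{W'}$, to assume there is a small root $\gamma\in\Phi^+_{W'}\setminus\Delta_{W'}$, and to prove $\alpha,\beta\in\Sigma$. I would first dispatch the case where $W'$ is \emph{infinite}: then $(W',\chi(W'))$ is an infinite dihedral Coxeter system, and Example~\ref{ex:Inf2} applied inside $W'$ shows that every non-simple positive root of $W'$ strictly dominates, in the dominance order $\prec_{W'}$ of $W'$, one of the two elements of $\Delta_{W'}$. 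By Lemma~\ref{lem:Domin}\emph{(1)}, $\prec_{W'}$ is the restriction of $\prec$, so $\gamma$ strictly dominates $\alpha$ or $\beta$ in $\Phi$; hence $\dep_\infty(\gamma)\geq 1$ and $\gamma\notin\Sigma$. Thus when $W'$ is infinite there is no interior small root at all and the implication holds vacuously.

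The substantial case is $W'$ \emph{finite}, and here I would argue by induction on $\dep(\gamma)$. Since $\widehat{\gamma}$ lies in the relative interior of the segment $[\widehat{\alpha},\widehat{\beta}]\subseteq\conv(\widehat{\Delta})$ (Remark~\ref{rem:Proj1}), it is not an extreme point of $\conv(\widehat{\Delta})$, so $\gamma\notin\Delta$ and there exists $s\in S$ with $B(\alpha_s,\gamma)>0$. As $\gamma$ is small and non-simple, Proposition~\ref{prop:SmallMin} yields $B(\alpha_s,\gamma)<1$, that $s(\gamma)\in\Sigma$, and that $\dep(s(\gamma))<\dep(\gamma)$. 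Conjugating, $W'':=sW's$ is again a maximal dihedral reflection subgroup (its roots are $s(\Phi_{W'})$, again the trace of a plane on $\Phi$), and $s(\gamma)$ is a small root of strictly smaller depth lying between the two walls of $W''$. Applying the inductive hypothesis to $(W'',s(\gamma))$ gives that the two canonical simple roots of $W''$ are small, and these are $s(\alpha),s(\beta)$ whenever $\alpha,\beta\neq\alpha_s$ (note $s(\alpha)\in\Phi^-$ forces $\alpha=\alpha_s$, since $s$ is simple).

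It then remains to transfer smallness back through $s$: from $\dep_\infty(s(\alpha))=0$ I want $\dep_\infty(\alpha)=0$, and by the recurrence of Proposition~\ref{prop:NSmall} applied to the root $s(\alpha)$ this holds exactly when $B(\alpha_s,s(\alpha))>-1$, i.e. $B(\alpha_s,\alpha)<1$ (and symmetrically for $\beta$). The boundary cases are immediate: if $\alpha=\alpha_s$ then $\alpha\in\Delta$ is small. The crux — and the step I expect to be the main obstacle — is excluding the configuration $B(\alpha_s,\alpha)\geq 1$. By the finite/infinite dichotomy for dihedral subgroups, $B(\alpha_s,\alpha)\geq 1$ is equivalent to $\alpha_s\prec\alpha$ (the reverse domination $\alpha\prec\alpha_s$ is impossible, as the simple root $\alpha_s$ is small). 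I would rule this out by contradiction, using the $W$-invariance of dominance (if $\mu\prec\nu$ and $w\mu,w\nu\in\Phi^+$ then $w\mu\prec w\nu$, which follows directly from the definition of $N$) to transport a root strictly dominated by $\alpha$ to one strictly dominated by $\gamma$, via an element $w\in W'$ carrying $\alpha$ into the $W'$-orbit of $\gamma$; this forces $\dep_\infty(\gamma)\geq 1$, contradicting smallness of $\gamma$.

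The delicate points concentrated in this last step are exactly where the real work lies: one must handle the two $W'$-orbits of roots that occur when the dihedral order is even (they govern which endpoint is reachable from $\gamma$, hence whether one lands on $\alpha$ or on $\beta$), propagate smallness to the endpoint \emph{not} lying in $\gamma$'s orbit, and control the positivity requirement $w\mu\in\Phi^+$ needed to invoke invariance (here Proposition~\ref{prop:FiniteRefl}\emph{(2)} and the coset machinery of Proposition~\ref{prop:CosetRep} are the natural tools). This is also precisely the point at which the argument is special to $n=0$: the analogous monotonicity of $\dep_\infty$ along $\Phi^+_{W'}$ fails for general finite dihedral subgroups, and it is Proposition~\ref{prop:SmallMin} — valid only for genuinely small roots — that makes the depth-reducing step, and therefore the whole induction, available.
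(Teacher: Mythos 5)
Your infinite-$W'$ case is correct, and is essentially the $n=0$ instance of the paper's Proposition~\ref{prop:Increase}(1): by Lemma~\ref{lem:Domin}(1) and Example~\ref{ex:Inf2}, every root of $\Phi^+_{W'}\setminus\Delta_{W'}$ dominates a canonical simple root of $W'$, so an infinite maximal dihedral subgroup carries no interior small root and bipodality is vacuous there. Your induction on $\dep(\gamma)$ in the finite case, conjugating by $s$ with $B(\alpha_s,\gamma)>0$ and using Proposition~\ref{prop:SmallMin}, is also sound in outline; in fact it reconstructs the paper's Lemma~\ref{lem:Bip1}, which shows that bipodality of $\Sigma$ is \emph{equivalent} to the condition $-1<B(\alpha_s,\alpha)<1$ in exactly the configuration you isolate (condition~($\heartsuit$)). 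But at that point you have only reformulated the theorem: excluding $B(\alpha_s,\alpha)\geq 1$ is the entire content, and the mechanism you sketch for it does not work.

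Concretely, you want $w\in W'$ with $w(\alpha)=\gamma$ and $w(\alpha_s)\in\Phi^+$, and then transport $\alpha_s\prec\alpha$ to $w(\alpha_s)\prec\gamma$. This fails on two counts. First, when $W'$ is dihedral of order $2m$ with $m$ even, $\Phi_{W'}$ splits into two $W'$-orbits (that of $\alpha$ and that of $\beta$), and $\gamma$ may lie only in the orbit of $\beta$, so no such $w$ exists; your phrase ``propagate smallness to the endpoint not lying in $\gamma$'s orbit'' has no supporting mechanism, and no stepwise or monotone one can exist: by Remark~\ref{rem:balance}(b), in type $\tilde G_2$ there is a finite maximal dihedral subgroup whose positive roots have $\infty$-depths $(0,0,1,0,1,0)$ along the segment, so $\dep_\infty$ is genuinely non-monotonic inside finite dihedral subsystems, and there the interior small roots lie in a single orbit. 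The paper itself flags this wall in the proof of Theorem~\ref{thm:W3Infty} (``We do not know if this holds in general'' about $\dep_\infty(s_{\beta}(\alpha))\geq \dep_\infty(\beta)$), which is exactly why Theorem~\ref{thm:Main2} needs the restriction on Coxeter labels. Second, even when $\gamma=w(\alpha)$, the contradiction needs $w(\alpha_s)\in\Phi^+$ since $\dom(\gamma)\subseteq\Phi^+$; as $\alpha_s\notin\Phi_{W'}$ there is no control over the sign of $w(\alpha_s)$ (for $w\in W'$ one only has $N(w)\cap\Phi_{W'}=N_{W'}(w)$, not $N(w)\subseteq\Phi_{W'}$), and the fiber $\{w\in W'\,|\,w(\alpha)=\gamma\}$ has at most two elements, so a good choice is not always available. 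The paper's actual proof of ($\heartsuit$) is of a different nature: Lemma~\ref{lem:Bip2} reduces it to rank-$3$ root subsystems (using that $\langle W',s_{\alpha_s}\rangle$ has rank exactly $3$), and Lemma~\ref{lem:Bip3} settles rank $3$ by exhaustively listing the full-support small roots via Brink's support theorem and checking each maximal dihedral subgroup by hand. Nothing in your proposal substitutes for this finite classification step, which is where the real work of Theorem~\ref{thm:Main3} lies.
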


This theorem is the direct consequence of the  three following technical  lemmas

\begin{lem} \label{lem:Bip1}The set $\Sigma$ is bipodal if and only if the following condition  holds:
\begin{enumerate}
\item[$\textnormal{($\heartsuit$)}$] for any  maximal dihedral reflection subgroup $W'$ of $W$, $\gamma\in \Sigma \cap (\Phi_{W'}^{+}\setminus \Delta_{W'})$ and simple root $\alpha\in \Delta\setminus \Delta_{W'}$ with $B(\alpha,\gamma)>0$, one has $-1<B(\alpha,\beta)<1$ for all $\beta\in \Delta_{W'}$.
\end{enumerate}
\end{lem}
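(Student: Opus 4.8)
The plan is to establish the two implications separately, both powered by a single device: conjugation by a simple reflection $s_\alpha$ that lowers the ordinary depth of the witnessing small root, combined with the fact (Proposition~\ref{prop:NSmall}) that $s_\alpha$ preserves the dominance depth $\dep_\infty$ exactly when $-1<B(\alpha,\cdot)<1$. I would first record two observations. (a) If $\beta\in\Sigma$ and $\alpha\in\Delta$ with $\alpha\neq\beta$, then $B(\alpha,\beta)<1$; otherwise Proposition~\ref{prop:NSmall} would give $\dep_\infty(s_\alpha(\beta))=\dep_\infty(\beta)-1=-1$, which is absurd. This already supplies the upper bound in ($\heartsuit$), since there $\alpha\in\Delta\setminus\Delta_{W'}$ and $\beta\in\Delta_{W'}$ are distinct. (b) Since $\Delta_{W'}$ represents the extreme rays of $\cone(\Phi_{W'}^+)$ and $\Delta$ those of $\cone(\Phi^+)$, any $\gamma\in\Phi_{W'}^+\setminus\Delta_{W'}$ satisfies $\gamma\in\Phi^+\setminus\Delta$, and any $\alpha\in\Delta\setminus\Delta_{W'}$ satisfies $\alpha\notin\Phi_{W'}$. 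Consequently $s_\alpha$, which permutes $\Phi^+\setminus\{\alpha\}$, maps $\Phi_{W'}^+$ bijectively onto $\Phi_{W''}^{+}$, where $W'':=s_\alpha W' s_\alpha$ is again a maximal dihedral reflection subgroup with $\Delta_{W''}=s_\alpha(\Delta_{W'})$, and it sends the non-simple root $\gamma$ to the non-simple root $s_\alpha(\gamma)\in\Phi_{W''}^+\setminus\Delta_{W''}$.

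For the implication ``$\Sigma$ bipodal $\Rightarrow$ ($\heartsuit$)'', fix $W'$, $\gamma$ and $\alpha$ as in the statement. Since $\gamma$ witnesses $\Sigma\cap(\Phi_{W'}^+\setminus\Delta_{W'})\neq\varnothing$, bipodality gives $\Delta_{W'}\subseteq\Sigma$, and the bounds $B(\alpha,\beta)<1$ follow from (a). For the lower bounds, suppose $B(\alpha,\beta)\leq -1$ for some $\beta\in\Delta_{W'}$. As $\gamma$ is small and $0<B(\alpha,\gamma)<1$ (positivity by hypothesis, the upper bound by (a)), Proposition~\ref{prop:SmallMin} shows $s_\alpha(\gamma)$ is small; by (b) it is a non-simple positive root of $W''$. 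Applying bipodality to $W''$ forces $\Delta_{W''}=s_\alpha(\Delta_{W'})\subseteq\Sigma$, so $s_\alpha(\beta)$ is small. But $B(\alpha,\beta)\leq-1$ and Proposition~\ref{prop:NSmall} give $\dep_\infty(s_\alpha(\beta))=\dep_\infty(\beta)+1=1$, contradicting smallness. Hence $B(\alpha,\beta)>-1$.

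For ``($\heartsuit$) $\Rightarrow$ $\Sigma$ bipodal'', I would prove by induction on $\dep(\gamma)$ the statement: every maximal dihedral $W'$ admitting some $\gamma\in\Sigma\cap(\Phi_{W'}^+\setminus\Delta_{W'})$ satisfies $\Delta_{W'}\subseteq\Sigma$. By (b) we have $\gamma\in\Phi^+\setminus\Delta$, so $\dep(\gamma)\geq2$; writing $\gamma=\sum_{\alpha\in\Delta}c_\alpha\alpha$ with $c_\alpha\geq0$, from $1=B(\gamma,\gamma)=\sum_\alpha c_\alpha B(\alpha,\gamma)$ there is $\alpha_0\in\Delta$ with $B(\alpha_0,\gamma)>0$, whence $\dep(s_{\alpha_0}\gamma)=\dep(\gamma)-1$ (Proposition~\ref{prop:Dep}) and $s_{\alpha_0}(\gamma)$ is small (Proposition~\ref{prop:SmallMin}). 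If $\alpha_0\in\Delta_{W'}$, then $\alpha_0\in\Delta\cap\Delta_{W'}$ is small, while $s_{\alpha_0}(\gamma)\in\Phi_{W'}^+$ is small and either equals the other element of $\Delta_{W'}$ (and we are done) or lies in $\Phi_{W'}^+\setminus\Delta_{W'}$, in which case the induction hypothesis applied to $(W',s_{\alpha_0}\gamma)$ gives $\Delta_{W'}\subseteq\Sigma$. If $\alpha_0\notin\Delta_{W'}$, then ($\heartsuit$) gives $-1<B(\alpha_0,\beta)<1$ for every $\beta\in\Delta_{W'}$; the induction hypothesis applied to $(W''=s_{\alpha_0}W's_{\alpha_0},\,s_{\alpha_0}\gamma)$ shows each $s_{\alpha_0}(\beta)\in\Delta_{W''}$ is small, and since $B(\alpha_0,s_{\alpha_0}(\beta))=-B(\alpha_0,\beta)\in(-1,1)$, Proposition~\ref{prop:NSmall} yields $\dep_\infty(\beta)=\dep_\infty(s_{\alpha_0}(\beta))=0$, i.e.\ $\beta\in\Sigma$. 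Thus $\Delta_{W'}\subseteq\Sigma$ in all cases.

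Finally, because both bipodality and ($\heartsuit$) are universally quantified over the pair $(W',\gamma)$, reading the two arguments above per pair assembles them into the asserted equivalence. I expect the crux to be the step ``$s_\alpha(\beta)$ small $\Rightarrow$ $\beta$ small'': this is precisely where the two-sided bound $-1<B(\alpha,\beta)<1$ of ($\heartsuit$) is indispensable, as it is what makes $s_\alpha$ preserve $\dep_\infty$ through the middle case of Proposition~\ref{prop:NSmall}; the one-sided inequality available from smallness alone does not suffice. A secondary technical point is observation (b), namely that $\alpha\notin\Phi_{W'}$ lets conjugation by $s_\alpha$ carry maximal dihedral subgroups and their canonical simple systems to maximal dihedral subgroups and their canonical simple systems.
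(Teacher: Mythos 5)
Your proof is correct and follows essentially the same route as the paper's: both directions hinge on conjugating $W'$ by $s_\alpha$ (using $\alpha\notin\Phi_{W'}$, so $\Delta_{W''}=s_\alpha(\Delta_{W'})$ for $W''=s_\alpha W's_\alpha$), on Proposition~\ref{prop:SmallMin} to keep the witness small while decreasing $\dep$, and on the three-case formula of Proposition~\ref{prop:NSmall} to rule out $|B(\alpha,\beta)|\geq 1$, with the converse proved by the same induction on $\dep(\gamma)$ split into the cases $\alpha\in\Delta_{W'}$ and $\alpha\notin\Delta_{W'}$. The only differences are cosmetic: you extract the upper bound $B(\alpha,\beta)<1$ directly from smallness of $\beta$ (your observation (a)) instead of the paper's symmetric contradiction, and you justify $\gamma\notin\Delta$ via extreme rays, which is in fact cleaner than the paper's remark at that step.
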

\begin{remark} Observe that the condition $B(\alpha,\gamma)>0$ is equivalent to say that $\gamma$ is on the positive side of the hyperplane $H_\alpha=\ker(v\mapsto B(\alpha,v))$ and that the condition $-1<B(\alpha,\beta)<1$ is equivalent to say that the dihedral reflection subgroup generated by $s_\alpha,s_\beta$ is finite, which is equivalent to $(\mathbb{R} \alpha+\mathbb{R}\beta)\cap \Phi$ is finite,  or equivalently,  $(\mathbb{R} \alpha+\mathbb{R}\beta)\cap Q=\{0\}$; see \cite{DyHoRi13,HoLaRi14} for more details. So the condition $(\heartsuit)$ has the following geometric interpretation: for any  small root $\gamma\in \Sigma$, simple root $\alpha\in \Delta$ such that $\gamma$ is on the positive side of the hyperplane $H_\alpha$ and  maximal dihedral reflection subgroup $W'$ of $W$ such that $\gamma\in (\Phi_{W'}^{+}\setminus \Delta_{W'})$, 
one has  $(\mathbb{R} \alpha+\mathbb{R}\beta)\cap \Phi$  finite  (which translates in the language of normalized roots to: the line passing through $\h\alpha$ and $\h\beta$ contains a finite number of normalized roots).
\end{remark}
\begin{proof} Suppose $\Sigma $ is bipodal. Let  $W'$ be  a  maximal dihedral reflection subgroup  of $W$,  and  $\gamma\in \Sigma \cap (\Phi_{W'}^{+}\setminus \Pi_{W'})$. Since $\Sigma $ is bipodal, $\Delta_{W'}\subseteq \Sigma $. Now let $\alpha\in \Delta\setminus \Delta_{W'}$ with $B(\alpha,\gamma)>0$. Abbreviate $s=s_{\alpha}$ and  $W'':=sW's$ (a maximal dihedral reflection subgroup of $W$).  
Note that $\alpha\not \in \Phi_{W'}$, for otherwise  $\alpha\in \Pi_{W'}$ since $\alpha\in \Delta$.
Hence   $\Delta_{W''}=s(\Delta_{W'})$ and $s(\gamma)\in s(\Phi_{W'}^+)=\Phi_{W''}^+\subseteq \Phi^{+}$. Proposition~\ref{prop:NSmall} forces  $0\leq \dep_{\infty}(s(\gamma))\leq \dep_{\infty}(\gamma)=0$, so $s(\gamma)\in \Sigma \cap (\Phi_{W''}^{+}\setminus \Delta_{W''})$. Since $\Sigma $ is bipodal, this implies $\Delta_{W''}\subseteq \Sigma$.
 Now let $\beta\in \Delta_{W'}$, so $s(\beta)\in \Delta_{W''}$ and $\dep_{\infty}(\beta)=\dep_{\infty}(s(\beta))=0$ by above. By Proposition~\ref{prop:NSmall}: if $B(\alpha,\beta)\leq -1$, then $\dep_{\infty}(s(\beta))=\dep_{\infty }(\beta)+1$, whereas if  $B(\alpha,\beta)\geq 1$, then $\dep_{\infty}(\beta)=\dep_{\infty }(s(\beta))+1$.   In either case, we have  a contradiction, so $-1<B(\alpha,\beta)<1$.  Hence  ($\heartsuit$) holds, completing the proof of the ``only if'' direction.

\smallskip
Now we assume  ($\heartsuit$). We  show  that for any maximal dihedral reflection subgroup $W'$ of $W$ and any $\gamma\in \Sigma \cap (\Phi_{W'}^{+}\setminus \Delta_{W'})$, one has $\Delta_{W'}\subseteq \Sigma$, by induction on $\dep(\gamma)$ using Proposition~\ref{prop:Dep}. This will obviously imply that $\Sigma$ is bipodal.

Choose $\alpha\in \Delta$ with $B(\alpha,\gamma)>0$.  Since $B(\alpha,\beta)\leq 0$ for all $\alpha,\beta\in \Delta$, we must have $\gamma\not\in \Delta$. So $\dep(\gamma)\geq 3$ by Proposition~\ref{prop:Dep}. Set as above $s=s_{\alpha}$ and  $W'':=sW's$.  By Proposition~\ref{prop:Dep},   we have $\dep(s(\gamma))=\dep(\gamma)-1$, so  $s(\gamma)\in \Sigma$ by Proposition~\ref{prop:SmallMin}.  We distinguish the cases $\alpha\in \Delta_{W'}$ and $\alpha\not\in \Delta_{W'}$. 

First assume $\alpha\in \Delta_{W''}$, so $W''=W'$. If $s(\gamma)\in \Delta_{W'}$, then $\Delta_{W'}=\{\alpha, \gamma\}\subseteq \Sigma $ as required. On the other hand, if $s(\gamma)\not\in \Delta_{W'}$, then $s(\gamma)\in \Sigma \cap (\Phi_{W'}^{+}\setminus \Delta_{W'})$ with $\dep(s(\gamma))<\dep(\gamma)$. Hence   $\Delta_{W'}\subseteq \Sigma $ by induction. 

Now consider the case $\alpha\not\in \Delta_{W''}$. Then $\Delta_{W''}=s(\Delta_{W'})$,
  $\Phi_{W''}^{+}=s(\Phi_{W'}^{+})$ and  $s(\gamma)\in \Sigma \cap (\Phi_{W''}^{+}\setminus \Delta_{W''})$ with $\dep(s(\gamma))<\dep(\gamma)$. By induction, $\Delta_{W''}\subseteq \Sigma $. But by ($\heartsuit$), for any $\beta\in \Delta_{W'}$, we have 
 $-1<B(\alpha,\beta)<1$ and hence $\dep_{\infty}(\beta)=d_{\infty}(s_{\alpha}(\beta))=0$ since $s_{\alpha}(\beta)\in \Delta_{W''}$. Hence $\Delta_{W'}\subseteq \Sigma$ as required to complete the proof of the ``if'' direction.
\end{proof}

\begin{lem} \label{lem:Bip2} Assume that $\Sigma$ is bipodal whenever $(W,S)$ is of rank $\vert S\vert =3$.  Then $\Sigma $ is bipodal for all $(W,S)$.
\end{lem}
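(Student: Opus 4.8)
The plan is to reduce bipodality of an arbitrary $(W,S)$ to the rank $3$ case using the reformulation of Lemma~\ref{lem:Bip1}: by that lemma it suffices to verify condition $(\heartsuit)$ for $(W,S)$. So I would fix a maximal dihedral reflection subgroup $W'$, a small root $\gamma\in\Sigma\cap(\Phi_{W'}^+\setminus\Delta_{W'})$ and a simple root $\alpha\in\Delta\setminus\Delta_{W'}$ with $B(\alpha,\gamma)>0$; writing $\Delta_{W'}=\{\beta_1,\beta_2\}$, the goal is to show $-1<B(\alpha,\beta_i)<1$ for $i=1,2$. The idea is that the whole of $(\heartsuit)$ only involves the three roots $\alpha,\beta_1,\beta_2$, so it should be detectable inside the rank $3$ reflection subgroup they generate.

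First I would check that $\alpha,\beta_1,\beta_2$ are linearly independent. The roots $\beta_1,\beta_2$ span the defining plane $P$ of $W'$, so $\Phi_{W'}=P\cap\Phi$. Since $\alpha\in\Delta$ we have $N(s_\alpha)=\{\alpha\}$, so if $\alpha\in\Phi_{W'}$ then $N(s_\alpha)\cap\Phi_{W'}=\{\alpha\}$ and $\alpha\in\Delta_{W'}$ by the characterization of $\Delta_{W'}$ in \S\ref{sse:RefSub}, contrary to hypothesis; hence $\alpha\notin\Phi_{W'}=P\cap\Phi$, and as $\alpha$ is a root this forces $\alpha\notin P=\Span(\beta_1,\beta_2)$. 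Thus $V':=\Span(\alpha,\beta_1,\beta_2)$ is $3$-dimensional. I would then set $W'':=\langle s_\alpha,s_{\beta_1},s_{\beta_2}\rangle$, a reflection subgroup with canonical simple system $\Delta_{W''}$ and Coxeter system $(W'',S'')$. Since $\Phi_{W''}\subseteq V'$ the span of $\Delta_{W''}$ is at most $3$-dimensional, while $\alpha,\beta_1,\beta_2\in\Phi_{W''}^+\subseteq\cone(\Delta_{W''})$ are independent, so $\Span(\Delta_{W''})=V'$ and $W''$ has rank exactly $3$ (using that a Coxeter group generated by $k$ reflections has rank at most $k$, which follows by realizing $W''$ in its own classical geometric representation, where the simple roots form a basis and every root lies in the span of a generating set of reflections).

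Next I would transport the configuration into $W''$. Because $P\cap\Phi_{W''}=P\cap\Phi=\Phi_{W'}$, the subgroup $W'$ is again a maximal dihedral reflection subgroup, now of $W''$, with unchanged canonical simple system $\Delta_{W'}$ and positive system $\Phi_{W'}^+$ (positivity being induced by $\Phi^+$ throughout). Since $\alpha\in\Delta$ and $\alpha\in\Phi_{W''}$, the equality $N(s_\alpha)\cap\Phi_{W''}=\{\alpha\}$ gives $\alpha\in\Delta_{W''}$, and $\alpha\notin\Delta_{W'}$, so $\alpha\in\Delta_{W''}\setminus\Delta_{W'}$. Finally, by Lemma~\ref{lem:Domin}(1) the dominance order of $W''$ is the restriction of that of $W$, whence $\dom_{W''}(\gamma)=\dom(\gamma)\cap\Phi_{W''}^+=\emptyset$ and $\gamma$ is small in $W''$; thus $\gamma\in\Sigma(W'')\cap(\Phi_{W'}^+\setminus\Delta_{W'})$ with $B(\alpha,\gamma)>0$ unchanged.

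To finish I would invoke the hypothesis: as $(W'',S'')$ has rank $3$, the set $\Sigma(W'')$ is bipodal, so by Lemma~\ref{lem:Bip1} applied to $(W'',S'')$ condition $(\heartsuit)$ holds in $W''$. Applying it to the data $(W',\gamma,\alpha)$ assembled above yields $-1<B(\alpha,\beta_i)<1$ for $i=1,2$, which is precisely $(\heartsuit)$ for $(W,S)$; a final appeal to Lemma~\ref{lem:Bip1} then gives that $\Sigma$ is bipodal. I expect the main obstacle to be the rank statement for $W''$ — certifying that three reflections generate a genuinely rank $3$ (and not larger) reflection subgroup — together with the bookkeeping that smallness, dihedral maximality and the canonical simple systems all restrict correctly from $W$ to $W''$; once $(W'',S'')$ is in place these transfers are routine consequences of Lemma~\ref{lem:Domin} and the description of $\Delta_{W''}$ recalled in \S\ref{sse:RefSub}.
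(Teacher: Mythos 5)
Your proof is correct and follows essentially the same route as the paper: reduce via condition $(\heartsuit)$ of Lemma~\ref{lem:Bip1}, pass to the rank-$3$ reflection subgroup $W''=\langle W', s_\alpha\rangle$, check that the dihedral data, the canonical simple systems and (via Lemma~\ref{lem:Domin}) smallness all restrict correctly, and invoke the rank-$3$ hypothesis. The only difference is cosmetic: where you justify the bound $\operatorname{rank}(W'')\leq 3$ by the standard span argument in the geometric representation, the paper simply cites \cite[Corollary~3.11]{Dy90}.
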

\begin{proof} We regard  the condition ($\heartsuit$) in Lemma~\ref{lem:Bip1}  as a condition on the based root system $(\Phi,\Delta)$:  
assume that ($\heartsuit$) holds for each rank three root subsystem $(\Psi,\Pi)$ of $(\Phi,\Delta)$.  Let $W'$ be any rank $2$ maximal dihedral reflection subgroup  of~$W$, $\gamma\in \Sigma \cap (\Phi_{W'}^{+}\setminus \Delta_{W'})$ and $\alpha\in \Delta\setminus \Delta_{W'}$ with $B(\alpha,\gamma)>0$ and $\beta\in \Delta_{W'}$.  We have to show that $-1<B(\alpha,\beta)<1$. 

Define the reflection subgroup  $G:=\langle W',s_{\alpha}\rangle$ of $W$. Since $\Phi_{W'}\sqcup \{\alpha\}$ spans a subspace of $V$ of dimension three and $\Phi_{W'}\sqcup \{\alpha\}\subseteq \Phi_G$, the rank of $G$ has to be at least $3$. Since $G$  is generated by three reflections, it is of rank at most three by~\cite[Corollary~3.11]{Dy90}. So the rank of $G$ is $3$. 

Set $\Psi:=\Phi_{G}$ and let $\Pi:=\Delta_{G}$, so $(\Psi, \Pi)$ is a rank three based root subsystem of $(\Phi,\Delta)$
(but \emph{not} a standard parabolic root subsystem in general).  Note that $W'$ is a maximal dihedral reflection subgroup of $G$ and that  
 the set $\Psi^{+}_{W'}$ of positive roots of $W'$ in $\Psi$ is   $\Psi^{+}_{W'}=\Phi^{+}_{W'}$ and  the canonical  set of simple roots 
 $\Delta_{W'}$ of  $\Psi^{+}_{W'}$ with respect to $(\Psi,\Pi)$ is $\Pi_{W'}=\Delta_{W'}$. Further, 
 $\gamma\in \Sigma(G)\cap (\Psi_{W'}^{+}\setminus \Pi_{W'})$ by Lemma~\ref{lem:Domin}, and   $\alpha\in \Pi\setminus \Pi_{W'}$ since  any simple root lies in the canonical simple system  
 of any root subsystem containing it.  Hence by  ($\heartsuit$) for $(\Psi,\Pi)$, which holds by assumption since $\Psi$ is of rank three,  it follows  that $-1<B(\alpha,\beta)<1$ for $\beta\in \Pi_{W'}= \Delta_{W'}$.
 \end{proof}

\begin{lem} \label{lem:Bip3} If $(W,S)$ has rank $3$, then  $\Sigma $ is bipodal.
\end{lem}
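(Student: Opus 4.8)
The plan is to verify condition $(\heartsuit)$ of Lemma~\ref{lem:Bip1} directly when $|S|=3$. So fix a maximal dihedral reflection subgroup $W'$ with $\Delta_{W'}=\{\mu,\nu\}$, a small root $\gamma\in\Sigma\cap(\Phi^+_{W'}\setminus\Delta_{W'})$, and a simple root $\alpha\in\Delta\setminus\Delta_{W'}$ with $B(\alpha,\gamma)>0$; I must show $-1<B(\alpha,\beta)<1$ for $\beta\in\{\mu,\nu\}$. First I record two normalizations. Since $\alpha$ is simple and $\alpha\notin\Delta_{W'}$, $\alpha$ cannot lie in $\Phi_{W'}=\Span(\mu,\nu)\cap\Phi$ (otherwise it would be forced into $\Delta_{W'}$, as any simple root lies in the canonical simple system of a subsystem containing it), so $\{\alpha,\mu,\nu\}$ is a basis of $V$; and by Lemma~\ref{lem:Domin} all the relevant data is unchanged if I replace $W$ by the rank-three reflection subgroup $\langle s_\alpha,s_\mu,s_\nu\rangle$, so I may assume $W=\langle s_\alpha,s_\mu,s_\nu\rangle$.

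The first real step is to show that $W'$ must be finite. Indeed, if $W'$ were infinite dihedral, then by Example~\ref{ex:Inf2} the only roots of $\Phi^+_{W'}$ of $\infty$-depth $0$ inside $W'$ are its two simple roots $\mu,\nu$; hence $\gamma\in\Phi^+_{W'}\setminus\Delta_{W'}$ satisfies $\dep_{\infty,W'}(\gamma)\ge 1$, and Lemma~\ref{lem:Domin}(1) gives $\dep_\infty(\gamma)\ge\dep_{\infty,W'}(\gamma)\ge1$, contradicting $\gamma\in\Sigma$. Thus $W'$ is finite and $B(\mu,\nu)=-\cos(\pi/m_{\mu\nu})\in(-1,0]$. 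Recall also that $\gamma\in\Sigma$ together with $B(\alpha,\gamma)>0$ already forces $0<B(\alpha,\gamma)<1$ and $s_\alpha(\gamma)\in\Sigma$ by Proposition~\ref{prop:SmallMin}.

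Now suppose, for contradiction, that $(\heartsuit)$ fails, say $\langle s_\alpha,s_\mu\rangle$ is infinite (the argument for $\nu$ is identical). Since $\alpha$ is simple it lies in the canonical simple system of $\langle s_\alpha,s_\mu\rangle$, so there are two cases. Either $B(\alpha,\mu)\le-1$ and $\{\alpha,\mu\}=\Delta_{\langle s_\alpha,s_\mu\rangle}$; or $B(\alpha,\mu)\ge1$, in which case $\mu$ lies in $\Phi^+_{\langle s_\alpha,s_\mu\rangle}\setminus\Delta_{\langle s_\alpha,s_\mu\rangle}$ strictly on the $\alpha$-side of this infinite dihedral group, whence Example~\ref{ex:Inf2} and Lemma~\ref{lem:Domin}(1) give $\alpha\prec\mu$; in particular $\mu$ is then not small. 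In either case the pair $(\alpha,\mu)$ is ``hyperbolic'': in the projective picture of Remark~\ref{rem:Proj1} the line $(\h\alpha,\h\mu)$ meets $\h Q$, whereas finiteness of $W'$ says $(\h\mu,\h\nu)$ misses $\h Q$ and $B(\alpha,\gamma)<1$ says $(\h\alpha,\h\gamma)$ misses $\h Q$, with $\h\gamma$ lying strictly inside the segment $[\h\mu,\h\nu]$.

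The heart of the proof---and the step I expect to be the main obstacle---is to turn this configuration into a violation of the smallness of $\gamma$, namely to exhibit a positive root dominated by $\gamma$ (so that $\dep_\infty(\gamma)\ge1$). The natural route is the visibility description of dominance in Remark~\ref{rem:Proj3}: using that $(\h\alpha,\h\gamma)$ misses $\h Q$ while $(\h\alpha,\h\mu)$ meets it, a tangent line from $\h\alpha$ to $\h Q$ occurs between $\h\mu$ and $\h\gamma$, and one must show that this forces a normalized root of $\langle s_\alpha,s_\mu\rangle$ into the blind cone $Bl(\gamma)$, i.e. behind $\h\gamma$ relative to $\h Q$; the delicate point is the bookkeeping of exactly which side of $\h\gamma$ these roots fall on. Equivalently, I would run this as an induction on $\dep(\gamma)$: Proposition~\ref{prop:SmallMin} lets me descend $\gamma\mapsto s_\alpha(\gamma)$ inside $\Sigma$ while conjugating $W'$ to $s_\alpha W's_\alpha$ and preserving $|B(\alpha,\cdot)|$ on $\Delta_{W'}$, reducing to a base configuration of small $\dep(\gamma)$ that is excluded directly by Proposition~\ref{prop:NSmall}. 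Once a root dominated by $\gamma$ is produced we contradict $\gamma\in\Sigma$, so $\langle s_\alpha,s_\mu\rangle$ and, symmetrically, $\langle s_\alpha,s_\nu\rangle$ are finite, establishing $(\heartsuit)$ and hence the bipodality of $\Sigma$ in rank three.
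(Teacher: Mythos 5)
Your opening moves are sound: the observation that $\alpha\notin\Phi_{W'}$, the passage to $G=\langle s_\alpha,s_\mu,s_\nu\rangle$, and especially the exclusion of infinite $W'$ via Example~\ref{ex:Inf2} and Lemma~\ref{lem:Domin}(1) are all correct (the last point is essentially the $n=0$ case of what the paper later proves in general as Proposition~\ref{prop:Increase}(1) and Corollary~\ref{cor:Increase}). But the step you yourself flag as ``the heart of the proof'' is a genuine gap, not a deferred computation: you never actually produce a positive root strictly dominated by $\gamma$ from the assumption that $\langle s_\alpha,s_\mu\rangle$ is infinite. Neither of your two suggested routes closes. In the blind-cone route, the roots of $\langle s_\alpha,s_\mu\rangle$ lie on the line $(\h\alpha,\h\mu)$, and nothing in your configuration places any of them in $Bl(\gamma)$: whether they fall behind $\h\gamma$ relative to $\h Q$ depends on the position of the conic $\h Q$ inside the triangle $\conv(\h\Delta)$, which is exactly the ``bookkeeping'' you defer. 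In the inductive route, Proposition~\ref{prop:SmallMin} lets you descend $\gamma\mapsto s_{\alpha'}(\gamma)$ only for a simple root $\alpha'$ with $B(\alpha',\gamma)>0$, and there is no reason $\alpha'=\alpha$; after conjugating, $W'$ becomes $s_{\alpha'}W's_{\alpha'}$ and the external simple root and the hypothesis ``$\langle s_\alpha,s_\mu\rangle$ infinite'' are not carried along in any controlled way, nor do you identify a base configuration that Proposition~\ref{prop:NSmall} excludes. So the proposal is a plan whose decisive step is asserted rather than proved.

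It is worth noting how the paper fills precisely this hole: not by geometry of $\h Q$, but by finite enumeration. After reducing to a connected graph and to $\gamma$ of full support (the non-full-support case drops to a dihedral standard parabolic via Proposition~\ref{prop:NSmallParab}), it invokes Brink's theorem \cite[Lemma~4.1]{Br98} that the support of a small root contains no cycle and no edge labelled $\infty$; discarding the finite and affine graphs (already handled by Example~\ref{ex:Finite} and Theorem~\ref{thm:AffineBip}) leaves only path graphs with labels $m>n\geq 4$, or $m\geq 7$ and $n=3$. For these, the full-support small roots can be listed explicitly (e.g.\ $c_m\alpha_1+\alpha_2+c_n\alpha_3=s_1s_3(\alpha_2)$ in the first family), each admissible $\Delta_{W'}$ is read off from $\gamma=a\mu_1+b\mu_2$ with $a,b\geq 1$, and every element of each $\Delta_{W'}$ is seen to lie in a finite dihedral standard parabolic subsystem, hence is small. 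This classification input is what substitutes for your missing dominance argument; to complete your version you would either need to import it as well, or supply a genuinely new geometric argument pinning down where the roots of the infinite dihedral group $\langle s_\alpha,s_\mu\rangle$ sit relative to $Bl(\gamma)$.
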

\begin{proof}  Assume that $(W,S)$ is of rank $3$. Let $\Gamma$ be the Coxeter graph of $(W,S)$, with set of vertices $\Delta=\{\alpha_1,\alpha_2,\alpha_3\}$. We denote $S=\{s_1,s_2,s_3\}$ with $s_i=s_{\alpha_i}$. The {\em support of a root} $\beta=a_1\alpha_1+a_2\alpha_2+a_3\alpha_3$ is  the  set  $\supp(\beta)=\{\alpha_i\in \Delta\,|\, a_{i}\not = 0\}$. We denote by
 $\Gamma(\beta)$ the full subgraph of $\Gamma$ on the vertex set $\supp(\beta)$.
 \smallskip

 The proof proceeds essentially by systematically listing all possible $(\Gamma,\gamma,W')$ satisfying the  conditions
 $\gamma\in \Sigma$ and  $W'$ is a maximal dihedral reflection subgroup such that $\gamma\in  \Phi^{+}_{W'}\setminus \Delta_{W'}$ 
  and checking that $\Delta_{W'}\subseteq \Sigma$ in each case.
\smallskip

If $\Gamma$ is not connected, i.e.,  $\Phi$ is reducible, then   $\Phi$ is the disjoint union of a simple root and of the root system of a dihedral standard parabolic subgroup, for which the set of small roots is bipodal. It is then easy to  see that $\Sigma$ is bipodal.  

\smallskip
So from now on, we suppose that  $\Gamma$ is a connected graph.

\smallskip

Assume that $\gamma$ is not of full support, i.e.,  $\supp(\gamma)=I\subsetneq \Delta$.  So $\gamma\in \Phi_I^{+}$ and therefore $\gamma$ is in a facet of $\cone(\Delta)$.  Since $\gamma\in  \Phi^{+}_{W'}\setminus \Delta_{W'}$, then $\gamma$ is in the relative interior of $\cone(\Delta_{W'})$. This forces  $\Delta_{W'}\subseteq \Phi_{W'}^{+}\subseteq \Phi_I^{+}$, since $\Delta_{W'} \subseteq \cone(\Delta_{W'})\subseteq \cone(\Delta)$. So we are in the case of a dihedral standard parabolic subgroup. But in this case we know that $\Delta_{W'}\subseteq \Sigma(W_I)$. So $\Delta_{W'}\subseteq \Sigma$ by Proposition~\ref{prop:NSmallParab}.

\smallskip
So from now on $\gamma$ is assumed to be of full support, i.e., $\Gamma(\gamma)=\Gamma$

\smallskip

By Brink's characterization of the support of small roots~\cite[Lemma~4.1]{Br98}, the support of any small  root contains no cycle and no edge with infinite label.  We may therefore  assume that the Coxeter graph is  of the form
\begin{center}
 \begin{tikzpicture}[sommet/.style={inner sep=2pt,circle,draw=blue!75!black,fill=blue!40,thick},]
\coordinate (ancre) at (0,0);
\node[sommet,label=above:$\alpha_1$] (alpha) at (ancre) {};
\node[sommet,label=above:$\alpha_2$] (beta) at ($(ancre)+(2,0)$) {} edge[thick] node[auto,swap] {$m$} (alpha);
\node[sommet,label=above:$\alpha_3$] (gamma) at ($(ancre)+(4,0)$) {}  edge[thick] node[auto,swap] {$n$} (beta);
\end{tikzpicture}
\end{center}
where $m,n\in \mathbb N$ and $m\geq n\geq 3$. 

\smallskip
We already know $\Sigma $ is bipodal if $W$ is finite (by Example~\ref{ex:Finite}) or affine (by Theorem~\ref{thm:AffineBip}). So the classification of affine and finite Coxeter groups (see for instance \cite{Hu90}) forces  $m> n\geq 4$, or $m\geq 7$ and $n=3$.
\smallskip

 It is easy now to list for each of these $\Gamma$ the small  roots $\gamma$ of full support. This may be done 
 either by simple direct calculation using Proposition~\ref{prop:NSmall} or by using~\cite{Br98}, where the small  roots are recursively determined for all finite rank Coxeter systems; see especially \cite[Propositions 4.7 and 6.7]{Br98}.  For each $\gamma$, all possible maximal dihedral reflection subgroups $W'$ are obtained by specifying the canonical simple system $\Delta_{W}=\{\mu_1,\mu_2\}$, which is obtained by inspection since $\gamma=a\mu_1+b\mu_2$ with $a,b\geq 1$. Observe that $\supp(\mu_i)\not = \{\alpha_1,\alpha_3\}$ since there is no root with  full  support in the subgroup generated by $s_1,s_3$.

 To complete the proof, we list below all the possible $\gamma$ and $W'$ (by specifying $\Delta_{W'}$), writing $c_{p}:=2\cos\frac{\pi}{p}$ for all $p\in \mathbb N_{\geq 2}$. In each case, each element of $\Delta_{W'}$ lies in a dihedral finite standard parabolic root subsystem of $\Phi$ and so is small  as required.
\begin{itemize}
\item If $m> n\geq 4$, the only possible $\gamma$ is $c_{m}\alpha_{1}+\alpha_{2}+c_{n}\alpha_{3}=s_{1}s_{3}(\alpha_{2})$ and then $\Delta_{W'}$ is either $\{\alpha_{1},s_{3}(\alpha_{2})=\alpha_2+c_n\alpha_3\}$ or  $\{\alpha_{3}, s_{1}(\alpha_{2})=\alpha_2+c_m\alpha_1\}$.
\item If $m\geq 7$ and $n=3$, there are three possible choices for $\gamma$. 
\begin{enumerate}[{\em (i)}]
\item  $\gamma=\alpha_{1}+c_{m}\alpha_{2}+c_{m}\alpha_{3}=s_{3}s_{2}(\alpha_{1})$; in this case $\Delta_{W'}$ is:
$$
\textrm{either $\{\alpha_{3},s_{2}(\alpha_{1})=\alpha_{1}+c_{m}\alpha_{2}\}$ or $\{\alpha_{1},s_{3}(\alpha_{2})  =\alpha_2+\alpha_3\}$.}
$$

\item  $\gamma=c_{m}\alpha_{1}+\alpha_{2}+ \alpha_{3}=s_{1}s_{2}(\alpha_{3})$; in this case $\Delta_{W'}$ is:
$$
\textrm{either $\{\alpha_{3},s_{1}(\alpha_{2})=\alpha_2+c_m\alpha_1\}$ or $\{\alpha_{1},s_{2}(\alpha_{3}) =\alpha_{2}+\alpha_{3}\}$.}
$$

\item  $\gamma=(c_{m}^{2}-1)\alpha_{1}+c_{m}\alpha_{2}+ c_{m}\alpha_{3}=s_{1}s_{3}s_{2}(\alpha_{1})$;  in this case $\Delta_{W'}$ is:
$$
\textrm{either $\{\alpha_{3},s_{1}s_{2}(\alpha_{1})=(c_m^2-1)\alpha_1+c_m\alpha_2\}$ or $\{\alpha_{1},s_{3}(\alpha_{2})=\alpha_2+\alpha_3\}$.}
$$
\end{enumerate}
\end{itemize}
\end{proof}

\section{Weak and Bruhat order on root systems and bipodality}\label{se:BruhatRoot}

Let $(\Phi,\Delta)$ be a based root system in the quadratic space $(V,B)$ with Coxeter system $(W,S)$. In the preceding sections we conjectured that:

\begin{itemize}
\item The set $L_n(W)$ of $n$-low elements in $W$ is a finite Garside shadow in $(W,S)$ for all $n\in \mathbb N$ (Conjecture~\ref{conj:1}).  

\item The set $\Sigma_n$ of $n$-small roots is bipodal for all $n\in \mathbb N$ (Conjecture~\ref{conj:3}).
\end{itemize}
In view of Proposition~\ref{prop:BipNSmall},  we know that Conjecture~\ref{conj:1} is implied by Conjecture~\ref{conj:3}. We know therefore that Conjectures~\ref{conj:1}~and~\ref{conj:3} are true in the following cases:
\begin{itemize}
\item if $n=0$ by Theorem~\ref{thm:Main1}~and Theorem~\ref{thm:Main3}; 
\item if $W$ is finite, dihedral or an affine Weyl group by Theorem~\ref{thm:AffineBip} and Example~\ref{ex:Finite}~\and~\ref{ex:Inf2};
\item if the Coxeter graph of $(W,S)$ has labels $3$ or $\infty$, by Theorem~\ref{thm:W3Infty}  in~\S\ref{se:BruhatRoot}. 
\end{itemize}

The aim of this section is to show Conjecture~\ref{conj:1} and Conjecture~\ref{conj:3} hold also in the following cases. 

\begin{thm}\label{thm:W3Infty}  Suppose that $(W,S)$ is a Coxeter system such that its Coxeter graph has all  edges labelled by $3$ or $\infty$, i.e.,  all entries of the Coxeter matrix of $(W,S)$ lie in $\{1,2,3,\infty\}$. Then for each $n\in \mathbb N$: 
\begin{enumerate}
\item $\Sigma_n(W)$ is a bipodal subset of $\Phi^{+}$;
\item $L_n(W)$ is a finite Garside shadow in $(W,S)$.  
\end{enumerate}
\end{thm}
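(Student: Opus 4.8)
The plan is to deduce statement~(2) from statement~(1): once $\Sigma_n$ is known to be bipodal, Proposition~\ref{prop:BipNSmall} immediately gives that $L_n(W)$ is a finite Garside shadow. So everything reduces to proving that $\Sigma_n$ is bipodal for every $n$, under the hypothesis that all Coxeter matrix entries lie in $\{1,2,3,\infty\}$. The first thing I would record is that proving bipodality of $\Sigma_n$ for \emph{every} $n$ amounts to a single monotonicity statement: for each maximal dihedral reflection subgroup $W'$ with $\Delta_{W'}=\{\alpha,\beta\}$ and each $\gamma\in\Phi_{W'}^{+}\setminus\Delta_{W'}$, one has $\dep_\infty(\alpha)\le\dep_\infty(\gamma)$ and $\dep_\infty(\beta)\le\dep_\infty(\gamma)$. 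Indeed, if this failed for some $W'$, $\gamma$ and, say, $\alpha$, then with $n=\dep_\infty(\gamma)$ the root $\gamma$ would lie in $\Sigma_n\cap(\Phi_{W'}^{+}\setminus\Delta_{W'})$ while $\alpha\notin\Sigma_n$, contradicting bipodality; conversely these inequalities yield bipodality for all $n$ at once. Thus the target is a \emph{monotonicity of dominance depth inside maximal dihedral reflection subgroups}.

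To attack this I would build the two orders on the full root system announced in the introduction: extend the root poset on $\Phi^{+}$ to a weak order on all of $\Phi$, and define a Bruhat order on $\Phi$ whose covers have the form $\delta\lhd s_\mu(\delta)$ for a reflection $s_\mu$ increasing the relevant depth statistic. The technical heart (the proposed Proposition~\ref{prop:Increase} and Corollary~\ref{cor:Increase}) is that an entire family of length functions on positive roots, containing both the usual depth $\dep$ and the dominance depth $\dep_\infty$, is \emph{monotone non-decreasing along the Bruhat order on roots}. I would prove this one cover at a time: for a given cover, pass to the maximal dihedral reflection subgroup generated by the two reflections involved, use functoriality of the Bruhat graph (Proposition~\ref{prop:CosetRep}(2)) to reduce to a rank-two computation, and control the variation of $\dep_\infty$ through the recurrence of Proposition~\ref{prop:NSmall} (and of Proposition~\ref{prop:Dep} for $\dep$). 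This settles the \emph{infinite} maximal dihedral reflection subgroups in the strong form: if $W'$ is infinite then $B(\alpha,\beta)\le -1$, so $s_\alpha(\beta)$ has strictly larger depth than $\beta$ and is a Bruhat cover of $\beta$; consequently every non-simple positive root of $W'$ lies Bruhat-above \emph{both} simple roots, and monotonicity forces the two displayed inequalities. Hence bipodality for infinite $W'$ holds in every Coxeter system, with no restriction on the labels.

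The genuinely delicate case is that of the \emph{finite} maximal dihedral reflection subgroups, and this is where the hypothesis on the labels becomes essential. For finite $W'$ the clean picture breaks: a non-simple positive root of $W'$ need not sit Bruhat-above both simple roots, so Bruhat monotonicity no longer forces $\dep_\infty(\alpha)\le\dep_\infty(\gamma)$, and indeed the balanced property can fail for finite dihedral reflection subgroups in general. The obstruction is that $\dep_\infty$ is an \emph{ambient} statistic which does not restrict to a reflection subgroup: by Lemma~\ref{lem:Domin}(1) the dominance order restricts, so $\dep_{\infty,W'}(\gamma)=|\dom(\gamma)\cap\Phi_{W'}|\le\dep_\infty(\gamma)$, but inside a finite $W'$ every $\dep_{\infty,W'}$ vanishes, so the inequality one must prove is entirely about how many roots \emph{outside} $W'$ are dominated by a simple versus a non-simple root of $W'$. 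The plan here is to localize, exactly as in the $n=0$ argument: the condition for a fixed $W'$, a simple root $\alpha\in\Delta\setminus\Delta_{W'}$ with $B(\alpha,\gamma)>0$, and $\gamma\in\Sigma_n\cap(\Phi_{W'}^{+}\setminus\Delta_{W'})$ only involves the rank-three reflection subgroup $G=\langle W',s_\alpha\rangle$, so by the rank-reduction mechanism of Lemma~\ref{lem:Bip2} it suffices to verify the criterion of Corollary~\ref{cor:Increase} inside rank-three subsystems whose labels all lie in $\{1,2,3,\infty\}$, and then to run an explicit analysis of the (few) finite dihedral reflection subgroups occurring there in the spirit of Lemma~\ref{lem:Bip3}.

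The main obstacle, as flagged, is precisely this finite dihedral case: since the monotonicity that makes the infinite case automatic genuinely breaks down for finite dihedral reflection subgroups, a case analysis cannot be avoided, and the whole role of restricting to labels in $\{3,\infty\}$ is to make the finite rank-three configurations rigid enough that the dominance-depth inequality can be checked directly. The hard part will be to show that no finite dihedral reflection subgroup arising in a $\{3,\infty\}$-labelled rank-three subsystem produces a counterexample, i.e.\ that in this restricted setting the ambient dominance depth of a simple root of $W'$ never exceeds that of a non-simple root of $W'$, despite the failure of the corresponding general statement.
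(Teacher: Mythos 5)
Your overall architecture agrees with the paper up to the crux: the reduction of (2) to (1) via Proposition~\ref{prop:BipNSmall}, the reformulation of ``$\Sigma_n$ bipodal for all $n$'' as a monotonicity of $\dep_\infty$ inside maximal dihedral reflection subgroups, and the treatment of \emph{infinite} dihedral subgroups by monotonicity of a family of length functions (containing $\dep$ and $\dep_\infty$) along a Bruhat order on roots are exactly the paper's Proposition~\ref{prop:Increase}(1) and Lemma~\ref{1.9}, and you are also right that the general finite-dihedral monotonicity fails (cf.\ Remark~\ref{rem:balance}(b)). But your plan for the finite dihedral case --- which you correctly identify as the only place the label hypothesis enters --- contains a genuine gap, and it is precisely the step you leave as ``the hard part.''

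You propose to localize to the rank-three reflection subgroup $G=\langle W',s_\alpha\rangle$ ``by the rank-reduction mechanism of Lemma~\ref{lem:Bip2}'' and then run a case analysis ``in the spirit of Lemma~\ref{lem:Bip3}.'' Neither step transfers from $n=0$ to general $n$. The rank-three reduction works at $n=0$ only because Lemma~\ref{lem:Bip1} first converts bipodality of $\Sigma$ into the \emph{intrinsic} inner-product condition $(\heartsuit)$, and that equivalence is proved using Proposition~\ref{prop:SmallMin}, a characterization special to small roots. For $n\geq 1$ the statistic $\dep_\infty$ is ambient: by Lemma~\ref{lem:Domin}(1) one only gets $\dep_{\infty,G}(\beta)\leq \dep_\infty(\beta)$ for a non-parabolic reflection subgroup $G$, so establishing $\Delta_{W'}\subseteq \Sigma_n(G)$ inside the rank-three subsystem gives no upper bound on $\dep_\infty$ computed in $W$ --- which is exactly the quantity you must bound. (Indeed, as you yourself note, inside a finite $W'$ all the relative depths vanish, so the inequality is entirely about roots outside the subsystem; the same objection applies to roots of $\Phi$ outside $\Phi_G$.) Likewise, Lemma~\ref{lem:Bip3} rests on Brink's classification of small roots of full support; there is no analogous finite list uniform in $n$, so ``checking that no finite dihedral subgroup produces a counterexample'' is not a finite verification but the theorem itself. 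The paper closes the finite case by a short argument your proposal misses entirely: by Tits' theorem, a finite reflection subgroup whose roots span a plane is conjugate to a reflection subgroup of a \emph{finite rank-two standard parabolic}, and under the hypothesis that all Coxeter matrix entries lie in $\{1,2,3,\infty\}$ such a parabolic has order $4$ or $6$; the existence of $\gamma\in\Phi_{W'}^{+}\setminus\Delta_{W'}$ rules out order $4$, forcing $B(\alpha,\beta)=-\tfrac{1}{2}$ and $\gamma=\alpha+\beta=s_\alpha(\beta)=s_\beta(\alpha)$, whereupon Proposition~\ref{prop:Increase}(2) --- valid with no restriction on labels --- gives $\dep_\infty(\alpha),\dep_\infty(\beta)\leq\dep_\infty(\gamma)$ directly, with no case analysis at all.
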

In order to show bipodality, we  will show that $\Sigma_n$ enjoys a stronger property that we define now.

\begin{defi} A subset $A\subseteq\Phi^+$ is {\em balanced} if  for all $\gamma \in A$ and all maximal dihedral reflection subgroups $W'$ of $W$ with $\gamma\in \Phi_{W'}$ the following holds: if $\beta\in \Phi_{W'}^+$ such that $\ell_{W'}(s_\beta)< \ell_{W'}(s_\gamma)$ then $\beta\in A$.
\end{defi}

This definition goes back to Edgar's thesis~\cite{Ed09} (in  the case of the  standard length function). Since the canonical simple reflections $s_\alpha,s_\beta$ of $W'$ are of length $\ell_{W'}(s_\alpha)=\ell_{W'}(s_\beta)=1$, it is easy to see that a balanced set of root is necessarily a bipodal set of roots. 

 \begin{remark}\label{rem:balance}
\begin{enumerate}[(a)]
\item In the language of normalized roots,   as in~Remark~\ref{rem:Proj1},  a maximal dihedral subgroup $W'$ with canonical simple system $\Delta_{W'}=\{\alpha,\beta\}$ corresponds to  a ``maximal'' segment $[\h\alpha,\h\beta]\cap \h\Phi$; maximality holds  in the sense that  there are no other roots on the line $(\h\alpha,\h\beta)$ but the ones in $[\h\alpha,\h\beta]\cap \h\Phi =\h\Phi_{W'}$.  If $W'$ is finite (infinite), one may index $\Phi_{W'}^{+}$ as
$\{ \alpha_{1}=\beta_{n},\alpha_{2}=\beta_{n-1}, \ldots, \alpha_{n}=\beta_{1}\}$ (resp.,
$\{\alpha_{1},\alpha_{2},\ldots, \alpha_{n},\ldots,  \beta_{n},\ldots, \beta_{2},\beta_{1}\}$; see Figure~\ref{fig:Dihedral}) in the order corresponding to that in which  a  point moving along the line segment  $[\h \alpha,\h\beta]$ from $\h\alpha$ to $\h\beta$ passes through the associated normalized roots. One has $l_{W'}(\alpha_{i})=l_{W'}(\beta_{i})=2i-1$ provided $1\leq 2i-1\leq \vert \Phi_{W'}^{+}\vert$.  
Set $a_{i}:=\dep_{\infty}(\alpha_{i})$ and $b_{i}=\dep_{\infty} (\beta_{i})$. 
Then 
$\Sigma_{n}$ is balanced if and only if  (for each  $W'$), $1\leq 2i-1\leq \vert \Phi_{W'}^{+}\vert$ and $\min(a_{i},b_{i})\leq n$ imply that $a_{j},b_{j}\leq n$ for all $j$ with $1\leq j<i$.

\item It is not true that a bipodal set of roots is balanced. Assume $(W,S)$ to be of type $\tilde G_2$ as in Figure~\ref{fig:AffineG2}, then $\Sigma$ is bipodal but not balanced. For the maximal (finite) segment $[\h\gamma,s_\beta\cdot \h\alpha]\cap \Phi$ has $\infty$-depth values $(0,0,1,0,1,0)$, where, in the notation of (a), $b_{3}=0$ but $b_{2}=1\not \leq 0$.
 Hence the conjecture, mentioned in \cite{Ed09}, that $\Sigma_{n}$ is always balanced is false (even for $n=0$).
\end{enumerate}
\end{remark}

Theorem~\ref{thm:W3Infty} is  a consequence of the following proposition, as we show now.

 \begin{prop} \label{prop:Increase} Let $W'$ be a dihedral reflection subgroup of $W$, and $\gamma,\delta\in \Phi^{+}_{W'}$.\begin{enumerate}
\item If $W'$ is infinite and  $\ell_{W'}(s_{\gamma})<\ell_{W'}(s_{\delta})$, then $\dep_{\infty}(\gamma)<\dep_{\infty}(\delta)$.
\item If $W'$ is finite and there exists $x\in W'$ such that $\delta=x(\gamma)$ and $\ell(s_{\delta})=\ell(s_{\gamma})+2\ell_{W'}(x)$, then $\dep_{\infty}(\gamma)\leq \dep_{\infty}(\delta)$.\end{enumerate}
\end{prop}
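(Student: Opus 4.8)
The plan is to reduce both statements to a single \emph{local} monotonicity property of the dominance depth under length-increasing simple reflections, and then to produce, from the hypotheses, a path of such reflections joining $\gamma$ to $\delta$. First I would record the reflection--length dictionary $\ell(s_\rho)=2\dep(\rho)-1$ for $\rho\in\Phi^+$, so that for $s\in S$ and $\gamma\in\Phi^+\setminus\{\alpha_s\}$ one has $\ell(s_{s(\gamma)})>\ell(s_\gamma)$ if and only if $\dep(s(\gamma))=\dep(\gamma)+1$, if and only if $B(\alpha_s,\gamma)<0$ (Proposition~\ref{prop:Dep}). Comparing this with the recurrence for the dominance depth (Proposition~\ref{prop:NSmall}) gives the key local lemma: if $B(\alpha_s,\gamma)<0$ then $\dep_\infty(s(\gamma))\geq\dep_\infty(\gamma)$, with equality when $B(\alpha_s,\gamma)\in\,]-1,0[$ and $\dep_\infty(s(\gamma))=\dep_\infty(\gamma)+1$ exactly when $B(\alpha_s,\gamma)\leq -1$, i.e. exactly when the rank two subgroup $\langle s,s_\gamma\rangle$ is infinite (Example~\ref{ex:InfDiRef} and Proposition~\ref{prop:FiniteRefl}). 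In other words $\dep_\infty$ is non-decreasing along every length-increasing simple reflection step $\gamma\mapsto s(\gamma)$, and strictly increasing precisely at the steps whose ambient rank two subgroup is infinite. This is the monotonicity of $\dep_\infty$ in the Bruhat order on roots, and the same computation applies verbatim to $\dep$, which is why the argument is phrased for the common family of length functions generalizing both.

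For part (2) I would argue that the length-additivity hypothesis is exactly what orients the conjugation $\delta=x(\gamma)$ into an increasing path. Writing $x=r_1\cdots r_k$ as a reduced word in the canonical generators of $W'$ (so $k=\ell_{W'}(x)$) and setting $\gamma_j=(r_{k-j+1}\cdots r_k)(\gamma)$, the equality $\ell(s_\delta)=\ell(s_\gamma)+2\ell_{W'}(x)$ translates, via $\ell(s_\rho)=2\dep(\rho)-1$, into $\dep(\delta)=\dep(\gamma)+k$; combined with functoriality of the Bruhat graph (Proposition~\ref{prop:CosetRep}) in the coset decomposition for $W'$, this forces each intermediate step $\gamma_{j-1}\to\gamma_j$ to raise $\dep$ by exactly one, so that the chain becomes a path of covers in the Bruhat order on roots from $\gamma$ up to $\delta$. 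Applying the non-strict half of the local lemma along this path yields $\dep_\infty(\gamma)\leq\dep_\infty(\delta)$.

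For part (1) the same mechanism gives more, since every rank two subgroup met along the path is infinite. On each of the two sides of the infinite dihedral $W'$ the consecutive positive roots are linked by dominance (Lemma~\ref{lem:Domin} together with the explicit infinite dihedral description), whence $\dom(\gamma)\cup\{\gamma\}\subseteq\dom(\delta)$ and $\dep_\infty$ strictly increases with $\ell_{W'}$; to compare roots on opposite sides I would route $\gamma$ and $\delta$ through an $\ell_{W'}$-increasing path in the Bruhat order on roots, where the infiniteness of $W'$ guarantees at least one strict (infinite rank two) step. Hence $\ell_{W'}(s_\gamma)<\ell_{W'}(s_\delta)$ forces $\dep_\infty(\gamma)<\dep_\infty(\delta)$.

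The hard part will be the bridge between the intrinsic data of $W'$ and the ambient simple reflections: the canonical generators of $W'$ are in general \emph{non-simple} reflections of $W$, so conjugating by them can change the ambient length by more than two, and the local lemma, which needs genuine elements of $S$, does not apply to them directly. This is precisely why the length-additive hypothesis in (2) is indispensable --- Remark~\ref{rem:balance}(b) exhibits, already for $\widetilde{G_2}$, a finite $W'$ along whose dihedral segment $\dep_\infty$ is genuinely non-monotone, so without orienting the path one cannot conclude --- and it is where the Bruhat order on roots does the real work, allowing one to replace each non-simple conjugation by a geodesic of simple reflection steps of matching total length and then invoke the monotonicity above.
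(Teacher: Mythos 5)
You have correctly isolated the crux --- your local lemma (from Proposition~\ref{prop:NSmall}) controls $\dep_\infty$ only under reflections in \emph{simple} roots, whereas both parts of the statement force you to reflect in canonical roots of $W'$, which are non-simple in $W$ in general --- but the bridge you propose to cross this gap does not work. You suggest replacing a step $\gamma\mapsto s_\alpha(\gamma)$ with $\alpha\in\Phi^{+}$ non-simple by ``a geodesic of simple reflection steps of matching total length.'' By Proposition~\ref{1.3f}, a chain of simple-reflection covers from $\gamma$ to $\delta=s_\alpha(\gamma)$ exists if and only if some $w\in W$ satisfies $w(\gamma)=\delta$ and $d(\delta)=d(\gamma)+2\ell(w)$; when the step is a Bruhat cover on roots ($d(\delta)=d(\gamma)+2$) this would force $\delta=s(\gamma)$ for some $s\in S$, which is generally false --- exactly as the Bruhat order on $W$ is not refined by the weak order. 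Nor can you evade this by taking a longer simple path: matching total length is essential, since along any longer path some simple steps must \emph{decrease} $d$, and there your local lemma yields inequalities pointing the wrong way. A secondary gap of the same nature occurs in your part (2): the claim that $\ell(s_\delta)=\ell(s_\gamma)+2\ell_{W'}(x)$ ``forces each intermediate step to raise $\dep$ by exactly one'' is unjustified, because a single step by a canonical generator of $W'$ (a non-simple reflection of $W$) can change the ambient reflection length by far more than $2$, so the total could a priori be realized by a mixture of large increases and decreases.

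The paper closes the gap by a genuinely different mechanism (Lemma~\ref{1.9}): instead of factoring the reflection $s_\alpha$ into simple steps, it \emph{transports the reflecting root}. Given $\alpha\in\Phi^{+}$ and $\beta$ with $B(\alpha,\beta)\leq 0$, one chooses a morphism $(\alpha,w,\alpha')$ in the root category with $\alpha'\in\Delta$ (this always exists, since every positive root lies above a simple root in the weak order on $\Phi$), sets $\beta'=w^{-1}(\beta)$, and proves by induction along simple steps of $w$ that the defect is monotone: $d_{X}(s_{\alpha}(\beta))-d_{X}(\beta)\geq d_{X}(s_{\alpha'}(\beta'))-d_{X}(\beta')$, via the elementary inequality \eqref{1.9.1}. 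The base case ($\alpha'$ simple) is precisely your local lemma, and strictness for $X=[1,\infty[$ comes out when $B(\alpha,\beta)\leq -1$. With this in hand the paper's endgame is close to your skeleton: for (1), pick $\alpha\in\Delta_{W'}$ with $B(\alpha,\delta)>0$; if $B(\alpha,\gamma)>0$ then $\gamma\prec\delta$ in dominance inside the infinite dihedral, so $\dom(\delta)\supseteq\dom(\gamma)\sqcup\{\gamma\}$ (your same-side argument), while if $B(\alpha,\gamma)\leq -1$ a single strict application of Lemma~\ref{1.9}(2) crosses to the other side; for (2), induction on $\ell_{W'}(x)$ reduces to one canonical-generator step, where the length hypothesis forces $B(\gamma,\alpha)<0$ and Lemma~\ref{1.9}(2) concludes --- no unit-increment claim is ever needed. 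So your outline identifies the right obstacles and the right case analysis, but the one load-bearing ingredient --- monotonicity of $d_{[1,\infty[}$ under Bruhat-increasing reflections in \emph{arbitrary} positive roots --- is missing, and your proposed substitute for it fails.
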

\begin{remark}\label{rem:Increase}  In the notation of Remark \ref{rem:balance}, (1) says that
(for infinite $W'$), $a_{j},b_{j}<a_{i},b_{i}$ if $1\leq j<i$, while (2) says that (for finite $W$),
$a_{i-1}<b_{i}$ and $b_{i-1}<a_{i}$ if $3\leq2i-1\leq \vert \Phi_{W'}^{+}\vert$. 
\end{remark}

 The proof of the proposition involves a  study of an analogue for $\Phi$ of the weak  order of $W$, and is postponed to the end of this section. 
 For now,  assuming the above proposition, let us give the proof of the theorem.

\begin{proof}[Proof of Theorem~\ref{thm:W3Infty}] We shall show that $\Sigma_n$ is balanced, and hence it is bipodal. We begin the argument assuming just that $(W,S)$ is a finite rank Coxeter system.  Let $\gamma\in \Sigma_n$ and $W'$ be a (maximal) dihedral reflection subgroup of $W$ such that $\gamma\in \Phi_{W'}\setminus \Delta_{W'}$.  Write $\Delta_{W'}=\{\alpha,\beta\}$. We wish to show that $\alpha,\beta\in \Sigma_n$.

In any case, $\ell_{W'}(s_{\alpha}),\ell_{W'}(s_{\beta})<\ell_{W'}(s_{\gamma})$. If $W'$ is infinite, it follows from Proposition~\ref{prop:Increase}(1) that $\dep_{\infty}(\alpha), \dep_{\infty}(\beta)<\dep_{\infty}(\gamma)\leq n$ and $\alpha,\beta\in \Sigma_n$ as required.
 Otherwise, $W'$ is finite.  It follows from Proposition~\ref{prop:Increase}(2) and Remark~\ref{rem:Increase} that $\dep_\infty(s_{\beta}(\alpha))\geq \dep_\infty(s_{\alpha})$, 
 $\dep_\infty(s_{\alpha}(\beta))\geq \dep_\infty(s_{\beta})$ and either 
 $\dep_\infty(s_{\gamma})\geq \dep_\infty(s_{\beta}(\alpha))$ or $\dep_\infty(s_{\gamma})\geq\dep_\infty(s_{\alpha}(\beta))$. Hence it would suffice to show that $\dep_\infty(s_{\beta}(\alpha))\geq \dep_\infty(s_{\beta})$ and $\dep_\infty(s_{\alpha}(\beta))\geq \dep_\infty(s_{\alpha})$. We do not know if this holds in general. However,  we now make the special assumptions on the Coxeter matrix  in Theorem~\ref{thm:W3Infty}  and   show instead that    one has $s_{\gamma}=s_{\alpha}(\beta)=s_{\beta}(\alpha)$.

 An argument due to Tits (see \cite[Ch IV, \S4, Ex 4(d)]{Bo68}, \cite[Proposition 1.3]{BrHo93} or \cite[Theorem 4.5.3]{BjBr05}) shows that a finite subgroup $H$ of $W$  must be conjugate to a reflection subgroup of a finite standard parabolic subgroup of $W$. A minor elaboration of Tits' argument shows that if $H$ is a reflection subgroup, and its roots span a subspace of dimension $r$, then the standard parabolic subgroup may be taken to be of rank $r$. This also follows from the well known  fact that, in a finite Coxeter group,   the parabolic closure $H''$  of a rank $r$ reflection subgroup $H'$ (i.e. the inclusion-minimal parabolic subgroup $H''$ containing $H'$) has as its  root system the set of all roots in the linear span of the root system of $H'$. 

  Applying the previous paragraph with $H=W'$ shows that  $W'$ is conjugate to a subgroup of a finite dihedral standard parabolic subgroup of $W$. By assumption, this subgroup must be of order $4$ or $6$ with two or three positive roots respectively. In particular, either $B(\alpha,\beta)=0$ (which is impossible since then $\Phi_{W'}\setminus\Delta_{W'}=\emptyset$ contradicting $\gamma\in\Phi_{W'}\setminus\Delta_{W'}$) or $B(\alpha,\beta)=-\frac{1}{2}$ and $\gamma=\alpha+\beta=s_{\alpha}(\beta)=s_{\beta}(\alpha)$ as required.
\end{proof} 

Proposition~\ref{prop:Increase} has also the following noteworthy consequence, which was observed in the proof above.

\begin{cor}\label{cor:Increase} Let $n\in \mathbb N$.  The set $\Sigma_n$ of $n$-small  roots  is bipodal  if and only for any pair of  roots $\alpha,\beta\in \Phi^{+}$ such that $W':=\langle s_{\alpha},s_{\beta}\rangle$ is a finite  maximal dihedral reflection subgroup with canonical simple system $\Delta_{W'}=\{\alpha,\beta\}$, $B(\alpha,\beta)\not = 0$, and  $s_{\alpha}(\beta)\in\Sigma_n$, one has $\alpha\in \Sigma_n$. 
\end{cor}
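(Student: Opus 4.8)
The plan is to prove both implications by reducing everything to the behaviour of $\dep_\infty$ on the positive roots of a single maximal dihedral reflection subgroup, exactly in the spirit of the proof of Theorem~\ref{thm:W3Infty}. The ingredients I would isolate first are the three monotonicity facts valid for \emph{every} finite maximal dihedral reflection subgroup $W'$ with $\Delta_{W'}=\{\alpha,\beta\}$ and every $\gamma\in\Phi^+_{W'}\setminus\Delta_{W'}$, all consequences of Proposition~\ref{prop:Increase}(2) together with Remark~\ref{rem:Increase}: (I) $\dep_\infty(\alpha)\leq\dep_\infty(s_\beta(\alpha))$; (II) $\dep_\infty(\beta)\leq\dep_\infty(s_\alpha(\beta))$; and (III) $\dep_\infty(\gamma)\geq\dep_\infty(s_\beta(\alpha))$ or $\dep_\infty(\gamma)\geq\dep_\infty(s_\alpha(\beta))$. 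I would also record that the stated condition is symmetric in $\alpha$ and $\beta$: applying it to the pair $(\beta,\alpha)$ shows that $s_\beta(\alpha)\in\Sigma_n$ forces $\beta\in\Sigma_n$ as well.

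For the forward implication I would assume $\Sigma_n$ bipodal and take a finite maximal dihedral $W'$ with $\Delta_{W'}=\{\alpha,\beta\}$, $B(\alpha,\beta)\neq0$ and $s_\alpha(\beta)\in\Sigma_n$. The point is simply that $B(\alpha,\beta)\neq0$ forces $m_{\alpha\beta}\geq3$, so that $s_\alpha(\beta)=\beta-2B(\alpha,\beta)\alpha$ is a positive root of $W'$ distinct from both $\alpha$ and $\beta$; thus $\Sigma_n\cap(\Phi^+_{W'}\setminus\Delta_{W'})\ni s_\alpha(\beta)$ is nonempty, and bipodality yields $\Delta_{W'}\subseteq\Sigma_n$, in particular $\alpha\in\Sigma_n$.

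The reverse implication is the substantive one. Assuming the condition, I would verify bipodality by fixing a maximal dihedral $W'$ and a root $\gamma\in\Sigma_n\cap(\Phi^+_{W'}\setminus\Delta_{W'})$, and showing $\alpha,\beta\in\Sigma_n$ where $\Delta_{W'}=\{\alpha,\beta\}$. If $W'$ is infinite, Proposition~\ref{prop:Increase}(1) immediately gives $\dep_\infty(\alpha),\dep_\infty(\beta)<\dep_\infty(\gamma)\leq n$, so this case needs nothing beyond part (1). If $W'$ is finite then $B(\alpha,\beta)\neq0$ (again because $\gamma$ is not simple), and I would split according to (III). In the branch $\dep_\infty(\gamma)\geq\dep_\infty(s_\alpha(\beta))$ one gets $s_\alpha(\beta)\in\Sigma_n$, whence $\alpha\in\Sigma_n$ by the hypothesis, while (II) yields $\dep_\infty(\beta)\leq\dep_\infty(s_\alpha(\beta))\leq n$, so $\beta\in\Sigma_n$; the branch $\dep_\infty(\gamma)\geq\dep_\infty(s_\beta(\alpha))$ is handled symmetrically using the symmetric form of the hypothesis together with (I). Either way $\Delta_{W'}\subseteq\Sigma_n$.

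The delicate point, and the reason the corollary is not a mere restatement, is the role played by the hypothesis. In the proof of Theorem~\ref{thm:W3Infty} the finite case was closed only by invoking the special structure $s_\gamma=s_\alpha(\beta)=s_\beta(\alpha)$ available for labels in $\{1,2,3,\infty\}$, precisely because the inequalities $\dep_\infty(s_\alpha(\beta))\geq\dep_\infty(\alpha)$ and $\dep_\infty(s_\beta(\alpha))\geq\dep_\infty(\beta)$ are \emph{not} known in general. The content of the corollary is that the assumed condition is an exact surrogate for these missing inequalities: it supplies one endpoint directly, after which (I)/(II) supply the other for free. Thus I expect no real obstacle beyond (a) extracting (I), (II), (III) correctly from Proposition~\ref{prop:Increase}(2) and Remark~\ref{rem:Increase}, and (b) checking the elementary but necessary fact that $s_\alpha(\beta)$ (resp.\ $s_\beta(\alpha)$) is a non-simple positive root of $W'$ once $B(\alpha,\beta)\neq0$, so that both the hypothesis and the definition of bipodality genuinely apply to it.
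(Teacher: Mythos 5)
Your proposal is correct and takes essentially the same route as the paper, whose proof of Corollary~\ref{cor:Increase} is exactly the argument ``observed'' in the proof of Theorem~\ref{thm:W3Infty}: Proposition~\ref{prop:Increase}(1) disposes of infinite $W'$, while for finite $W'$ the inequalities you label (I), (II), (III), extracted from Proposition~\ref{prop:Increase}(2) and Remark~\ref{rem:Increase}, reduce bipodality precisely to the stated condition, which substitutes for the unknown inequalities $\dep_\infty(s_\alpha(\beta))\geq\dep_\infty(\alpha)$ and $\dep_\infty(s_\beta(\alpha))\geq\dep_\infty(\beta)$. Your forward direction and the check that $B(\alpha,\beta)\neq 0$ makes $s_\alpha(\beta)$ a non-simple positive root of $W'$ are the same easy verifications the paper leaves implicit.
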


\subsection{Length and depth functions on $\Phi$}\label{sse:Length}  

\begin{defi} Fix any subset $X\subseteq \mathbb R^+:=\{\lambda\in \mathbb R\mid \lambda >0\}$. The {\em $X$-length} on $\Phi$ is the function $d_X:\Phi\to \mathbb Z$ defined as follows:  let  $\beta\in\Phi^{+}$, then  
\begin{equation*}
d_{X}( \pm\beta)=\pm\vert \{\alpha\in N(s_{\beta})\,|\, B(\alpha,\beta)\in X\}\vert.
\end{equation*} 
\end{defi}

\begin{ex}\label{ex:Length1}
Note that if $\beta\in \Phi^{+}  $ and $\alpha\in N(s_{\beta})$, then $B(\alpha,\beta)>0$, since otherwise $s_{\beta}(\alpha)=\alpha-2B(\alpha,\beta) \beta\in \Phi^{+}$ contrary to $\alpha\in N(s_{\beta})$. So if $X=\mathbb R^+$ the $\mathbb R^+$-length on $\beta$ corresponds to the usual length of the associated reflection $s_\beta$:
$$
d_{\mathbb R^+}(\beta)=|N(s_\beta)|=\ell(s_\beta)=2\dep(\beta)-1,\qquad \beta\in \Phi^{+}.
$$
\end{ex}

\begin{remark}\label{rem:Ideal}
\begin{enumerate}[(a)]
\item For a reflection subgroup $W'$ of $W$,  $d_{W',X}\colon \Phi_{W'}\to \mathbb N$ denotes the function attached to $(\Phi_{W'},\Delta_{W'},X)$ in the same way as $d_{X}$ is attached to $(\Phi,\Delta,X)$.

\item  
The function $d_{X}$ depends only on $X\cap \{B(\gamma,\beta)\,|\, \gamma,\beta\in \Phi\}$. Also, $d_{\sqcup_{i} X_{i}}(\alpha)=\sum_{i }d_{X_{i}}(\alpha)$ (where the right hand side has only finitely many non-zero terms for fixed $\alpha\in \Phi$) if the subsets $X_{i}\subseteq \mathbb R^+ $ for $i\in I$ are pairwise disjoint. It follows that  the functions $d_{X}$ for arbitrary subsets of $\mathbb R$ are determined by the functions $d_{X}$ for order coideals of $\mathbb R$: such an order coideal $X$ is either $\emptyset$, $\mathbb R^+ $ or an open or closed ray $[a,\infty[$ or $]a,\infty[$ for some $a>0$.
\end{enumerate}
\end{remark}

\begin{prop} \label{1.6} Let $X\subseteq \mathbb R^+$ and set $-X:=\{-c\,|\, c\in X\}$. Let $\beta\in \Phi$ and $\alpha\in \Delta$. Then $d_{X}(\alpha)$ is equal to $1$ if $1$ is in $X$, and to $0$ otherwise. Further, 
\begin{equation*}
d_{X}(s_{\alpha}(\beta))=
\begin{cases}
d_{X}(\beta)-2,&\text{if $B(\alpha,\beta)\in X$}\\
d_{X}(\beta),&\text{if $B(\alpha,\beta)\in \mathbb R\setminus(X\cup -X)$}\\
d_{X}(\beta)+2,&\text{if $B(\alpha,\beta)\in -X$.}
\end{cases} 
\end{equation*} 
\end{prop}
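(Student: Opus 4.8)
The plan is to reduce to the case $\beta\in\Phi^{+}$ and then to compare the two inversion sets $N(s_{\beta})$ and $N(s_{s_{\alpha}(\beta)})$ by means of the involution $s_{\alpha}$, carefully tracking the (at most two) roots at which this comparison breaks down. First, the value on a simple root is immediate: since $N(s_{\alpha})=\{\alpha\}$ and $B(\alpha,\alpha)=1$, one reads off $d_{X}(\alpha)=1$ if $1\in X$ and $0$ otherwise. For the recursion, the definition gives $d_{X}(-\beta)=-d_{X}(\beta)$; combining this with $s_{\alpha}(-\beta)=-s_{\alpha}(\beta)$ and $B(\alpha,-\beta)=-B(\alpha,\beta)$ shows the case $\beta\in\Phi^{-}$ follows from the case $-\beta\in\Phi^{+}$ (the three regions $X$, $\mathbb R\setminus(X\cup -X)$, $-X$ are interchanged correctly under $\beta\mapsto-\beta$). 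The case $\beta=\alpha$ is checked directly from $s_{\alpha}(\alpha)=-\alpha$ and $B(\alpha,\alpha)=1$. So I may assume $\beta\in\Phi^{+}\setminus\{\alpha\}$, whence $\gamma:=s_{\alpha}(\beta)\in\Phi^{+}\setminus\{\alpha\}$ as well, because a simple reflection permutes $\Phi^{+}\setminus\{\alpha\}$.

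The core of the argument is a bijection. Put $b:=B(\alpha,\beta)$; since $s_{\alpha}$ preserves $B$ and is an involution, $B(\alpha,\gamma)=B(s_{\alpha}\alpha,\beta)=-b$. From $s_{\gamma}=s_{\alpha}s_{\beta}s_{\alpha}$ one computes $s_{\gamma}(s_{\alpha}(\eta))=s_{\alpha}(s_{\beta}(\eta))$, and $B(s_{\alpha}\eta,\gamma)=B(\eta,\beta)$, so $s_{\alpha}$ preserves the condition ``$B(\cdot,\cdot)\in X$''. For $\eta\in N(s_{\beta})$ (i.e.\ $\eta\in\Phi^{+}$, $s_{\beta}\eta\in\Phi^{-}$), membership $s_{\alpha}\eta\in N(s_{\gamma})$ requires $s_{\alpha}\eta\in\Phi^{+}$ and $s_{\gamma}(s_{\alpha}\eta)=s_{\alpha}(s_{\beta}\eta)\in\Phi^{-}$. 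As $s_{\alpha}$ only flips the sign of $\pm\alpha$, the first condition fails exactly when $\eta=\alpha$ and the second exactly when $s_{\beta}\eta=-\alpha$, i.e.\ $\eta=-s_{\beta}(\alpha)$. Both of these roots lie in $N(s_{\beta})$ precisely when $\alpha\in N(s_{\beta})$, and they are distinct (else $\alpha$ would be proportional to $\beta$). Using that $s_{\alpha}$ is an involution, one obtains a bijection $N(s_{\beta})\setminus\mathrm{bad}_{\beta}\to N(s_{\gamma})\setminus\mathrm{bad}_{\gamma}$ preserving the $X$-condition, with $\mathrm{bad}_{\beta}=\{\alpha,\,-s_{\beta}(\alpha)\}$ (present iff $\alpha\in N(s_{\beta})$) and symmetrically $\mathrm{bad}_{\gamma}=\{\alpha,\,-s_{\gamma}(\alpha)\}$ (present iff $\alpha\in N(s_{\gamma})$).

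Two short computations then finish the proof. First, both roots of $\mathrm{bad}_{\beta}$ have $B(\cdot,\beta)=b$ (here $B(\beta,\beta)=1$ is used for $-s_{\beta}(\alpha)$), and likewise both roots of $\mathrm{bad}_{\gamma}$ have $B(\cdot,\gamma)=-b$. Writing $[P]$ for $1$ if $P$ holds and $0$ otherwise, the bijection gives
\[
d_{X}(\gamma)-d_{X}(\beta)=2\,[\alpha\in N(s_{\gamma})]\,[-b\in X]-2\,[\alpha\in N(s_{\beta})]\,[b\in X].
\]
Second — the clean geometric input — for a simple root $\alpha$ and $\beta\in\Phi^{+}\setminus\{\alpha\}$ one has $\alpha\in N(s_{\beta})\iff b>0$: the direction ``$\Rightarrow$'' is the observation already recorded in Example~\ref{ex:Length1}, while ``$\Leftarrow$'' follows because otherwise $\alpha=s_{\beta}(\alpha)+2b\beta$ would express the simple (hence extreme) root $\alpha$ as a positive combination of two non-proportional positive roots. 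Substituting $[\alpha\in N(s_{\beta})]=[b>0]$ and $[\alpha\in N(s_{\gamma})]=[-b>0]=[b<0]$ into the displayed identity, and splitting into the cases $b\in X$ (which forces $b>0$), $b\in -X$ (which forces $b<0$), and $b\in\mathbb R\setminus(X\cup -X)$, yields exactly $-2$, $+2$, and $0$ respectively, as claimed.

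The hard part will be the bookkeeping of the exceptional roots: confirming that there are exactly two failures of the map $s_{\alpha}$, that both lie in $N(s_{\beta})$ under the single condition $\alpha\in N(s_{\beta})$, that they are distinct, and — most importantly — that they share the common value $B(\cdot,\beta)=b$, so that they enter the $X$-count in lockstep and produce the uniform jump of $\pm 2$. Everything else is routine, and the characterization $\alpha\in N(s_{\beta})\iff B(\alpha,\beta)>0$ is exactly what converts the combinatorial indicator identity into the stated sign conditions on $B(\alpha,\beta)$.
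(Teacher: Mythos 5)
Your proof is correct and is in essence the paper's own argument: both compute $d_X(s_\alpha(\beta))-d_X(\beta)$ by transporting $N(s_\beta)$ to $N(s_{s_\alpha(\beta)})$ via the $B$-preserving involution $s_\alpha$ and tracking the same two exceptional roots (your $-s_\beta(\alpha)$ is exactly the paper's $s_\alpha s_\gamma(\alpha)$, where $\gamma=s_\alpha(\beta)$), both of which carry the common value $B(\cdot,\beta)=B(\alpha,\beta)$ and hence shift the count by precisely $\pm 2$. The only divergence is organizational: the paper first reduces by the $\pm$-symmetries to $\beta\in\Phi^+$ with $B(\alpha,\beta)\geq 0$, where $\ell(s_\beta)=\ell(s_\gamma)+2$ and Proposition~\ref{prop:Weak} yield the disjoint decomposition $N(s_\beta)=\{\alpha\}\sqcup s_\alpha(N(s_\gamma))\sqcup\{s_\alpha s_\gamma(\alpha)\}$ in one stroke, whereas you handle both signs of $B(\alpha,\beta)$ at once through the characterization $\alpha\in N(s_\beta)\iff B(\alpha,\beta)>0$, whose converse correctly rests on the fact that a simple root spans an extreme ray of $\cone(\Phi^+)$ --- valid in the paper's generality (where $\Delta$ need not be a basis) by the remark in \S\ref{sse:RefSub} citing Deodhar.
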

\begin{proof} The first claim is immediate from the definitions. It is easy to check that the displayed formula  is equivalent to any of its variants given by replacing $\beta$ by $\pm \beta$ or $\pm s_{\alpha}(\beta)$.  Hence, without loss of generality, we may (and do) assume that $\beta\in \Phi^{+}$ and $B(\beta,\alpha)\geq 0$.  If $\beta=\alpha$ or $B(\beta,\alpha)=0$, the conclusion follows readily. Otherwise, $\gamma:=s_{\alpha}(\beta)\in \Phi^{+} $ and $s_{\beta}=s_{\alpha}s_{\gamma}s_{\alpha}$ with $\ell(s_{\beta})=\ell(s_{\gamma})+2$.  Hence  $N(s_\beta)=\{\alpha\}\sqcup s_{\alpha}(N(s_{\gamma}))\sqcup \{s_{\alpha}s_{\gamma}(\alpha)\}$ by Proposition~\ref{prop:Weak}.  One has
$$
 B(s_{\alpha}s_{\gamma}(\alpha),\beta)=B(s_{\alpha}s_{\gamma}s_{\alpha}(\alpha),-\beta)=B(s_{\beta}(\alpha),-\beta)=B(\alpha,\beta).
 $$  
 For any $\gamma'\in N(s_{\gamma})$ we have $B(\gamma',\gamma)\in X$ if and only if $B(s_\alpha(\gamma'),\beta)\in X$, since $$
 B(s_{\alpha}(\gamma'),\beta)=B(\gamma',s_{\alpha}(\beta))=B(\gamma',\gamma).
 $$
  Let $c$ denote $1$ if $B(\alpha,\beta)\in X$ and $0$ otherwise.  Now, since $B(\alpha,\beta)>0$,  the desired conclusion follows: \begin{equation*}\begin{split}
 d_{X}(\beta)&=\vert \{\rho\in N(s_{\beta})\mid B(\rho,\beta)\in X\}\vert =\vert \{\gamma'\in N(s_{\gamma})\mid B(s_{\alpha}(\gamma'),\beta)\in X\}\vert +2c\\ &=\vert \{\gamma'\in N(s_{\gamma})\mid B(\gamma',\gamma)\in X\}\vert +2c=d_{X}(\gamma)+2c.\end{split}\end{equation*}
  \end{proof}

\begin{cor}
\begin{enumerate}
\item Let  $w\in W$ and $\beta\in \Phi$. Then  $d_{X}(w(\beta))$ and $d_{X}(\beta) $ are integers of  the same parity, and $-2\ell(w)\leq d_{X}(w(\beta))-d_{X}(\beta)\leq 2\ell(w)$
\item $d_{X}(\beta)$ is  an odd integer or  an even integer for all $\beta\in \Phi$ according as to whether $1\in X$ or $1\not \in X$.
\end{enumerate} 
\end{cor}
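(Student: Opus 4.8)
The plan is to derive both statements directly from the recurrence in Proposition~\ref{1.6}, which controls how $d_{X}$ changes under a single \emph{simple} reflection, and then to promote this to arbitrary $w\in W$ by telescoping along a reduced word.

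For part (1), I would fix a reduced expression $w=s_{1}\cdots s_{k}$ with $k=\ell(w)$ and $s_{i}=s_{\alpha_{i}}$, $\alpha_{i}\in\Delta$, and build up $w(\beta)$ one reflection at a time. Setting $\beta_{k}:=\beta$ and $\beta_{i-1}:=s_{i}(\beta_{i})$ for $i=k,\dots,1$, so that $\beta_{0}=w(\beta)$, each $\beta_{i}$ is again a root (as $W$ permutes $\Phi$), and Proposition~\ref{1.6} gives $d_{X}(\beta_{i-1})-d_{X}(\beta_{i})\in\{-2,0,2\}$ at every step. The telescoping identity $d_{X}(w(\beta))-d_{X}(\beta)=\sum_{i=1}^{k}\bigl(d_{X}(\beta_{i-1})-d_{X}(\beta_{i})\bigr)$ then expresses the difference as a sum of $k=\ell(w)$ terms, each lying in $\{-2,0,2\}$. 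Such a sum is automatically an even integer, which yields equal parity of $d_{X}(w(\beta))$ and $d_{X}(\beta)$, and its absolute value is at most $2k=2\ell(w)$, which is exactly the asserted bound.

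For part (2), I would use the standard fact $\Phi=W(\Delta)$, so that any $\beta\in\Phi$ can be written as $\beta=w(\alpha)$ for some $w\in W$ and $\alpha\in\Delta$ (valid for negative roots as well, since $-\alpha_{s}=s_{\alpha_{s}}(\alpha_{s})$). By the first sentence of Proposition~\ref{1.6}, $d_{X}(\alpha)$ equals $1$ when $1\in X$ and $0$ otherwise; in particular $d_{X}(\alpha)$ is odd exactly when $1\in X$. Part (1) then forces $d_{X}(\beta)=d_{X}(w(\alpha))$ to have the same parity as $d_{X}(\alpha)$, settling the claim uniformly for every root.

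There is essentially no serious obstacle here: the corollary is a formal consequence of Proposition~\ref{1.6}. The only points requiring a little care are bookkeeping ones---choosing to decompose $w$ into exactly $\ell(w)$ simple reflections so that the bound comes out as $2\ell(w)$ rather than something weaker; checking that each intermediate vector $\beta_{i}$ is genuinely a root so that Proposition~\ref{1.6} applies at every step; and observing that the trichotomy in Proposition~\ref{1.6} always moves $d_{X}$ by an \emph{even} amount, so that parity is preserved throughout. For part (2) one should additionally recall $\Phi=W(\Delta)$ so that the base case $d_{X}(\alpha)\in\{0,1\}$ for $\alpha\in\Delta$ propagates to all of $\Phi$.
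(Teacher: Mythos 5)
Your proposal is correct and follows essentially the same route as the paper: the paper proves part (1) by induction on $\ell(w)$ using Proposition~\ref{1.6}, which is precisely your telescoping along a reduced word with each step changing $d_{X}$ by an element of $\{-2,0,2\}$, and part (2) likewise via the base values $d_{X}(\alpha)\in\{0,1\}$ for $\alpha\in\Delta$ together with the fact that every root lies in the $W$-orbit of a simple root. Your write-up simply makes the induction explicit; there is no substantive difference.
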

\begin{proof} Part (1)  is proved by induction on $\ell(w)$ using Proposition~\ref{1.6}. Part (2) follows from (1) and the description of $d_{X}(\alpha)$ for $\alpha\in \Delta$ in Propositilon~\ref{1.6},  since every root  $\beta$ is in the $W$-orbit of some simple root $\alpha$. 
\end{proof}

Thanks to Proposition~\ref{1.6}, we can define $X$-depth on $\Phi$.

 \begin{defi}\label{1.6a} Let $X\subseteq \mathbb R^+$, the {$X$-depth on $\Phi^{+}$} is the function $\dep_X:\Phi^{+}\to \mathbb N$  defined as follows: 
 \begin{equation*}
\dep_{X}(\beta)=
\begin{cases}
\frac{d_{X}(\beta)-1}{2},&\text{if $1\in X$}\\
\frac{d_{X}(\beta)}{2},&\text{if $1\notin X$.}
\end{cases} 
\end{equation*}
\end{defi}

The following proposition follows from Proposition~\ref{1.6} together with Proposition~\ref{prop:Dep} and Proposition~\ref{prop:NSmall}.

\begin{prop}\label{prop:DepthLength}
\begin{enumerate}
\item If $X=\mathbb R^+$ then $\dep_X(\beta)=\dep(\beta)-1$ for all $\beta\in \Phi^+$.
\item If $X=[1,+\infty)$ then $\dep_X(\beta)=\dep_\infty(\beta)$ for all $\beta\in \Phi^+$.
\end{enumerate}
\end{prop}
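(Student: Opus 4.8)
The plan is to handle the two parts separately: part~(1) is immediate from a computation already recorded in the text, and part~(2) reduces to matching two recurrences that the paper has already established.

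For part~(1), I would note that $\mathbb R^{+}$ contains $1$, so Definition~\ref{1.6a} gives $\dep_X(\beta)=(d_X(\beta)-1)/2$. Example~\ref{ex:Length1} already records the identity $d_{\mathbb R^+}(\beta)=2\dep(\beta)-1$, and substituting this into the formula yields $\dep_X(\beta)=(2\dep(\beta)-2)/2=\dep(\beta)-1$ with no further work.

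For part~(2), here $X=[1,+\infty)$ still contains $1$, so again $\dep_X(\beta)=(d_X(\beta)-1)/2$. The idea is to prove $\dep_X=\dep_\infty$ on $\Phi^{+}$ by induction on the usual depth $\dep(\beta)$, exploiting that both functions obey the same recurrence. Concretely, for this $X$ one has $-X=(-\infty,-1]$ and $\mathbb R\setminus(X\cup -X)=(-1,1)$, so halving the three cases of Proposition~\ref{1.6} (the constant $-1$ in the definition of $\dep_X$ cancels in the difference) shows that $\dep_X(s(\gamma))-\dep_X(\gamma)$ equals $-1$, $0$, or $+1$ according as $B(\alpha_s,\gamma)\ge 1$, lies in $(-1,1)$, or is $\le -1$. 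This is exactly the recurrence satisfied by $\dep_\infty$ in Proposition~\ref{prop:NSmall}. Since the base values also agree on simple roots, $\dep_X(\alpha)=0$ by the first clause of Proposition~\ref{1.6} and $\dep_\infty(\alpha)=0$ by Proposition~\ref{prop:NSmall}, the two functions must coincide.

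To run the induction cleanly I would, for a non-simple $\beta$ (so $\dep(\beta)\ge 2$ by Proposition~\ref{prop:Dep}), first produce a simple root $\alpha_s$ with $B(\alpha_s,\beta)>0$: writing $\beta=\sum_{s}c_s\alpha_s$ with all $c_s\ge 0$ and using $B(\beta,\beta)=1>0$ forces some summand $c_sB(\alpha_s,\beta)$ to be positive. Then Proposition~\ref{prop:Dep} gives $\dep(s(\beta))=\dep(\beta)-1$, the induction hypothesis applies to $s(\beta)$, and applying the common recurrence to pass from $s(\beta)$ back to $\beta=s(s(\beta))$ transports the equality $\dep_X(s(\beta))=\dep_\infty(s(\beta))$ to $\beta$. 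The only point requiring care is the bookkeeping in this last step: one must check that $s(\beta)\in\Phi^{+}$ and $s(\beta)\neq\alpha_s$ so that both recurrences legitimately apply to $\gamma=s(\beta)$, and confirm that the increment dictated by $B(\alpha_s,s(\beta))=-B(\alpha_s,\beta)$ is the same integer for $\dep_X$ and $\dep_\infty$. Both facts are routine, since $s_{\alpha_s}$ permutes $\Phi^{+}\setminus\{\alpha_s\}$, so I do not expect a genuine obstacle; the entire content of the proposition is the coincidence of the two recurrences together with their initial conditions.
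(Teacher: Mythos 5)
Your proposal is correct and follows essentially the same route as the paper, whose proof is precisely the observation that Proposition~\ref{1.6}, Proposition~\ref{prop:Dep} and Proposition~\ref{prop:NSmall} give matching recurrences and initial values; your part~(2) spells out exactly this induction on $\dep(\beta)$, and your part~(1) uses the identity $d_{\mathbb R^+}(\beta)=\ell(s_\beta)=2\dep(\beta)-1$ already recorded in Example~\ref{ex:Length1}. The bookkeeping you flag (that $s(\beta)\in\Phi^+\setminus\{\alpha_s\}$ and $B(\alpha_s,s(\beta))=-B(\alpha_s,\beta)$) is handled correctly, so there is no gap.
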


\begin{remark} Sometimes, the depth of a positive root is defined so that the simple roots equal have depth $0$ and not $1$ as in this text. In this case, then the equality would be $\dep_X(\beta)=\dep(\beta)$ in~{\em (1)} above.
\end{remark}

\begin{ex} \label{e1} We discuss some examples of the functions $d_{X}: \Phi\to \mathbb R$,  restricting to the case in which $X$ is an order ideal of $\mathbb R^+$. 
The claims are readily checked using Proposition \ref{1.6}.
\begin{enumerate}[(a)]  
\item Fix $\alpha\in \Phi^{+}$. If  $W$ is finite and $X=]1,\infty[$, then $d_{X}(\alpha)=0$. If $X=[1,\infty[$, then $d_{X}(\alpha)=1$.  If $X=\mathbb{R}^{+}$, then $d_{X}(\alpha)=\ell(s_{\alpha})$.
\item If $W$ is a  finite  simply laced Weyl group  then
for any order coideal $X$ of $\mathbb{R}^{+}$, one has $d_{X}=d_{Y}$ where $Y$ is either $]1,\infty[$, $[1,\infty[$ or $\mathbb{R}^{+}$.

\item Suppose $W$ is of type $B/C_{n}$, with positive roots
\begin{equation*}
\Phi^{+}=\{\frac{1}{\sqrt{2}}(e_{i}\pm e_{j})\,|\, 1\leq i<j\leq n\}\cup\{e_{i}\,|\, 1\leq i\leq n\}. 
\end{equation*} 
Let $\alpha\in \Phi^{+}$. If $1\not \in X$, then $d_{X}(\alpha)=0$. If $1\in X$ but $\frac{1}{\sqrt{2}}\not\in X$, then $d_{X}(\alpha)=1$. If $\frac{1}{2}\in X$, then $d_{X}=d_{\mathbb R^+}$,  which   is given by  $d_{\mathbb R^+}((e_{i}-e_{j})/\sqrt{2})=2j-2i-1$ and 
 $d_{\mathbb R^+}((e_{i}+e_{j})/\sqrt{2})=4n-2i-2j+1$ if $1\leq i<j\leq n$ and  $d_{\mathbb R^+}(e_{i})=2n-2i+1$ for $1\leq i\leq n$. In the remaining case ($\frac{1}{2}\not\in X$ but $\frac{1}{\sqrt{2}}\in X$), one  checks that $d_{X}((e_{i}-e_{j})/\sqrt{2})=1$ and 
$d_{X}((e_{i}+e_{j})/\sqrt{2})=3$ if $1\leq i<j\leq n$, while $d_{X}(e_{i})=2n-2i+1$ for $1\leq i\leq n$.
\end{enumerate}
\end{ex}

\subsection{The weak order on $\Phi$}\label{1.3}  If $X=\mathbb R^+ $, we abbreviate $d_{\mathbb R^+}$ by $d$ and call it the \emph{standard length function on $\Phi$}. We saw in Example~\ref{ex:Length1} that for any
$\beta\in \Phi^{+}$, we have 
\begin{equation}\label{eq1.3a.1}
d(\pm{\beta})=\pm\ell(s_\beta)=\pm(\dep(\beta)-1).
\end{equation} 
Proposition \ref{1.6} implies that if $\beta\in \Phi$ and $\alpha\in \Delta$, then 
\begin{equation}\label{eq1.3.1}
d(s_{\alpha}(\beta))>d(\beta)\iff d(s_{\alpha}(\beta))=d(\beta)+2\iff B(\alpha,\beta)<0.
\end{equation}

\begin{defi}
Let $\lessdot$ be the relation on $\Phi$ defined by $\alpha\lessdot \beta$ if there is $\gamma\in \Delta$ such that $B(\gamma,\alpha)<0$ and $\beta=s_{\gamma}(\alpha)$ both hold.  We  define the \emph{weak order on $\Phi$} to be the partial order  $\leq$ on $\Phi$ obtained as   the reflexive transitive closure of the relation $\lessdot$. 
\end{defi}

Since $d(\beta)$ is odd for all $\beta\in \Phi$,  \eqref{eq1.3.1} implies that  $\lessdot $ is the covering relation of $\leq$.  Thus, $\alpha\leq \beta$ holds in $\Phi$ if and only if there exist $n\in \mathbb N$ and $\beta_{0},\ldots, \beta_{n}\in \beta$ with
\begin{equation}\label{eq1.3.2}
\alpha=\beta_{0}\lessdot \beta_{1}\lessdot \ldots\lessdot \beta_{n}=\beta.
\end{equation}
Since $d(\beta_{i})=d(\beta_{i-1})+2$ for all $i$ from $1$ to $n$, this implies that
\begin{equation}\label{eq1.3.3}
d(\beta)=d(\alpha)+2n,\qquad n=\frac{1}{2}(d(\beta)-d(\alpha)).
\end{equation}

\begin{remark}\label{1.3h}
\begin{enumerate}[(a)]

\item  If $\alpha\lessdot \beta$, then $\beta=\alpha+c \gamma$ where $c=-2B(\alpha,\gamma)>0$, so $\gamma$ and $c$ are uniquely determined by $\alpha$ and $\beta$.  

\item   In \cite{BjBr05}, the set $\Phi^{+}  $ partially ordered by restriction of $\leq$ is called the \emph{root poset}. Note that  this partial order on  $\Phi^{+}$ is \emph{not}  in general   the order obtained by restricting   weak  order on $W$ to $T=\{s_\beta\,|\,\beta\in \Phi\}$ and transferring to $\Phi^{+}$ via the standard bijection $\beta\mapsto s_{\beta}$.  It is easy to see that if $\beta\in \Phi^{+}  $, then $-\beta\leq \beta$. The map $\beta\mapsto -\beta$ for $\beta\in \Phi$ is an order-reversing bijection of $\Phi$ with itself in  weak order. Note also  that if $\beta\lessdot \alpha$ where $\alpha\in \Delta$, then $\beta=-\alpha$.

\end{enumerate}
\end{remark}


\subsection{The root category}\label{1.3d} 
The relationship between the weak orders on $\Phi$ and $W$ is clarified by the introduction  of a certain category $\mathcal C$, which we shall call the \emph{root category}.

First, let $\mathcal C'=G$ denote  the transformation groupoid of $W$ on $\Phi$; this is the  category (in fact, a groupoid)   such that:
$$
\textrm{$\ob(G)= \Phi$ and  $\mor_{G}(\alpha,\beta)=\{(\beta,w,\alpha)\,|\, w\in W,  w(\alpha)=\beta\}$,}
$$  
with composition defined  by $(\gamma,v,\beta)(\beta,w,\alpha)=(\gamma,vw,\alpha)$. 
 For $\beta\in V$, $c\in \mathbb R$ and $\bullet$ denoting one of the symbols  $=$, $<$ ,$\leq$, $\geq$ or  $>$, define $V_{\beta}^{\bullet c}:=\{\alpha\in V\,|\,B(\alpha,\beta) \bullet c\}$.

\begin{prop}\label{1.3e} 
\begin{enumerate}
\item There is a subcategory $\mathcal C$ of $\mathcal C'$ with all objects of $\mathcal C'$, and only those morphisms $(\beta,w,\alpha)$ in $\mathcal C'$ with $N(w)\subseteq V_{\beta}^{>0}$.
\item There is a duality (contravariant involutive automorphism) of $\mathcal C$ given by $\alpha\mapsto -\alpha$ on objects and $(\beta,w,\alpha)\mapsto (-\alpha,w^{-1},-\beta)$ on morphisms.
\item For  two   morphisms $(\gamma,v,\beta)$ and  $(\beta,w,\alpha)$ of $\mathcal C$, one has 
$\ell(vw)=\ell(v)+\ell(w)$ and hence  $v\leq_R vw$ in right weak order  $\leq_R $  on $W$.
\item Let $(\gamma,x,\alpha)$ be a morphism in $\mathcal C$ and $v\in W$ with $v\leq_R x$. Set $w:=v^{-1}x$ and $\beta:=w(\alpha)=v^{-1}(\gamma)$. Then $(\gamma,v,\beta)$ and $(\beta,w,\alpha)$ are morphisms in $\mathcal C$ and $(\gamma,v,\beta)(\beta,w,\alpha)=(\gamma,x,\alpha)$.
\end{enumerate}
\end{prop}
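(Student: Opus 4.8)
The plan is to handle the four assertions in sequence, with essentially all the content sitting in part (1); once the composition behaves well, part (3) is immediate and parts (2) and (4) are bookkeeping. The organizing tool throughout is the inversion-set calculus of Corollary~\ref{cor:RedInv}: a product $vw$ is reduced, i.e.\ $\ell(vw)=\ell(v)+\ell(w)$, exactly when $v(N(w))\subseteq\Phi^+$, and in that case $N(vw)=N(v)\sqcup v(N(w))$. The other constant companion is $W$-invariance of the form, $B(w(\rho),w(\sigma))=B(\rho,\sigma)$, which lets me transport the defining condition $N(w)\subseteq V_{\beta}^{>0}$ of $\mathcal C$ along the group action.

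For part (1), identity morphisms $(\alpha,e,\alpha)$ lie in $\mathcal C$ since $N(e)=\emptyset$. The crux, and what I expect to be the main obstacle, is closure under composition: given composable $(\gamma,v,\beta)$ and $(\beta,w,\alpha)$ in $\mathcal C$, so $w(\alpha)=\beta$, $v(\beta)=\gamma$, $N(w)\subseteq V_{\beta}^{>0}$ and $N(v)\subseteq V_{\gamma}^{>0}$, I must show $(\gamma,vw,\alpha)\in\mathcal C$. I would first prove $v(N(w))\subseteq\Phi^+$ by contradiction: if $v(\rho)<0$ for some $\rho\in N(w)$, then $\mu:=-v(\rho)\in\Phi^+$ has $v^{-1}(\mu)=-\rho<0$, so $\mu\in N(v)$; but $B(\mu,\gamma)=-B(v(\rho),\gamma)=-B(\rho,v^{-1}(\gamma))=-B(\rho,\beta)<0$ because $\rho\in N(w)\subseteq V_{\beta}^{>0}$, contradicting $N(v)\subseteq V_{\gamma}^{>0}$. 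Hence $v(N(w))\subseteq\Phi^+$, the product $vw$ is reduced, and $N(vw)=N(v)\sqcup v(N(w))$ by Corollary~\ref{cor:RedInv}. This set lies in $V_{\gamma}^{>0}$: the summand $N(v)$ does by hypothesis, while for $\rho\in N(w)$ one has $B(v(\rho),\gamma)=B(\rho,\beta)>0$, so $v(N(w))\subseteq V_{\gamma}^{>0}$ too. Part (3) then drops out, since the same reducedness gives $\ell(vw)=\ell(v)+\ell(w)$ and so $v\leq_R vw$.

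Part (2) is a direct verification. The rule sends $(\beta,w,\alpha)\colon\alpha\to\beta$ to $(-\alpha,w^{-1},-\beta)\colon-\beta\to-\alpha$; this is a morphism of $\mathcal C'$ because $w(\alpha)=\beta$ gives $w^{-1}(-\beta)=-\alpha$. To see it lands in $\mathcal C$ I would use $N(w^{-1})=\{-w^{-1}(\rho)\mid\rho\in N(w)\}$: for $\sigma=-w^{-1}(\rho)$ one computes $B(\sigma,-\alpha)=B(w^{-1}(\rho),\alpha)=B(\rho,\beta)>0$, whence $N(w^{-1})\subseteq V_{-\alpha}^{>0}$. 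Contravariance is then formal from $(\gamma,vw,\alpha)\mapsto(-\alpha,w^{-1}v^{-1},-\gamma)$, the assignment is clearly an involution on objects and morphisms, and it fixes nothing essential beyond what is required for a duality.

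Part (4) I would obtain by reading part (1) backwards along the factorization. Since $v\leq_R x$, the product $x=v\,(v^{-1}x)$ is reduced, so with $w:=v^{-1}x$ one has $N(x)=N(v)\sqcup v(N(w))$, and $\beta:=w(\alpha)=v^{-1}x(\alpha)=v^{-1}(\gamma)$, which is exactly what makes $(\gamma,v,\beta)(\beta,w,\alpha)=(\gamma,x,\alpha)$. Membership of the two factors in $\mathcal C$ is then just restriction of the inclusion $N(x)\subseteq V_{\gamma}^{>0}$ coming from $(\gamma,x,\alpha)\in\mathcal C$: directly $N(v)\subseteq N(x)\subseteq V_{\gamma}^{>0}$ gives $(\gamma,v,\beta)\in\mathcal C$, while for $\rho\in N(w)$ the relation $v(\rho)\in N(x)\subseteq V_{\gamma}^{>0}$ yields $B(\rho,\beta)=B(v(\rho),\gamma)>0$, so $N(w)\subseteq V_{\beta}^{>0}$ and $(\beta,w,\alpha)\in\mathcal C$.
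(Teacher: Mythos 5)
Your proof is correct and follows essentially the same route as the paper: the key computation $v(N(w))\subseteq V_{v(\beta)}^{>0}=V_\gamma^{>0}$ via $W$-invariance of $B$, reducedness plus the decomposition $N(vw)=N(v)\sqcup v(N(w))$ from Corollary~\ref{cor:RedInv} for (1) and (3), the identity $N(w^{-1})=-w^{-1}(N(w))$ for (2), and restriction of $N(x)\subseteq V_\gamma^{>0}$ along $N(x)=N(v)\sqcup v(N(w))$ for (4). Your only deviation is cosmetic: you establish $v(N(w))\subseteq\Phi^+$ by an explicit contradiction where the paper notes $v(N(w))\cap -N(v)=\emptyset$, which is the same underlying fact.
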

\begin{proof}
Consider two composable morphisms $(\gamma,v,\beta)$ and  $(\beta,w,\alpha)$ of $\mathcal C'$ with both $N(v)\subseteq V^{>0}_{\gamma}$ and $N(w)\subseteq V^{>0}_{\beta}$. Then $v(N(w))\subseteq v( V^{>0}_{\beta})=V^{>0}_{v(\beta)}=V^{>0}_{\gamma}$. Therefore $v(N(w))\cap -N(v)=\emptyset$. By Corollary~\ref{cor:RedInv}, we get $\ell(vw)=\ell(v)+\ell(w)$, so $v\leq_R vw$, and $N(vw)=N(v)\sqcup v(N(w))\subseteq V^{>0}_{\gamma}$. This implies (1) and (3). Part (2) is readily proved using the fact  that $N(w^{-1})=-w^{-1}(N(w))$. For (4), note that  we have  $N(x)=N(v)\sqcup v(N(w))$ by Corollary~\ref{cor:RedInv} again,  so $N(v)\subseteq N(x)\subseteq V^{>0}_{\gamma}$ and  $N(w)\subseteq v^{-1}(N(x))\subseteq v^{-1}(V^{>0}_{\gamma}) =V^{>0}_{v^{-1}(\gamma)}=V^{>0}_{\beta}$.  
\end{proof}

\begin{cor}\label{1.4}  Let $\alpha,\beta\in \Phi $. Then there is    a natural  bijective correspondence  between the set of maximal chains  from $\alpha$ to $\beta$ in $(\Phi ,\leq)$ and  the set of reduced expressions of elements $w\in W$ such that $(\beta,w,\alpha)\in \mor_{\mathcal C}(\alpha,\beta)$.
\end{cor}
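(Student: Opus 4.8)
The plan is to exhibit two explicit, mutually inverse maps, the verification of both being a direct assembly of Proposition~\ref{1.3e}. Given a maximal chain $\alpha=\beta_{0}\lessdot\beta_{1}\lessdot\cdots\lessdot\beta_{n}=\beta$ in $(\Phi,\leq)$, Remark~\ref{1.3h}(a) attaches to each covering $\beta_{i-1}\lessdot\beta_{i}$ a \emph{unique} simple root $\gamma_{i}\in\Delta$ with $B(\gamma_{i},\beta_{i-1})<0$ and $\beta_{i}=s_{\gamma_{i}}(\beta_{i-1})$. Writing $w_{i}:=s_{\gamma_{i}}\cdots s_{\gamma_{1}}$, so that $w_{i}(\alpha)=\beta_{i}$ and $w_{0}=e$, I would send the chain to the word $s_{\gamma_{n}}\cdots s_{\gamma_{1}}$, which expresses the element $w:=w_{n}$ satisfying $w(\alpha)=\beta$. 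In the other direction, a reduced word $s_{\gamma_{n}}\cdots s_{\gamma_{1}}$ expressing an element $w$ with $(\beta,w,\alpha)\in\mor_{\mathcal C}(\alpha,\beta)$ is sent to the sequence $\beta_{i}:=w_{i}(\alpha)$, with $w_{i}$ as above.

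To see that the first map indeed produces a reduced expression lying in $\mor_{\mathcal C}(\alpha,\beta)$, I would prove by induction on $i$ that $(\beta_{i},w_{i},\alpha)\in\mor_{\mathcal C}$ and $\ell(w_{i})=i$. The case $i=0$ is immediate since $N(e)=\emptyset$. For the inductive step, the reflection computation $B(\gamma_{i+1},\beta_{i+1})=B(s_{\gamma_{i+1}}(\gamma_{i+1}),\beta_{i})=-B(\gamma_{i+1},\beta_{i})>0$ shows that the single inversion $\gamma_{i+1}$ of $s_{\gamma_{i+1}}$ lies in $V_{\beta_{i+1}}^{>0}$, so $(\beta_{i+1},s_{\gamma_{i+1}},\beta_{i})\in\mor_{\mathcal C}$; composing it with $(\beta_{i},w_{i},\alpha)$ via Proposition~\ref{1.3e}(1) gives $(\beta_{i+1},w_{i+1},\alpha)\in\mor_{\mathcal C}$, and Proposition~\ref{1.3e}(3) gives $\ell(w_{i+1})=\ell(w_{i})+1=i+1$. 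Taking $i=n$ shows that $s_{\gamma_{n}}\cdots s_{\gamma_{1}}$ is reduced and $(\beta,w,\alpha)\in\mor_{\mathcal C}$, as needed.

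For the reverse map, starting from a reduced word $s_{\gamma_{n}}\cdots s_{\gamma_{1}}$ with $(\beta,w,\alpha)\in\mor_{\mathcal C}$, I would apply Proposition~\ref{1.3e}(4) with $v:=s_{\gamma_{n}}\cdots s_{\gamma_{i+1}}$, a length-$(n-i)$ prefix of the reduced word and hence $v\leq_{R}w$; since $v^{-1}w=w_{i}$ and $v^{-1}(\beta)=\beta_{i}$, this yields $(\beta_{i},w_{i},\alpha)\in\mor_{\mathcal C}$, i.e.\ $N(w_{i})\subseteq V_{\beta_{i}}^{>0}$. As the word is reduced, $\ell(w_{i-1})=\ell(w_{i})-1$ with $s_{\gamma_{i}}w_{i}=w_{i-1}$, so $\gamma_{i}\in N(w_{i})$ and therefore $B(\gamma_{i},\beta_{i})>0$. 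The reflection identity then gives $B(\gamma_{i},\beta_{i-1})=-B(\gamma_{i},\beta_{i})<0$, which together with $\beta_{i}=s_{\gamma_{i}}(\beta_{i-1})$ is exactly the covering relation $\beta_{i-1}\lessdot\beta_{i}$; thus the $\beta_{i}$ form a maximal chain from $\alpha$ to $\beta$.

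Finally, the two maps are mutually inverse: by the uniqueness in Remark~\ref{1.3h}(a), reading the sequence $(\gamma_{1},\ldots,\gamma_{n})$ off a chain and then reconstructing the roots $\beta_{i}=s_{\gamma_{i}}(\beta_{i-1})$ recovers the original chain, and conversely reconstructing the word from the $\beta_{i}$ recovers the original reduced expression. I expect the main obstacle to be purely book-keeping: the covering relation on $\Phi$ is generated by \emph{left} multiplications by simple reflections but is oriented oppositely to the order in which those reflections occur in the reduced word, so one must be careful which prefix of $w$ to feed into the factorization of Proposition~\ref{1.3e}(4); the one genuinely substantive point is the combination of $\gamma_{i}\in N(w_{i})$ with $N(w_{i})\subseteq V_{\beta_{i}}^{>0}$ that pins down the sign of $B(\gamma_{i},\beta_{i-1})$.
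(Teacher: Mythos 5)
Your proposal is correct and follows essentially the same route as the paper's own proof: both directions are handled by the same mechanism, namely the equivalence between a covering $\beta_{i-1}\lessdot\beta_{i}$ and a morphism $(\beta_{i},s_{\gamma_{i}},\beta_{i-1})$ of $\mathcal C$ (your sign computation $B(\gamma_{i},\beta_{i})=-B(\gamma_{i},\beta_{i-1})$ is exactly what the paper's ``by the definitions'' abbreviates), with composition via Proposition~\ref{1.3e}(1),(3) in one direction and the factorization of Proposition~\ref{1.3e}(4) in the other. Your route through $\gamma_{i}\in N(w_{i})\subseteq V_{\beta_{i}}^{>0}$ in the reverse direction is just a slightly more explicit unwinding of the paper's second paragraph, and the appeal to the uniqueness in Remark~\ref{1.3h}(a) correctly settles that the two maps are mutually inverse.
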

\begin{proof} Consider $\gamma\lessdot \gamma'$ in $(\Phi ,\leq)$. Write $\gamma'=s_{\beta}(\gamma)$ with $\beta\in \Delta$. Then $(\gamma',s_{\beta},\gamma)$ is in~$\mor(\mathcal C)$, by the definitions. On the other hand, if $(\gamma',s_{\beta},\gamma)$ is in~$\mor(\mathcal C)$ with $\beta\in \Delta$, then $\gamma\lessdot \gamma'$ in $(\Phi ,\leq)$.

Given a maximal chain $c: \alpha=\alpha_{0}\lessdot \ldots \lessdot \alpha_{n}=\beta$  in $(\Phi ,\leq)$, let $\beta_{i}\in \Delta$ with $\alpha_{i}=s_{\beta_{i}}(\alpha_{i-1})$. By the above,  $(\alpha_{i},s_{\beta_{i}},\alpha_{i-1})$ is in~$\mor(\mathcal C)$ for $i=1,\ldots, n$ with composite $(\alpha_{n},s_{\beta_{n}},\alpha_{n-1})\cdots (\alpha_{1},s_{\beta_{1}},\alpha_{0})=(\beta,w,\alpha)\in\mor(\mathcal C)$ where $w:=s_{\beta_{n}}\cdots s_{\beta_{1}}\in W$. We have $n=\ell(w)$ by Proposition \ref{1.3e}, and we attach to $c$ the above reduced expression defining $w$.

On the other hand, consider a morphism $(\beta,w,\alpha)$ in $\mathcal C$ and a reduced expression $w=s_{\beta_{n}}\cdots s_{\beta_{1}}$, where all $\beta_{i}\in \Delta$. Let $w_{i}:=s_{\beta_{i}}\cdots s_{\beta_{1}}$ and $\alpha_{i}:=w_{i}(\alpha)$ for $i=0,\ldots, n$. From Proposition \ref{1.3e} again, we see that $c: \alpha=\alpha_{0}\lessdot \ldots \lessdot \alpha_{n}=\beta$ is a maximal chain from $\alpha$ to $\beta$ in $(\Phi ,\leq)$.

 It is clear that the maps of the  two previous paragraphs define inverse bijections as  required.
\end{proof}

\begin{prop} \label{1.3f} For all $\alpha,\beta\in \Phi$ and $w\in W$, one has $(\beta,w,\alpha)\in \mor (\mathcal C)$ if and only if  $\beta=w(\alpha)$ and $d(\beta)=d(\alpha)+2\ell(w)$ hold.
\end{prop}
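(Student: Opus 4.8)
The plan is to notice that both sides of the claimed equivalence already force $\beta = w(\alpha)$ (this is built into the definition of $\mor(\mathcal{C}')$, and appears explicitly on the right), so the real content is: \emph{given} $\beta = w(\alpha)$, the inversion condition $N(w) \subseteq V_\beta^{>0}$ holds if and only if $d(\beta) = d(\alpha) + 2\ell(w)$. I would prove this by induction on $\ell(w)$, the case $\ell(w) = 0$ (so $w = e$, $\beta = \alpha$, $N(w) = \emptyset$, $d(\beta) = d(\alpha)$) being immediate.

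For the inductive step, write $w = s_\gamma w'$ with $\gamma \in \Delta$ and $\ell(w) = \ell(w') + 1$, and set $\beta' := w'(\alpha) = s_\gamma(\beta)$. Two parallel reductions drive the argument. First, Corollary~\ref{cor:RedInv} gives $N(w) = \{\gamma\} \sqcup s_\gamma(N(w'))$, and since $s_\gamma \in O_B(V)$ we have $B(s_\gamma(\rho), \beta) = B(\rho, \beta')$ for every $\rho \in N(w')$; hence $N(w) \subseteq V_\beta^{>0}$ is equivalent to the conjunction ``$B(\gamma, \beta) > 0$ and $N(w') \subseteq V_{\beta'}^{>0}$''. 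Second, using $B(\gamma, \beta) = -B(\gamma, \beta')$ (because $s_\gamma(\gamma) = -\gamma$) together with \eqref{eq1.3.1}, I would show that $d(\beta) = d(\alpha) + 2\ell(w)$ is equivalent to ``$B(\gamma, \beta) > 0$ and $d(\beta') = d(\alpha) + 2\ell(w')$''. Matching these two reductions term by term and invoking the inductive hypothesis for $(\beta', w', \alpha)$ (which has $\ell(w') < \ell(w)$) then closes the induction.

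The delicate point, and the one I expect to be the main obstacle, is the forward half of the second reduction: from $d(\beta) = d(\alpha) + 2\ell(w)$ I must extract \emph{both} that $B(\gamma,\beta) > 0$ and that $d(\beta') = d(\alpha) + 2\ell(w')$ exactly. Here I would use the bound $d(\beta') = d(w'(\alpha)) \leq d(\alpha) + 2\ell(w')$ from the corollary to Proposition~\ref{1.6}, combined with $d(\beta) - d(\beta') \in \{-2, 0, 2\}$ (Proposition~\ref{1.6} with $X = \mathbb{R}^+$, where $\mathbb{R} \setminus (X \cup -X) = \{0\}$). These squeeze $d(\beta) = d(\alpha) + 2\ell(w) \leq d(\beta') + 2 \leq d(\alpha) + 2\ell(w') + 2 = d(\beta)$, forcing equality throughout: $d(\beta') = d(\alpha) + 2\ell(w')$ and $d(\beta) = d(\beta') + 2$. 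The latter, via \eqref{eq1.3.1}, yields $B(\gamma, \beta') < 0$, hence $B(\gamma, \beta) = -B(\gamma,\beta') > 0$, exactly as the reduction requires.

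As a sanity check and an alternative route for the ``only if'' direction, I note that if $(\beta, w, \alpha) \in \mor(\mathcal{C})$ then any reduced expression of $w$ corresponds, under the bijection of Corollary~\ref{1.4}, to a maximal chain from $\alpha$ to $\beta$ in $(\Phi, \leq)$ with exactly $\ell(w)$ covering steps, so \eqref{eq1.3.3} gives $d(\beta) = d(\alpha) + 2\ell(w)$ directly. The inductive argument above has the advantage of establishing both implications uniformly and without separately invoking the chain correspondence.
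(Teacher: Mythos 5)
Your proof is correct, but it takes a genuinely different route from the paper's. The paper handles the two directions separately and leans on the chain machinery: for the ``only if'' direction it invokes the bijection of Corollary~\ref{1.4} between reduced expressions of $w$ and maximal chains from $\alpha$ to $\beta$ in $(\Phi,\leq)$, together with \eqref{eq1.3.3}; for the ``if'' direction it fixes a reduced expression $w=s_{\beta_n}\cdots s_{\beta_1}$, squeezes all intermediate values at once ($d(\alpha_i)\leq d(\alpha)+2i$ from one end and $d(\alpha_i)\geq d(\beta)-2\bigl(\ell(w)-\ell(w_i)\bigr)$ from the other) to force $d(\alpha_i)=d(\alpha)+2i$, concludes that the $\alpha_i$ form a maximal chain, and passes back through Corollary~\ref{1.4}. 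You bypass Corollary~\ref{1.4} entirely and work directly with the defining condition $N(w)\subseteq V_{\beta}^{>0}$: the decomposition $N(w)=\{\gamma\}\sqcup s_\gamma(N(w'))$ from Corollary~\ref{cor:RedInv}, plus $W$-invariance of $B$, converts membership in $\mor(\mathcal C)$ into a one-step recursion, and your local squeeze (the bound $d(\beta')\leq d(\alpha)+2\ell(w')$ from the corollary to Proposition~\ref{1.6}, played against $d(\beta)-d(\beta')\in\{-2,0,2\}$ and \eqref{eq1.3.1}) is exactly the step-at-a-time version of the paper's global squeeze. What each buys: your single induction establishes both implications uniformly and is slightly more elementary and self-contained, needing only Corollary~\ref{cor:RedInv}, Proposition~\ref{1.6} and \eqref{eq1.3.1}; the paper's version is shorter given that Corollary~\ref{1.4} is already in place, and that chain interpretation pays for itself later anyway (it is reused in \S\ref{1.5a} and in Proposition~\ref{2.60}). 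Your closing sanity check is apt: the alternative ``only if'' argument you sketch via Corollary~\ref{1.4} and \eqref{eq1.3.3} is precisely the paper's own proof of that direction.
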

\begin{proof} Suppose $(\beta,w,\alpha)\in \mor(\mathcal C)$. Then by the proof of Corollary \ref{1.4}, there is a maximal chain from $\alpha$ to $\beta$ in $(\Phi,\leq)$ of length $l(w)$. By  \eqref{eq1.3.3}, this maximal chain necessarily has length $\frac{1}{2}(d(\beta)-d(\alpha))$, proving that $d(\beta)=d(\alpha)+2\ell(w)$.

On the other hand, suppose that $\beta=w(\alpha)$ where $d(\beta)=d(\alpha)+2\ell(w)$. Choose a reduced expression $w=s_{\beta_{n}}\cdots s_{\beta_{1}}$ where each $\beta_{i}\in \Delta$.  Let $w_{i}:=s_{\beta_{i}}\cdots s_{\beta_{1}}$ and $\alpha_{i}:=w_{i}\alpha$ for $i:=0,\ldots, n$. Then $d(\alpha_{i})\leq d(\alpha)+2\ell(w_{i})=d(\alpha)+2i$ and  $d(\alpha_{i})=d(w_{i}w^{-1}\beta)\geq d(\beta)-2\ell(w_{i}w^{-{1}})=d(\beta)-2(\ell(w)-\ell(w_{i})=d(\alpha)+2i$. Hence $d(\alpha_{i})=d(\alpha)+2i$ and it follows from  \S\ref{1.3} that $\alpha=\alpha_{0}\lessdot \alpha_{1}\lessdot\ldots \lessdot \alpha_{n}=\beta$ is  a maximal chain from $\alpha$ to $\beta$ in $(V,\leq)$. By the proof of  Corollary \ref{1.4}, $(\beta,w,\alpha)\in \mor(\mathcal C)$.
\end{proof}

\subsection{Paths in weak order  and  length functions}\label{1.5a}   We introduce more systematic notation for certain invariants attached to paths  in the poset $(\Phi,\leq)$. 

Let  $p:\alpha=\alpha_{0}\lessdot \alpha_{1}\lessdot\ldots\lessdot  \alpha_{n}=\beta$ be a maximal chain in the interval 
$$
[\alpha,\beta]:=\{\gamma\in \Phi\,|\, \alpha\leq \gamma\leq \beta\}
$$ 
in $(\Phi,\leq)$. Let $\beta_{i}\in \Delta$ with $\alpha_{i}=s_{\beta_{i}}(\alpha_{i-1})$.  
Then  $(\alpha_{i},s_{\beta_{i}},\alpha_{i-1})$ is in~$\mor(\mathcal C)$ for $i=1,\ldots, n$ with composite 
$$
(\alpha_{n},s_{\beta_{n}},\alpha_{n-1})\cdots (\alpha_{1},s_{\beta_{1}},\alpha_{0})=(\beta,w,\alpha)\in\mor(\mathcal C)
$$
 where $w:=s_{\beta_{n}}\cdots s_{\beta_{1}}\in W$. We have $n=\ell(w)$ by Proposition~\ref{1.3e}. 
 
 For $i:=1,\ldots, n$, let   $c_{i}:=B(\beta_{i},\alpha_{i})\in \mathbb R^+$,  
$$
w_{i}:=s_{\beta_{i}}\cdots s_{\beta_{1}}\quad \textrm{ and }\quad 
 \gamma_{i}:=ww_{i}^{-1}(\beta_{i})=s_{\beta_{n}}\ldots s_{\beta_{i+1}}(\beta_{i})\in \Phi^{+} .
 $$ 
 We call $\mathcal L(p):=n$ the \emph{length} of $p$ and $\mathcal L_{W}(p):=w$ the \emph{$W$-length} of $p$.
Define the \emph{simple root label} $\Delta(p):=(\beta_{n},\ldots, \beta_{1})\in \Delta^{n}$, the \emph{root label} $\Phi(p)=(\gamma_{n},\ldots, \gamma_{1})\in \Phi^{n}$ and the  \emph{numerical label} $c(p):=(c_{n},\ldots, c_{1})\in (\mathbb R^+ )^{n}$. We record the following simple relationships amongst these invariants for reference.
\begin{equation}\label{eq2.12.1} w=s_{\beta_{n}}\ldots s_{\beta_{1}}=s_{\gamma_{1}}\cdots s_{\gamma_{n}}, \qquad n=\ell(w),
\qquad N(w)=\{\gamma_{1},\ldots, \gamma_{n}\}.
\end{equation}
\begin{equation} \label{eq2.12.2} 
 \alpha_{i}=w_{i}(\alpha)=w_{i}w^{-1} (\beta), \quad i=0,\ldots, n
\end{equation}
\begin{equation}\label{eq2.12.3}
 \beta_{i}=w_{i}w^{-1}(\gamma_{i}), \quad 
c_{i}=B(\alpha_{i},\beta_{i})=B(\beta,\gamma_{i}), \quad i=1,\ldots, n.
\end{equation}
\begin{equation}\label{eq2.12.4} 
s_{\beta}=ws_{\alpha}w^{-1},\qquad  d(\beta)=d(\alpha)+2\ell(w).
\end{equation}
\begin{equation}\label{eq2.12.5} 
\text{\rm  $ N(s_{\beta})=N(w)\sqcup w(N(s_{\alpha}))\sqcup -s_{\beta} (N(w))$ if $\alpha\in \Phi^{+}  $.}
\end{equation}
\begin{equation}\label{eq2.12.6} 
\text{\rm $w (N(s_{a}))=\{\beta\}$ if  $\alpha\in \Delta$.}
\end{equation}

\begin{cor} \label{1.7} Consider $\alpha\leq \beta$ in  $(\Phi,\leq)$. Fix  a  path $p$ from $\alpha$ to $\beta$  and write its numerical label as   $\Delta(p)=(c_{1},\ldots, c_{n})$.  Also let $(\beta,w,\alpha)$, where $w\in W$,  be in $\mor_{\mathcal C}(\alpha,\beta)$ (e.g. $w=\mathcal L_{W}(p)$ is the $W$-length of $p$). Then for any $X\subseteq \mathbb R^+$, 
\begin{equation*}
\vert  \{ \gamma\in N(w)\,|\, B(\gamma,\beta)\in X\}\vert =\vert\{i\,|\, 1\leq i\leq n, c_{i}\in X\}\vert =\frac{d_{X}(\beta)-d_{X}(\alpha)}{2}.
\end{equation*}  
In particular, the two cardinalities  depend only on $\alpha$ and $\beta$, and 
for any  $\alpha\leq \beta$ in~$\Phi$, one has $d_{X}(\alpha)\leq d_{X}(\beta)$.
\end{cor}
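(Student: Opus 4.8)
The plan is to reduce both equalities to a single telescoping computation along the chosen path $p$, using the recursion for $d_{X}$ in Proposition~\ref{1.6}, and then to transfer the conclusion from the distinguished element $\mathcal L_{W}(p)$ to an arbitrary morphism $(\beta,w,\alpha)$ by means of Corollary~\ref{1.4}. First I would dispose of the left-hand equality in the special case $w=\mathcal L_{W}(p)$. By~\eqref{eq2.12.1} the inversion set is $N(w)=\{\gamma_{1},\ldots,\gamma_{n}\}$, and since $\vert N(w)\vert=\ell(w)=n$ these roots are pairwise distinct. By~\eqref{eq2.12.3} together with the symmetry of $B$ one has $B(\gamma_{i},\beta)=B(\beta,\gamma_{i})=c_{i}$, so $\gamma_{i}$ lies in $\{\gamma\in N(w)\mid B(\gamma,\beta)\in X\}$ precisely when $c_{i}\in X$. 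Hence $\vert\{\gamma\in N(w)\mid B(\gamma,\beta)\in X\}\vert=\vert\{i\mid c_{i}\in X\}\vert$ for this $w$.

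Next I would prove the right-hand equality by computing the increment of $d_{X}$ at each covering and summing. At the $i$-th step $\alpha_{i-1}\lessdot\alpha_{i}=s_{\beta_{i}}(\alpha_{i-1})$ with $\beta_{i}\in\Delta$, the definition of $\lessdot$ gives $B(\beta_{i},\alpha_{i-1})<0$; since $s_{\beta_{i}}$ preserves $B$ one has $c_{i}=B(\beta_{i},\alpha_{i})=-B(\beta_{i},\alpha_{i-1})>0$, so $B(\beta_{i},\alpha_{i-1})=-c_{i}$. The step where one must be careful with signs is reading off the correct case of Proposition~\ref{1.6} applied to $d_{X}(s_{\beta_{i}}(\alpha_{i-1}))$: because $-c_{i}<0$ can never lie in $X\subseteq\mathbb R^{+}$, the value $B(\beta_{i},\alpha_{i-1})=-c_{i}$ falls into the second case ($d_{X}$ unchanged) when $c_{i}\notin X$ and into the third case ($-c_{i}\in -X$, so $d_{X}$ increases by $2$) when $c_{i}\in X$. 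Thus $d_{X}(\alpha_{i})-d_{X}(\alpha_{i-1})$ equals $2$ if $c_{i}\in X$ and $0$ otherwise, and telescoping $\sum_{i=1}^{n}\bigl(d_{X}(\alpha_{i})-d_{X}(\alpha_{i-1})\bigr)=d_{X}(\beta)-d_{X}(\alpha)$ yields $d_{X}(\beta)-d_{X}(\alpha)=2\,\vert\{i\mid c_{i}\in X\}\vert$, which is exactly the second equality.

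Finally I would remove the restriction $w=\mathcal L_{W}(p)$. Given any $(\beta,w,\alpha)\in\mor_{\mathcal C}(\alpha,\beta)$, a reduced expression of $w$ corresponds by Corollary~\ref{1.4} to a maximal chain $p'$ from $\alpha$ to $\beta$ with $\mathcal L_{W}(p')=w$; applying the two equalities just proved to $p'$ shows that $\vert\{\gamma\in N(w)\mid B(\gamma,\beta)\in X\}\vert=\tfrac12\bigl(d_{X}(\beta)-d_{X}(\alpha)\bigr)$, a quantity manifestly independent of the choice of $w$ and of the path. Combined with the computation for the fixed path $p$, this gives all three equalities of the displayed formula for arbitrary $w$, shows that both cardinalities depend only on $\alpha$ and $\beta$, and, since a cardinality is non-negative, forces $d_{X}(\alpha)\leq d_{X}(\beta)$. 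The only genuine subtlety is the sign bookkeeping in invoking Proposition~\ref{1.6}; the rest is a telescoping sum and the dictionary between maximal chains and reduced words supplied by Corollary~\ref{1.4}.
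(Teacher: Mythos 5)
Your proposal is correct and follows essentially the same route as the paper: the second equality via Proposition~\ref{1.6} applied step by step along the chain (your telescoping sum is the paper's induction on $\mathcal L(p)$), the first equality for $w=\mathcal L_{W}(p)$ from \eqref{eq2.12.1} and \eqref{eq2.12.3}, and Corollary~\ref{1.4} to reduce an arbitrary morphism $(\beta,w,\alpha)$ to that case. Your sign bookkeeping in Proposition~\ref{1.6} (that $B(\beta_i,\alpha_{i-1})=-c_i<0$ lands in the third case exactly when $c_i\in X$) is exactly right, and spelling out explicitly why the path-independence of $\tfrac{1}{2}(d_X(\beta)-d_X(\alpha))$ justifies the paper's ``without loss of generality'' is a welcome clarification rather than a deviation.
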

\begin{proof} The second equality in the displayed equation follows from Proposition~\ref{1.6} by induction on $\mathcal L(p)$. To prove the first equality in that equation, one may, by  the proof of Corollary \ref{1.4}, assume without loss of generality that  $w=\mathcal L_{W }(p)$. Then the first equality is a consequence of~\eqref{eq2.12.1} and \eqref{eq2.12.3}.  The final statement of the corollary follows from the displayed equation.
\end{proof}

\begin{remark} \label{2.18} 
\begin{enumerate}[(a)] 
\item The corollary refines (with simpler proof)  unpublished results of the first author (M.~D) used  in his proof of \ref{thm:NSmall} and related results.

\item The corollary shows that, for all $X\subseteq \mathbb R^+ $, the function $d_{X}$ is monotonic non-decreasing with respect to weak order on $\Phi$. 
\end{enumerate}
\end{remark}

\subsection{Length functions and Bruhat order on root systems}\label{s3}
In view of  Remark~\ref{rem:Ideal}  , we assume now that $X$ is an order coideal  of $\mathbb R^+$: $X$ is either $\emptyset$, $\mathbb R^+ $ or an open or closed ray $[a,\infty[$ or $]a,\infty[$ for some $a>0$.
 
  The  following  lemma (in the case $X=[1,\infty[$) is needed for the proof of Proposition~\ref{prop:Increase}.

 \begin{lem}\label{1.9} 
\begin{enumerate}
\item Let $(\alpha,w,\alpha')$ be a morphism in $\mathcal C$. Then
\begin{equation*}
d_{X}(s_{\alpha}(\beta))-d_{X}(\beta)\geq d_{X}(s_{\alpha'}(\beta'))-d_{X}(\beta')
\end{equation*}  
for all  $\beta,\beta'\in \Phi$ such that  $\beta=w(\beta')$ and   $B(\alpha,\beta)\leq 0$.
\item Let  $\beta\in \Phi$, $\alpha\in \Phi^{+}  $  with $B(\alpha,\beta)\leq 0$.  Then $d_{X}(s_{\alpha}(\beta))\geq  d_{X}(\beta)$ with strict inequality if $B(\alpha,\beta)\in -X$. 
\end{enumerate} 
\end{lem}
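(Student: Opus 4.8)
The plan is to reduce everything to the single-reflection recursion of Proposition~\ref{1.6}. For a simple root $\gamma\in\Delta$ and any $\mu\in\Phi$ that proposition reads $d_{X}(s_{\gamma}(\mu))=d_{X}(\mu)+\eta(B(\gamma,\mu))$, where $\eta(t)=-2,0,+2$ according as $t\in X$, $t\in\mathbb R\setminus(X\cup -X)$, or $t\in -X$. The observation I want to isolate first is that, because $X$ is an order coideal of $\mathbb R^{+}$, the function $\eta$ is well defined (as $X\cap -X=\emptyset$) and non-increasing on $\mathbb R$: larger inner products give smaller increments of $d_{X}$. This monotonicity is exactly where the coideal hypothesis enters, and it is the lever for both parts.

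For part (1) I would induct on $\ell(w)$, the case $\ell(w)=0$ being the trivial equality $\alpha=\alpha'$, $\beta=\beta'$. For the inductive step I factor $w=s_{\sigma}w_{1}$ reduced with $\sigma\in\Delta$ and use Proposition~\ref{1.3e}(4) to split $(\alpha,w,\alpha')$ into morphisms $(\alpha,s_{\sigma},\delta)$ and $(\delta,w_{1},\alpha')$ with $\delta=s_{\sigma}(\alpha)=w_{1}(\alpha')$; being a morphism forces $B(\sigma,\alpha)>0$, hence $B(\sigma,\delta)<0$ and $s_{\alpha}=s_{\sigma}s_{\delta}s_{\sigma}$. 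Setting $\tilde\beta:=s_{\sigma}(\beta)=w_{1}(\beta')$ I check that the hypothesis is preserved, $B(\delta,\tilde\beta)=B(\alpha,\beta)\le 0$, so the inductive hypothesis gives the inequality for the pair $(\delta,w_{1},\alpha')$. It then remains to compare $d_{X}(s_{\alpha}(\beta))-d_{X}(\beta)$ with $d_{X}(s_{\delta}(\tilde\beta))-d_{X}(\tilde\beta)$; writing $\rho:=s_{\delta}(\tilde\beta)$ so that $s_{\alpha}(\beta)=s_{\sigma}(\rho)$ and applying the recursion at $\sigma$ twice, this difference equals $\eta(B(\sigma,\rho))-\eta(B(\sigma,\tilde\beta))$. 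The crux is the computation $B(\sigma,\rho)=B(s_{\delta}(\sigma),\tilde\beta)=B(\sigma,\tilde\beta)+c\,B(\delta,\tilde\beta)$ with $c=-2B(\sigma,\delta)>0$, which together with $B(\delta,\tilde\beta)\le 0$ gives $B(\sigma,\rho)\le B(\sigma,\tilde\beta)$; monotonicity of $\eta$ then makes the difference nonnegative, and chaining with the inductive hypothesis closes the step.

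For part (2) I would deduce it from (1) by straightening $\alpha$ to a simple root. Since $\alpha\in\Phi^{+}$, reducing depth by simple reflections (Proposition~\ref{prop:Dep}) produces a maximal chain in $(\Phi,\le)$ from some $\alpha'\in\Delta$ up to $\alpha$, which by Corollary~\ref{1.4} is realized by a morphism $(\alpha,w,\alpha')\in\mor(\mathcal C)$ with $w(\alpha')=\alpha$. Putting $\beta':=w^{-1}(\beta)$ I get $\beta=w(\beta')$ and, crucially, $B(\alpha',\beta')=B(\alpha,\beta)\le 0$, so part (1) applies and bounds $d_{X}(s_{\alpha}(\beta))-d_{X}(\beta)$ below by $d_{X}(s_{\alpha'}(\beta'))-d_{X}(\beta')$. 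The latter is just $\eta(B(\alpha',\beta'))$ by the base recursion, which is $\ge 0$ since $B(\alpha',\beta')\le 0$ and equals $+2$ exactly when $B(\alpha,\beta)\in -X$; this yields both the inequality and its strict form.

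The step I expect to be the main obstacle is the single-reflection comparison inside part (1): recognizing that the two ``difference'' quantities differ by $\eta(B(\sigma,\rho))-\eta(B(\sigma,\tilde\beta))$ and controlling its sign. This needs the conjugation identity $s_{\alpha}=s_{\sigma}s_{\delta}s_{\sigma}$, the propagation of the sign hypothesis $B(\delta,\tilde\beta)=B(\alpha,\beta)\le 0$ through the induction, and the monotonicity of $\eta$ coming from $X$ being a coideal. Everything else is bookkeeping with the root category $\mathcal C$ and Proposition~\ref{1.6}.
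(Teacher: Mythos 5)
Your proof is correct and follows essentially the same route as the paper's: your induction on $\ell(w)$ in part (1) is the paper's observation that the predicate is closed under composition of morphisms in $\mathcal C$, reducing to the single-simple-reflection case, where your comparison $B(\sigma,\rho)\leq B(\sigma,\tilde\beta)$ is, after applying $s_{\sigma}$, exactly the paper's inequality $B(\gamma,s_{\alpha}(\beta))\geq B(\gamma,\beta)$ fed into the monotone increment of Proposition~\ref{1.6} (your function $\eta$, whose monotonicity indeed rests on $X$ being a coideal). Part (2) --- straightening $\alpha$ to a simple root via a morphism $(\alpha,w,\alpha')$ with $\alpha'\in\Delta$, transporting $\beta$ to $\beta'=w^{-1}(\beta)$ with $B(\alpha',\beta')=B(\alpha,\beta)\leq 0$, and invoking (1) plus the base recursion for the strictness claim --- is verbatim the paper's argument.
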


\begin{proof} Note that in {\em (1)}, $\alpha=w(\alpha')$  so $B(\alpha,\beta)=B(\alpha',\beta')$ for $\beta,\beta'$ as there.  By a  predicate, we mean here  a function which attaches to each element of its domain  a truth value. Consider the predicate which attaches to any morphism $(\alpha,w,\alpha')$ in $\mathcal C$ the truth value of the second sentence  in {\em (1)}.    It is easy to check that this predicate   is true on identity morphisms and that if it is true on each of two composable morphisms in $\mathcal C$, it is true on their composite. Since every (non-identity) morphism in $\mathcal C$ is a composite of morphisms $(\alpha,s,\alpha')$ with $s\in S$, we may assume without loss of generality that $w=s_{\gamma}$ for some $\gamma\in \Delta$. 

 Rearranging the equation in  {\em (1)}, we see now that it  holds  if and only if    
\begin{equation}\label{1.9.1}
d_{X}(s_{\alpha}(\beta))-d_{X}(s_{\gamma}s_{\alpha}(\beta))\geq d_{X}(\beta)-d_{X}(s_{\gamma}(\beta))
\end{equation}
for all $\alpha,\beta\in \Phi$ and $\gamma\in \Delta$ with $B(\alpha,\beta)\leq 0$ and $B(\gamma,\alpha)>0$. We have $s_{\alpha}(\beta)=\beta+c\alpha$ where $c:=-2B(\alpha,\beta)\geq 0$, so $B(\gamma,s_{\alpha}(\beta))=B(\gamma,\beta)+cB(\gamma,\alpha)\geq B(\gamma,\beta)$.  Then~Equation~\eqref{1.9.1} follows by computing both sides  by~Proposition \ref{1.6} using $B(\gamma,s_{\alpha}(\beta))\geq B(\gamma,\beta)$. This completes the proof of {\em (1)}. 
\smallskip

Now let $\alpha,\beta$ be as in {\em (2)}. Since $\alpha\in \Phi^{+}  $, we may choose a morphism $(\alpha,w,\alpha')$ in $\mathcal C$ with $\alpha'\in \Delta$, and set $\beta'=w^{-1}(\beta)$. By {\em (1)}, $d_{X}(s_{\alpha}(\beta))-d_{X}(\beta)\geq d_{X}(s_{\alpha'}(\beta'))-d_{X}(\beta')$. But since $B(\alpha',\beta')\leq 0$, Proposition~\ref{1.6} implies that  $d_{X}(s_{\alpha'}(\beta'))-d_{X}(\beta')\geq 0$  with strict inequality if  $B(\alpha',\beta')\in -X$ i.e. if $B(\alpha,\beta)\in -X$. This proves {\em (2)}.
 \end{proof}

 The remaining results of this subsection are not required in the proof of  Proposition \ref{prop:Increase}. They provide a more conceptual interpretation of Lemma \ref{1.9}(2) 
as a monotonicity property of the length functions $d_{X}$ on $\Phi$, in terms  of  a second  partial order $\leq'$ on $\Phi$ which we now define.  

\begin{defi}
\begin{enumerate}[(i)]
\item  Let $\lessdot'$ be the relation on $\Phi$ defined by $\alpha\lessdot' \beta$ if there is $\gamma\in \Phi^{+}  $ such that $c:=B(\gamma,\alpha)<0$ and $\beta=s_{\gamma}(\alpha)$.  This implies that $\beta-\alpha=-c\gamma$,  so $\gamma$ and $c$ are uniquely determined by $\alpha$ and $\beta$.   
\item Define a preorder (reflexive transitve relation)   $\leq'$ on $\Phi $ as the reflexive, transitive closure of the relation $\lessdot'$.  
\end{enumerate}
\noindent If $\alpha\leq '\beta$, then $\beta-\alpha\in \cone(\Phi)$. Since $\cone(\Phi)\cap -\cone(\Phi)=\{0\}$, it follows that~$\leq'$ is anti-symmetric i.e. it is a partial order, which we call  the \emph{Bruhat order} on $\Phi $.
\end{defi}

\begin{remark} 
\begin{enumerate}[(a)]
\item Note that $\lessdot'$ is not the covering relation of $\leq'$ in general. 

\item The Bruhat order is stronger than weak order, in the sense that $\alpha\leq \beta$ implies  $\alpha\leq'\beta$, for any $\alpha,\beta\in \Phi $.  

\item The assertions of \ref{1.3h}(2), excluding those in the  last sentence there, still hold with ``weak order'' replaced by ``Bruhat order'' and ``$\leq$'' replaced by ``$\leq'$''. 

\item  Analogues of weak and Bruhat orders on $W$-orbits on $V$ will be  discussed more systematically in relation to orders on $W$   by the first author (M.D.) elsewhere.
\end{enumerate}
\end{remark}

Length functions have the following  interesting behavior in relation to the Bruhat order on $\Phi$.

\begin{thm} \label{1.10} Let  $W'$ be a reflection subgroup of $W$, and  $\leq'$ (resp., $\leq'_{W'}$) denote Bruhat order on $\Phi$ (resp., $\Phi_{W'}$).
\begin{enumerate}
\item If $\beta,\beta'\in \Phi$ and  $\beta\leq'\beta'$, then $d_{X}(\beta)\leq d_{X}(\beta')$.
\item If $\beta,\beta'\in \Phi_{W'}$ and $\beta\leq'_{W'}\beta'$, then $ \beta\leq'\beta'$ and    
$$
0\leq d_{W',X}(\beta')-d_{W',X}(\beta)\leq d_X(\beta')-d_{X}(\beta).
$$ 
\item  If $\beta'\in \Phi_{W'}^{+}$, then $0\leq d_{W',X}(\beta')\leq d_{X}(\beta')$.
\end{enumerate}
\end{thm}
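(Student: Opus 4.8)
The plan is to take the three parts in the order (1), (3), (2), since the last is by far the hardest and reuses the other two. For part (1), since $\leq'$ is by definition the reflexive transitive closure of $\lessdot'$, it suffices to check $d_X(\alpha)\leq d_X(\delta)$ whenever $\alpha\lessdot'\delta$; such a covering means $\delta=s_\gamma(\alpha)$ for some $\gamma\in\Phi^+$ with $B(\gamma,\alpha)<0$, which is exactly Lemma~\ref{1.9}(2) (with its ``$\alpha$'' taken to be $\gamma\in\Phi^{+}$ and its ``$\beta$'' our $\alpha$, the hypothesis $B(\gamma,\alpha)\leq0$ holding). Transitivity then gives all of $\leq'$. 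For part (3), I would just unwind definitions: writing $\beta'\in\Phi_{W'}^{+}$ and recalling from \S\ref{sse:RefSub} that $N_{W'}(s_{\beta'})=N(s_{\beta'})\cap\Phi_{W'}\subseteq N(s_{\beta'})$, with the form on $\Phi_{W'}$ being the restriction of $B$, the set $\{\rho\in N_{W'}(s_{\beta'})\mid B(\rho,\beta')\in X\}$ counted by $d_{W',X}(\beta')$ is a subset of the set $\{\rho\in N(s_{\beta'})\mid B(\rho,\beta')\in X\}$ counted by $d_X(\beta')$, so $0\leq d_{W',X}(\beta')\leq d_X(\beta')$.

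For part (2) the relation $\beta\leq'\beta'$ is immediate, since every covering $\gamma\lessdot'_{W'}\delta$ is witnessed by a root of $\Phi^{+}_{W'}\subseteq\Phi^{+}$ and is therefore also a $\lessdot'$-covering, so a $\leq'_{W'}$-chain is a $\leq'$-chain. The left inequality $0\leq d_{W',X}(\beta')-d_{W',X}(\beta)$ is part (1) applied verbatim to the based root system $(\Phi_{W'},\Delta_{W'})$, using Remark~\ref{rem:Ideal}(a) to transport the machinery to $W'$. The only substantial point is the right inequality, which I would rephrase as the monotonicity in $\leq'_{W'}$ of the ``external'' length $e:=d_X-d_{W',X}$ on $\Phi_{W'}$; note $e(\delta)=|\{\rho\in N(s_\delta)\setminus\Phi_{W'}\mid B(\rho,\delta)\in X\}|$ for $\delta\in\Phi^{+}_{W'}$, which is why part (3) already gives $e\geq0$ on positive roots. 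By transitivity it suffices to prove the single-step inequality $e(s_\gamma(\alpha))\geq e(\alpha)$ for $\gamma\in\Phi^{+}_{W'}$, $\alpha\in\Phi_{W'}$ with $B(\gamma,\alpha)<0$.

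To prove this single step I would induct on $\ell_{W'}(s_\gamma)$. In the base case $\gamma\in\Delta_{W'}$: Proposition~\ref{1.6} applied inside $W'$ evaluates the internal increment $d_{W',X}(s_\gamma(\alpha))-d_{W',X}(\alpha)$ to be $2$ if $B(\gamma,\alpha)\in-X$ and $0$ otherwise (it cannot lie in $X$, as $B(\gamma,\alpha)<0$ and $X\subseteq\mathbb R^{+}$), whereas Lemma~\ref{1.9}(2) gives $d_X(s_\gamma(\alpha))-d_X(\alpha)\geq0$, with an increment that is even and hence $\geq2$ precisely when $B(\gamma,\alpha)\in-X$; subtracting yields $e(s_\gamma(\alpha))\geq e(\alpha)$. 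For the inductive step I would choose a canonical simple reflection $s_\eta$ of $W'$ (so $\eta\in\Delta_{W'}$) reducing $\gamma$, i.e.\ $\gamma'':=s_\eta(\gamma)\in\Phi^{+}_{W'}$ with $\ell_{W'}(s_{\gamma''})=\ell_{W'}(s_\gamma)-2$, and transport the inequality for $\gamma''$ (known by induction, since $B(\gamma'',s_\eta(\alpha))=B(\gamma,\alpha)\le0$) up to $\gamma$ through $s_\gamma=s_\eta s_{\gamma''}s_\eta$, in the manner of the proof of Lemma~\ref{1.9}(1).

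The main obstacle I anticipate is exactly this transport, where an asymmetry enters: Proposition~\ref{1.6} governs $d_{W',X}$ cleanly under reflection by the $W'$-simple root $\eta$, but $\eta$ is in general only a positive, \emph{non-simple}, root of $W$, so there is no matching recurrence for $d_X$ under $s_\eta$, and the ``$d_X(\mu)-d_X(s_\eta\mu)$ is monotone in $B(\eta,\mu)$'' argument driving Lemma~\ref{1.9}(1) is unavailable for $d_X$. My plan to overcome this is to prove a relative analogue of Lemma~\ref{1.9}(1) directly for $e$, reducing as in that lemma to a single $W'$-simple reflection and comparing numerical labels $B(\eta,\cdot)$, and to absorb the non-simplicity of $\eta$ on the $d_X$ side by a further nested application of Lemma~\ref{1.9}. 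As a fallback I would use Remark~\ref{rem:Ideal}(b) to reduce to $X$ a single order coideal and argue combinatorially with the pairing $\rho\mapsto-s_\delta(\rho)$ of the external inversions $N(s_\delta)\setminus\Phi_{W'}$, which carries a constant value $B(\rho,\delta)$ along each pair and so respects the $X$-condition.
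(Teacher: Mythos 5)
Parts (1) and (3) of your proposal are sound, and in fact your part (3) is \emph{more} direct than the paper's: you get $0\leq d_{W',X}(\beta')\leq d_X(\beta')$ immediately from $N_{W'}(s_{\beta'})=N(s_{\beta'})\cap\Phi_{W'}\subseteq N(s_{\beta'})$, whereas the paper derives (3) from (2) by taking $\beta:=-\beta'$. The reduction of (2) to the single-step inequality for a covering $\beta\lessdot'_{W'}\beta'=s_\gamma(\beta)$, $\gamma\in\Phi^+_{W'}$, $B(\gamma,\beta)<0$, is also correct, and your base case $\gamma\in\Delta_{W'}$ is a nice self-contained argument (Proposition~\ref{1.6} inside $W'$ pins the internal increment at $0$ or $2$, Lemma~\ref{1.9}(2) plus the parity corollary gives the external increment $\geq 0$ or $\geq 2$ accordingly) — it is even slightly stronger than the paper's base case, which only treats $\alpha\in\Delta\cap\Phi^+_{W'}$.

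But the inductive step is a genuine gap, and you have correctly diagnosed where: your induction on $\ell_{W'}(s_\gamma)$ forces you to conjugate by $\eta\in\Delta_{W'}$, which is generically \emph{not} simple in $W$, and there is then no recurrence controlling $d_X$ under $s_\eta$. The ``relative analogue of Lemma~\ref{1.9}(1) for $e$'' that you would need amounts to $e(s_\gamma\alpha)-e(s_\eta s_\gamma\alpha)\geq e(\alpha)-e(s_\eta\alpha)$; expanding, the $d_{W',X}$ part follows from \eqref{1.9.1} applied inside $W'$, but the $d_X$ part requires a \emph{two-sided} comparison of $d_X$-increments under the non-simple reflection $s_\eta$, and Lemma~\ref{1.9} only supplies one-sided monotonicity under a sign hypothesis — a ``further nested application'' of it does not produce the missing direction, and the fallback pairing $\rho\mapsto -s_\delta(\rho)$ of external inversions organizes a count but constructs no injection between the relevant sets for $\beta$ and $\beta'$. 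Note also that you cannot sidestep the induction: since Bruhat order on $\Phi$ is strictly stronger than weak order, coverings by arbitrary $\gamma\in\Phi^+_{W'}$ do not factor through $\Delta_{W'}$-coverings. The paper's proof avoids your obstacle by a different induction, on $d(\alpha)=\ell(s_\alpha)$ in the \emph{ambient} group $W$ (where $\alpha$ is the reflecting root): it picks $\gamma\in\Delta$ with $s_\gamma(\alpha)\lessdot\alpha$ in the weak order on $\Phi$ and — this is the idea your scheme is missing — lets the reflection subgroup vary, replacing $W'$ by $W'':=s_\gamma W's_\gamma$. If $\gamma\in\Phi_{W'}$ then $W''=W'$ with $\gamma\in\Delta_{W'}$, and Proposition~\ref{1.6} applies simultaneously to $W$ and $W'$, giving \emph{equal} increments on both sides; if $\gamma\notin\Phi_{W'}$, then $\beta''\mapsto s_\gamma(\beta'')$ is a $B$-preserving bijection $\Phi_{W'}\to\Phi_{W''}$ matching canonical simple systems, so $d_{W'',X}\circ s_\gamma=d_{W',X}$ transports the internal side exactly, while \eqref{1.9.1} (valid because $\gamma\in\Delta$) handles $d_X$. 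All conjugations thus stay simple in $W$, which is exactly the control your route lacks.
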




\begin{proof}[Proof of Theorem~\ref{1.10}] Part {\em (1)} follows from Lemma \ref{1.9}{\em (2)} and the definition of Bruhat order $\leq'$ on $\Phi$.  Part {\em (3)} follows from {\em (2)} by taking $\beta:=-\beta'$, so $\beta\leq'_{W'}\beta'$ since  $\beta'\in \Phi_{W'}^{+}$, and  dividing the resultant inequalities in {\em (2)} by $2$.

It remains to prove (b). By definition of $\leq'_{W'}$, it is sufficient to do this in the case that $\beta\lessdot'_{W'}\beta'$ i.e. when  $\beta'=s_\alpha(\beta)$ for some $\alpha\in \Phi_{W'}^{+}$ with $\mpair{\alpha,\beta}<0$. This immediately implies $\beta\lessdot'\beta'$, so $\beta\leq' \beta'$. One has $0\leq d_{W',X}(\beta')-d_{W',X}(\beta)$ by {\em (1)} applied to $W'$. To complete the proof of {\em (2)}, we shall  show  by induction on $n\in \mathbb N$ that  for all reflection subgroups $W'$ of $W$ and all $\beta\in \Phi_{W'}$ and $\alpha\in \Phi_{W'}^{+}$ with $B(\alpha,\beta)<0$ and $d(\alpha)=2n+1$, one has  \begin{equation}\label{1.10.1}
 d_{W',X}(s_{\alpha}(\beta))-d_{W',X}(\beta)\leq d_X(s_{\alpha}(\beta))-d_{X}(\beta).
\end{equation}
 
If $n=0$, then $\alpha\in \Delta$. Since also $\alpha\in \Phi_{W'}^{+}$, we have $\alpha\in \Delta_{W'}^{+}$. Equality holds in~\eqref{1.10.1}, by computing both sides using Proposition~\ref{1.6} applied to $W'$ and $W$.

Assume now that $n>0$. Choose $\gamma\in \Delta$ so $\alpha':=s_{\gamma}(\alpha)\lessdot \alpha$. Then $d(\alpha')=2n-1>0$ so $\alpha'\in \Phi^{+}$. Define the reflection subgroup $W'':=s_{\gamma}W' s_{\gamma}$ of $W$  and the root $\beta':=s_{\gamma}(\beta)\in \Phi$.  One has $\alpha'\in \Phi_{W''}^{+}$ and $\beta'\in \Phi_{W''}$  with $B(\alpha',\beta')<0$ and  $d(\alpha')=2n-1$.  By induction, we have 
 $d_{W'',X}(s_{\alpha'}(\beta'))-d_{W'',X}(\beta')\leq d_X(s_{\alpha'}(\beta'))-d_{X}(\beta')$.
That is, 
 \begin{equation} \label{1.10.2} 
 d_{W'',X}(s_{\gamma}s_{\alpha}(\beta))-d_{W'',X}(s_{\gamma}(\beta) )\leq d_X(s_{\gamma}s_{\alpha}(\beta))-d_{X}(s_{\gamma}(\beta)).
\end{equation} 

We now consider two cases as follows. First, suppose that $\gamma\in \Phi_{W'}$. Then
$W''=W'$ and  $\gamma\in \Delta_{W'}$.  By Proposition~\ref{1.6} applied to both $W$ and $W'$, one has $d_{W',X}(s_{\gamma}(\beta''))-d_{W',X}(\beta'')=d_{X}(s_{\gamma}(\beta''))-d_{X}(\beta'')$ for all $\beta''\in \Phi_{W'}$.
Taking $\beta'':=s_{\alpha}(\beta)$ and $\beta'':=\beta$ in turn in this, we see that~\eqref{1.10.1} follows from \eqref{1.10.2}.

The second and final case is that in which $\gamma\not\in \Phi_{W'}$. Since $\gamma\in \Delta\setminus \Delta_{W'}$, we have $\Delta_{W''}=s_{\gamma}(\Delta_{W'})$. It follows that the map $\beta''\mapsto s_{\gamma}(\beta'')$ defines  a bijection  $\Phi_{W'}\to \Phi_{W''}$.  This bijection is  an order isomorphism $(\Phi_{W'},\leq_{W'})\to (\Phi_{W''},\leq_{W''})$ in the corresponding  weak orders,
 restricts to   bijections $\Delta_{W'}\to \Delta_{W''}$ and $\Phi_{W'}^{+}\to \Phi_{W''}^{+}$,
 and preserves bilinear forms in the sense $B(s_{\gamma}(\beta''),s_{\gamma}(\beta'''))=
B(\beta'',\beta''')$ for all $\beta'',\beta'''\in \Phi_{W'}$. It follows that $d_{W'',X}(s_{\gamma}(\beta''))=d_{W',X}(\beta'')$ for all $\beta''\in \Phi_{W'}$. Taking $\beta''$ equal to $s_{\alpha}(\beta)$ and to $\beta$ in turn shows that the left hand sides of  \eqref{1.10.2} and \eqref{1.10.1} are equal. On the other hand, by \eqref{1.9.1}, the right hand side of \eqref{1.10.2} is less than or equal to  the right hand side of \eqref{1.10.1}. Hence \eqref{1.10.1} also follows from \eqref{1.10.2} in this case. This completes the proof of {\em (2)} and of the theorem. 
 \end{proof}

\subsection{Consequences for $\infty$-depth and  dominance order}\label{s4} Henceforward, we take $X=[1,\infty)$. Recall that for $\alpha\in \Phi^{+}$ we have:
$$
d_{[1,\infty)}(\alpha)=2\dep_{\infty}(\alpha)+1=2\vert\dom(\alpha)\vert+1 ,
$$ 
where $\dom(\alpha)=\{\beta\in \Phi^+\,|\, \beta\prec \alpha\}$.  Dominance order and $\infty$-depth are related to~$(\Phi,\leq)$ as follows. 

\begin{prop}\label{2.60} Let $\beta\in \Phi^{+}$. Choose a path  $p:\alpha_{0}\lessdot \alpha_{1}\lessdot \ldots\lessdot \alpha_{n}=\beta$ in $(\Phi,\leq)$ where $\alpha_{0}\in \Delta$. Write $c(p):=(c_{n},\ldots, c_{1})$ and $\Phi(p)=(\gamma_{n},\dots, \gamma_{1})$. 
Then \begin{enumerate}
\item $\dom(\beta)=\{\gamma_{i}\,|\, 1\leq i\leq n,  c_{i}\geq 1\}$. 
\item $\dep_{\infty}(\beta)=\vert \{i\,|\, 1\leq i\leq n,  c_{i}\geq 1\}\vert$.
\item If $(\beta,y,\alpha)$ is in $\mor(\mathcal C)$ and $\alpha\in \Delta$, then 
$$
\dom(\beta)=\{\gamma\in N(y)\,|\, B(\gamma,\beta)\geq 1\}.
$$
\item   $\dom(\beta)=y(\dom( \alpha))\sqcup \{\gamma\in N(y)\,|\, B(\gamma,\beta)\geq 1\}$ if $(\beta,y,\alpha)\in \mor(\mathcal C)$ and $\alpha\in \Phi^{+}$.
 \item $\dom(\beta)=\{\gamma\in N(s_{\beta})\,|\, B(\gamma,\beta)\geq 1, \ell(s_{\gamma})<\ell(s_{\beta})\}$.
\end{enumerate}
\end{prop}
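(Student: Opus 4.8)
The plan is to deduce all five assertions from (i) the counting already contained in Corollary~\ref{1.7}, and (ii) one equivariance property of dominance which I would record first: \emph{for $v\in W$ and $\mu,\lambda\in\Phi^{+}$ with $v(\mu),v(\lambda)\in\Phi^{+}$, one has $\mu\preceq\lambda$ iff $v(\mu)\preceq v(\lambda)$.} This is immediate from the definition: for $x\in W$, since $v(\lambda)\in\Phi^{+}$ we have $v(\lambda)\in N(x)\iff x^{-1}v(\lambda)\in\Phi^{-}\iff(v^{-1}x)^{-1}(\lambda)\in\Phi^{-}\iff\lambda\in N(v^{-1}x)$, and similarly for $\mu$; letting $x$ range over $W$ (so that $v^{-1}x$ also ranges over $W$) identifies the containment defining $v(\mu)\preceq v(\lambda)$ with the one defining $\mu\preceq\lambda$. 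I would then prove the statements in the order (1) $\Rightarrow$ (2),(3) $\Rightarrow$ (4), and finally (5).

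For (1) I would induct on $n=\mathcal L(p)$. When $n=0$, $\beta=\alpha_{0}\in\Delta$ and both sides are empty ($\dep_{\infty}(\alpha_{0})=0$). For the step, write $\nu:=\beta_{n}\in\Delta$ and $\beta'':=\alpha_{n-1}=s_{\nu}(\beta)$; since $\alpha_{n-1}\lessdot\beta$ one has $B(\nu,\beta'')<0$, hence $B(\nu,\beta)=-B(\nu,\beta'')>0$. The labels of the truncated path $p''\colon\alpha_{0}\lessdot\cdots\lessdot\beta''$ are $\gamma_{i}',c_{i}'$ ($1\le i<n$), and those of $p$ satisfy $\gamma_{i}=s_{\nu}(\gamma_{i}')$ and $c_{i}=B(\beta,\gamma_{i})=B(\beta'',\gamma_{i}')=c_{i}'$ for $i<n$, while $\gamma_{n}=\nu$ and $c_{n}=B(\beta,\nu)>0$. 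By the induction hypothesis $\dom(\beta'')=\{\gamma_{i}'\mid c_{i}'\ge1\}\subseteq N(\mathcal L_{W}(p''))=\{\gamma_{1}',\dots,\gamma_{n-1}'\}$. Because every numerical label is positive (by \eqref{eq2.12.3} and $c_{i}=B(\beta_{i},\alpha_{i})\in\mathbb R^{+}$) while $B(\beta'',\nu)<0$, no $\gamma_{i}'$ equals $\nu$; thus $\nu\notin\dom(\beta'')$. The equivariance lemma applied with $v=s_{\nu}$ (legitimate, as $\delta,\beta\neq\nu$) gives $\{\delta\in\dom(\beta)\mid\delta\neq\nu\}=s_{\nu}(\dom(\beta''))=\{\gamma_{i}\mid c_{i}\ge1,\ i<n\}$. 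Finally Corollary~\ref{1.7} together with Proposition~\ref{prop:DepthLength} gives $\lvert\dom(\beta)\rvert=\dep_{\infty}(\beta)=\lvert\{i\le n\mid c_{i}\ge1\}\rvert$ and $\lvert\dom(\beta'')\rvert=\lvert\{i<n\mid c_{i}\ge1\}\rvert$; subtracting shows $\nu\in\dom(\beta)\iff c_{n}\ge1$, whence (1).

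Assertion (2) is then immediate, since the $\gamma_{i}$ are the $n$ distinct elements of $N(\mathcal L_{W}(p))$ (alternatively it is the case $X=[1,\infty)$ of Corollary~\ref{1.7}). For (3), given $(\beta,y,\alpha)\in\mor(\mathcal C)$ with $\alpha\in\Delta$, Corollary~\ref{1.4} turns a reduced word for $y$ into a maximal chain $p$ from $\alpha$ to $\beta$ with $\mathcal L_{W}(p)=y$, $N(y)=\{\gamma_{1},\dots,\gamma_{n}\}$ and $c_{i}=B(\beta,\gamma_{i})$, so (1) reads $\dom(\beta)=\{\gamma\in N(y)\mid B(\gamma,\beta)\ge1\}$. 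For (4), choose $(\alpha,z,\mu)\in\mor(\mathcal C)$ with $\mu\in\Delta$ (a chain from a simple root up to $\alpha$) and compose in $\mathcal C$ via Proposition~\ref{1.3e} to get $(\beta,yz,\mu)\in\mor(\mathcal C)$ with $N(yz)=N(y)\sqcup y(N(z))$. Applying (3) to $(\beta,yz,\mu)$ and to $(\alpha,z,\mu)$ and splitting the union along this decomposition yields the $N(y)$-term directly; for $\gamma=y(\gamma')$ one has $B(\gamma,\beta)=B(\gamma',y^{-1}\beta)=B(\gamma',\alpha)$, so the remaining roots form $y(\dom(\alpha))$, giving (4).

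For (5) take $w:=\mathcal L_{W}(p)$ with $\alpha_{0}\in\Delta$, so by (3) $\dom(\beta)=\{\gamma\in N(w)\mid B(\gamma,\beta)\ge1\}$, while by \eqref{eq2.12.5}--\eqref{eq2.12.6} $N(s_{\beta})=N(w)\sqcup\{\beta\}\sqcup\iota(N(w))$, where $\iota(\gamma):=-s_{\beta}(\gamma)$ is the involution of $N(s_{\beta})$ fixing $\beta$ and preserving $B(-,\beta)$. Each $\gamma_{i}\in\dom(\beta)\subseteq N(w)$ satisfies $\dep(\gamma_{i})\le(n-i)+1\le n<n+1=\dep(\beta)$, hence $\ell(s_{\gamma_{i}})<\ell(s_{\beta})$; with $B(\gamma_{i},\beta)\ge1$ this gives $\dom(\beta)\subseteq\{\gamma\in N(s_{\beta})\mid B(\gamma,\beta)\ge1,\ \ell(s_{\gamma})<\ell(s_{\beta})\}$. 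Conversely a root in the right-hand set lies in $N(s_{\beta})$, is not $\beta$ (lengths), and if it lies in $N(w)$ it is in $\dom(\beta)$ by (3); so everything hinges on excluding the roots of $\iota(N(w))$, i.e. on the following length identity, which I expect to be the main obstacle: \emph{for every $\gamma\in N(s_{\beta})$ one has $\dep(\gamma)+\dep(\iota(\gamma))=2\dep(\beta)$.} Granting it, $\dep(\iota(\gamma_{i}))=2(n+1)-\dep(\gamma_{i})\ge n+2>\dep(\beta)$, so $\ell(s_{\iota(\gamma_{i})})>\ell(s_{\beta})$ and the roots of $\iota(N(w))$ are excluded, completing (5). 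I would prove this identity by induction on $\dep(\beta)$, using Proposition~\ref{prop:Dep} and the way $N(s_{\beta})$ transforms under conjugation of $s_{\beta}$ by a length-reducing simple reflection; the dihedral case (where it is a direct root-system computation in $\langle s_{\gamma},s_{\beta}\rangle$) is the model, but transferring it to $W$ through Theorem~\ref{1.10}(3) loses information, so the genuine work is the depth-recursion inside $W$.
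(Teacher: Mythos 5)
Parts (1)--(4) of your proposal are correct, and your proof of (1) is a genuinely different route from the paper's: you induct on the length of the path, using your equivariance lemma to handle $\dom(\beta)\setminus\{\nu\}$ and the counting of Corollary~\ref{1.7} (with Proposition~\ref{prop:DepthLength}) to decide membership of $\nu$ by cardinality, whereas the paper argues directly, getting the forward inclusion from the dominance criterion ``$B(\gamma_i,\beta)\geq 1$ and $\ell(s_{\gamma_i})<\ell(s_\beta)$ imply $\gamma_i\prec\beta$'' and the reverse inclusion from the definition of dominance via $\gamma\in N(ws_{\alpha_0})=N(w)\sqcup\{\beta\}$. Your route buys independence from that comparability criterion; the paper's is shorter. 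Your (3) and (4) coincide with the paper's arguments.

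Part (5), however, contains a genuine gap: the ``length identity'' you hinge everything on, namely $\dep(\gamma)+\dep(\iota(\gamma))=2\dep(\beta)$ for \emph{every} $\gamma\in N(s_\beta)$, is false. Take $W$ of type $A_2$ with simple roots $\alpha_1,\alpha_2$ and $\beta=s_{\alpha_1}(\alpha_2)=\alpha_1+\alpha_2$; here $n=1$, $w=s_{\alpha_1}$, $N(w)=\{\alpha_1\}$, and $\iota(\alpha_1)=-s_\beta(\alpha_1)=\alpha_2$, so $\dep(\alpha_1)+\dep(\iota(\alpha_1))=2$ while $2\dep(\beta)=4$. The identity does hold inside \emph{infinite} dihedral subsystems (where $\vert B\vert\geq 1$ throughout), which is presumably what suggested it, but it breaks precisely when $\langle s_{\gamma},s_{\beta}\rangle$ is finite --- and those are exactly the cases where $B(\gamma,\beta)<1$. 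This points to the repair, which needs only ingredients you already have: since your involution $\iota$ preserves $B(-,\beta)$, a root $\iota(\gamma')$ with $B(\gamma',\beta)<1$ is excluded from the right-hand set of (5) by the inner-product clause, with no length statement required; and when $B(\gamma',\beta)\geq 1$, part (3) gives $\gamma'\prec\beta$, whence (using that the $W$-action preserves dominance on all of $\Phi$ and that $\gamma\mapsto-\gamma$ reverses it, see \cite[Lemma~2.2]{BrHo93} --- note your equivariance lemma must be extended to negative roots here) one gets $\beta\prec\iota(\gamma')$, hence $\dep(\beta)<\dep(\iota(\gamma'))$ and $\ell(s_{\iota(\gamma')})>\ell(s_\beta)$. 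This \emph{conditional} statement, which is what the paper proves, is all that is needed; no depth recursion on $\dep(\beta)$ is necessary, and the induction you sketch would be attempting to prove a false statement.
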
 
\begin{proof} We  prove {\em (1)} using the facts listed in \S\ref{1.5a}. Write $\Delta(p)=(\beta_{n},\ldots, \beta_{1})$.  Suppose $1\leq i\leq n$ and $c_{i}\geq 1$. Then  $B(\gamma_{i},\beta)=c_{i}\geq 1$. Also, since $\gamma_{i}=s_{\beta_{n}}\cdots s_{\beta_{i+1}}(\beta_{i})$, we have 
\begin{equation*}
\ell(s_{\gamma_{i}})\leq 2(n-i)+1<2n+1=\ell(s_{\beta}).
\end{equation*} 
This implies that $\gamma_{i}\prec \beta$.
On the other hand, let $\gamma\in \Phi^{+}  $ with $\gamma\prec \beta$. Set $w:=\mathcal L_{W}(p)$, so $w\alpha_{0}=\beta$. Hence $s_{\alpha_{0}}w^{-1}(\beta)=-\alpha_{0}$,  we have $\gamma\in N(ws_{\alpha_{0}})=N(w)\sqcup\{\beta\}=\{\gamma_{n},\ldots, \gamma_{1},\beta\}$. Therefore $\gamma=\gamma_{i}$ for some $1\leq i\leq n$, with $1\leq B(\gamma_{i},\beta)=c_{i}$.  Part {\em (2)} follows by taking cardinalities in the equality in {\em (1)}. 

To prove {\em (3)} and {\em (4)}, we may suppose without loss of generality that $\alpha=\alpha_{0}$ and $y=\mathcal L_{W}(p)$;  so $\beta=y(\alpha_{0})$ . Then  for {\em (3)}  $N(y)=\{s_{\gamma_{1}},\ldots, s_{\gamma_{n}}\}$ and $B(\gamma_{i},\beta)=c_{i}$ for $i=1,\ldots, n$, so {\em (3)} follows from {\em (1)}. For {\em (4)} choose a morphism $(\alpha,z,\delta)$ in $\mathcal C$ with $\delta\in \Delta$. Then $(\beta,yz,\delta) = (\beta,y,\alpha)(\alpha,z,\delta)$ in $\mor(\mathcal C)$. By {\em (3)}, 
$$
\dom(\beta)=\{\gamma\in N(yz)\,|\, B(\gamma,\beta)\geq 1\}\ \textrm{ and  }\ \dom(\alpha)=\{\gamma\in N(z)\,|\, B(\gamma, \alpha)\geq 1\}.
$$
 But $N(yz)=N(y)\sqcup y(N(z))$ and for $\gamma\in N(z)$, one has $B(\gamma,\alpha)=B(y(\gamma),\beta)$, so {\em (4)} follows.

Finally, we prove {\em (5)}. Let $y=\mathcal L_{W}(p)$. Then $\beta=y(\alpha_{0})$ and  $s_{\beta}=ys_{\alpha_{0}}y^{-1}$ where $\ell(s_{\beta})=2\ell(y)+1$. Hence
$N(s_{\beta})$ is the disjoint union $N(s_\beta)=N(y)\sqcup \{\beta\}\sqcup -s_{\beta }(N(y))$. By {\em (3)}, it will suffice to show that if $\gamma\in \{\beta\}\cup -s_{\beta }(N(y))$, then either $\ell(s_{\gamma})\geq \ell(s_{\beta})$ or $B(\gamma,\beta)<1$. This is trivial if $\gamma=\beta$. Otherwise, write $\gamma=-s_{\beta}(\gamma')$ where $\gamma'\in N(y)$. Then $\ell(s_{\gamma'})<\ell(s_{\beta})$ and 
 $B(\gamma,\beta)=B(\gamma',\beta)$. If $B(\gamma',\beta)<1$, then $B(\gamma,\beta)<1$ also.  If $B(\gamma',\beta)\geq 1$, then $\gamma'\prec \beta$ by {\em (3)}. 
  
  The following facts can be easily deduced from the definition of dominance order, see \cite[Lemma~2.2]{BrHo93}:  the action of~$W$ preserves the dominance order so   $-\gamma=s_{\beta}(\gamma')\prec s_{\beta}(\beta)=-\beta$;   multiplication by $-1$ reverses the dominance order so $\beta\prec \gamma$ and in this case $\dep(\beta)<\dep(\gamma)$. Therefore by Example~\ref{ex:Length1} we obtain $\ell(s_{\beta})<\ell(s_{\gamma})$ as required. 
 
\end{proof}

\subsection{Proof of Proposition~\ref{prop:Increase}}\label{2.24}

 Assume first for the proof of \ref{prop:Increase}(1) that $W$ is infinite.  We use here Proposition~\ref{2.60}. Write $\Delta_{W'}=\{\alpha,\beta\}$. Note that the inner product of any two roots in 
$\Phi_{W'}$ is greater than or equal to $1$ in absolute value, since $W'$ is infinite.  
 There is some $\rho\in \Delta_{W'}$ with $B(\rho,\delta)>0$. Interchanging 
$\alpha$ and $\beta$ if necessary, we assume without loss of generality  that $B(\alpha,\delta)>0$. 
If also $B(\alpha,\gamma)>0$, then $\alpha\preceq \gamma\prec \delta$ since 
$1=\ell_{W'}(s_{\alpha})\leq \ell_{W'}(s_{\gamma})<\ell_{W'}(s_{\delta})$. Hence 
 $\dom(\delta)\supseteq \dom(\gamma)\sqcup \{\gamma\}$ and $\dep_{\infty }(\delta)=\vert \dom(\delta)\vert \geq \vert \dom(\gamma)\vert+1>\dep_{\infty}(\gamma)$. The other case is that $B(\alpha,\gamma)\leq -1$. Let $\gamma':=s_{\alpha}\gamma$. Then $\dep_{\infty}(\gamma)<\dep_{\infty}(\gamma')$ by Lemma~\ref{1.9}(2). We have $B(\alpha,\gamma')>0$, 
$B(\alpha, \delta)>0$ and  $\ell_{W'}(s_{\gamma'})=\ell_{W'}(s_{\gamma})+2\leq \ell_{W'}(s_{\delta})$, so
 $\alpha\prec \gamma'\preceq  \delta$. This gives either $\gamma'=\delta$ or else 
$\dep_{\infty}(\gamma')<\dep_{\infty}(\delta)$, by arguing as before but with $\gamma$ replaced by $\gamma'$. In either case,  $\dep_{\infty}(\gamma)<\dep_{\infty}(\gamma')\leq \dep_{\infty}(\delta)$ as required. 

Now assume  for the proof of \ref{prop:Increase}(2) that $W'$ is finite.
By induction on $\ell_{W'}(x)$, it is sufficient to prove the assertion there in the special
case that $x=s_{\alpha}$  where $\alpha\in \Delta_{W'}$. But then the hypotheses $\delta=s_{\alpha}\gamma$ with $\ell_{W'}(s_{\delta})=\ell_{W'}(s_{\gamma)}+2$ imply that $B(\gamma,\alpha)<0$ and  so $\dep_{\infty}(\gamma)\leq \dep_{\infty}(s_{\alpha}(\gamma))=\dep_{\infty}(s_{\delta})$ as required, by 
Lemma~\ref{1.9}(2) again.  This finishes the proof of Proposition~\ref{prop:Increase}.

\subsection*{Acknowledgment} 

We deeply thank Patrick Dehornoy for all the interesting and exciting discussions we had on this subject, and for many suggestions on a preliminary version of this article that improved the present text. 
  The second author (CH) wishes to thank Christian Kassel and the Institut de Recherche Math\'ematique Avanc\'ee (IRMA), Universit\'e de Strasbourg, to have hosted him during his sabbatical leave from October 2013 to June 2014 and where a significant part of this work was done.   The second author wishes also to warmly thank  Nathan Williams for many interesting conversations on the subject, in particular for inspiring the counterexample in type $\tilde G_2$ that the smallest Garside shadow is not the set of low elements.



\end{document}